\theoremstyle{plain}
\newtheorem{lemma}{Lemma}[section]
\newtheorem{theorem}[lemma]{Theorem}
\newtheorem{proposition}[lemma]{Proposition}
\newtheorem{corollary}[lemma]{Corollary}
\theoremstyle{definition}
\newtheorem{definition}[lemma]{Definition}
\newtheorem{example}[lemma]{Example}
\newtheorem{remark}[lemma]{Remark}
\theoremstyle{remark}
\newcommand{\PBVBG}[3]{{#2}^* #1}
\newcommand{\Ker}{\operatorname{Ker}}
\newcommand{\half}{\frac 1 2}
\def\Z{{\mathbb Z}}
\def\N{{\mathbb N}}
\def\g{\mathfrak{g}}
\def\Nu{{\cal V}}
\newcommand{\bea}{\begin{eqnarray}}
\newcommand{\eea}{\end{eqnarray}}
\def\BaseGroupoid{{\Gamma}}
\newcommand{\BaseAlgebroid}{A}
\def\toto{{\rightrightarrows}}
\newcommand{\gauge}[1]{\ensuremath{\overrightarrow{#1}-\overleftarrow{#1}}}
\newcommand{\longiso}{\stackrel{\textstyle\sim}{\longrightarrow}}
\newcommand{\gpoidmm}{\Gamma\toto M}
\newcommand{\gpoidmmi}{\Gamma_1\toto M_1}
\newcommand{\gpoidmmm}{\Gamma_2\toto M_2}
\newcommand{\quasip}{\big(\gpoidmm, \Pi, \Lambda  \big)}
\newcommand{\smalcirc}{\mbox{\tiny{$\circ $}}}
\newcommand{\pixx}{{\cal P}}
\newcommand{\zz}{\mathbb{Z}}
\newcommand{\frakg}{\mathfrak{g}}
\newcommand{\pro}{proj}
\newcommand{\vb}[4]{\xymatrix{
{#1} \ar@<-.5ex>[d]\ar@<.5ex>[d]\ar[r] &
{#3} \ar@<-.5ex>[d]\ar@<.5ex>[d]\\
{#2}\ar[r]^\phi & {#4} }}
\newcommand{\ec}{{\cal E}}
\DeclareMathAlphabet{\mathpzc}{OT1}{pzc}{m}{it}
\newcommand{\XX}{\mathfrak{X}}
\newcommand{\leftgla}{\mathfrak{A}}
\newcommand{\rightgla}{\mathfrak G}
\newcommand{\assogla}{\mathcal V}
\newcommand{\diff}{\mathrm d}
\newcommand{\LeftRight}{\mathrm d}
\newcommand{\hSec}[2]{\Sigma^{#1} (#2)}
\newcommand{\polymult}[2]{{\mathcal T}_{mult}^{#1}{#2}}
\newcommand{\super}{_{[1]}}
\newcommand{\twist}{T}
\newcommand{\dgla}{dgla}
\newcommand{\hordis}{\nabla}
\newcommand{\RRR}{\psi}
\newcommand{\superG}{graded groupoid}
\newcommand{\horlift}{{\lambda_{\nabla}}}
\newcommand{\projj}[1]{\bar{#1}}
\newcommand{\strict}{{$\zz$-graded  Lie $2$-algebra}}
\newcommand{\glaPoint}{$\zz$-graded Lie algebra.}
\newcommand{\Lieh}{\text{Lie}_2}
\newcommand{\Poiss}{{Pois}}
\newcommand{\proj}{\text{pr}}
\newcommand{\Groupoid}{\underline{Gr}}
\newcommand{\vbdiag}[4]{\xymatrix{ #1 \ar[d] \ar@<-.5ex>[r] \ar@<.5ex>[r] & #2 \ar[d] \\ #3 \ar@<-.5ex>[r] \ar@<.5ex>[r] & #4 }}
\newcommand{\id}{\text{id}}
\newcommand{\VB}[4]{\xymatrix{ #1 \ar[d] \ar@<-.5ex>[r] \ar@<.5ex>[r] & #2 \ar[d] \\ #3 \ar@<-.5ex>[r] \ar@<.5ex>[r] & #4 }}
\newcommand{\Hom}{\text{Hom}}
\let\Vec=\overrightarrow
\let\ceV=\overleftarrow
\begin{document}
\title{Shifted  Poisson Structures on Differentiable  Stacks}

\author{Francesco Bonechi \footnote{\small INFN Sezione di Firenze, email: francesco.bonechi@fi.infn.it}, Nicola Ciccoli \footnote{\small Dipartimento di Matematica e Informatica, Universit\`a di Perugia, email: nicola.ciccoli@unipg.it}, Camille Laurent-Gengoux \footnote{\small Institut Elie Cartan de Lorraine (IECL), UMR 7502, Universit\'e de Lorraine, Metz, email: camille.laurent-gengoux@univ-lorraine.fr}, Ping Xu \footnote{\small Department of Mathematics, Pennsylvania State University, email: ping@math.psu.edu} }

\date{}

\maketitle

\abstract{The purpose of this paper is to investigate  $(+1)$-shifted
Poisson structures in the context of differential geometry.
The  relevant notion is that of $(+1)$-shifted  Poisson structures on differentiable stacks.
More precisely,  we develop the notion of the Morita equivalence  of
 quasi-Poisson groupoids. Thus isomorphism classes of $(+1)$-shifted
 Poisson stacks correspond to Morita equivalence classes of 
quasi-Poisson groupoids. 
 In the process,   we carry out the following
program  which is of independent interest:

(1) We introduce   a $\zz$-graded Lie 2-algebra of polyvector fields on a
 given Lie groupoid and prove that its homotopy equivalence class is 
invariant under the Morita equivalence of Lie groupoids, and
 thus they can be considered to be  polyvector fields on the corresponding differentiable
stack $\XX$. It turns out that  $(+1)$-shifted  Poisson structures on $\XX$
correspond exactly to  elements of the Maurer-Cartan  moduli set 
of the corresponding dgla.

(2) We introduce  the notion of the
 tangent complex $T_\XX$  and the cotangent complex
$L_\XX$ of a differentiable stack $\XX$ in terms of any Lie groupoid
$\Gamma\toto M$ representing $\XX$. They correspond to   
a homotopy class of 2-term  
homotopy $\Gamma$-modules 
$A[1]\rightarrow TM$ and $T^\vee M\rightarrow A^\vee[-1]$, respectively.
Relying on the  tools  of theory of VB-groupoids including
homotopy and Morita equivalence of VB-groupoids,
we prove that a $(+1)$-shifted Poisson structure on a differentiable
 stack $\XX$ defines a morphism ${L_\XX}[1]\to {T_\XX}$.
}

\section{Introduction}
\newtheorem{introthm}{Theorem}
\newtheorem{intropro}[introthm]{Problem}
\newcommand{\ra}{\rangle}

Derived algebraic
 geometry for shifted symplectic structures
and Poisson structures on moduli spaces
  have proved to be 
 important in understanding several theories   including
 Donaldson--Thomas invariants \cite{PTVV}
and quantum field theory \cite{CG}. 
 The symplectic case was addressed first in \cite{PTVV} and later shifted
 Poisson structures were developed
 (see \cite{CPTVV, MeSa1, MeSa2, Pr1, Pr2, Saf}).

Although a  very powerful post-Grothendieck machinery has been developed
in the context of algebraic geometry to deal with both
derived and stacky   singularities, 
we believe it is valuable to develop a purely differential geometric
 approach to issues pertaining to symplectic and Poisson geometry that
 are specific to the $C^\infty$-context. 
Both derived and stacky  singularities occur 
 in problems in  classical  symplectic and Poisson geometry \cite{SL}. 
There, many existing tools from differential geometry can be used and results can be sharpened. 
In this paper, we will thus focus on $(+1)$-shifted  Poisson structures on differentiable
Artin 1-stacks (what we shall call  differentiable stacks for short). 

Classical Poisson manifolds and $(+1)$-shifted  Poisson stacks are
  different in nature. 
While classical Poisson manifolds arise as phase spaces of Hamiltonian
 systems in classical mechanics,
$(+1)$-shifted  Poisson stacks are abstract mathematical constructions
 capturing the symmetry of Hamiltonian systems possessing momentum maps. 
The word `momentum' denotes a quantity whose conservation under the time 
evolution of a physical system is related to some symmetry of the system. 
The $(+1)$-shifted  symplectic stack $[\frakg^*/G]$ was (perhaps)
 the first instance (albeit in a hidden form) of a $(+1)$-shifted  Poisson
 stack encountered in the study of Hamiltonian systems. 
It can be credited to Mikami--Weinstein \cite{MW} who showed that the usual Hamiltonian momentum map theory 
can in fact be reformulated as a symplectic action of the symplectic groupoid 
 $\frakg^*\rtimes G \toto \frakg^*$, which
 is indeed a presentation of the $(+1)$-shifted  symplectic stack $[\frakg^*/G]$.

In the late 1980's, Weinstein introduced the notion of Poisson groupoid
\cite{Wei:Poiss}
in order to unify Drinfeld's theory of Poisson groups \cite{Drinfeld} 
with the theory of symplectic groupoids \cite{Wei:symp}.
The introduction of Poisson groupoids has led to many new developments
 in Poisson geometry in the last three decades, 
in particular the theory of quasi-Poisson groupoids which was
 developed in \cite{IPLGX}.
Roughly speaking, a quasi-Poisson groupoid is a Lie groupoid endowed with
 a multiplicative bivector field 
whose Schouten bracket with itself is `homotopic to zero.' 
In the present paper, we adopt the viewpoint that quasi-Poisson groupoids
ought to be understood as \emph{$(+1)$-shifted  differentiable Poisson stacks},
 which we should introduce. 

It is well known that isomorphism classes of differentiable stacks
can be constructed as Morita equivalence classes of Lie 
groupoids \cite{BehrendXu}.
Hence it  is natural to define $(+1)$-shifted  differentiable Poisson stacks
as \emph{Morita equivalence classes of quasi-Poisson groupoids}. 

This immediately raises the following  problems: 
\begin{intropro}\label{London}
\begin{itemize}
\item What is Morita equivalence for quasi-Poisson groupoids?
\item Given a quasi-Poisson structure on a Lie groupoid, 
is it possible to transfer it to any  Morita equivalent Lie groupoid?
\end{itemize}
\end{intropro}

While the notion of Morita equivalence of Lie groupoids was easily
 extended to quasi-symplectic groupoids \cite{X}, 
it does not admit a straightforward extension to quasi-Poisson groupoids. 
Indeed, unlike differential forms, 
the pull back of polyvector fields  is not well defined.
To overcome this difficulty, we show that
  quasi-Poisson structures on a given Lie groupoid $\Gamma\toto M$ 
are  Maurer--Cartan elements of the dgla
 determined by a $\Z$-graded Lie 2-algebra 
$\hSec{\bullet}{\BaseAlgebroid}\stackrel{\diff}{\mapsto} \polymult{\bullet}{\BaseGroupoid}$
constructed in a canonical way from the groupoid $\Gamma\toto M$.
This construction is explained in Section \ref{section2} and the Appendix.


This re-characterization of quasi-Poisson structures is closely related
 to the following  question, which is of independent interest: 
\begin{intropro}\label{Rome}
 What  are polyvector fields on a differentiable stack,
 and how can we describe them efficiently?
\end{intropro}

Berwick-Evans and Lerman \cite{Berwick-Evans-Lerman} proved that, 
given a presentation of a differentiable stack $\XX$ by a Lie groupoid $\Gamma\toto M$, 
the vector fields on $\XX$ can be understood in terms of a Lie 2-algebra 
consisting of the multiplicative vector fields \cite{MX:3} on $\Gamma$ and 
the sections of the Lie algebroid $A$ associated with the Lie groupoid
 $\Gamma\toto M$. 
This Lie 2-algebra has appeared in a disguised form in \cite{Mehta}
(see \cite[Propositions 3.2.25 and  3.2.27]{Mehta}).

Inspired by \cite{Berwick-Evans-Lerman}, 
we associate a $\Z$-graded Lie 2-algebra 
$\hSec{\bullet}{\BaseAlgebroid}\xrightarrow{\diff}\polymult{\bullet}{\BaseGroupoid}$
of `polyvector fields' with every Lie groupoid $\Gamma\toto M$. 
Here $\polymult{\bullet}{\BaseGroupoid}$ denotes the space of multiplicative polyvector fields on $\BaseGroupoid$ 
and $\hSec{\bullet}{\BaseAlgebroid}$ denotes the space of sections of the exterior powers of the Lie algebroid $\BaseAlgebroid$.
We prove that the $\Z$-graded Lie 2-algebras
 associated in this way to Morita equivalent Lie groupoids 
are  homotopy equivalent. 
Consequently, we define the space of polyvector fields on
 a differentiable stack $\XX$ 
to be the homotopy equivalence class of the $\Z$-graded Lie 2-algebras
 associated with any Lie groupoid representing the  differentiable stack $\XX$. 
A $(+1)$-shifted  Poisson structure on a differentiable 
stack $\XX$ is then simply an element of the Maurer--Cartan moduli
 set of the dgla 
determined by the homotopy equivalence class of $\Z$-graded Lie 2-algebras corresponding to $\XX$.
The choice of a presentation of the stack $\XX$ by
 a Lie groupoid $\Gamma\toto M$ 
identifies the $(+1)$-shifted  Poisson structures on $\XX$ with gauge equivalence classes of quasi-Poisson structures on $\Gamma\toto M$. 
Such a  gauge equivalence class of quasi-Poisson structures can be passed 
along from one Lie groupoid to any other Morita equivalent Lie groupoid. 
We thus obtain a satisfying definition of 
the Morita equivalence of quasi-Poisson groupoids.

The second  goal of the paper is   to explore where  the degree 
shifting comes from for a quasi-Poisson groupoid,
and to introduce the rank of $(+1)$-shifted Poisson stacks
 and the meaning of  their non-degeneracy.
Our construction is certainly inspired by derived algebraic
 geometry, but is of a different nature. 
Recall that, in classical Poisson geometry, a Poisson structure $\pi$ on
 a smooth manifold $X$ 
determines a morphism $\pi^\sharp: T^\vee_X\to T_X$
from the cotangent bundle $T^\vee_X$ to the tangent bundle $T_X$.
One expects that  an analogue statement holds for $(+1)$-shifted 
 Poisson stacks. 
Before one can attempt to address this issue, one must first investigate the 
following questions.
\begin{intropro}
\label{Brussels}
What are the analogues of the tangent and cotangent bundles for differentiable stacks?
\end{intropro}

In (derived) algebraic geometry, the definition
of the  cotangent complex  requires enormous preparation work
 \cite{Iussi}. 
This seems neither practical nor necessary when dealing with differentiable stacks. 
Here we introduce  this notion in terms of presentations of the  
differentiable stack by Lie groupoids. 
The following short answer was suggested to us by Behrend
(private communication; see also \cite[Introduction]{Kai}): 
the \emph{tangent complex} $T_\XX$ of a differentiable
stack $\XX$ admitting a presentation by a Lie groupoid $\Gamma\toto M$ 
is  the \emph{homotopy equivalence class of the
 homotopy $\Gamma$-module $\rho: A[1]\to TM$}
\cite{AriasAbadCrainic, ELW, Gracia-Saz-Mehta},  
where $A$ designates once again the Lie algebroid of $\Gamma\toto M$ and $\rho$ denotes its anchor map. 
Its dual, \emph{the cotangent complex $L_\XX$ of $\XX$}, is the 
\emph{homotopy equivalence class of the homotopy $\Gamma$-module
 $\rho^\vee: T^\vee M \to A^\vee[-1]$}.
Homotopy $\Gamma$-modules were independently introduced by Gracia-Saz and Mehta,
 who called them ``flat superconnection" \cite{Gracia-Saz-Mehta0, Gracia-Saz-Mehta}, and by  Abad and Crainic \cite{AriasAbadCrainic},
 who  called them ``representations up to homotopy". Both of  them were
 inspired by the work of Evens, Lu, and Weinstein \cite{ELW}.

Of course, one must  justify  that the  tangent  complex and
the  cotangent complex are well defined by investigating 
the  relation between the homotopy $\Gamma$-modules arising from different
 presentations of the differentiable stack $\XX$, i.e.\ 
different Morita equivalent  Lie groupoids $\Gamma\toto M$. 


Homotopy $\Gamma$-modules have been studied extensively in the literature.
%
In their pioneering work \cite{Gracia-Saz-Mehta}, Gracia-Saz and Mehta
established a dictionary between VB-groupoids over a fixed Lie groupoid $\Gamma\toto M$, 
and 2-term homotopy $\Gamma$-modules.
Here we enrich the dictionary by investigating Morita equivalence.
Note that Morita equivalence of VB-groupoids has been studied
in \cite{OrtizDelHoyo}. In this paper, however, we
will take a different approach more relevant to our situation,
and we will relate Morita and homotopy equivalence of VB-groupoids and investigate how this reflects on maps between them. 
 VB-groupoids $V_1\toto E_1$ and $V_2\toto E_2$ over $\Gamma_1\toto M_1$ 
and $\Gamma_2\toto M_2$, respectively, 
are Morita equivalent if and only if there exists  a
$\Gamma_1$-$\Gamma_2$-bitorsor
 $M_1\stackrel{\varphi_1}{\leftarrow} X \stackrel{\varphi_1}{\rightarrow} M_2$
such that the pullback VB-groupoids $V_1[\varphi^*_1 E_1]$ and $V_2[\varphi_2^* E_2]$ are homotopy equivalent.
Making use of the dictionary of Gracia-Saz and Mehta \cite{Gracia-Saz-Mehta},
this definition can be transposed to homotopy $\Gamma$-modules:
a homotopy $\Gamma_1$-module $\ec_1$ is Morita equivalent to a
 homotopy $\Gamma_2$-module $\ec_2$
if and only if there exists a $\Gamma_1$-$\Gamma_2$-bitorsor 
 $M_1\stackrel{\varphi_1}{\leftarrow} X \stackrel{\varphi_1}{\rightarrow} M_2$
and a homotopy equivalence of homotopy
 $\Gamma_1[X] (\cong \Gamma_2[X])$-modules
from $\ec_1[X]$ to $\ec_2[X]$.

If $\Gamma_1\toto M_1$ and $\Gamma_2 \toto M_2$ are Morita equivalent
 Lie groupoids, 
then $T\Gamma_1\toto TM_1$ and $T\Gamma_2\toto TM_2$ are Morita equivalent VB-groupoids 
and, similarly, $T^\vee\Gamma_1\toto A_1^\vee$ and $T^\vee\Gamma_2\toto A_2^\vee$ are Morita equivalent VB-groupoids.
It immediately follows that the homotopy $\Gamma_1$-module $A_1[1]\to TM_1$
is Morita equivalent to the homotopy $\Gamma_2$-module $A_2[1]\to TM_2$,
while the homotopy $\Gamma_1$-module $T^\vee M_1 \to A^\vee _1[-1]$ 
is Morita equivalent to the homotopy $\Gamma_2$-module  $T^\vee M_2 \to A^\vee_2[-1]$. 
This justifies  our definition of the tangent and cotangent complexes $T_\XX$ and $L_\XX$ of a differentiable stack $\XX$. 


Given a  quasi-Poisson groupoid $(\Gamma,\Pi,\Lambda)$, the associated map 
$\Pi^\sharp: T^\vee\Gamma\rightarrow T\Gamma$ is a VB-groupoid morphism. 
Moreover, if $(\Gamma_1,\Pi_1,\Lambda_1)$ and $(\Gamma_2,\Pi_2,\Lambda_2)$ are
Morita equivalent quasi-Poisson groupoids, then
the associated VB-groupoid morphisms $\Pi_1^\sharp: T^\vee\Gamma_1\rightarrow T\Gamma_1$ 
and $\Pi^\sharp_2:T^\vee\Gamma_2\rightarrow T\Gamma_2$ are equivalent as generalized VB-groupoid morphisms.
As an immediate consequence, we prove that a $(+1)$-shifted Poisson structure on a differentiable stack $\XX$  indeed
determines a morphism $\Pi^\sharp: {L_\XX}[1]\to {T_\XX}$ of $2$-term 
complexes from the $(+1)$-shifted cotangent complex to the tangent complex.
This, in turn, allows us to introduce the rank of a $(+1)$-shifted  Poisson stack $\XX$ as an integer-valued map 
defined on its coarse moduli space $|\XX|$ \cite{BehrendXu}.
We are thus led to a natural definition of non-degenerate $(+1)$-shifted
   Poisson stacks.


We conclude this introduction with a few remarks.
As was proved in \cite{X}, quasi-symplectic structures on a
Lie groupoid transfer to Morita equivalent Lie groupoids.
Therefore one can speak quasi-symplectic structures
on a  Morita equivalent class of  Lie groupoids.
  This readily provides a notion of  $(+1)$-shifted
 symplectic structures on a differentiable stack.
 It is natural to expect that
non-degenerate  $(+1)$-shifted  Poisson stacks   are isomorphic  to
$(+1)$-shifted  symplectic stacks.
Indeed, in a forthcoming paper \cite{BCLX2}, 
we establish an explicit one-one correspondence between
non-degenerate  $(+1)$-shifted  Poisson stacks
and  $(+1)$-shifted  symplectic stacks,
 and study its application  to the momentum map theory.
\color{black} 
In particular, we prove that the momentum map theory of quasi-Poisson 
groupoids in~\cite{IPLGX} is stacky in nature 
and that Hamiltonian reductions can be carried out, which agrees with the 
derived symplectic  geometry principle that 
the derived intersection of coisotropics of a $(+1)$-shifted  Poisson stack
 gives rise to a Poisson structure \cite{ToenICM}. 
It also enables us to merge the quasi-Hamiltonian momentum map theory of Alekseev-Malkin-Meinrenken \cite{AMM} 
with the quasi-Poisson theory of Alekseev, Kosmann-Schwarzbach and Meinrenken \cite{AKKS, AKM}.
Finally, we refer the reader to \cite{Pr2} for an  explanation
of  the  relationship
 between various concepts introduced in the present
paper and those  in the algebraic geometry setting   \cite{CPTVV, Pr1}.
See also  Remark \ref{rk:1}, Remark \ref{rk:2}, Remark \ref{rk:3}, and
Remark \ref{rk:4}.
In  the second version  of \cite{Saf},  the author claims
 that  $(+1)$-shifted Poisson structures introduced
in \cite{CPTVV} 
  are  equivalent to  $(+1)$-shifted Poisson structures in our sense
(see Theorem 3.29 in  version 2 of \cite{Saf}).
 However,  note that
the author  considers source-connected groupoids in the context of smooth affine
 schemes, while we deal with any  $C^\infty$-groupoids.

One of the authors announced some of the results set forth
 in the present paper at the conference 
\emph{Derived algebraic geometry with a focus on derived symplectic techniques} held at the University of Warwick in April 2015.
He wishes to thank the organizers for providing him the opportunity to disseminate the results of our work.

\subsection*{Acknowledgements}

We would like to thank Ruggero Bandiera, Kai Behrend, Domenico Fiorenza,
Yvette Kosmann-Schwarzbach, Jonathan  Pridham, Mathieu Sti\'enon,
  Gabriele Vezzosi for fruitful discussions and useful comments.
We are grateful to the anonymous referees for useful comments and suggestions.
 The research project was, at different stages, hosted by INFN-Sezione di Firenze, Department of Mathematics, Pennsylvania State University and Dipartimento di Matematica e Informatica, Universit\`a di Perugia. We thank all the involved institutions for providing support and excellent working conditions.
 
 N.C. is partially supported by Indam-GNSAGA, PRIN \emph{Variet\`a reali e complesse: geometria, topologia e analisi armonica} and Research project \emph{Geometry of quantization}- Universit\`a di Perugia. 
P.X. is  partially supported by  NSF grants DMS-1406668 and DMS-1707545.

\section{Polyvector fields on a differentiable stack}

\label{section2}

The purpose of  this section  is to introduce the notion of
 polyvector fields on a differentiable stack. 
 They can be represented by a dgla
  when the differentiable stack is represented by a Lie groupoid.
 Different Lie groupoids representing the isomorphic stack  give rise  to
homotopy equivalent dglas. In fact, more precisely,
they are represented by  homotopy equivalence classes of {\strict}s.

\subsection{The {\strict} of polyvector fields on a Lie groupoid}

We first recall a few basic facts 
concerning  multiplicative polyvector fields 
on a Lie groupoid~\cite{IPLGX}.

Let $\BaseGroupoid \toto M$ be a Lie groupoid with
the source map $s$ and the target map  $t$,  respectively. Let 
 $\BaseAlgebroid$ be its Lie algebroid, with anchor map $\rho:A\to TM$. 
Denote the graph of the multiplication  by
$$
{\rm graph}(\BaseGroupoid) = \{(g_1,g_2,g_1 g_2)|\ g_1,g_2\in \BaseGroupoid,\
 s(g_1)=t(g_2)\} \subset \BaseGroupoid\times \BaseGroupoid\times \BaseGroupoid.
$$ 
We say that a $k$-vector field $P\in\Gamma(\wedge^k T\BaseGroupoid)$
on $\BaseGroupoid$  is {\it multiplicative}
if ${\rm graph}(\BaseGroupoid)$ is coisotropic with respect to 
$P\oplus P\oplus (-1)^{k+1}P$. 
We denote by $\polymult{k}{\BaseGroupoid}$, for any $k\geq 0$,
 the space of multiplicative 
$(k+1)$-vector fields on $\BaseGroupoid$, 
and  by  $\polymult{\bullet}{\BaseGroupoid} = \bigoplus_{k \geq -1} \polymult{k}{\BaseGroupoid}$   the $\zz$-graded vector space of multiplicative polyvector
 fields on  $\BaseGroupoid$.

For any $k \geq -1$, we denote by $\hSec{k}{\BaseAlgebroid}$ the
 space $\Gamma(\wedge^{k+1} \BaseAlgebroid)$ of sections of
the exterior vector bundle  $\wedge^{k+1} \BaseAlgebroid \to M$.
When equipped with the Schouten-Nijenhuis bracket, $\hSec{\bullet}{\BaseAlgebroid} := \bigoplus_{k \geq -1} \hSec{k}{\BaseAlgebroid} $ is a {\glaPoint}

For every $a\in\hSec{k}{\BaseAlgebroid}$, we denote the corresponding right 
and left invariant $(k+1)$-vector fields by $\overrightarrow{a}$
 and $\overleftarrow{a}$,
respectively. 

 It is easy to check that $\gauge{a}$
is a multiplicative $(k+1)$-vector field \cite{IPLGX}, called an {\it exact multiplicative $(k+1)$-vector field}.
We recall from \cite{IPLGX}  some 
well-known facts concerning multiplicative polyvector fields.

\smallskip
\begin{lemma}
\label{lem:rightAndleft}
\begin{enumerate}
 \item[(i)] The space of multiplicative polyvector fields
 $\polymult{\bullet}\BaseGroupoid$ on $\BaseGroupoid\toto M$
is closed under the Schouten-Nijenhuis bracket, and is therefore a {\glaPoint}
\item[(ii)] The map ${\LeftRight}:\hSec{\bullet}{\BaseAlgebroid}\rightarrow\polymult{\bullet}{\BaseGroupoid}$, 
\begin{equation}\label{twist}
{\LeftRight}(a)=\gauge{a}
 \end{equation}
is an homomorphism of $\zz$-graded Lie algebras.
\item[(iii)] For each $P\in\polymult{k}{\BaseGroupoid}$ and $a\in\hSec{l}{\BaseAlgebroid}$, 
there exists an unique section $\delta_P (a) \in \hSec{k+l}{\BaseAlgebroid}$ such that
$$
\overrightarrow{\delta_P (a)} = [P,\overrightarrow{a}]\;.
$$
Moreover, the correspondence $P\mapsto \delta_P$ is
 a $\zz$-graded Lie algebra morphism
from  $\polymult{\bullet}{\BaseGroupoid}$ to $\mathrm{Der}^\bullet(\Sigma(A))$.
The action satisfies the following properties:
\begin{itemize}
 \item[$1$)]  ${\LeftRight}(\delta_P (a))= \, [P,{\LeftRight}(a)]$; 
 \item[$2$)] $\delta_{{\LeftRight}(a)}(b) = [a,b]$,
\end{itemize}
for any  $P\in\polymult{\bullet}{\BaseGroupoid}$ and $a,b\in\hSec{\bullet}{\BaseAlgebroid}$.
\end{enumerate}
\end{lemma}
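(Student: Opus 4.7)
The plan is to prove the three parts in order, leveraging the elementary calculus of right- and left-invariant polyvector fields on a Lie groupoid.

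For part $(i)$, the cleanest route is via the coisotropy characterization. If $P\in\polymult{k}{\BaseGroupoid}$ and $Q\in\polymult{l}{\BaseGroupoid}$, set $\tilde P := P\oplus P \oplus (-1)^{k}P$ and $\tilde Q := Q\oplus Q \oplus (-1)^{l}Q$ on $\BaseGroupoid\times\BaseGroupoid\times\BaseGroupoid$; by assumption both make ${\rm graph}(\BaseGroupoid)$ coisotropic. A direct Schouten--Nijenhuis computation on this product manifold (in which cross-factor brackets vanish) yields $[\tilde P,\tilde Q] = [P,Q]\oplus[P,Q]\oplus(-1)^{k+l}[P,Q]$, which is precisely the expression prescribed by the multiplicativity convention for the $(k+l+1)$-vector field $[P,Q]$. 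Since the multivector fields making a fixed submanifold coisotropic form a graded Lie subalgebra under the Schouten bracket, we conclude $[P,Q]\in\polymult{k+l}{\BaseGroupoid}$.

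For part $(ii)$, I rely on the standard identities
\begin{equation*}
[\overrightarrow{a},\overrightarrow{b}] = \overrightarrow{[a,b]},\qquad
[\overleftarrow{a},\overleftarrow{b}] = -\overleftarrow{[a,b]},\qquad
[\overrightarrow{a},\overleftarrow{b}] = 0,
\end{equation*}
valid for all $a,b\in\hSec{\bullet}{\BaseAlgebroid}$, which extend to polyvectors the classical commutation relations between right- and left-invariant vector fields on a Lie groupoid. Expanding $[\gauge{a},\gauge{b}]$ by graded bilinearity and applying these identities collapses the bracket to $\overrightarrow{[a,b]} - \overleftarrow{[a,b]} = \gauge{[a,b]}$, so ${\LeftRight}$ is a graded Lie algebra homomorphism. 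That $\LeftRight(a)\in\polymult{\bullet}{\BaseGroupoid}$ is itself a direct coisotropy verification on ${\rm graph}(\BaseGroupoid)$, using that $\overrightarrow{a}$ and $\overleftarrow{a}$ agree on units and are intertwined by right and left translation respectively.

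For part $(iii)$, the crux is to show that $[P,\overrightarrow{a}]$ is again right-invariant whenever $P$ is multiplicative. I would argue via the infinitesimal reformulation of multiplicativity: for every $g\in\BaseGroupoid$, the right translation $R_g$ (defined on $t^{-1}(t(g))$) satisfies $(R_g)_*P = P$ modulo a left-invariant correction that Schouten-commutes with every right-invariant multivector field. Combining this with $[\overleftarrow{\cdot},\overrightarrow{a}]=0$ and the derivation property of $(R_g)_*$ yields $(R_g)_*[P,\overrightarrow{a}]=[P,\overrightarrow{a}]$, i.e.\ right-invariance. Hence a unique section $\delta_P(a)\in\hSec{k+l}{\BaseAlgebroid}$ exists with $\overrightarrow{\delta_P(a)}=[P,\overrightarrow{a}]$. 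The graded Jacobi identity for the Schouten bracket immediately gives $\delta_{[P,Q]}=[\delta_P,\delta_Q]$, so $P\mapsto\delta_P$ is a graded Lie algebra morphism into $\mathrm{Der}^\bullet(\Sigma(A))$. Property $1$) then follows from the symmetric identity $[P,\overleftarrow{a}]=\overleftarrow{\delta_P(a)}$, which implies $[P,\gauge{a}]=\gauge{\delta_P(a)}={\LeftRight}(\delta_P(a))$; property $2$) follows by specializing $P={\LeftRight}(b)$ and invoking part $(ii)$ together with $\overrightarrow{[b,a]}=[\overrightarrow{b},\overrightarrow{a}]$.

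The main obstacle is the rigorous justification, in part $(iii)$, of right-invariance of $[P,\overrightarrow{a}]$: passing from the global coisotropy of the graph to a pointwise invariance statement under right translations requires some care with the pushforward conventions. I expect this is essentially the content of the corresponding proposition in \cite{IPLGX}, from which the lemma is quoted.
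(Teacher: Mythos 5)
The paper does not actually prove this lemma: it is recalled as a collection of ``well-known facts'' with a citation to \cite{IPLGX}, so there is no in-paper argument to compare against. Your reconstruction follows exactly the classical route of that reference (and of Mackenzie--Xu), and the individual steps check out: the sign $(-1)^k$ in $\tilde P$ is the correct one for a $(k+1)$-vector field under the paper's convention, the closure of coisotropy under the Schouten bracket gives $(i)$, the three bracket identities for $\overrightarrow{a},\overleftarrow{a}$ give $(ii)$, and the inversion map $\iota_*$ (which sends $\overrightarrow{a}$ to $(-1)^{l+1}\overleftarrow{a}$ and a multiplicative $P\in\polymult{k}{\BaseGroupoid}$ to $(-1)^kP$) converts $\overrightarrow{\delta_P(a)}=[P,\overrightarrow{a}]$ into $\overleftarrow{\delta_P(a)}=[P,\overleftarrow{a}]$ with no residual sign, which is what property $1)$ needs. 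The only substantive point, as you correctly flag, is the right-invariance of $[P,\overrightarrow{a}]$; your ``affineness'' sketch is the right idea but is the one step that is asserted rather than proved. A clean way to close it --- and the one this paper implicitly leans on in its Appendix B (cf.\ Lemma \ref{graded_grpd_cohomology} and the reference there to Prop.~2.7 of \cite{IPLGX}) --- is the supergeometric dictionary: $P\in\polymult{k}{\BaseGroupoid}$ iff the associated fibrewise-polynomial function on the symplectic graded groupoid $T^\vee[1]\BaseGroupoid\toto \BaseAlgebroid^\vee[1]$ is a multiplicative function, while $\hSec{k}{\BaseAlgebroid}$ is the space of functions on the base $\BaseAlgebroid^\vee[1]$. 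Under this identification the Schouten bracket is the canonical Poisson bracket, so $(i)$ becomes ``multiplicative functions on a symplectic groupoid are closed under Poisson bracket,'' $(iii)$ becomes ``the bracket of a multiplicative function with the pullback of a base function is again such a pullback,'' and $(ii)$, $1)$, $2)$ are the coboundary identities --- a uniform proof of all three parts with no case-by-case invariance argument.
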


The following proposition  follows  immediately.

\begin{proposition}
\label{prop:strict}
 Let $\BaseGroupoid\toto M$ be a Lie groupoid. Then $\hSec{\bullet}{\BaseAlgebroid}\stackrel{\diff}{\mapsto} \polymult{\bullet}{\BaseGroupoid}$ together with the Lie brackets 
and actions described in Lemma \ref{lem:rightAndleft}  is a {\strict}.
\end{proposition}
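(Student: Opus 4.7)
The approach is to verify directly the axioms of a $\zz$-graded strict Lie 2-algebra, taking Lemma \ref{lem:rightAndleft} as the sole input. Recall that such a structure amounts to a crossed module of $\zz$-graded Lie algebras: two $\zz$-graded Lie algebras $L_0$ and $L_{-1}$, a graded Lie algebra morphism $\phi\colon L_{-1}\to L_0$, and an action of $L_0$ on $L_{-1}$ by graded derivations, satisfying the equivariance condition $\phi(P\cdot a)=[P,\phi(a)]$ and the Peiffer identity $\phi(a)\cdot b=[a,b]_{L_{-1}}$. The $\zz$-grading conventions are those already fixed in this section: elements of $\polymult{k}{\BaseGroupoid}$ and $\hSec{k}{\BaseAlgebroid}$ carry internal degree $k$, and the Schouten--Nijenhuis brackets preserve this degree.

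The plan is first to identify each of the pieces. The role of $L_0$ is played by $\polymult{\bullet}{\BaseGroupoid}$ endowed with the Schouten--Nijenhuis bracket of multivector fields on $\BaseGroupoid$, which is a graded Lie algebra by Lemma \ref{lem:rightAndleft}(i). The role of $L_{-1}$ is played by $\hSec{\bullet}{\BaseAlgebroid}$ equipped with the Lie algebroid Schouten--Nijenhuis bracket, a graded Lie algebra by standard Lie algebroid theory. The morphism $\phi$ is $\diff=\LeftRight$, defined by $a\mapsto\gauge{a}$; Lemma \ref{lem:rightAndleft}(ii) tells us that it is a homomorphism of graded Lie algebras. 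Finally, the action of $L_0$ on $L_{-1}$ is $P\cdot a:=\delta_P(a)$, well defined through the relation $\Vec{\delta_P(a)}=[P,\Vec{a}]$; Lemma \ref{lem:rightAndleft}(iii) furnishes not only the existence of $\delta_P(a)$ but also the fact that $P\mapsto\delta_P$ lands in $\mathrm{Der}^{\bullet}(\Sigma(A))$, so it is an action by graded derivations.

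It then remains only to check the two compatibility axioms, and these are precisely properties (1) and (2) of Lemma \ref{lem:rightAndleft}(iii): property (1) is the equivariance of $\diff$, and property (2) is the Peiffer identity. Hence all the required axioms are verified and the proposition follows. There is essentially no obstacle, since the proposition is a direct repackaging of Lemma \ref{lem:rightAndleft}; the only point worth attention is to match the sign and degree conventions used in the definition of strict $\zz$-graded Lie 2-algebra with those adopted here for multiplicative polyvector fields and sections of exterior powers of $\BaseAlgebroid$, which is a routine bookkeeping.
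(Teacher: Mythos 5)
Your proposal is correct and follows exactly the route the paper intends: the paper's own proof consists of the single remark that the proposition ``follows immediately'' from Lemma \ref{lem:rightAndleft}, and your verification simply makes explicit the matching between the data of Definition \ref{def:strictLie2} (the two graded Lie algebras, the morphism $\diff$, the action $\delta$, and the two crossed-module axioms) and items (i)--(iii) of that lemma. Nothing is missing.
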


In other words,  
$\hSec{\bullet}{\BaseAlgebroid}\stackrel{\diff}{\mapsto} 
\polymult{\bullet}{\BaseGroupoid}$
is a crossed module of $\zz$-graded Lie algebras.  There is an associated
   dgla 
$\Nu^\bullet (\BaseGroupoid )  := \bigoplus_{k\ge -2}\Nu^k (\BaseGroupoid )$,
where
  $$ \Nu^k (\BaseGroupoid ) =    \hSec{k+1}{\BaseAlgebroid}\oplus \polymult{k}{\BaseGroupoid} .$$
See Appendix \ref{AppendixA1} for details. 
The  associated {\dgla}  $\Nu^\bullet (\BaseGroupoid )$
 is  called the \emph{{\dgla} of polyvector fields on the Lie groupoid
 $\BaseGroupoid \toto M$}.

\begin{remark}
\label{rk:florence}
Recall that, for any {\dgla}  ${\Nu}^\bullet$, its  cohomology
 $H^\bullet(\Nu )$ is a $\zz$-graded Lie algebra. It is easy to
see that
$$H^k(\Nu (\BaseGroupoid)) \cong 
\hSec{k+1}{\BaseAlgebroid}^\BaseGroupoid 
\oplus\frac{\polymult{k}\BaseGroupoid}{\{\gauge{a}|a\in \hSec{k}{\BaseAlgebroid}\} },$$
where $\hSec{k+1}{\BaseAlgebroid}^\BaseGroupoid$ denotes 
the space of $\Gamma$-invariant sections of $\wedge^{k+1} \BaseAlgebroid$.
\end{remark}

\subsection{Morita equivalence}

In this section, we discuss how 
the  ${\mathbb Z}$-graded   2-term 
complex\footnote{A ${\mathbb Z}$-graded 2-term complex is a 2-term 
complex in the category of ${\mathbb Z}$-graded vector spaces.
More explicitly, a ${\mathbb Z}$-graded   2-term complex consists of
$\zz$-graded vector spaces $A$ and $B$  and a graded linear map
$d: A \to B$ of degree \emph{zero} (with respect to the gradings of $A$
 and $B$). 
A ${\mathbb Z}$-graded 2-term complex morphism
from  $A \stackrel{d}{\mapsto} B$ to  $A' \stackrel{d'}{\mapsto} B'$
 is a pair of chain maps $A \mapsto A'$ and $B \mapsto B'$ 
of degree $0$.
Homotopies between morphisms are  usual homotopy maps 
  $B\to A'$, which are again assumed to be of degree $0$.}
$\hSec{\bullet}{\BaseAlgebroid}\stackrel{\diff}{\mapsto} \polymult{\bullet}{\BaseGroupoid}$ changes  
under Morita equivalence of Lie groupoids.
Note that  if $\BaseGroupoid_i \toto M_i$, $i=1,2$, are two Lie groupoids with respective Lie algebroids 
$\BaseAlgebroid_i$, and $\phi: \BaseGroupoid_1 \to \BaseGroupoid_2 $ is a
 Lie groupoid morphism over $\varphi : M_1 \to M_2$, in general,
 there is \emph{no} natural  chain map
from  $\hSec{\bullet}{\BaseAlgebroid_1}\stackrel{\diff_1}{\mapsto} \polymult{\bullet}{\BaseGroupoid_1}$
 to  $\hSec{\bullet}{\BaseAlgebroid_2}\stackrel{\diff_2}{\mapsto} \polymult{\bullet}{\BaseGroupoid_2}$.
However, we will prove that when $\phi$ is a Morita morphism of Lie groupoids,
these ${\mathbb Z}$-graded 2-term  complexes
are indeed homotopy equivalent.

Assume that $\BaseGroupoid[X]\toto X$
 is the pull-back groupoid of the Lie groupoid $\BaseGroupoid\toto M$
under a  surjective submersion $\varphi:X \to M$, where
 $\BaseGroupoid[X] =X\times_{M, t}\Gamma\times_{s, M} X$.		 
Let $\phi :  \BaseGroupoid[X] \to \BaseGroupoid $ be the natural projection,
which is a Morita morphism.
 Let $\phi_A:A[X]\to A$ be the corresponding Lie algebroid morphism. 
By 
$\hSec{\bullet}{\BaseAlgebroid} \stackrel{\diff}{\mapsto}
\polymult{\bullet}{\BaseGroupoid}$
and 
$\hSec{\bullet}{\BaseAlgebroid[X]} \stackrel{\diff'}{\mapsto}
 \polymult{\bullet}{\BaseGroupoid[X]}$,
we denote   the $\zz$-graded Lie 2-algebras as in
Proposition \ref{prop:strict}
associated to the Lie groupoids $\BaseGroupoid\toto M$ and
$\BaseGroupoid[X]\toto X$, respectively.
 As in \cite{OrtizWaldron},   we  consider the following spaces:

\begin{enumerate}
 \item[(i)] By $\polymult{\bullet}{\BaseGroupoid[X]}_{\pro}$,
we denote the subspace of $\polymult{\bullet}{\BaseGroupoid[X]}$ 
consisting of \emph{projectable multiplicative polyvector fields on
 $\BaseGroupoid[X] $},
 namely those   $P\in  \polymult{\bullet}{\BaseGroupoid[X]}$ 
  such that 
there exists $\projj{P} \in  \polymult{\bullet}{\BaseGroupoid} $ 
satisfying
 $ \phi_* (P)  = \projj{P}$.
 \item[(ii)]  By $\hSec{\bullet}{\BaseAlgebroid[X]}_{\pro}$, we  denote
 the subspace  of $\hSec{\bullet}{\BaseAlgebroid[X]}$
consisting  of \emph{projectable sections} in  $\Gamma (X; \wedge^{\bullet+1}
 \BaseAlgebroid[X] )$,
 namely those sections  $a \in \hSec{\bullet}{\BaseAlgebroid[X]}$
 such that there exists $\projj{a} \in  \hSec{\bullet}{\BaseAlgebroid}$
satisfying $\phi_A (a)  = \projj{a}$.
\end{enumerate}

There are projection maps:
\begin{eqnarray}
&\proj: &\polymult{\bullet}{\BaseGroupoid[X]}_{\pro}\to \polymult{\bullet}{\BaseGroupoid}, \ \ P\mapsto \projj{P}\, , \label{eq:Paris1}\\
&\proj: &\hSec{\bullet}{\BaseAlgebroid[X]}_{\pro} \to \hSec{\bullet}{\BaseAlgebroid},
\ \ a\mapsto \projj{a}\, \label{eq:Paris2}.
\end{eqnarray}

\begin{proposition}
\label{prop:strict2}
Assume that $\BaseGroupoid\toto M$
is a Lie groupoid, $\varphi: X \to M$
a  surjective submersion.
Let $\phi: \BaseGroupoid[X] \to \BaseGroupoid $ be the corresponding
Morita   morphism of Lie groupoids. Then
\begin{enumerate}
 \item[(i)] 
$\hSec{\bullet}{\BaseAlgebroid[X]}_{\pro} \stackrel{\diff'}{\mapsto} \polymult{\bullet}{\BaseGroupoid[X]}_{\pro}$ 
is a $\zz$-graded Lie 2-subalgebra of
 $\hSec{\bullet}{\BaseAlgebroid[X]} \stackrel{\diff'}{\mapsto}
 \polymult{\bullet}{\BaseGroupoid[X]}$; 
 \item[(ii)]  the projection  $\proj$ in Equations \eqref{eq:Paris1}-\eqref{eq:Paris2}
 is a   morphism of $\zz$-graded Lie 2-algebras
 from 
$\hSec{\bullet}{\BaseAlgebroid[X]}_{\pro} \stackrel{\diff'}{\mapsto} \polymult{\bullet}{\BaseGroupoid[X]}_{\pro}$ 
to $\hSec{\bullet}{\BaseAlgebroid} \stackrel{\diff}{\mapsto} 
\polymult{\bullet}{\BaseGroupoid}$. 
\end{enumerate}
\end{proposition}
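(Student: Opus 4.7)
The strategy is to reformulate projectability in terms of $\phi$- and $\phi_A$-relatedness, and then to verify that these relations are preserved by each of the structural operations of the {\strict} in Proposition~\ref{prop:strict}. Concretely, $a \in \hSec{\bullet}{\BaseAlgebroid[X]}$ is projectable with projection $\projj{a}$ precisely when $a$ and $\projj{a}$ are $\phi_A$-related, while $P \in \polymult{\bullet}{\BaseGroupoid[X]}$ is projectable with projection $\projj{P}$ precisely when $P$ and $\projj{P}$ are $\phi$-related as multivector fields. The plan is first to establish three compatibilities, exploiting the fact that $\phi:\BaseGroupoid[X]\to \BaseGroupoid$ is a surjective submersive groupoid morphism whose linearization at units is $\phi_A$: (i) for projectable $a$, the invariant extensions $\Vec{a},\ceV{a}$ are $\phi$-related to $\Vec{\projj{a}},\ceV{\projj{a}}$ because $\phi$ intertwines the right and left translations on $\BaseGroupoid[X]$ and $\BaseGroupoid$; (ii) if $P_1, P_2 \in \polymult{\bullet}{\BaseGroupoid[X]}$ are $\phi$-projectable, then so is $[P_1, P_2]$ with pushforward $[\projj{P_1}, \projj{P_2}]$, by the standard behaviour of the Schouten bracket under surjective submersions; (iii) if $a_1,a_2 \in \hSec{\bullet}{\BaseAlgebroid[X]}$ are $\phi_A$-projectable, then so is $[a_1,a_2]$ with projection $[\projj{a_1},\projj{a_2}]$, either by appealing directly to the fact that $\phi_A$ is a Lie algebroid morphism or by combining (i), (ii), and Lemma~\ref{lem:rightAndleft}(ii).

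Given these, part (1) is immediate: (ii) and (iii) give closure of $\polymult{\bullet}{\BaseGroupoid[X]}_{\pro}$ and $\hSec{\bullet}{\BaseAlgebroid[X]}_{\pro}$ under their Schouten brackets, while (i) yields that $\diff'(a)=\gauge{a}$ of a projectable $a$ projects to $\gauge{\projj{a}}=\diff(\projj{a})$, so $\diff'$ carries $\hSec{\bullet}{\BaseAlgebroid[X]}_{\pro}$ into $\polymult{\bullet}{\BaseGroupoid[X]}_{\pro}$ and $\proj\circ\diff'=\diff\circ\proj$. For the action, the key trick is the defining identity $\Vec{\delta_P(a)}=[P,\Vec{a}]$ of Lemma~\ref{lem:rightAndleft}: if $P$ and $a$ are both projectable, then (i) and (ii) imply that $[P,\Vec{a}]$ is $\phi$-related to $[\projj{P},\Vec{\projj{a}}]=\Vec{\delta_{\projj{P}}(\projj{a})}$; evaluating both sides at a unit $(x,\varphi(x),x)$ and identifying $s$-vertical tangents at units with the respective algebroid fibres produces $\phi_A(\delta_P(a)(x))=\delta_{\projj{P}}(\projj{a})(\varphi(x))$, showing at once that $\delta_P(a)$ is projectable and that $\proj$ intertwines the two actions. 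This completes both (1) and (2) in one go.

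The main obstacle will be the careful bookkeeping in (i): one must verify that $\phi_*$ applied to the right-invariant extension $\Vec{a}$ at an arrow $(x_1,g,x_2)\in\BaseGroupoid[X]$ produces precisely $\Vec{\projj{a}}(g)\in T_g\BaseGroupoid$. This rests on the fact that right translation in $\BaseGroupoid[X]$ by $(x_1,g,x_2)$ is $\phi$-covered by right translation in $\BaseGroupoid$ by $g$, together with the natural identification of the Lie algebroid of the pullback groupoid with the pullback Lie algebroid $\varphi^* \BaseAlgebroid$. Once (i) is in place, (ii), (iii), and the action compatibility reduce to routine manipulations of $\phi$-relatedness.
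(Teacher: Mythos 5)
Your proposal is correct, and it supplies exactly the verification the paper leaves implicit: the paper states Proposition~\ref{prop:strict2} without proof, treating it as a direct consequence of Lemma~\ref{lem:rightAndleft} and the definitions of projectability. Your route --- recasting projectability as $\phi$- and $\phi_A$-relatedness, using that $\phi$ intertwines the invariant extensions, that the Schouten and algebroid brackets preserve relatedness, and that the action compatibility follows from $\overrightarrow{\delta_P(a)}=[P,\overrightarrow{a}]$ evaluated along the units --- is the natural argument the authors intend, so there is nothing to contrast.
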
 
Proposition \ref{prop:strict2} means that both horizontal maps
 in the diagram below  are  morphisms of $\zz$-graded Lie 2-algebras, 
where ${\mathfrak i}$ stands for the inclusion maps:

\begin{equation}
 \label{eq:purpose0}
 \xymatrix{  \polymult{\bullet}{\BaseGroupoid} & \polymult{\bullet}{\BaseGroupoid[X]}_{\pro} &  \polymult{\bullet}{\BaseGroupoid[X]} \\ 
& \ar@{}[r]^(.25){}="a"^(.75){}="b" \ar@{^{(}->}^{\mathfrak i} "a";"b"   \ar@{}[l]^(.25){}="a"^(.75){}="b" \ar@{->}_{\proj} "a";"b"  & \\ 
\ar[uu]^{\diff} \hSec{\bullet}{\BaseAlgebroid} & \hSec{\bullet}{\BaseAlgebroid[X]}_{\pro} \ar[uu]^{\diff'}& \hSec{\bullet}{\BaseAlgebroid [X]} \ar[uu]^{\diff'} } 
\end{equation}

We  now define horizontal lifts.
By  an \emph{Ehresmann connection} $\hordis$  for a surjective
 submersion $\varphi : X \to M$, we mean   a subbundle 
$H^\nabla\subset TX$ such that $TX \cong H^\nabla	\oplus \ker(T\varphi )$
as vector bundles over $X$. An Ehresmann connection $\hordis$ induces
an injective  map of vector bundles, denoted by the same symbol,
$\nabla:\varphi^*TM\hookrightarrow  TX$.
The subbundle $H^\nabla\subseteq TX$ is also called an \emph{horizontal lift}.

For any $x,y \in X$ and $\gamma \in \BaseGroupoid$ with $\varphi(x) = t(\gamma)$ and $\varphi(y)=s(\gamma)$, 
the connection $\hordis$ induces a pair of natural injections:
\bea
&&\label{eq:Jusseu1}
 T_{\gamma} \Gamma  \hookrightarrow  T_{(x,\gamma,y)} (\BaseGroupoid[X]),
 \hbox{ and } \\
&&\label{eq:Jusseu2} 
\BaseAlgebroid_{\varphi(x)} \hookrightarrow \BaseAlgebroid[X]_{x} 
\eea

The map \eqref{eq:Jusseu1} is defined as follows:
\bea
\label{eq:horlift00}
  T_{\gamma} \Gamma  &&\to   (T_xX) 
\times_{(T_{\varphi(x)} M)}
  (T_{\gamma} \BaseGroupoid)  \times_{(T_{\varphi(y)} M)}  (T_y X) 
{\simeq} T_{(x,\gamma,y)} (\BaseGroupoid[X])  \\
  u &&\to \big( (\nabla \smalcirc t_{T\Gamma}) (u) , \ u,  \ 
(\nabla \smalcirc s_{T\Gamma}) (u) \big),  \nonumber
\eea
 where $s_{T\Gamma}(u)\in T_{s(\gamma)}M\hookrightarrow \varphi^*(TM)_y$ and $t_{T\Gamma}(u)\in T_{t(\gamma)}M\hookrightarrow \varphi^*(TM)_x$ 
are, respectively,  the
 source map and the target map of the tangent groupoid $T\Gamma\toto TM$.
The map \eqref{eq:Jusseu2} is  defined by
\bea
\label{eq:horlift01}
  A_{\varphi(x)} && \to (A_{\varphi(x)}) \times_{(T_{\varphi(x)}M)} (T_x X)   {\simeq} \BaseAlgebroid [X]_x  \\
  a &&\to \big(a , \  (\nabla \smalcirc \rho ) (a) \big). \nonumber
 \eea

By dualizing the maps (\ref{eq:horlift00}-\ref{eq:horlift01}),
 we obtain a pair of vector bundle morphisms:

\begin{equation}
\label{eq:aa}
\begin{tikzcd}
{T^\vee \BaseGroupoid[X]}  \arrow{d}  \arrow{r}{\Phi_\hordis} &
{T^\vee\BaseGroupoid} \arrow{d}  \\
{X} \arrow{r}{\varphi} & {M}
\end{tikzcd}
\end{equation}
and
\begin{equation}
\label{eq:aa'}
\begin{tikzcd}
{\BaseAlgebroid[X]^\vee} \arrow{d}  \arrow{r}{\phi_\hordis} &
{\BaseAlgebroid^\vee} \arrow{d} \\
{X} \arrow{r}{\varphi} & {M}
\end{tikzcd}
\end{equation}

These morphisms extend to exterior product bundles, and give rise to
 a pair of maps on the sections of their dual bundles, called
\emph{horizontal lifts} by abuse of notations:
\bea
\label{eq:hordis000}
&& \lambda_\hordis : \Gamma(\wedge T\BaseGroupoid ) \to \Gamma(\wedge T\BaseGroupoid[X] )  \hbox{ and } \\
&& \lambda_\hordis : \Gamma(\wedge \BaseAlgebroid ) \to   \Gamma(\wedge \BaseAlgebroid[X] ) \nonumber.
\eea  
Note that
$\polymult{\bullet}{\BaseGroupoid} \to
  \hSec{\bullet}{\BaseAlgebroid},
  \polymult{\bullet}{\BaseGroupoid[X]}_{\pro} \to
  \hSec{\bullet}{\BaseAlgebroid[X]}_{\pro}$, and
  $\polymult{\bullet}{\BaseGroupoid[X]} \to
  \hSec{\bullet}{\BaseAlgebroid[X]}$ are 
$\zz$-graded 2-term complexes.
 By forgetting, for the moment, their $\zz$-graded  Lie brackets, 
we have the following proposition, whose proof is
postponed to Appendix   \ref{App:B2}.

\begin{proposition}
\label{cor:superGroupoidsConclusion}
Let  $\BaseGroupoid \toto M$  be a Lie groupoid,
 and $\varphi : X \to M$ a surjective submersion. Choose  an Ehresmann connection $\hordis$ for
$\varphi$. Then
\begin{enumerate}
 \item[(i)]  the chain map $\proj $ is a left inverse of $\horlift$,
 and, moreover, there exists a  chain homotopy 
 $ h_{\lambda_\hordis}: \polymult{\bullet}{\BaseGroupoid[X]}_{\pro} \to
  \hSec{\bullet}{\BaseAlgebroid[X]}_{\pro}$ 
 between $ \horlift \circ \proj $ and the identity map:
$$ \xymatrix{  \polymult{\bullet}{\BaseGroupoid}   & 
\polymult{\bullet}{\BaseGroupoid[X]}_{\pro} \ar@/^/@[blue][dd]^{h_\horlift} 
 \\ 
\ar@{}[r]^(.25){}="a"^(.75){}="b" \ar@<4pt>@[red]@{^{(}->}^{\horlift} "a";"b"  & 
 \ar@{}[l]^(.25){}="a"^(.75){}="b" \ar@<4pt>@[red]@{_{}->}^{\proj} "a";"b"\\
\ar[uu]^{\diff} \hSec{\bullet}{\BaseAlgebroid}   & 
\hSec{\bullet}{\BaseAlgebroid[X]}_{\pro} \ar[uu]^{\diff'} } $$ 
 
 \item[(ii)] there exists a chain map $ \RRR$  and an homotopy $h_X: \polymult{\bullet}{\BaseGroupoid[X]}\rightarrow\hSec{\bullet}{\BaseAlgebroid[X]}$, 
 
 $$ \xymatrix{  \ar@/_1.0pc/@[blue][dd]_{h_{\lambda_\nabla}}  
\polymult{\bullet}{\BaseGroupoid[X]}_{\pro}  & \polymult{\bullet}{\BaseGroupoid[X]} 
 \ar@/^/@[blue][dd]^{h_X} 
 \\ 
\ar@{}[r]^(.25){}="a"^(.75){}="b" \ar@<4pt>@[red]@{^{(}->}^{{\mathfrak i}} "a";"b"  & 
 \ar@{}[l]^(.25){}="a"^(.75){}="b" \ar@<4pt>@[red]@{_{}->}^{\RRR} "a";"b"\\
\ar[uu]^{\diff'} \hSec{\bullet}{\BaseAlgebroid[X]}_{\pro}  & \hSec{\bullet}{\BaseAlgebroid[X]} \ar[uu]^{\diff'}, } $$ 
 such that  both $ \RRR \circ {\mathfrak i}$ 
and $  {\mathfrak i} \circ \RRR$ are homotopic 
 to the identity as chain maps.
\end{enumerate}  
\end{proposition}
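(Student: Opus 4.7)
The plan is to exploit the Ehresmann connection $\nabla$ to produce explicit splittings and chain homotopies. Since the proposition only concerns the underlying $\zz$-graded two-term complexes, I will ignore the Lie brackets throughout and work purely at the level of chain complexes.

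Let $V := \ker \varphi_* \subset TX$ denote the vertical bundle; the connection $\nabla$ gives the splitting $TX = H^\nabla \oplus V$ with $H^\nabla \cong \varphi^*TM$. Using the fibered-product description $\BaseGroupoid[X] = X\times_M\BaseGroupoid\times_M X$ together with \eqref{eq:horlift00}--\eqref{eq:horlift01}, this extends canonically to horizontal/vertical splittings of both $T\BaseGroupoid[X]$ and $\BaseAlgebroid[X]$. One then reads off directly from \eqref{eq:horlift00}--\eqref{eq:horlift01} that $\horlift$ takes values in the horizontal summand and satisfies $\proj \circ \horlift = \id$ at both levels, establishing the first half of part~(1).

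For the chain homotopy in part~(1), every $P \in \polymult{\bullet}{\BaseGroupoid[X]}_{\pro}$ decomposes along the ``number of vertical legs'' bigrading of $\wedge^\bullet T\BaseGroupoid[X]$, and its purely horizontal component is precisely $\horlift(\projj{P})$. The map $h_{\lambda_\nabla}$ would be defined by contracting $P$ with the vertical projector along the source fibre of $\BaseGroupoid[X]$ and restricting the result to the unit section, producing an element of $\hSec{\bullet-1}{\BaseAlgebroid[X]}_{\pro}$. Standard Cartan-calculus identities, combined with multiplicativity and with the definition \eqref{twist} of $\diff$, then yield
\[
\diff' \circ h_{\lambda_\nabla} + h_{\lambda_\nabla} \circ \diff' \;=\; \id - \horlift \circ \proj.
\]
For part~(2), I would build $\RRR : \polymult{\bullet}{\BaseGroupoid[X]} \to \polymult{\bullet}{\BaseGroupoid[X]}_{\pro}$ by extracting the purely horizontal component of an arbitrary multiplicative $P$ and extending it horizontally along the $V$-fibres so that the output is projectable; this automatically makes $\RRR\circ\mathfrak{i} = \id$. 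The chain homotopy $h_X$ between $\mathfrak{i}\circ\RRR$ and the identity on the full complex is again constructed as a contraction with the vertical projector, this time without the projectability constraint.

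The main obstacle will be to verify that $h_{\lambda_\nabla}$, $\RRR$ and $h_X$ all respect multiplicativity (and, for part~(1), projectability). A naive vertical contraction on $\wedge^\bullet T\BaseGroupoid[X]$ does not in general preserve the coisotropy condition cutting out $\polymult{\bullet}{}$. The key observation is that for the pull-back groupoid $X\times_M\BaseGroupoid\times_M X$, the source and target maps interact with the $H^\nabla\oplus V$ decomposition in such a way that any vertical leg entering the graph of multiplication lies in exactly one of the three factors; consequently, contracting a vertical leg commutes in the appropriate sense with the projections defining multiplicativity. This local analysis, together with the standard verification that the resulting $h$-maps lower the vertical-leg degree by one, will complete the argument; the detailed bookkeeping is carried out in Section~\ref{App:B2}.
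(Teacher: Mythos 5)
Your reduction of the identity $\proj\circ\horlift=\mathrm{id}$ to the splittings \eqref{eq:horlift00}--\eqref{eq:horlift01} is fine, but the actual content of the proposition --- the existence of the homotopies $h_{\horlift}$, $h_X$ and of the retraction $\RRR$ --- is not established, and the constructions you sketch break down. For part (1) you must produce, for each projectable $P\in\polymult{k}{\BaseGroupoid[X]}_{\pro}$, a global section $b=h_{\horlift}(P)\in\hSec{k}{\BaseAlgebroid[X]}_{\pro}$ with $P-\horlift(\projj{P})=\gauge{b}$; that is, you must prove that the vertical part of $P$ is \emph{exact} for $\diff'$. This is a coboundary statement, not a Cartan-calculus identity: the multiplicativity of $P$ enters as a cocycle condition in the groupoid cohomology of the graded groupoid $T^\vee_{[1]}(\BaseGroupoid[X])\toto(\BaseAlgebroid[X])^\vee_{[1]}$ (this is how the paper phrases it via Lemma \ref{lem:ConnectionMoritaMaps0}), and you give no argument that the components of $P-\horlift(\projj{P})$ carrying $1,2,\dots,k+1$ vertical legs assemble into a single term of the form $\gauge{b}$. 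Note also that your candidate $h_{\horlift}$ lands in $\hSec{\bullet-1}{\BaseAlgebroid[X]}_{\pro}$: a contraction lowers the number of legs, whereas the required homotopy must send multiplicative $(k+1)$-vector fields to sections of $\wedge^{k+1}\BaseAlgebroid[X]$, since $\diff'$ preserves the $\zz$-grading.

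The more serious gap is in part (2): defining $\RRR$ on a non-projectable $P$ by ``extracting the horizontal component and extending it horizontally along the $V$-fibres'' is not well defined, because for non-projectable $P$ the horizontal components at distinct points of a $\varphi$-fibre push down to \emph{different} polyvector fields on $\BaseGroupoid$, so there is nothing canonical to extend (a global section of $\varphi$ need not exist). Some averaging device over the fibres is unavoidable, and this is precisely the ingredient the paper supplies: a partition of unity $(\chi_i)$ subordinate to a cover of $M$ over which $\varphi$ admits local sections $\sigma_i:U_i\to X$, from which the maps $I$ and $H$ of Equations \eqref{eq:I1}--\eqref{eq:HX} are built, and then $\RRR=\lambda_\hordis\circ I$ and $h_X=H$. (The paper obtains only $\RRR\circ{\mathfrak i}=\horlift\circ\proj$, homotopic to the identity, not $\RRR\circ{\mathfrak i}=\mathrm{id}$ on the nose.) Your closing appeal to ``the detailed bookkeeping carried out in Section \ref{App:B2}'' is circular, since that section \emph{is} the proof you are being asked to supply. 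The missing ideas are the pair (local sections of $\varphi$, partition of unity) and the reformulation of multiplicativity as a groupoid-cohomology cocycle condition on the shifted cotangent groupoid, which is what makes the homotopy identities verifiable.
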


\begin{remark}
In Proposition \ref{cor:superGroupoidsConclusion}, 
the  maps  $\RRR$, $h_X$ and $h_{\lambda_\nabla}$
can be described explicitly
 in terms of geometric data such as
 the connection $\nabla$ on $\varphi : X \to M$,
a partition of unity with respect to an open cover
 $(U_i)_{i \in I}$ of $M$,
 and  local sections $\sigma_i : U_i \to X$ of $\varphi$.
 Explicit formulas
can be derived from Equations (\ref{eq:I1}--\ref{eq:HX}).
\end{remark}


\subsection{Polyvector fields on a differentiable stack}

\label{sec:polyvDiffStack}

Let $\BaseGroupoid \toto M$ be a Lie groupoid and $\varphi: X \to M$
 a surjective submersion.
Let $\proj$ and ${\mathfrak i}$ be the  morphisms of $\zz$-graded 2-term 
complexes 
 as in Equation (\ref{eq:purpose0}).

Choose  an Ehresmann connection $\hordis$ for
$\varphi : X \to M$. 
According to  Proposition \ref{cor:superGroupoidsConclusion} (i),
 the horizontal lift $\horlift$ is an  homotopy inverse 
of $\proj$.
According to  Proposition \ref{cor:superGroupoidsConclusion} (ii),
there  exists a retraction $\RRR$
which is a homotopy inverse  of ${\mathfrak i}$. We summarize
all chain maps in the diagram below, where
all morphisms of graded $2$-term complexes
 pointing on the left are homotopy inverses of those pointing on the right:
 \begin{equation}
 \label{eq:purpose}
 \xymatrix{  \polymult{\bullet}{\BaseGroupoid} & \polymult{\bullet}{\BaseGroupoid[X]}_{\pro} \ar@/^/@[blue][dd]^{h_\horlift} &  \polymult{\bullet}{\BaseGroupoid[X]}  \ar@/^/@[blue][dd]^{h_X}\\ 
\ar@{}[r]^(.25){}="a"^(.75){}="b" \ar@<4pt>@[red]@{^{(}->}^{\horlift} "a";"b" & 
\ar@{}[r]^(.25){}="a"^(.75){}="b" \ar@<4pt>@[red]@{^{(}->}^{\mathfrak i} "a";"b"   
\ar@{}[l]^(.25){}="a"^(.75){}="b" \ar@<4pt>@[red]@{->}^{\proj} "a";"b"  & 
\ar@{}[l]^(.25){}="a"^(.75){}="b" \ar@<4pt>@[red]@{->}^{\RRR} "a";"b"   \\ 
\ar[uu]^{\diff} \hSec{\bullet}{\BaseAlgebroid} & \hSec{\bullet}{\BaseAlgebroid[X]}_{\pro} \ar[uu]^{\diff'}& \hSec{\bullet}{\BaseAlgebroid [X]} \ar[uu]^{\diff'} } .
\end{equation}

In addition to being morphisms of $\zz$-graded 2-term complexes, 
both $\proj$ and ${\mathfrak i}$ are strict morphisms of $\zz$-graded
 Lie $2$-algebras.
 However, in general,
  neither  $ \horlift$ nor $\RRR$ is a  strict morphism of
  $\zz$-graded Lie 2-algebras. Nevertheless, we have the following

\begin{proposition}
\label{th:linfty_morphism}
Let $\BaseGroupoid \toto M$ be a Lie groupoid, and
 $\varphi: X \to M$ a surjective submersion.
Choose an Ehresmann connection $\nabla$ for $\varphi$. Then,
\begin{enumerate}
\item[(i)] the morphism  of $\zz$-graded Lie 2-algebras $\proj$ in Equation
\eqref{eq:purpose}
  admits an  homotopy inverse, whose linear part is the horizontal
 lift $\horlift$ and whose quadratic part depends only on $\hordis$ and $h_{\horlift}$;
 \item[(ii)] the morphism  of $\zz$-graded Lie 2-algebras
 $ {\mathfrak i}$  in Equation \eqref{eq:purpose} admits an homotopy 
 inverse,
 whose linear part is the retraction 
$\RRR$ and  whose quadratic part depends only on $\RRR, h_X$ and $h_{\lambda_\nabla}$.
\end{enumerate}
\end{proposition}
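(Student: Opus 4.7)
\emph{Plan.} The statement is the output of the homological perturbation lemma (homotopy transfer theorem for $L_\infty$ algebras) applied to the chain-level deformation retracts furnished by Proposition \ref{cor:superGroupoidsConclusion}. After passing from each crossed module to its associated {\dgla} $\Nu^\bullet$ as in Appendix \ref{AppendixA1}, we have two deformation retracts whose forward maps $\proj$ and ${\mathfrak i}$ are already strict morphisms of $\zz$-graded Lie $2$-algebras, while the backward chain maps $\horlift$ and $\RRR$, together with the contracting homotopies $h_{\horlift}$, $h_X$, $h_{\lambda_\nabla}$, are a priori only chain-level. The goal is to promote each backward chain map to a genuine $\zz$-graded Lie $2$-algebra morphism, i.e.\ an $L_\infty$ morphism whose only nonzero Taylor components are linear and quadratic.

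The first step is the standard tree-sum formula of homotopy transfer, which defines the quadratic term $F_2$ by contracting the obstruction to strictness against the chain homotopy. Because the source {\dgla} is two-term and carries only a binary bracket, trees with three or more inputs vanish by degree reasons, so only the quadratic Taylor component survives; explicitly,
\[
F_2(a,b) \;:=\; h_{\horlift}\bigl([\horlift a,\horlift b] - \horlift[a,b]\bigr),
\]
and analogously in the $\RRR$ case with $h_X$ for brackets of arguments in the ambient space and $h_{\lambda_\nabla}$ for their projections. The degree-zero $L_\infty$ relation is just the chain-map property of $\horlift$; the degree-one relation, upon invoking $\diff\, h_{\horlift} + h_{\horlift}\,\diff = \mathrm{id} - \horlift\circ\proj$ together with the strict bracket-compatibility of $\proj$, reduces to the Jacobi identity in the target, while the mixed-bracket equation reduces to the crossed-module axioms of Lemma \ref{lem:rightAndleft}. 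The pair $(\horlift, F_2)$ is then a homotopy inverse of $\proj$ in the $\zz$-graded Lie $2$-algebra sense, since $\proj\circ\horlift = \mathrm{id}$ strictly and $\horlift\circ\proj \simeq \mathrm{id}$ via $h_{\horlift}$; the analogous construction using $\RRR$, $h_X$, $h_{\lambda_\nabla}$ yields the second statement.

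Uniqueness up to $L_\infty$-homotopy is another standard output of homotopy transfer: any two lifts with the same linear part compatible with the same contracting data differ by an $L_\infty$-homotopy built from the same trees. The main technical obstacle is not $L_\infty$-algebraic bookkeeping but verifying that the quadratic corrections $F_2$ and its $\RRR$-analogue actually land in the \emph{projectable} subspaces $\polymult{\bullet}{\BaseGroupoid[X]}_{\pro}$ and $\hSec{\bullet}{\BaseAlgebroid[X]}_{\pro}$, rather than merely in the ambient Lie $2$-algebra of $\BaseGroupoid[X]$. Projectability is not formal; it relies on the explicit geometric description of $\RRR$, $h_X$ and $h_{\lambda_\nabla}$ in terms of the Ehresmann connection, a partition of unity, and local sections of $\varphi$ alluded to in the remark following Proposition \ref{cor:superGroupoidsConclusion}. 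Once projectability is secured, the remaining $L_\infty$ axioms are formal consequences of the deformation-retract identities.
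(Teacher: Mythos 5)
Your strategy is essentially the one the paper uses: the paper's Theorem \ref{th:invertingStrictLie2Morphisms} is precisely a hand-rolled homotopy-transfer statement for {\strict}s, and your formula $F_2(a,b)=h_{\horlift}\bigl([\horlift a,\horlift b]-\horlift[a,b]\bigr)$ coincides with the paper's explicit quadratic term \eqref{eq:explicitPsi2} once one specializes to $\Phi_2=0$ and one of the two homotopies being zero. However, there is a genuine gap in your justification of the key step. You claim that ``trees with three or more inputs vanish by degree reasons'' because the source is a two-term dgla. That degree argument is valid only in the \emph{ungraded} crossed-module case, where $\Nu$ is concentrated in degrees $-1$ and $0$. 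Here ``two-term'' refers to the crossed module $\hSec{\bullet}{\BaseAlgebroid}\to\polymult{\bullet}{\BaseGroupoid}$, each term of which is a full $\zz$-graded Lie algebra, so the associated dgla $\Nu^k=\hSec{k+1}{\BaseAlgebroid}\oplus\polymult{k}{\BaseGroupoid}$ lives in all degrees $k\ge -2$ and a cubic Taylor coefficient of degree $-2$ has no reason to vanish. The paper handles this by \emph{defining} morphisms of {\strict}s to have only the prescribed linear and quadratic components (Definition \ref{def:strictLie2_morph}) and then \emph{verifying} that the truncated ansatz satisfies all the $L_\infty$ coherences --- in particular the trilinear relation (d) of the remark following that definition, which is exactly the relation that would otherwise be repaired by a nonzero $\Psi_3$. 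That verification is the ``long and explicit computation'' in the proof of Theorem \ref{th:invertingStrictLie2Morphisms}; your proof omits it and replaces it with a degree argument that does not apply.

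Two smaller remarks. First, you present projectability of $F_2$ as the main technical obstacle, but it is immediate from data already established: Proposition \ref{cor:superGroupoidsConclusion} specifies that $h_{\horlift}$ and $h_{\lambda_\nabla}$ take values in the projectable subspaces and Proposition \ref{prop:strict2} makes the projectables a Lie $2$-subalgebra, so $[\horlift a,\horlift b]-\horlift[a,b]$ and its image under the homotopy are automatically projectable; no appeal to the partition-of-unity construction is needed at this point. Second, the proposition asserts that the quadratic term \emph{depends only} on the stated data; your explicit formula does exhibit this, but ``uniqueness up to $L_\infty$-homotopy'' is weaker than what the paper proves, namely strict uniqueness of $\Psi_2$ via $\operatorname{Im}(\Psi_2-\tilde\Psi_2)\subset\Ker\diff\cap\Ker\Phi_1=0$.
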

\begin{proof}
To prove (i), we  apply Theorem \ref{th:invertingStrictLie2Morphisms} to
the morphisms  in Proposition \ref{cor:superGroupoidsConclusion} (i).
Recall the notations of  Theorem \ref{th:invertingStrictLie2Morphisms}: 
\begin{equation}
 \xymatrix{  \ar@[blue]@/_{16pt}/[dd]_{h} \rightgla & \rightgla' \ar@[blue]@/^{16pt}/[dd]^{h'}  \\
    \ar@{}[r]^(.25){}="a"^(.75){}="b" \ar@[red]@<4pt>@{->}^{\Phi_1} "a";"b"  &    \ar@{}[l]^(.25){}="a"^(.75){}="b" \ar@[red]@<4pt>@{->}^{\Psi_1} "a";"b"   \\
 \leftgla  \ar[uu]^{\diff}  & \leftgla'  \ar[uu]_{\diff'} 
}  
\end{equation}
Here we   take the following data:
 (1) $\leftgla' \stackrel{\diff'}{\to} \rightgla'$ is $\hSec{\bullet}{\BaseAlgebroid[X]}_{\pro} \stackrel{\diff'}{\mapsto}
 \polymult{\bullet}{\BaseGroupoid[X]}_{\pro}$;
 (2) $\leftgla \stackrel{\diff}{\to} \rightgla$ is $\hSec{\bullet}{\BaseAlgebroid}\stackrel{\diff}{\mapsto} \polymult{\bullet}{\BaseGroupoid}$;
 (3) $ \Psi_1$ is the projection $\proj$;
 (4) $\Phi_1$ is the horizontal lift $\horlift$;
 (5) $h=0$;
 and (6)  $h'$ is the homotopy 
$h_{\horlift}: \polymult{\bullet}{\BaseGroupoid[X]}_{\pro} \to \hSec{\bullet}{\BaseAlgebroid[X]}_{\pro}$ 
as in  Proposition \ref{cor:superGroupoidsConclusion} (i).
It is easy to check that all
 conditions in Theorem \ref{th:invertingStrictLie2Morphisms}
 are  satisfied, and therefore assertion (i) is proved.

Similarly, assertion  (ii) is proved  by
applying Theorem \ref{th:invertingStrictLie2Morphisms} 
to the maps appearing as in  Proposition \ref{cor:superGroupoidsConclusion} (ii).
\end{proof}

It follows from the previous proposition that the 
$\zz$-graded Lie 2-algebras
$\hSec{\bullet}{\BaseAlgebroid}\stackrel{\diff}{\mapsto} \polymult{\bullet}{\BaseGroupoid}$ 
and 
$\hSec{\bullet}{\BaseAlgebroid[X]}\stackrel{\diff'}{\mapsto} \polymult{\bullet}{\BaseGroupoid[X]}$
are homotopy equivalent.
 Moreover, there is a  canonical homotopy
equivalence   class of morphisms
 between these $\zz$-graded Lie 2-algebras,
 which  is the composition of the
homotopy inverse of ${\proj}$ 
 with the inclusion ${\mathfrak i}$.
   The following result extends Theorem 7.4 in \cite{OrtizWaldron}.

\begin{theorem}
\label{cor:MoritaEquivalent1}
Let $\gpoidmmi$ and $\gpoidmmm$ be Morita equivalent Lie groupoids.
Then any  $\Gamma_1$-$\Gamma_2$-bitorsor $M_1\leftarrow X\to M_2$ 
induces a homotopy equivalence between the  $\zz$-graded Lie 2-algebra
$\hSec{\bullet}{\BaseAlgebroid_2}\stackrel{\diff'}{\mapsto} \polymult{\bullet}{\BaseGroupoid_2}$
of polyvector fields on $\gpoidmmm$
and   the $\zz$-graded Lie 2-algebra
 $\hSec{\bullet}{\BaseAlgebroid_1}\stackrel{\diff}{\mapsto} \polymult{\bullet}{\BaseGroupoid_1}$
of polyvector fields on $\gpoidmmi$.
\end{theorem}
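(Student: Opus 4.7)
The plan is to exploit the fact that, for a $\Gamma_1$-$\Gamma_2$-bitorsor $M_1 \stackrel{\varphi_1}{\leftarrow} X \stackrel{\varphi_2}{\rightarrow} M_2$, both $\varphi_1$ and $\varphi_2$ are surjective submersions, and the two pullback Lie groupoids $\BaseGroupoid_1[X]$ and $\BaseGroupoid_2[X]$ are canonically isomorphic over $X$ (both being the gauge groupoid of the bitorsor, or equivalently $X\times_{M_1}X$ and $X\times_{M_2}X$ coincide via the bitorsor structure). This canonical isomorphism of Lie groupoids induces an isomorphism $\BaseAlgebroid_1[X] \cong \BaseAlgebroid_2[X]$ of Lie algebroids over $X$, and hence a strict isomorphism of $\zz$-graded Lie $2$-algebras
$$\hSec{\bullet}{\BaseAlgebroid_1[X]}\stackrel{\diff}{\mapsto} \polymult{\bullet}{\BaseGroupoid_1[X]} \;\cong\; \hSec{\bullet}{\BaseAlgebroid_2[X]}\stackrel{\diff}{\mapsto} \polymult{\bullet}{\BaseGroupoid_2[X]}.$$

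With this canonical bridge in hand, I would then zigzag through the diagram (\ref{eq:purpose}) twice, once on each side. Concretely, choose Ehresmann connections $\nabla_1$ for $\varphi_1:X\to M_1$ and $\nabla_2$ for $\varphi_2:X\to M_2$. Applying Proposition \ref{th:linfty_morphism}(1) on the $\BaseGroupoid_i$ side, the projection $\proj_i : \hSec{\bullet}{\BaseAlgebroid_i[X]}_{\pro} \stackrel{\diff'}{\mapsto} \polymult{\bullet}{\BaseGroupoid_i[X]}_{\pro} \to \hSec{\bullet}{\BaseAlgebroid_i}\stackrel{\diff}{\mapsto}\polymult{\bullet}{\BaseGroupoid_i}$ admits a homotopy inverse as a morphism of $\zz$-graded Lie $2$-algebras, whose linear part is the horizontal lift $\horlift$ determined by $\nabla_i$. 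Applying Proposition \ref{th:linfty_morphism}(2), the inclusion ${\mathfrak i}_i$ of the projectable subalgebra into $\hSec{\bullet}{\BaseAlgebroid_i[X]}\stackrel{\diff}{\mapsto}\polymult{\bullet}{\BaseGroupoid_i[X]}$ admits a homotopy inverse whose linear part is the retraction $\RRR_i$.

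Composing these four homotopy equivalences with the canonical isomorphism in the middle yields the desired chain
$$\hSec{\bullet}{\BaseAlgebroid_2}\mapsto \polymult{\bullet}{\BaseGroupoid_2} \;\simeq\; \hSec{\bullet}{\BaseAlgebroid_2[X]}_{\pro}\mapsto \polymult{\bullet}{\BaseGroupoid_2[X]}_{\pro} \;\simeq\; \hSec{\bullet}{\BaseAlgebroid_2[X]}\mapsto \polymult{\bullet}{\BaseGroupoid_2[X]}$$
$$\cong \; \hSec{\bullet}{\BaseAlgebroid_1[X]}\mapsto \polymult{\bullet}{\BaseGroupoid_1[X]} \;\simeq\; \hSec{\bullet}{\BaseAlgebroid_1[X]}_{\pro}\mapsto \polymult{\bullet}{\BaseGroupoid_1[X]}_{\pro} \;\simeq\; \hSec{\bullet}{\BaseAlgebroid_1}\mapsto \polymult{\bullet}{\BaseGroupoid_1},$$
and the overall composition is a homotopy equivalence of $\zz$-graded Lie $2$-algebras.

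The main technical obstacle is that composition of homotopy equivalences of $\zz$-graded Lie $2$-algebras must itself be a homotopy equivalence. This is not entirely automatic at the strict level (since the intermediate morphisms have nontrivial quadratic parts coming from $h_{\horlift}$, $h_X$ and $h_{\lambda_\nabla}$), but it follows by passing through the associated dgla $\Nu^\bullet$ of Appendix \ref{AppendixA1}, where the machinery of $L_\infty$-morphisms guarantees that quasi-isomorphisms compose and the homotopy inverses produced by Proposition \ref{th:linfty_morphism} are genuine two-sided homotopy inverses. A secondary subtlety worth checking is independence, up to homotopy, of the resulting equivalence from the choices of $\nabla_1$, $\nabla_2$ and the various auxiliary chain homotopies; this is handled by the uniqueness-up-to-homotopy clause of Proposition \ref{th:linfty_morphism}, and it is what allows the statement to depend only on the bitorsor $X$, not on the Ehresmann data.
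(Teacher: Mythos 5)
Your proposal is correct and follows essentially the same route as the paper: the canonical isomorphism $\BaseGroupoid_1[X]\cong\BaseGroupoid_2[X]$ over the bitorsor, combined with the homotopy inverses of $\proj$ and ${\mathfrak i}$ supplied by Proposition \ref{th:linfty_morphism}, composed through diagram \eqref{eq:purpose} on each side. The only cosmetic difference is that the paper justifies closure under composition and compatibility with homotopies directly at the level of {\strict} morphisms (Equation \eqref{eq:compositionStrictLie2} and Proposition \ref{lem:homotopyMakesSense1}) rather than by appealing to general $L_\infty$-machinery on the associated dglas, but both justifications are valid.
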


By construction,
 the assignment in Theorem \ref{cor:MoritaEquivalent1}
is functorial. More precisely,  let  $\Groupoid$
be the category  whose 
 objects are Lie groupoids, and arrows are Morita bitorsors
 (up to isomorphisms), and $\Lieh$ be the category whose
objects are {\strict}s, and  arrows are homotopy equivalence
classes of  morphisms of {\strict}s. In summary, we have the following

\begin{corollary}
\label{cor:MoritaEquivalent2}
The assignment in Theorem \ref{cor:MoritaEquivalent1}
is  a   functor from the category $\Groupoid$ to the category  $\Lieh$.
\end{corollary}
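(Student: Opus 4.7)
The plan is to verify the three functoriality axioms: well-definedness of the assignment on arrows, preservation of identities, and preservation of composition of Morita bitorsors. Since the object assignment is clear and Theorem \ref{cor:MoritaEquivalent1} already supplies the action on arrows, the task reduces to checking compatibility. The unifying tool throughout is the uniqueness clause of Proposition \ref{th:linfty_morphism}: any two morphisms of $\zz$-graded Lie $2$-algebras with the same linear part differ by a homotopy.

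For well-definedness, the homotopy equivalence induced by a bitorsor $M_1 \leftarrow X \to M_2$ is built from diagram (\ref{eq:purpose}) by composing the strict morphisms $\proj$ and $\mathfrak i$ with their homotopy inverses $\horlift$ and $\RRR$, both of which depend on chosen Ehresmann connections. A different choice of connections produces homotopy inverses with possibly different linear parts, but Proposition \ref{th:linfty_morphism} guarantees that any two such inverses are homotopic as morphisms of $\zz$-graded Lie $2$-algebras, and hence so are their composites. For the identity arrow on $\BaseGroupoid \toto M$, modeled by the trivial bitorsor $M \xleftarrow{\id} M \xrightarrow{\id} M$, the pullback groupoid $\BaseGroupoid[M]$ is canonically $\BaseGroupoid$, the maps $\proj$ and $\mathfrak i$ become identities, and the induced homotopy equivalence is literally the identity.

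The preservation of composition is the substantial step. Given bitorsors $M_1 \leftarrow X \to M_2$ and $M_2 \leftarrow Y \to M_3$, whose composite in $\Groupoid$ is $M_1 \leftarrow Z \to M_3$ with $Z = (X \times_{M_2} Y)/\BaseGroupoid_2$, I would introduce the common refinement $W = X \times_{M_2} Y$, which is simultaneously a surjective submersion over $X$, $Y$ and $Z$, and therefore over $M_1$, $M_2$ and $M_3$. Pulling back the groupoids $\BaseGroupoid_i$ to $W$ produces a commutative diamond of strict morphisms of the type $\proj$ and $\mathfrak i$ through which both zigzags --- the composite of the homotopy equivalences induced by $X$ and $Y$, and the homotopy equivalence induced by $Z$ --- can be routed.

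The main obstacle is precisely this composition step, since the two zigzags are assembled from independently chosen Ehresmann connections and traverse different intermediate pullback groupoids, so they cannot coincide on the nose. My strategy is to compare both composites to a third zigzag built from a single choice of connections on $W$, verify that the linear parts of the comparison morphisms agree because horizontal lifts and retractions are natural with respect to commutative squares of surjective submersions, and then invoke Proposition \ref{th:linfty_morphism} to upgrade this equality of linear parts to a homotopy of morphisms of $\zz$-graded Lie $2$-algebras. Since $\Lieh$ identifies homotopic morphisms by definition, the two composites become equal in $\Lieh$, and the assignment defines a bona fide functor.
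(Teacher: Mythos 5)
The paper offers no written proof here --- Corollary \ref{cor:MoritaEquivalent2} is asserted to hold ``by construction'' --- so your plan of checking well-definedness, identities and composition via the common refinement $W=X\times_{M_2}Y$ is a sensible expansion of what the authors leave implicit, and the overall architecture (route both zigzags of $\proj$'s and ${\mathfrak i}$'s through the pullbacks to $W$, then compare) is the right one. There is, however, a genuine flaw in the tool you lean on at every comparison step. You assert that ``any two morphisms of $\zz$-graded Lie $2$-algebras with the same linear part differ by a homotopy'' and attribute this to the uniqueness clause of Proposition \ref{th:linfty_morphism}. That is not what the uniqueness clause says, and the general principle is false: Theorem \ref{th:invertingStrictLie2Morphisms} pins down the quadratic Taylor coefficient only after the linear part \emph{and} both chain homotopies $h,h'$ have been fixed, and two morphisms sharing a linear part can have quadratic coefficients differing by a bilinear map $\wedge^2\rightgla\to\leftgla'$ that is not of the form $\Theta_h^{\Phi}$ for any admissible $h$ (a homotopy between morphisms with equal linear parts forces $\diff'\smalcirc h=0$ and $h\smalcirc\diff=0$, a nontrivial constraint). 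So ``the linear parts of the comparison morphisms agree'' does not by itself upgrade to a homotopy of morphisms of $\zz$-graded Lie $2$-algebras.

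The repair is standard and available in the paper: homotopy inverses of a fixed morphism are unique up to homotopy, because homotopy is an equivalence relation compatible with composition (Proposition \ref{lem:homotopyMakesSense1}); explicitly, if $\Psi$ and $\Psi'$ are both homotopy inverses of $\proj$, then $\Psi\simeq\Psi\smalcirc(\proj\smalcirc\Psi')=(\Psi\smalcirc\proj)\smalcirc\Psi'\simeq\Psi'$. Substituting this argument for your ``equal linear parts'' criterion fixes both the well-definedness step (independence of the chosen Ehresmann connections) and the composition step: the strict morphisms $\proj$ and ${\mathfrak i}$ in the diamond over $W$ commute on the nose, their homotopy inverses are then determined up to homotopy, and the two composites agree in $\Lieh$. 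One minor slip: the identity arrow on $\Gamma\toto M$ in $\Groupoid$ is the bitorsor $M\stackrel{t}{\leftarrow}\Gamma\stackrel{s}{\rightarrow}M$, not $M=M=M$ (the latter is a bitorsor only for trivial $\Gamma$); your identity check still goes through because the associated surjective submersion is $\id_M$, for which $\Gamma[M]=\Gamma$ and $\proj={\mathfrak i}=\id$.
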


Such a functor  is called \emph{the polyvector field functor}.
Corollary \ref{cor:MoritaEquivalent2} justifies the following 

\begin{definition}
\label{eq:poly}
Let $\XX$ be a differentiable stack. The space of polyvector fields on
$\XX$ is defined to be the homotopy equivalence class of {\strict}s 
$\hSec{\bullet}{\BaseAlgebroid}\stackrel{\diff}{\mapsto} \polymult{\bullet}{\BaseGroupoid}$, where $\BaseGroupoid \toto M$
is any Lie groupoid representing $\XX$.
\end{definition}

\begin{remark}
\label{rk:1}
We expect that the associated dg Lie algebra of polyvector fields
  in Definition \ref{eq:poly} corresponds  to
a  2-term truncation of the dg Lie algebra of
polyvector fields $\text{Pol}(X, 1)$ in \cite[Section 3.1]{CPTVV}
and $\widehat{\text{Pol}}(A, 1)$ in \cite[Section 3.3.1]{Pr1}.
See   \cite[Section 4.2]{Pr2}.
\end{remark}

\begin{remark}
Note that, to any Morita morphism, a canonical
bitorsor  is associated. In the sequel, we will use both  of them
interchangeably.  Assume that $\phi$ is a Morita morphism
of Lie groupoids from $\Gamma_1\toto M_1$ to $\Gamma_2\toto M_2$.
It is easy to check that

$$
\begin{array}{ccccc}
\Gamma_1&&M_1\times_{M_{2},t_2}\Gamma_2&&\Gamma_2\\
\downarrow\downarrow&\stackrel{\sigma_1 }{\swarrow}&& \stackrel{\sigma_2 }{\searrow} &\downarrow\downarrow\\
M_1&&&&M_{2}\\
\end{array}
$$
is a $\Gamma_1$-$\Gamma_2$-bitorsor.
 Here $\sigma_1 (m,\gamma)=m$, $\sigma_2 (m,\gamma)=s_2 (\gamma)$,
  $\forall \  (m,\gamma) \in  M_1\times_{M_{2},t_2}\Gamma_2$.
The left action of $ \Gamma_1\toto M_1$ on
 $M_1\times_{M_{2},t_2}\Gamma_2$ is given by
$$\gamma_1\cdot (m_1,\gamma_2)=(t_1(\gamma_1),\phi(\gamma_1)\gamma_2),$$
while the right action of  $\Gamma_2\toto M_2$
on $M_1\times_{M_{2},t_2}\Gamma_2$ is given by
$$(m_1,\gamma_2)\cdot \gamma_2'=(m_1,\gamma_2\gamma_2'),$$
whenever composable.
\end{remark}

It follows from Theorem \ref{cor:MoritaEquivalent1} that,
for Morita equivalent Lie groupoids  $\gpoidmmi$ and $\gpoidmmm$,
the corresponding dglas 
${\Nu}^\bullet ({\BaseGroupoid_1})$ and ${\Nu}^\bullet ({\BaseGroupoid_2})$
are quasi-isomorphic as $L_\infty$-algebras.
At the level of cohomology,
 this induces an  isomorphism of $\mathbb Z$-graded Lie algebras.
 The following result  extends Corollary 7.2 in \cite{OrtizWaldron}
 to polyvector fields:

\begin{corollary}
Under the same hypothesis as in Theorem \ref{cor:MoritaEquivalent1},
 there is   an  isomorphism  of $\zz$-graded Lie algebras
$H^\bullet ({\Nu}({\BaseGroupoid_1}))\simeq  H^\bullet ({\Nu}({\BaseGroupoid_2}))$. 
\end{corollary}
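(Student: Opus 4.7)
The plan is to deduce the corollary from Theorem \ref{cor:MoritaEquivalent1} by passing from the homotopy equivalence of $\zz$-graded Lie 2-algebras to an isomorphism on the cohomology of the associated dglas. First I would note the standard fact, already mentioned in the discussion preceding the corollary, that the homotopy equivalence of $\zz$-graded Lie 2-algebras
$$\hSec{\bullet}{\BaseAlgebroid_1}\stackrel{\diff}{\mapsto}\polymult{\bullet}{\BaseGroupoid_1}\quad\text{and}\quad\hSec{\bullet}{\BaseAlgebroid_2}\stackrel{\diff'}{\mapsto}\polymult{\bullet}{\BaseGroupoid_2}$$
produced by the $\Gamma_1$-$\Gamma_2$-bitorsor $M_1\leftarrow X\rightarrow M_2$ translates, via the crossed-module-to-dgla construction recalled in the Appendix, into an $L_\infty$-morphism between the dglas $\Nu^\bullet(\BaseGroupoid_1)$ and $\Nu^\bullet(\BaseGroupoid_2)$ whose linear part is a quasi-isomorphism of chain complexes.

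Next I would invoke the standard homological algebra principle for $L_\infty$-morphisms: the linear part of an $L_\infty$-morphism is a chain map, and the induced map on cohomology is automatically compatible with the graded Lie bracket induced by the binary bracket of the dgla. This compatibility is a direct consequence of the $L_\infty$-relation
$$\Phi_1\bigl([x,y]\bigr)-[\Phi_1(x),\Phi_1(y)]= d\Phi_2(x,y)\pm\Phi_2(dx,y)\pm\Phi_2(x,dy),$$
since the right-hand side vanishes upon passing to cohomology classes. Hence the linear part $\Phi_1$ descends to a morphism of graded Lie algebras $H^\bullet(\Nu(\BaseGroupoid_1))\to H^\bullet(\Nu(\BaseGroupoid_2))$, which is an isomorphism because the linear part is already a quasi-isomorphism of the underlying complexes.

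For canonicity, I would use the uniqueness up to homotopy asserted in Proposition \ref{th:linfty_morphism} and Corollary \ref{cor:MoritaEquivalent2}: two homotopic $L_\infty$-morphisms induce the same map on cohomology, so the isomorphism depends only on the Morita bitorsor (up to isomorphism), not on the auxiliary choices such as the Ehresmann connection $\hordis$, the partition of unity, or the local sections $\sigma_i$. Functoriality with respect to composition of Morita bitorsors follows from the corresponding statement in Corollary \ref{cor:MoritaEquivalent2}. The only place where one needs to be slightly careful is the verification that the quadratic term $\Phi_2$ of the inverse constructed in Proposition \ref{th:linfty_morphism} does indeed fit into the $L_\infty$-equation above; but this is precisely the content of Theorem \ref{th:invertingStrictLie2Morphisms} invoked in the proof of that proposition, so no new argument is required.

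The main obstacle, if any, is bookkeeping rather than conceptual: one has to keep track of the fact that the homotopy equivalence produced by Theorem \ref{cor:MoritaEquivalent1} is a genuine $L_\infty$-morphism (with a non-trivial quadratic component, since $\horlift$ and $\RRR$ are not strict), and to confirm that Morita composition of bitorsors corresponds to composition of $L_\infty$-morphisms up to homotopy. Both facts are already packaged in Proposition \ref{th:linfty_morphism} and Corollary \ref{cor:MoritaEquivalent2}, so the corollary really is an immediate consequence.
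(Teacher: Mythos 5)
Your proposal is correct and follows essentially the same route as the paper: the paper likewise deduces the corollary from the $L_\infty$-isomorphism of dglas, canonical up to homotopy, that Theorem \ref{cor:MoritaEquivalent1} provides, and then passes to cohomology using the standard facts that the linear part of an $L_\infty$-morphism induces a graded Lie algebra morphism on cohomology and that homotopic $L_\infty$-morphisms induce the same map. Your write-up merely makes explicit the $L_\infty$-relation and the canonicity bookkeeping that the paper leaves implicit.
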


\section{$(+1)$-shifted Poisson structures on differentiable stacks}

\subsection{Quasi-Poisson groupoids}

First, we recall the definition of quasi-Poisson groupoids \cite{IPLGX}.
We follow the notations  of  Lemma \ref{lem:rightAndleft}.

\begin{definition} [\cite{IPLGX}]
\label{def:quasiPoisson}
Let $\BaseGroupoid\toto M$ be a Lie groupoid.
\begin{enumerate}
\item[(i)] A quasi-Poisson structure on $\BaseGroupoid\toto M$ is a pair
 $(\Pi,\Lambda)$, with $\Pi \in \polymult{1}{\BaseGroupoid}$
a multiplicative bivector field on 
$\BaseGroupoid$ and $\Lambda \in  \hSec{2}{\BaseAlgebroid}$
satisfying
\begin{equation}
\label{q_poisson_compatibility}
\frac{1}{2} [\Pi,\Pi] = {\rm d}\Lambda\;,\;\;\; \delta_\Pi (\Lambda) = 0\;. 
\end{equation}
\item[(ii)] Quasi-Poisson structures $(\Pi_1,\Lambda_1)$ and $(\Pi_2,\Lambda_2)$ on
 $\gpoidmm$
 are said to be \emph{twist equivalent} if there exists a section 
${\twist} \in \hSec{1}{\BaseAlgebroid}$, called the \emph{twist}, such that 
 \begin{equation}
  \label{eq:quasiEquivQuasiPoisson}
  \Pi_2 = \Pi_1+ {\rm d} {\twist}, \ \ \ \Lambda_2 =\Lambda_1 -
 \delta_{\Pi_1} ({\twist}) - \frac{1}{2} [{\twist},{\twist}]_{}. 
 \end{equation} 
\end{enumerate}
\end{definition}

In the sequel, we  will
denote the  quasi-Poisson structure
 $(\Pi + {\rm d} {\twist},\Lambda - \delta_{\Pi} ({\twist}) -
 \frac{1}{2} [{\twist},{\twist}]_{ })$ by   $(\Pi_\twist,\Lambda_\twist)$.
Quasi-Poisson structures and twist equivalences can
 be described  completely in term of {\strict}s. See Appendix 
\ref{set:MC}.

\begin{proposition}
\label{prop:MC=quasiPoisson}
Let  $\gpoidmm$ be a Lie groupoid.
\begin{enumerate}
\item[(i)]  There is a one-one correspondence between
 quasi-Poisson structures on $\gpoidmm$  
 and  Maurer-Cartan elements of the {\strict}
$\hSec{\bullet}{\BaseAlgebroid}\stackrel{\diff}{\mapsto} \polymult{\bullet}{\BaseGroupoid}$.
\item[(ii)]  Quasi-Poisson structures on the same Lie  groupoid
 $\gpoidmm$
 are twist equivalent if and only if they  correspond to
gauge  equivalent Maurer-Cartan elements 
of  the {\strict} $\hSec{\bullet}{\BaseAlgebroid}\stackrel{\diff}{\mapsto} \polymult{\bullet}{\BaseGroupoid}$ with  the gauge element being
in $\hSec{1}{\BaseAlgebroid}$.
\end{enumerate}
\end{proposition}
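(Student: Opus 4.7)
The plan is to unfold the Maurer--Cartan equation of the dgla $\Nu^\bullet(\BaseGroupoid)$ associated with the crossed module of Proposition \ref{prop:strict} (see the Appendix for details) component by component, and observe that each piece of Definition \ref{def:quasiPoisson} arises from exactly one slot. Recall that $\Nu^k(\BaseGroupoid) = \hSec{k+1}{\BaseAlgebroid} \oplus \polymult{k}{\BaseGroupoid}$, that the differential is $D(a,P) = (0, \diff(a))$, and that the graded bracket is built from the Schouten brackets on each factor together with the derivation $\delta$ of Lemma \ref{lem:rightAndleft}. In particular, a degree-$1$ element is precisely a pair $(\Lambda,\Pi)$ with $\Lambda \in \hSec{2}{\BaseAlgebroid}$ and $\Pi \in \polymult{1}{\BaseGroupoid}$.

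For claim $(1)$, I would write $D(\Lambda,\Pi) + \tfrac{1}{2}[(\Lambda,\Pi),(\Lambda,\Pi)] = 0$ slot by slot. The $\polymult{\bullet}{\BaseGroupoid}$-component reads $\diff(\Lambda) + \tfrac{1}{2}[\Pi,\Pi] = 0$; using $\diff(\Lambda) = \gauge{\Lambda} = \overrightarrow{\Lambda} - \overleftarrow{\Lambda}$, this is exactly $\tfrac{1}{2}[\Pi,\Pi] = \overleftarrow{\Lambda} - \overrightarrow{\Lambda}$. The $\hSec{\bullet}{\BaseAlgebroid}$-component is fed only by the cross-term $\delta_\Pi(\Lambda)$ from Lemma \ref{lem:rightAndleft}(iii) (no $[\Lambda,\Lambda]$ term enters the crossed-module bracket on $\Nu$ at this bidegree, since a Schouten bracket of two sections of $\wedge^3 \BaseAlgebroid$ lands in $\hSec{4}{\BaseAlgebroid}$, outside $\Nu^2$), so this slot reads $\delta_\Pi(\Lambda) = 0$. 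Together the two equations are exactly \eqref{q_poisson_compatibility}, and the correspondence is tautologically bijective.

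For claim $(2)$, given a twist $\twist \in \hSec{1}{\BaseAlgebroid} = \Gamma(\wedge^2 \BaseAlgebroid)$, I would take the degree-$0$ gauge parameter $\xi = (\twist, 0) \in \Nu^0(\BaseGroupoid)$ and integrate the gauge flow $\dot{\alpha}_s = D\xi + [\xi, \alpha_s]$ from $\alpha_0 = (\Lambda,\Pi)$ to $\alpha_1$. In the $\polymult{\bullet}{\BaseGroupoid}$-slot the right-hand side is the $s$-independent quantity $\diff(\twist)$, which integrates to $\Pi_1 = \Pi + \diff(\twist)$. In the $\hSec{\bullet}{\BaseAlgebroid}$-slot the right-hand side is $-\delta_{\Pi_s}(\twist) = -\delta_\Pi(\twist) - s\,\delta_{\diff(\twist)}(\twist)$; by Lemma \ref{lem:rightAndleft}(iii)(2) this equals $-\delta_\Pi(\twist) - s[\twist,\twist]$, and integration over $[0,1]$ produces $\Lambda_1 = \Lambda - \delta_\Pi(\twist) - \tfrac{1}{2}[\twist,\twist]$, matching \eqref{eq:quasiEquivQuasiPoisson} with $\Pi_1 \to \Pi_2$ and $\Lambda_1 \to \Lambda_2$.

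The main obstacle is purely one of bookkeeping: one must pin down the sign conventions of the graded bracket on $\Nu^\bullet(\BaseGroupoid)$ — which are forced by the crossed-module identities $(1)$ and $(2)$ of Lemma \ref{lem:rightAndleft}(iii) — and verify that the gauge flow, evaluated on a degree-$0$ element of the pure form $(\twist, 0)$, truncates at quadratic order in $\twist$. The substantive identity at both steps is the Peiffer-type relation $\delta_{\diff(\twist)}(\twist) = [\twist,\twist]$, which is precisely what produces the $-\tfrac{1}{2}[\twist,\twist]$ correction appearing in Definition \ref{def:quasiPoisson}$(2)$.
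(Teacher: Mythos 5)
Your proposal is correct and follows essentially the same route as the paper: part (1) is the componentwise unfolding of the Maurer--Cartan equation recorded in Lemma \ref{lem:maurerCartanStrict}, and part (2) is the identification of the twist with the gauge action of $\twist\oplus 0$, which the paper carries out in Proposition \ref{prop:gauge=twist} by plugging into the closed-form series \eqref{eq:gauge} and observing that $\ad_{\twist}^i$ kills everything for $i\geq 2$, whereas you integrate the equivalent gauge-flow ODE --- the same computation, hinging on the same Peiffer identity $\delta_{\diff \twist}(\twist)=[\twist,\twist]$ from Lemma \ref{lem:rightAndleft}. The only caveat is the sign convention for the gauge parameter (the paper's appendix actually shows $\exp(-T)\cdot(\Lambda\oplus\Pi)=(\Lambda\oplus\Pi)_T$), which you correctly flag as bookkeeping.
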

\begin{proof}
The first assertion is quite obvious.  For (ii), see Proposition
 \ref{prop:gauge=twist}.
\end{proof}

\begin{remark}
\label{rk:2}
Proposition \ref{prop:MC=quasiPoisson} essentially
states that 
quasi-Poisson structures on a  Lie  groupoid $\Gamma \toto M$ 
moduli twists is in bijection to the   Maurer-Cartan moduli set of
 the dgla associated to the Lie 2-algebra $\hSec{\bullet}{\BaseAlgebroid}\stackrel{\diff}{\mapsto} \polymult{\bullet}{\BaseGroupoid}$ with 
 the gauge element being in $\hSec{1}{\BaseAlgebroid}$. 
 In spirit, this is   parallel  to
\cite[Definition 3.1.1]{CPTVV},
\cite[Definition 1.5]{Pr1} and \cite[Definition 2.5]{Pr2}.
\end{remark}

As a consequence, for a given Lie groupoid $\Gamma \toto M$,
the  Maurer-Cartan  moduli set (see Definition \ref{MC_moduli_set})
$\underline{MC( \hSec{\bullet}{\BaseAlgebroid}\stackrel{\diff}{\mapsto} \polymult{\bullet}{\BaseGroupoid} ) }$
 of  the {\strict}
$\hSec{\bullet}{\BaseAlgebroid}\stackrel{\diff}{\mapsto} \polymult{\bullet}{\BaseGroupoid}$
coincides with the set of twist equivalence classes  of quasi-Poisson structures on
$\gpoidmm$.
 The composition of the polyvector field
 functor $\Groupoid\to \Lieh$ (Corollary
 \ref{cor:MoritaEquivalent2}) with the Maurer-Cartan functor (see the end of Appendix A)
 is a functor from the category $\Groupoid$ to the category \emph{Sets}, called 
 the \emph{Poisson functor} and denoted $\Poiss$.

According to   Proposition \ref{prop:MC=quasiPoisson},
 the Poisson functor associates to a Lie groupoid $\Gamma\toto M$
its moduli set of quasi-Poisson structures up to twists
 $\Poiss (\BaseGroupoid) := \underline{MC( \hSec{\bullet}{\BaseAlgebroid}
\stackrel{\diff}{\mapsto} \polymult{\bullet}{\BaseGroupoid})}$,
and to a Morita equivalence of Lie groupoids
 the induced bijection between the corresponding moduli sets.
 We denote by $\underline{\Lambda\oplus\Pi}$ the class in $\Poiss(\BaseGroupoid)$ of a quasi-Poisson
groupoid $( \BaseGroupoid \toto M, \Pi, \Lambda )$.

\begin{lemma}
\label{lem:QuasiPoissonUpToTwistsFunctor}
Let  $(\gpoidmmi, \Pi_1, \Lambda_1)$ and $(\gpoidmmm, \Pi_2, \Lambda_2)$
 be quasi-Poisson groupoids.
 Let $\phi$ be a Morita morphism from $\gpoidmmi$ to $\gpoidmmm$.
The following  statements are equivalent:
\begin{enumerate}
 \item[(i)]  Under the Poisson functor
$\Poiss(\phi): \Poiss (\BaseGroupoid_1) \simeq   \Poiss (\BaseGroupoid_2)$,
 the class  $\underline{(\Lambda_1\oplus\Pi_1)} \in 
\Poiss (\BaseGroupoid_1)$ corresponds to
 $\underline{(\Lambda_2\oplus\Pi_2)} \in \Poiss (\BaseGroupoid_2)$;
 \item[(ii)] The following relation holds
 $$\underline{MC}(\phi )^{-1}  \left( \underline{(\Lambda_2\oplus\Pi_2)} \right) =  \underline{MC}({\mathfrak i})^{-1}
 \left( \underline{(\Lambda_1\oplus\Pi_1)} \right) . $$
\end{enumerate}
\end{lemma}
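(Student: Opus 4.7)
The plan is to unwind the construction of the Poisson functor $\Poiss(\phi)$ in terms of the pullback morphisms $\proj$ and ${\mathfrak i}$, after which the equivalence of the two statements reduces to a tautology. Since $\phi:\Gamma_2 \to \Gamma_1$ is a Morita morphism, its base map $\varphi:M_2\to M_1$ is a surjective submersion and $\Gamma_2$ is canonically isomorphic to the pullback groupoid $\Gamma_1[M_2]$. Proposition \ref{prop:strict2} therefore applies and yields strict morphisms of $\zz$-graded Lie 2-algebras
\[
\bigl(\hSec{\bullet}{\BaseAlgebroid_1}\stackrel{\diff}{\mapsto}\polymult{\bullet}{\BaseGroupoid_1}\bigr)\xleftarrow{\proj}\bigl(\hSec{\bullet}{\BaseAlgebroid_2}_{\pro}\stackrel{\diff'}{\mapsto}\polymult{\bullet}{\BaseGroupoid_2}_{\pro}\bigr)\xrightarrow{{\mathfrak i}}\bigl(\hSec{\bullet}{\BaseAlgebroid_2}\stackrel{\diff'}{\mapsto}\polymult{\bullet}{\BaseGroupoid_2}\bigr).
\]

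After choosing an Ehresmann connection $\hordis$ for $\varphi$, Proposition \ref{cor:superGroupoidsConclusion} shows that both $\proj$ and ${\mathfrak i}$ are homotopy equivalences of the underlying graded 2-term complexes, with $\horlift$ and $\RRR$ as respective homotopy inverses; Proposition \ref{th:linfty_morphism} then upgrades these to homotopy equivalences of {\strict}s, unique up to homotopy. Writing $\mathcal{M}$ for the Maurer-Cartan moduli set of the middle (projectable) {\strict}, the Maurer-Cartan functor turns these into bijections
\[
\Poiss(\Gamma_1)\;\xleftarrow{\;\underline{MC}(\proj)\;}\;\mathcal{M}\;\xrightarrow{\;\underline{MC}({\mathfrak i})\;}\;\Poiss(\Gamma_2).
\]
By construction of the Poisson functor as the composition of Corollary \ref{cor:MoritaEquivalent2} with the Maurer-Cartan functor of Definition \ref{def:MCfunctor}, the induced bijection $\Poiss(\phi):\Poiss(\Gamma_2)\to\Poiss(\Gamma_1)$ then coincides with
\[
\Poiss(\phi)\;=\;\underline{MC}(\proj)\circ\underline{MC}({\mathfrak i})^{-1}.
\]

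Granted this identification, statement (1) reads $\underline{MC}(\proj)\bigl(\underline{MC}({\mathfrak i})^{-1}([\Lambda_2\oplus\Pi_2])\bigr)=[\Lambda_1\oplus\Pi_1]$, and since $\underline{MC}(\proj)$ is a bijection, applying $\underline{MC}(\proj)^{-1}$ to both sides yields exactly statement (2); conversely, applying $\underline{MC}(\proj)$ to both sides of (2) recovers (1).

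The only non-formal step, and where I anticipate the main obstacle, is the identification $\Poiss(\phi)=\underline{MC}(\proj)\circ\underline{MC}({\mathfrak i})^{-1}$. The polyvector field functor of Theorem \ref{cor:MoritaEquivalent1} is defined via bitorsors rather than through the pullback zigzag, so one must verify that the homotopy equivalence of $\zz$-graded Lie 2-algebras attached to the canonical bitorsor of the Morita morphism $\phi$ (as described in the remark following Corollary \ref{cor:MoritaEquivalent2}) agrees, up to homotopy, with the zigzag obtained from $\proj$, ${\mathfrak i}$, $\horlift$ and $\RRR$. The uniqueness clause of Proposition \ref{th:linfty_morphism} then forces both constructions to induce the same bijection at the level of Maurer-Cartan moduli sets, completing the argument.
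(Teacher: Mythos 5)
Your proposal is correct and follows essentially the same route as the paper: the paper's proof likewise represents $\Poiss(\phi)$ by the composition $\proj\circ{\mathfrak i}^{-1}$ (using the identification of $\Gamma_2$ with the pullback groupoid and the zigzag of Equation (\ref{eq:purpose0})) and then invokes the properties of the Maurer--Cartan functor to make the equivalence of (1) and (2) formal. The compatibility issue you flag at the end is exactly what the paper settles in the discussion following Proposition \ref{th:linfty_morphism}, where the homotopy equivalence attached to a Morita morphism is described as inverting $\proj$ and composing with ${\mathfrak i}$, so your argument is complete.
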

\begin{proof}
In order to compare with Proposition \ref{th:linfty_morphism},
here we denote $\BaseGroupoid_2\toto M_2$ by $\Gamma\toto M$,
and   $M_1$ by $X$. Then  we  identify  $\BaseGroupoid_1\toto M_1$
with $\BaseGroupoid[X]\toto X$, and 
  $\phi: \BaseGroupoid_1\to \BaseGroupoid_2$  with the
    projection map $\proj :\BaseGroupoid[X]\to \BaseGroupoid$.

The polyvector field functor  assigns to
 the Morita morphism $\phi: \Gamma_2\to \Gamma_1$
 an  homotopy  equivalent class of $\zz$-graded
 Lie 2-algebra morphisms  from polyvector fields on $ \BaseGroupoid_2\toto M_2$ 
to polyvector fields on $ \BaseGroupoid_1\toto M_1$.
The latter  can be represented   by the
 composition $ {\proj} \circ {\mathfrak i}^{-1} $,
 where $\proj$ and $ {\mathfrak i}$ are as in Diagram (\ref{eq:purpose0}) and ${\mathfrak i}^{-1}$ is an homotopy inverse as in Proposition \ref{th:linfty_morphism} ($2$). Then our result follows immediately by
  functoriality of the Maurer-Cartan functor.
\end{proof}

It is  standard  that  given a  {\dgla}
$(\frakg, {\rm d}, [\cdot,\cdot])$ and  a Maurer-Cartan
element $\lambda \in \frakg^1$, the triple
$(\frakg, {\rm d}+[ \lambda, \cdot], [\cdot,\cdot])$ is again a {\dgla},
called the tangent {\dgla} \cite{Kontsevich}.
In our case, for a given quasi-Poisson structure $(\Pi,\Lambda)$ on 
$\gpoidmm$, since $(\Pi,\Lambda)$ is a Maurer-Cartan
element in $\Nu^\bullet (\BaseGroupoid)$,
 the resulting twisted  differential is given as follows:
 $$
 \begin{array}{rrcl}
 {\rm d}_{\Pi,\Lambda}: &    \Nu^{k}(\BaseGroupoid) & \mapsto &   \Nu^{k+1}(\BaseGroupoid) \\
 & a\oplus P  & \to & (- \delta_\Pi (a) - \delta_P (\Lambda) )\oplus( [\Pi,P] +  {\rm d} a )   , 
 \end{array}
 $$
where  $P \in  \polymult{k}{\BaseGroupoid}$ and
 $a \in \hSec{k+1}{\BaseAlgebroid}$.
As in  classical Poisson geometry, we introduce the following

\begin{definition}
Let  $(\gpoidmm, \Pi,\Lambda)$ be a quasi-Poisson groupoid.
 The complex $\big(\Nu^{\bullet}(\BaseGroupoid), {\rm d}_{\Pi,\Lambda}\big)$
is called the \emph{Lichnerowicz-Poisson (LP) cochain complex} of
 the quasi-Poisson structure $(\Pi,\Lambda)$, and  its cohomology is
called   the \emph{Lichnerowicz-Poisson cohomology} of $(\Pi,\Lambda)$,
denoted by $H^\bullet_{LP} (\gpoidmm, (\Pi, \Lambda))$
\end{definition}

Since twist equivalent quasi-Poisson structures are gauge equivalent
 according to  Proposition \ref{prop:MC=quasiPoisson},  the following proposition is immediate.

\begin{proposition}
\label{prop:chainiso}
If   quasi-Poisson structures on a Lie groupoid
$\Gamma \toto M$ are twist equivalent, their corresponding
 Lichnerowicz-Poisson cohomologies are  isomorphic.
\end{proposition}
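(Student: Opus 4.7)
The plan is to derive this as a direct corollary of the general principle in dgla theory: gauge-equivalent Maurer--Cartan elements give rise to isomorphic twisted complexes, the isomorphism being induced by the gauge transformation itself.

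First, by Proposition \ref{prop:MC=quasiPoisson}(2), the hypothesis means that $(\Pi_1,\Lambda_1)$ and $(\Pi_2,\Lambda_2)$ are gauge-equivalent Maurer--Cartan elements of the dgla $\Nu^\bullet(\BaseGroupoid)$ associated to the $\zz$-graded Lie $2$-algebra $\hSec{\bullet}{\BaseAlgebroid}\stackrel{\diff}{\mapsto}\polymult{\bullet}{\BaseGroupoid}$, with gauge parameter $T\in\hSec{1}{\BaseAlgebroid}\subset \Nu^0(\BaseGroupoid)$; the gauge action is precisely the formal exponential $\exp(T)\cdot$ that already appears in the reformulation $\Lambda_2\oplus\Pi_2 = e^T\cdot(\Lambda_1\oplus\Pi_1)$ following Proposition \ref{prop:MC=quasiPoisson}.

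Next I would invoke the standard fact that for any dgla $(\frakg, d, [\cdot,\cdot])$, two Maurer--Cartan elements $\lambda_1,\lambda_2\in\frakg^1$ linked by a gauge parameter $\xi\in\frakg^0$, i.e.\ $\lambda_2 = e^{\ad_\xi}\lambda_1 + \frac{1-e^{\ad_\xi}}{\ad_\xi}(d\xi)$, satisfy
\[
e^{\ad_\xi}\circ (d + [\lambda_1,-]) = (d + [\lambda_2,-])\circ e^{\ad_\xi} ,
\]
so that $e^{\ad_\xi}:\frakg^\bullet\to\frakg^\bullet$ is an isomorphism of cochain complexes from $(\frakg,d_{\lambda_1})$ to $(\frakg,d_{\lambda_2})$. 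Applying this to $\frakg = \Nu^\bullet(\BaseGroupoid)$ and $\xi = T$ produces a canonical chain isomorphism
\[
e^{\ad_T}:\bigl(\Nu^\bullet(\BaseGroupoid),\,d_{\Pi_1,\Lambda_1}\bigr)\longrightarrow\bigl(\Nu^\bullet(\BaseGroupoid),\,d_{\Pi_2,\Lambda_2}\bigr),
\]
and hence the desired isomorphism $H^\bullet_{LP}(\Gamma,(\Pi_1,\Lambda_1))\cong H^\bullet_{LP}(\Gamma,(\Pi_2,\Lambda_2))$ on Lichnerowicz--Poisson cohomologies.

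The only point that requires a bit of care is convergence of the series defining $e^{\ad_T}$. Since $T$ lives in degree $0$ and $\Nu^\bullet(\BaseGroupoid)$ is bounded below (starting in degree $-2$), for any homogeneous element $\eta\in\Nu^k(\BaseGroupoid)$ only finitely many terms of $\sum_{n\geq 0}\frac{1}{n!}\ad_T^n(\eta)$ are nonzero after pairing off the $\hSec{\bullet}{\BaseAlgebroid}$ and $\polymult{\bullet}{\BaseGroupoid}$ components via the bracket of the $\zz$-graded Lie $2$-algebra, so the expression is an honest automorphism of graded vector spaces. The intertwining identity then reduces to unwinding the definitions of $d_{\Pi_i,\Lambda_i}$ (the action $\delta$ together with $\diff$ and the Schouten bracket on multiplicative polyvectors) and applying the Jacobi identity, exactly as in the classical proof. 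Canonicity of the isomorphism on cohomology follows from the fact that varying $T$ within its gauge class changes $e^{\ad_T}$ by a chain homotopy, so the induced map on $H^\bullet_{LP}$ depends only on the twist equivalence, not on the specific $T$ witnessing it. The main subtlety, if any, is not conceptual but notational: keeping straight the mixed-degree conventions of $\Nu^\bullet$, which is why I would carry everything through the abstract dgla statement rather than compute $d_{\Pi_2,\Lambda_2}-d_{\Pi_1,\Lambda_1}$ directly.
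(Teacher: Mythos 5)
Your proposal is correct and follows essentially the same route as the paper, which derives the statement as an immediate consequence of the general dgla fact that gauge-equivalent Maurer--Cartan elements have isomorphic twisted (tangent) cohomologies, the gauge equivalence being supplied by Proposition \ref{prop:MC=quasiPoisson}(2). The only imprecision is your justification of convergence: a degree-$0$ operator on a bounded-below complex can perfectly well have infinitely many nonzero powers on a fixed element, so boundedness below is not the reason the series for $e^{\ad_T}$ terminates; the actual reason is that $\hSec{1}{\BaseAlgebroid}\oplus 0$ sits inside $\Nu^0(\BaseGroupoid)$ as an abelian subalgebra whose adjoint action sends everything into $\hSec{\bullet}{\BaseAlgebroid}\oplus 0$, so $\ad_T^2=0$ --- exactly the nilpotency observation the paper makes just before Proposition \ref{prop:gauge=twist}.
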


\subsection{Morita equivalence and $(+1)$-shifted Poisson differentiable  
stacks}

\begin{definition}
\label{def:MoritaMorphQuasiPoisson}
Let  $(\gpoidmmi, \Pi_1, \Lambda_1)$ and $(\gpoidmmm, \Pi_2,\Lambda_2)$ 
 be quasi-Poisson groupoids.
 By a Morita morphism
 of quasi-Poisson groupoids from
  $(\gpoidmmi, \Pi_1, \Lambda_1)$ to $(\gpoidmmm, \Pi_2, \Lambda_2)$, 
we mean a  Morita morphism of Lie groupoids
\begin{equation}
\label{eq:Moritamorphism}
\begin{tikzcd}
{\Gamma_1} \arrow[xshift=-3pt]{d} \arrow[xshift=3pt]{d} \arrow{r}{\phi} &
{\Gamma_2} \arrow[xshift=-3pt]{d} \arrow[xshift=3pt]{d} \\
{M_1} \arrow{r}{\varphi} & {M_2}
\end{tikzcd}
\end{equation}
such that 
\begin{enumerate}
 \item[(i)] there exists a twist $T \in \Sigma^1 (A_1)$
 such that  $e^{T}  \cdot (\Lambda_1\oplus\Pi_1) $ is a projectable
 quasi-Poisson structure on $\BaseGroupoid_1 \toto M_1$;
 \item[(ii)]  $ \phi_* (e^{T}\cdot (\Lambda_1\oplus\Pi_1)) = 
\Lambda_2\oplus\Pi_2$, i.e.
$ (\phi_*)(\Pi_{1})_T =\Pi_2$ and $(\phi_*) (\Lambda_{1})_T=\Lambda_2$.
\end{enumerate}
\end{definition}

\begin{lemma}
\label{lem:MoritaMorphQuasiPoisson}
Let  $(\gpoidmmi, \Pi_1, \Lambda_1)$ and $(\gpoidmmm, \Pi_2, \Lambda_2)$ be
  quasi-Poisson groupoids, and
$$
\begin{tikzcd}
{\Gamma_1} \arrow[xshift=-3pt]{d} \arrow[xshift=3pt]{d} \arrow{r}{\phi} &
{\Gamma_2} \arrow[xshift=-3pt]{d} \arrow[xshift=3pt]{d} \\
{M_1} \arrow{r}{\varphi} & {M_2}
\end{tikzcd}
$$ 
 a Morita morphism of Lie groupoids. Then the following
statements are equivalent.
\begin{enumerate}
\item[(i)] $\phi$ is a Morita morphism of quasi-Poisson groupoids;
 \item[(ii)] There exists a twist $T_1\in \Sigma^1 (A_1)$
 such that  $e^{T_1}  \cdot (\Lambda_1\oplus\Pi_1)$ is  projectable,
 and $\phi_*(e^{T_1}(\Lambda_1\oplus\Pi_1)) =
 e^{T_2} \cdot (\Lambda_2\oplus\Pi_2 )$ for some $T_2\in \Sigma^1 (A_2)$.
 \item[(iii)] The  relation
 $ \Poiss (\phi) (\underline{ \Lambda_1\oplus\Pi_1})=\underline{\Lambda_2\oplus\Pi_2} $ holds;
\item[(iv)] The relation $\underline{MC}(\phi ) \smalcirc
 \underline{MC}({\mathfrak i})^{-1} \left(\underline{ \Lambda_1\oplus\Pi_1}\right)= \underline{(\Lambda_2\oplus\Pi_2)} )$ holds.
\end{enumerate}
\end{lemma}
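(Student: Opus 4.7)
The four statements split naturally as $(i) \Leftrightarrow (ii)$, $(ii) \Leftrightarrow (iii)$, and $(iii) \Leftrightarrow (iv)$, and the plan is to handle each pair in turn. The equivalence $(iii) \Leftrightarrow (iv)$ is a direct invocation of Lemma \ref{lem:QuasiPoissonUpToTwistsFunctor}, whose proof already realises $\Poiss(\phi)$ as the composition $\underline{MC}(\proj) \circ \underline{MC}(\mathfrak{i})^{-1}$ via diagram \eqref{eq:purpose0}. For $(ii) \Leftrightarrow (iii)$ the plan is to unpack the Poisson functor: by Proposition \ref{prop:MC=quasiPoisson} the moduli set $\Poiss(\Gamma)$ is precisely the set of twist equivalence classes of quasi-Poisson structures on $\Gamma \toto M$, so the equality $\Poiss(\phi)(\underline{\Lambda_1 \oplus \Pi_1}) = \underline{\Lambda_2 \oplus \Pi_2}$ is tautologically the existence of twists $T_1 \in \Sigma^1(A_1)$ and $T_2 \in \Sigma^1(A_2)$ with $e^{T_1} \cdot (\Lambda_1 \oplus \Pi_1)$ projectable and $\phi_*(e^{T_1} \cdot (\Lambda_1 \oplus \Pi_1)) = e^{T_2} \cdot (\Lambda_2 \oplus \Pi_2)$.

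The only direction with genuine technical content is $(ii) \Leftrightarrow (i)$. The implication $(i) \Rightarrow (ii)$ is immediate on taking $T_1 = T$ from Definition \ref{def:MoritaMorphQuasiPoisson} and $T_2 = 0$. For the substantive direction $(ii) \Rightarrow (i)$, the plan is to fix an Ehresmann connection $\nabla$ for $\varphi : M_1 \to M_2$ and horizontally lift $T_2$ via \eqref{eq:horlift01} to a projectable section $\horlift(T_2) \in \hSec{1}{A_1}_{\pro}$ with $\proj(\horlift(T_2)) = T_2$ (Proposition \ref{cor:superGroupoidsConclusion}). Since $\proj$ is a strict morphism of $\zz$-graded Lie 2-algebras by Proposition \ref{prop:strict2}, it intertwines the gauge actions on Maurer-Cartan elements, and therefore
\[
\phi_*\bigl(e^{-\horlift(T_2)} \cdot e^{T_1} \cdot (\Lambda_1 \oplus \Pi_1)\bigr) = e^{-T_2} \cdot e^{T_2} \cdot (\Lambda_2 \oplus \Pi_2) = \Lambda_2 \oplus \Pi_2.
\]
Composing the two gauge transformations via the BCH formula on the controlling dgla yields a single twist $T \in \Sigma^1(A_1)$, lying again in the projectable subcomplex because both ingredients do, such that $e^T \cdot (\Lambda_1 \oplus \Pi_1)$ is projectable and projects on the nose to $\Lambda_2 \oplus \Pi_2$; this witnesses $(i)$.

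The main obstacle is precisely the implication $(ii) \Rightarrow (i)$: one must legitimately compose two gauge transformations into a single one while staying in the projectable subcomplex, and justify that $\phi_*$ commutes with the gauge action of projectable twists. Both steps rest on the fact that the gauge action arises from exponentiating a genuine action of the degree-zero piece of the dgla $\Nu^\bullet(\Gamma_1)$, and that $\proj$ respects the full $\zz$-graded Lie 2-algebra structure rather than merely the differential. The remaining equivalences are then essentially bookkeeping through the definitions of the Poisson functor and its local presentation in diagram \eqref{eq:purpose0}.
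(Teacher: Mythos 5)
Your proposal is correct and follows essentially the same route as the paper: the same three-way decomposition, with $(iii)\Leftrightarrow(iv)$ delegated to Lemma \ref{lem:QuasiPoissonUpToTwistsFunctor} and the substantive step $(ii)\Rightarrow(i)$ handled by lifting $T_2$ to a projectable section (e.g.\ $\lambda_\nabla(T_2)$) and absorbing it into the twist. The only difference is that you invoke the BCH formula to compose the two gauge transformations, whereas the paper simply uses that $\Sigma^1(A_1)$ is an abelian subalgebra of the degree-zero part of $\Nu^\bullet(\Gamma_1)$, so the composite twist is just $T_1-\lambda_\nabla(T_2)$ and no convergence or nilpotency issues arise.
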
 
\begin{proof}
To be consistent with the notations introduced earlier,
let us denote $\BaseGroupoid_2\toto M_2$ by $\Gamma\toto M$,
and   $M_1$ by $X$. Then  we can identify  $\BaseGroupoid_1\toto M_1$
with $\BaseGroupoid[X]\toto X$, and
thus  $\phi: \BaseGroupoid_1\to \BaseGroupoid_2$ is simply the
    projection map $\proj :\BaseGroupoid[X]\to \BaseGroupoid$.

First, we prove the equivalence of (i) and (ii).
It is obvious that  (ii)  holds 
if $\phi$ is a Morita morphism  of  quasi-Poisson groupoids
as defined in Definition~\ref{def:MoritaMorphQuasiPoisson}.

Conversely, assume that  (ii) holds.
Let $T' \in \Sigma^1 (\BaseAlgebroid[X])_{proj}$ be
any projectable section such that $\phi_* (T') = \proj (T') = T_2$.
For instance, choose an Ehresmann connection on $\varphi:X\rightarrow M$, and
 take $T'=\lambda_\hordis (T_2)$,  where $\lambda_\hordis$ is as in (\ref{eq:purpose}).
It is simple to check that
$$\phi_* \left(  e^{T_1-T'}(\Lambda_1\oplus\Pi_1) \right)=
\Lambda_2\oplus\Pi_2 \, .$$
Therefore, $\phi$ is indeed a Morita morphism of quasi-Poisson groupoids.

Next, we prove the equivalence between (ii) and (iv).
 Let $\tilde{\Lambda}\oplus \tilde{\Pi}$ be any representative
 of $\underline{MC}({\mathfrak i})^{-1} (\underline{ \Lambda_1\oplus \Pi_1 })$.
 By definition, $\tilde{\Lambda} \oplus \tilde{\Pi}$ is projectable,
 and is twist equivalent to
 $\Lambda_1 \oplus \Pi_1$. Moreover
 $(\BaseGroupoid[X]\toto X, \tilde{\Pi} ,  \tilde{\Lambda})$
is a quasi-Poisson groupoid. The condition 
$\underline{MC}({\proj})   \left(\underline{ \tilde{\Lambda}\oplus \tilde{\Pi} }
\right)= \underline{(\Lambda_2 \oplus \Pi_2)}$  is then equivalent to
 $\proj_* ( \tilde{\Lambda}\oplus \tilde{\Pi}) = e^{T_2}
\cdot(\Lambda_2 \oplus \Pi_2 )$ for some $T_2\in \Sigma^1 (A_2)$.
Therefore (ii) and (iv) are  indeed equivalent.	

Finally, Lemma \ref{lem:QuasiPoissonUpToTwistsFunctor} implies that (iii)
  and (iv)  are equivalent.
This concludes the proof of the lemma.
\end{proof}

We are now ready to introduce the Morita equivalence of
 quasi-Poisson groupoids.
 
\begin{definition}
\label{def:morita}
Quasi-Poisson groupoids 
$(\gpoidmmi, \Pi_1, \Lambda_1)$ and $(\gpoidmmm, \Pi_2, \Lambda_2)$ 
are Morita equivalent
if  there exists a  third quasi-Poisson groupoid
$(\Xi\toto X, \Pi_X, \Lambda_X)$
 and  Morita morphisms of quasi-Poisson groupoids $(\Xi\toto X, \Pi_X, \Lambda_X)\to 
(\gpoidmmi, \Pi_1, \Lambda_1)$
and $(\Xi\toto X, \Pi_X, \Lambda_X)  \to (\gpoidmmm, \Pi_2, \Lambda_2)$.
\end{definition}

In order to prove that this is indeed an equivalence relation,
we need to describe 
Morita equivalence in terms of the Poisson functor $\Poiss$.
Recall that $\underline{\Lambda \oplus \Pi}$ stands for the class
of $\Lambda \oplus \Pi$ in the moduli set $Pois(\BaseGroupoid) :=
 \underline{MC(  \hSec{\bullet}{\BaseAlgebroid}\stackrel{\diff}{\mapsto} 
\polymult{\bullet}{\BaseGroupoid} )}$.

\begin{proposition}
\label{prop:equiv.def.MEquasiPoisson}
Quasi-Poisson groupoids 
$(\gpoidmmi, \Pi_1, \Lambda_1)$ and $(\gpoidmmm, \Pi_2, \Lambda_2)$ 
are Morita equivalent
if and only if  there exists 
a  bitorsor $M_1\leftarrow X\rightarrow M_2$ between 
  $\gpoidmmi$ and $\gpoidmmm$  such that
\begin{equation}
\label{eq:poissonMorph}
\Poiss (M_1\leftarrow X\rightarrow M_2) (\underline{  \Pi_1 \oplus \Lambda_1} ) = \underline{ 
 \Pi_2 \oplus \Lambda_2} .
\end{equation}
\end{proposition}
\begin{proof}
Assume that $(\gpoidmmi, \Pi_1, \Lambda_1)$ and $(\gpoidmmm, \Pi_2, \Lambda_2)$
are Morita equivalent quasi-Poisson groupoids. 
 By definition, there exists a third quasi-Poisson groupoid
$(\Xi\toto X, \Pi_X, \Lambda_X)$
 and  Morita morphisms of quasi-Poisson groupoids $\phi_{1}:(\Xi\toto X, \Pi_X, \Lambda_X)\to 
(\gpoidmmi, \Pi_1, \Lambda_1)$
and $\phi_2:(\Xi\toto X, \Pi_X, \Lambda_X)  \to (\gpoidmmm, \Pi_2, \Lambda_2)$.
According to  Lemma \ref{lem:MoritaMorphQuasiPoisson}, we have

$$ \Poiss (\phi_{1}) (\underline{ \Lambda_X \oplus \Pi_X })=\underline{\Lambda_1 \oplus \Pi_1} \hbox{ and }
\Poiss (\phi_{2}) (\underline{ \Lambda_X \oplus\Pi_X })=\underline{\Lambda_2 \oplus \Pi_2}\, .$$
 This implies that:
  $$ \Poiss (\phi_{2}) \circ \Poiss (\phi_{1})^{-1} \left( \underline{\Lambda_1 \oplus \Pi_1} \right) = \underline{\Lambda_2 \oplus \Pi_2}\, . $$
  But the composition $\Poiss (\phi_{2}) \circ \Poiss (\phi_{1})^{-1}$ is exactly $\Poiss (M_1\leftarrow X\rightarrow M_2)$, since $\Poiss$ is a functor.

Conversely, assume that $M_1\leftarrow X\rightarrow M_2$ is
 a bitorsor between the Lie groupoids
 $\gpoidmmi$ and $\gpoidmmm$, 
  and the  quasi-Poisson structures $(\Pi_i,\Lambda_i)$
  on $\BaseGroupoid_i\toto M_i$, $i=1,2$ are related to each other
by the following condition:
    \begin{equation}
    \label{eq:ifClassCorrespond} 
 \Poiss (M_1\leftarrow X\rightarrow M_2)  \left( \underline{\Lambda_1\oplus\Pi_1} \right) = \underline{\Lambda_2\oplus\Pi_2} .
    \end{equation}

 Let $\Gamma_1[X] \toto X$ and $\Gamma_2[X] \toto X$
 be the
 pull-back Lie groupoids of 
$\Gamma_1 \toto M_1$ and $\Gamma_2 \toto M_2$
 via
the surjective submersions $X\to M_1$ and $X\to M_2$, respectively.
Since $M_1\leftarrow X\rightarrow M_2$ is a bitorsor, 
 $\Gamma_1 [X] \toto X$ is canonically isomorphic to 
$\Gamma_2[X] \toto X$.
Denote the projections  from $\Gamma_1 [X] \cong \Gamma_2[X]\toto X$ 
 to $\Gamma_1 \toto M_1 $,
 and  to $\Gamma_2 \toto M_2$ by $\phi_{1} $ and $\phi_{2}$,
 respectively.
 By functoriality, we have 
 \[
 \Poiss (M_1\leftarrow X\rightarrow M_2) =\Poiss (\phi_{2}) \circ 
\Poiss (\phi_{1})^{-1}\, .
\]
Then  Equation (\ref{eq:ifClassCorrespond}) implies that 
\begin{equation}
\label{eq:FCO} 
  \Poiss (\phi_{1})^{-1} \left( \underline{\Lambda_1\oplus\Pi_1} \right) = \Poiss (\phi_{2})^{-1} \left( \underline{\Lambda_2\oplus\Pi_2}\right)\, .
\end{equation}
 Let $(\Pi_X, \Lambda_X)$ be any quasi-Poisson structure on $\Gamma_1 [X]\cong
\Gamma_2 [X]\toto X$
 representing    the class \eqref{eq:FCO}.
  By construction, we have
 $$ \Poiss (\phi_{1}) (\underline{ \Lambda_X\oplus\Pi_X })=
\underline{\Lambda_1\oplus\Pi_1} \ \ \  \hbox{ and }   
\Poiss (\phi_{2}) (\underline{ \Lambda_X\oplus\Pi_X })=
\underline{\Lambda_2\oplus\Pi_2}\,  .$$
 According to  Lemma \ref{lem:MoritaMorphQuasiPoisson},
 both $\phi_{1}$ and $\phi_{2}$ are Morita morphisms of
 quasi-Poisson groupoids. 
As a consequence,  the quasi-Poisson groupoids $(\gpoidmmi,\Pi_1,\Lambda_1)$ 
and $(\gpoidmmm,\Pi_2,\Lambda_2)$ are Morita equivalent.
\end{proof}

\begin{corollary}
Morita equivalence in Definition \eqref{def:morita} is  indeed an equivalence relation among quasi-Poisson Lie groupoids.
\end{corollary}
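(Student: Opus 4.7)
The strategy is to leverage Proposition \ref{prop:equiv.def.MEquasiPoisson}, which reformulates Morita equivalence of quasi-Poisson groupoids as the existence of a Morita bitorsor $M_1 \leftarrow X \rightarrow M_2$ for which the induced bijection $\Poiss(M_1 \leftarrow X \rightarrow M_2)$ between the moduli sets of twist equivalence classes sends $\underline{\Lambda_1 \oplus \Pi_1}$ to $\underline{\Lambda_2 \oplus \Pi_2}$. Once phrased in this way, reflexivity, symmetry, and transitivity follow from the functorial nature of $\Poiss$ established in Corollary \ref{cor:MoritaEquivalent2}.

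For \textbf{reflexivity}, I would take the canonical identity bitorsor $M \leftarrow \Gamma \rightarrow M$ associated to the identity Morita morphism of $\Gamma\toto M$. Under $\Poiss$ this is the identity bijection of $\Poiss(\Gamma)$, so trivially $\Poiss(\text{id})(\underline{\Lambda \oplus \Pi}) = \underline{\Lambda \oplus \Pi}$. For \textbf{symmetry}, observe that any $\Gamma_1$-$\Gamma_2$-bitorsor $M_1 \leftarrow X \rightarrow M_2$ has an opposite $\Gamma_2$-$\Gamma_1$-bitorsor $M_2 \leftarrow X \rightarrow M_1$, and in the category $\Groupoid$ these are inverse to each other. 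Since $\Poiss$ is a functor with values in \emph{Sets}, the arrow $\Poiss(M_1 \leftarrow X \rightarrow M_2)$ is a bijection whose inverse is $\Poiss(M_2 \leftarrow X \rightarrow M_1)$; hence the defining relation \eqref{eq:paris} can be read in either direction.

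For \textbf{transitivity}, suppose the bitorsors $M_1 \leftarrow X \rightarrow M_2$ and $M_2 \leftarrow Y \rightarrow M_3$ witness the Morita equivalences between $(\Gamma_1,\Pi_1,\Lambda_1)$ and $(\Gamma_2,\Pi_2,\Lambda_2)$, and between $(\Gamma_2,\Pi_2,\Lambda_2)$ and $(\Gamma_3,\Pi_3,\Lambda_3)$, respectively. Their composition in $\Groupoid$ is the bitorsor $M_1 \leftarrow X \times_{M_2} Y \rightarrow M_3$, and the functoriality of $\Poiss$ yields
\[
\Poiss(M_1 \leftarrow X \times_{M_2} Y \rightarrow M_3) = \Poiss(M_2 \leftarrow Y \rightarrow M_3) \circ \Poiss(M_1 \leftarrow X \rightarrow M_2).
\]
Applying this to $\underline{\Lambda_1 \oplus \Pi_1}$ produces $\underline{\Lambda_3 \oplus \Pi_3}$, so by Proposition \ref{prop:equiv.def.MEquasiPoisson} the quasi-Poisson groupoids $(\Gamma_1,\Pi_1,\Lambda_1)$ and $(\Gamma_3,\Pi_3,\Lambda_3)$ are Morita equivalent.

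The main subtlety, and the only part requiring genuine content, is ensuring that the characterization in Proposition \ref{prop:equiv.def.MEquasiPoisson} is consistent with the composition of bitorsors as arrows in $\Groupoid$ — this is precisely what Corollary \ref{cor:MoritaEquivalent2} (and, through it, the uniqueness up to homotopy of the $L_\infty$-morphism given by Proposition \ref{th:linfty_morphism}) guarantees. Everything else is a purely formal consequence of having transferred Morita equivalence of quasi-Poisson groupoids into an equality of values of a set-valued functor defined on a category where the arrows themselves form an equivalence relation on objects.
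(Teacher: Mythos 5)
Your proposal is correct and follows exactly the paper's argument: the paper's own proof is the one-line observation that the corollary is an immediate consequence of Proposition \ref{prop:equiv.def.MEquasiPoisson} together with the functoriality of $\Poiss$. You have simply spelled out the reflexivity, symmetry, and transitivity checks that the paper leaves implicit.
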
 
 \begin{proof}
This follows immediately from Proposition
 \ref{prop:equiv.def.MEquasiPoisson}, together with the fact that  $\Poiss$ is a functor. 
 \end{proof}

\begin{theorem}
\label{prop:MEandQuasiPoisson}
Let $(\gpoidmmi, \Pi_1, \Lambda_1)$ be  a quasi-Poisson groupoid.
Assume that $\Gamma_2\toto M_2$ is any Lie groupoid Morita equivalent
to $\Gamma_1\toto M_1$ as Lie groupoids. Then
there exists a quasi-Poisson structure $(\Pi_2, \Lambda_2)$, unique up
to twists,
on $\gpoidmmm$ such that $(\gpoidmmm, \Pi_2, \Lambda_2)$ and
 $(\gpoidmmi, \Pi_1, \Lambda_1)$
are Morita  equivalent quasi-Poisson groupoids. 
\end{theorem}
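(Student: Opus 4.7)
The plan is to deduce the theorem directly from the machinery developed so far, namely the Poisson functor $\Poiss:\Groupoid\to\text{Sets}$ (obtained as the composition of the polyvector field functor of Corollary \ref{cor:MoritaEquivalent2} with the Maurer--Cartan functor), together with the characterization of Morita equivalent quasi-Poisson groupoids given in Proposition \ref{prop:equiv.def.MEquasiPoisson}. The existence and uniqueness up to twist should both fall out of the fact that the map induced on moduli sets by a Morita bitorsor is a bijection.

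For the existence part, I would start by fixing a Morita $\Gamma_1$-$\Gamma_2$-bitorsor $M_1\stackrel{\varphi_1}{\leftarrow} X\stackrel{\varphi_2}{\rightarrow} M_2$ implementing the given Morita equivalence of Lie groupoids. Applying the Poisson functor to this bitorsor gives a bijection
\[
\Poiss(M_1\leftarrow X\rightarrow M_2):\Poiss(\Gamma_1)\longrightarrow \Poiss(\Gamma_2).
\]
Set $\underline{\Pi_2\oplus\Lambda_2}:=\Poiss(M_1\leftarrow X\rightarrow M_2)(\underline{\Pi_1\oplus\Lambda_1})$, and let $(\Pi_2,\Lambda_2)$ be any quasi-Poisson structure on $\Gamma_2\toto M_2$ whose twist equivalence class represents this element. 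Then Equation \eqref{eq:paris} of Proposition \ref{prop:equiv.def.MEquasiPoisson} is satisfied by construction, and hence $(\Gamma_1\toto M_1,\Pi_1,\Lambda_1)$ and $(\Gamma_2\toto M_2,\Pi_2,\Lambda_2)$ are Morita equivalent quasi-Poisson groupoids.

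For the uniqueness up to twists, I would suppose that $(\Pi_2',\Lambda_2')$ is another quasi-Poisson structure on $\Gamma_2\toto M_2$ making $(\Gamma_2\toto M_2,\Pi_2',\Lambda_2')$ Morita equivalent as a quasi-Poisson groupoid to $(\Gamma_1\toto M_1,\Pi_1,\Lambda_1)$. A priori this Morita equivalence might be implemented by some other bitorsor $Y$. However, the Poisson functor assigns the \emph{same} bijection on moduli sets to any two isomorphic bitorsors, and more importantly the statement of Proposition \ref{prop:equiv.def.MEquasiPoisson} says that the existence of \emph{some} bitorsor realizing \eqref{eq:paris} forces the equality of the classes $\underline{\Pi_2\oplus\Lambda_2}$ and $\underline{\Pi_2'\oplus\Lambda_2'}$, once we translate through the chosen bitorsor $X$ above. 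Concretely, both classes must equal $\Poiss(M_1\leftarrow X\rightarrow M_2)(\underline{\Pi_1\oplus\Lambda_1})$, and the latter is a single element of $\Poiss(\Gamma_2)$, i.e.\ a single twist equivalence class; hence $(\Pi_2',\Lambda_2')$ is twist equivalent to $(\Pi_2,\Lambda_2)$.

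The main subtlety, and what I expect to be the key point to check carefully, is the independence of the construction from the choice of bitorsor implementing the Morita equivalence: one must verify that if two bitorsors represent the same arrow in $\Groupoid$ (i.e.\ are isomorphic as bitorsors), the induced maps on $\Poiss$ agree. This is exactly the content of the functoriality assertion in Corollary \ref{cor:MoritaEquivalent2}, so strictly speaking no additional work is needed; nevertheless, unpacking it amounts to checking that the homotopy class of the $L_\infty$-morphism produced in Theorem \ref{cor:MoritaEquivalent1} depends only on the isomorphism class of the bitorsor, and this is where the ``up to twist'' ambiguity precisely originates on the quasi-Poisson side.
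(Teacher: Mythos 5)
Your proposal is correct and follows essentially the same route as the paper: the paper's proof is a two-line appeal to Proposition \ref{prop:equiv.def.MEquasiPoisson}, taking $(\Pi_2,\Lambda_2)$ to be any representative of the image of $\underline{\Pi_1\oplus\Lambda_1}$ under the bijection $\Poiss(M_1\leftarrow X\rightarrow M_2)$, with uniqueness up to twist implicit in the fact that this image is a single twist-equivalence class. Your additional remarks on independence of the choice of bitorsor only make explicit what the paper leaves to the functoriality assertion of Corollary \ref{cor:MoritaEquivalent2}.
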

\begin{proof}
 This is an immediate  consequence of Proposition
 \ref{prop:equiv.def.MEquasiPoisson}. 
\end{proof}

We are now ready to introduce

\begin{definition}
\label{poiss.stack}
A $(+1)$-shifted Poisson differentiable stack, up to isomorphisms,
 is a Morita  equivalence class of quasi-Poisson groupoids.
\end{definition}

We will use the notation $( \XX, \pixx)$ to
denote a $(+1)$-shifted Poisson differentiable  stack.

The following lemma follows from the general fact
concerning  tangent cohomology of a ${\dgla}$
at Maurer-Cartan elements \cite{Kontsevich}.

\begin{lemma}
Assume that $\phi$ is a  Morita morphism
of quasi-Poisson groupoids from
 $(\gpoidmmi, \Pi_1, \Lambda_1)$ to $(\gpoidmmm, \Pi_2, \Lambda_2)$.
 Then $\phi$ induces
an  isomorphism of the Lichnerowicz-Poisson cohomology
$$\phi_* :H^\bullet_{LP} (\gpoidmmi, (\Pi_1, \Lambda_1))\longiso 
H^\bullet_{LP} (\gpoidmmm, (\Pi_2, \Lambda_2)). $$
\end{lemma}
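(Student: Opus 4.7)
The plan is to promote the given Morita morphism $\phi$ to an $L_\infty$-quasi-isomorphism of the underlying dglas of polyvector fields (via the polyvector field functor of Corollary \ref{cor:MoritaEquivalent2}), and then invoke the standard fact that twisting an $L_\infty$-quasi-isomorphism by a Maurer-Cartan element produces a quasi-isomorphism of the twisted dglas.

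By Definition \ref{def:MoritaMorphQuasiPoisson}, there is a twist $T\in\Sigma^1(A_1)$ such that $(\Pi_{1,T},\Lambda_{1,T})$ is projectable along $\phi$ with $\phi_*(\Pi_{1,T},\Lambda_{1,T})=(\Pi_2,\Lambda_2)$. Proposition \ref{prop:chainiso} supplies a canonical isomorphism $H^\bullet_{LP}(\Gamma_1,(\Pi_1,\Lambda_1))\cong H^\bullet_{LP}(\Gamma_1,(\Pi_{1,T},\Lambda_{1,T}))$, so I would first reduce to the case where $(\Pi_1,\Lambda_1)$ itself is projectable and projects on the nose to $(\Pi_2,\Lambda_2)$. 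In the language of the diagram in Equation \eqref{eq:purpose}, one may take $\Gamma_1=\Gamma[X]$, $\Gamma_2=\Gamma$ and $\phi=\proj$.

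Next, by Proposition \ref{prop:strict2} the projection $\proj$ is a strict morphism of $\zz$-graded Lie $2$-algebras from $\Sigma^\bullet(A_1)_{\proj}\xrightarrow{\diff'}\polymult{\bullet}{\Gamma_1}_{\proj}$ to $\Sigma^\bullet(A_2)\xrightarrow{\diff}\polymult{\bullet}{\Gamma_2}$, and the inclusion $\mathfrak{i}$ is likewise a strict morphism. Propositions \ref{cor:superGroupoidsConclusion} and \ref{th:linfty_morphism} ensure that both $\proj$ and $\mathfrak{i}$ admit $L_\infty$-inverses, so composing $\proj$ with the $L_\infty$-inverse of $\mathfrak{i}$ yields an $L_\infty$-quasi-isomorphism of dglas $\Psi:\Nu^\bullet(\Gamma_1)\to \Nu^\bullet(\Gamma_2)$ which is canonical up to $L_\infty$-homotopy. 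Since $\proj$ is strict and sends $(\Pi_1,\Lambda_1)$ to $(\Pi_2,\Lambda_2)$, the image $\Psi_*(\Pi_1,\Lambda_1)=\sum_{n\geq 1}\tfrac{1}{n!}\Psi_n(\Pi_1\oplus\Lambda_1,\dots,\Pi_1\oplus\Lambda_1)$ is gauge equivalent to $(\Pi_2,\Lambda_2)$ (this is essentially the content of Lemma \ref{lem:MoritaMorphQuasiPoisson}(iii)--(iv)).

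Finally, I would invoke the general principle: if $\Psi:(\frakg_1,{\rm d}_1)\to(\frakg_2,{\rm d}_2)$ is an $L_\infty$-quasi-isomorphism of dglas and $\lambda\in\frakg_1^1$ is a Maurer-Cartan element, then the twisted map
\[
\Psi^\lambda:(\frakg_1,{\rm d}_1+[\lambda,\cdot\,])\longrightarrow (\frakg_2,{\rm d}_2+[\Psi_*(\lambda),\cdot\,])
\]
is again a quasi-isomorphism, and its cohomology class depends only on the $L_\infty$-homotopy class of $\Psi$ and on the gauge class of $\lambda$. Composed with the canonical isomorphism obtained from gauge-equivalent Maurer-Cartan elements (Proposition \ref{prop:chainiso} in our setting), this produces the desired canonical isomorphism
\[
\phi_*: H^\bullet_{LP}(\Gamma_1,(\Pi_1,\Lambda_1))\longiso H^\bullet_{LP}(\Gamma_2,(\Pi_2,\Lambda_2)).
\]
The main technical obstacle will be verifying cleanly the twisting/perturbation step: one must check that the $L_\infty$-structure produced in Proposition \ref{th:linfty_morphism} is compatible with Maurer-Cartan twisting, and that two $L_\infty$-homotopic morphisms induce the same map in twisted cohomology. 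This can be handled either by a spectral-sequence argument on the filtration by the higher Taylor components of $\Psi$, or more directly by the by-now-standard formalism of $L_\infty$-twists of dglas à la Kontsevich.
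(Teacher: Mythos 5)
Your proposal is correct and follows essentially the same route as the paper's (much terser) proof: both rest on the two general facts that gauge-equivalent Maurer--Cartan elements have isomorphic tangent cohomologies and that an $L_\infty$-morphism induces a map on tangent cohomologies depending only on its homotopy class, combined with the homotopy equivalence of the polyvector-field dglas coming from Theorem \ref{cor:MoritaEquivalent1}. Your write-up merely makes explicit the reduction to the projectable case via the twist $T$ and the identification of the $L_\infty$-quasi-isomorphism as $\proj\circ{\mathfrak i}^{-1}$, which the paper leaves implicit.
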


Since Lichnerowicz-Poisson cohomology of  quasi-Poisson groupoids
is invariant under Morita equivalence, the following definition is well-posed.

\begin{definition}
Let $( \XX, \pixx)$ be  a $(+1)$-shifted Poisson differentiable  stack. Its Lichnerowicz-Poisson cohomology is
$$H^\bullet_{LP} ( \XX, \pixx): =H^\bullet_{LP} (\gpoidmm, (\Pi, \Lambda)),$$
where  $(\gpoidmm, \Pi, \Lambda)$ is any  quasi-Poisson groupoid
representing $( \XX, \pixx)$.
\end{definition} 

\bigskip

\section{Homotopy and Morita equivalence of VB-groupoids}

Since the multiplicative bivector fields associated to quasi-Poisson groupoids
 induce VB-groupoid morphisms between the tangent and cotangent bundles,
 it is natural to ask how Morita equivalence of quasi-Poisson groupoids is
 reflected on relations between these  VB-groupoid morphisms.
 The purpose of this section is to study the framework needed to understand this relation. 
Our main result is  singled out in a separate subsection  at the end,
and shows that  Morita equivalent quasi-Poisson groupoids indeed
 induce the correct notion of equivalence of the underlying VB-groupoid morphisms.

\subsection{Homotopy equivalence of VB-groupoids}\label{sec:homot_equiv}

This section is devoted to the study of homotopy equivalence of VB-groupoids.
We  first recall some basic notions and results about VB-groupoids, following \cite{Gracia-Saz-Mehta,Mackenzie, MackenzieII}. 
We  will  introduce the definition of  homotopies of
VB-groupoid morphisms.
Our main examples of VB-groupoids are the tangent and cotangent groupoids.
Morphisms from the cotangent groupoids
to the  tangent groupoids  induced by multiplicative bivector fields
on Lie groupoids  will be our main examples
 of VB-groupoid morphisms, 
while  those corresponding to  their twists will provide our main examples of
 homotopies.

Recall that a \emph{VB-groupoid} is a groupoid object in the category of
vector bundles. 
In more concrete terms, a VB-groupoid is a pair  of Lie groupoids
 $V\toto E$ and $\Gamma\toto M$,  where $V\rightarrow\Gamma$
 and $E\rightarrow M$   are vector bundles,
 satisfying a list of compatibility
conditions (see  \cite{Mackenzie, MackenzieII}).
 A VB-groupoid is either denoted by the diagram: 
\begin{equation}\label{eq:florence}
\VB{V}{E}{\Gamma}{M}
\end{equation}
or simply by $V$ for short. The source and target maps of
 $V \toto E$ are denoted by $s_V,t_V$, while $s,t$
 stand for the source and target maps of $\Gamma \toto M$. 
For each $\gamma\in\Gamma$,  we denote by $0_\gamma^V$ the zero element of
 the fiber $V_\gamma$.
The {\it core} 
 $$C := \Ker ( s_V : V|_M \mapsto E)$$
 is a vector bundle over $M$.
The \emph{core-anchor} 
\begin{equation}
\label{eq:core_anchor}
\rho_V : C \mapsto E
\end{equation}
 is defined to be
the restriction of $t_V : V|_M \mapsto E$ to the core.
There are natural embeddings  
$R_V:t^*C\rightarrow V$ and $L_V:s^*C\rightarrow V$ defined by
\begin{equation}
\label{core_embedding}
L_V(c) =- 0_\gamma^V\cdot c^{-1}\;,\;\;\;
R_V(c') = c'\cdot 0_\gamma^V\;,
\end{equation}
for all $\gamma\in\Gamma$, $c \in C_{s(\gamma)}$  and $c' \in C_{t(\gamma)}$.
Here   the dot and the upscript
 stand for the groupoid multiplication and the inverse
of $V\toto E$. The embedding $R_V$ fits into the following exact sequence of vector bundles over $\Gamma$:
\begin{equation}
\label{VB_exact_sequence}
0\to t^*C\stackrel{R_V}{\to} V \stackrel{s_V}{\to} s^*E \to 0 . 
\end{equation}
There is an analogous short exact sequence for $L_V$. 
 The restriction of the short exact sequence (\ref{VB_exact_sequence}) to $M$ admits a canonical splitting given by the unit map
of $V \toto E$.
A splitting of \eqref{VB_exact_sequence} that coincides with such a
canonical splitting when restricted to $M$ is called a {\it right decomposition}. Every right decomposition induces a vector bundle isomorphism 
$\pi:V\simeq t^*C\oplus s^*E$ over $\Gamma$. By transporting the VB-groupoid
 structure on $V$ to the latter, we obtain a VB-groupoid on $t^* C\oplus s^*E$,
 called  {\it split VB-groupoid}. See  \cite{Gracia-Saz-Mehta} for explicit structure maps.

\begin{example} \label{eq:TG} For any Lie groupoid $\Gamma\toto M$,
 the \emph{tangent groupoid} is a VB-groupoid
\begin{equation}\label{eq:paris}\xymatrix{ T\Gamma\ar[d] \ar@<-.5ex>[r] \ar@<.5ex>[r] & TM \ar[d] \\ \Gamma \ar@<-.5ex>[r] \ar@<.5ex>[r] & M }.\end{equation}
The structure maps of $T\Gamma \toto TM$ are the 
tangent maps of the structure maps of $\Gamma \toto M$, {\it e.g.} $s_{T\Gamma} = Ts$, $t_{T\Gamma}=Tt$, and so on.
The core is the Lie algebroid $A \to M$ of $\Gamma \toto M$ and the embeddings (\ref{core_embedding}) are  the right and left groupoid
translations, respectively.
\end{example}

Given a VB-groupoid as in (\ref{eq:florence}), the dual bundle  $V^\vee\rightarrow\Gamma$ inherits a VB-groupoid structure called the \emph{dual VB-groupoid} \cite{Mackenzie, MackenzieII}
\begin{equation} \label{eq:perugia}\xymatrix{ V^\vee \ar[d] \ar@<-.5ex>[r] \ar@<.5ex>[r] & C^\vee \ar[d] \\ \Gamma \ar@<-.5ex>[r] \ar@<.5ex>[r] & M }
\end{equation}
where the source and target maps $s_{V^\vee},t_{V^\vee}:V^\vee\rightarrow C^\vee$ are defined, respectively,  by 
\begin{equation}
\label{dual_VB_maps}
\langle s_{V^\vee}(\eta),c\rangle = -\langle \eta, 0_\gamma^V\cdot c^{-1} \rangle\;\;,\langle t_{V^\vee}(\eta),c'\rangle = \langle \eta, c'\cdot 0_\gamma^V \rangle\;
\end{equation}
for all
 $c\in C_{s(\gamma)}$, $c'\in C_{t(\gamma)}$ and $\eta\in V^\vee_\gamma$.
 In particular, one has 
\begin{equation}\label{dual_VB_maps_translate}
R_V = t_{V^\vee}^\vee:t^*C\rightarrow V\;,\;\; L_V = s_{V^\vee}^\vee:s^*C\rightarrow V\;.
\end{equation}
The core of $V^\vee$ is $E^\vee \to M$. Note that the dual VB-groupoid  of
 (\ref{eq:perugia})
is  canonically isomorphic to the VB-groupoid $V$ itself.

\begin{remark}
Let $\Omega\subset V^\vee\times V^\vee\times V^\vee$
be the graph of the multiplication of the groupoid $V^\vee \toto C^\vee$.
Then $\bar{\Omega}=\{(\xi, \eta, -\gamma)| (\xi, \eta, \gamma)\in \Omega\}$
 is the annihilator of the graph of the multiplication of the 
groupoid $V\toto E$.
\end{remark}

\begin{example} \label{eq:TGdual} 
The dual VB-groupoid of the tangent groupoid in Example \ref{eq:TG} is the \emph{cotangent groupoid }:
\begin{equation}\label{eq:paris_dual}\xymatrix{ T^\vee\Gamma\ar[d] \ar@<-.5ex>[r] \ar@<.5ex>[r] & A^\vee \ar[d] \\ \Gamma \ar@<-.5ex>[r] \ar@<.5ex>[r] & M \, \, .}\end{equation}
Its core is $T^\vee M$,
 and the embeddings (\ref{core_embedding}) are  the dual maps of $Ts: T\Gamma \to TM$ and $Tt: T\Gamma \to TM$.
\end{example}

\emph{VB-groupoid morphisms} are both Lie groupoid morphisms and vector bundle morphisms \cite{Mackenzie, MackenzieII}.
The following proposition is  standard   \cite{MackenzieX:1994}:

\begin{proposition}
\label{pro:quasiP}
Let  $(\gpoidmm, \Pi, \Lambda)$ be  a quasi-Poisson groupoid. Then
$\Pi^\# : T^\vee\Gamma\to T\Gamma$ induces a morphism of VB-groupoids from the cotangent VB-groupoid \eqref{eq:paris_dual}  to the tangent VB-groupoid \eqref{eq:paris} .
\end{proposition}

We now introduce the notion of homotopy of VB-groupoid morphisms. 
 Consider VB-groupoids over the same base groupoid $\Gamma \toto M$:
 \begin{equation}
 \label{eq:VB:samebase}
 \VB{V_1}{E_1}{\Gamma}{M}  \hbox{ \begin{tabular}{c} \\ \\  and \end{tabular} } \VB{V_2}{E_2}{\Gamma}{M}  
 \end{equation}

It is simple to see that the space of
VB-groupoid morphisms from $V_1$ to $V_2$ 
over the identity map of $\Gamma$ is a vector space, denoted
by $\Hom_{\Gamma}(V_1, V_2)$.
 
  Denote the cores of $V_1$ and $V_2$ by $C_1$ and $C_2$,
 respectively. 
For any vector bundle morphism
$h: E_1\to C_2$ over the identity map of $M$, we define 
a vector bundle morphism over the identity map on $\Gamma$ by
\begin{equation}\label{eq:JofH}
 \begin{array}{rcl}J_h : V_1 & \mapsto & V_2\\  v & \to &  
L_{V_2}\smalcirc h  \smalcirc  s_{V_1} (v)+ R_{V_2}\smalcirc h \smalcirc  t_{V_1} (v).\end{array} 
\end{equation}

\begin{remark}
Using  (\ref{core_embedding}), we can rewrite
  $J_h$  as follows 
\begin{equation}
\label{eq:homotopy3}
J_h(v_\gamma) = 0_\gamma\cdot h(s_{V_1}(v_\gamma))^{-1} + h(t_{V_1}(v_\gamma))\cdot 0_\gamma\;.
\end{equation}
\end{remark}

The following can be easily verified.

\begin{lemma}
The map $J_h$ is a VB-groupoid morphism from $V_1$ to $V_2$ over the
 identity map on $\Gamma$.
\end{lemma}

\begin{definition}\label{defn:VB_homotopy}
 Let $V_1$ and $V_2$ be VB-groupoids as in
 \eqref{eq:VB:samebase}.
Let $\Phi$ and $\Psi\in  \Hom_{\Gamma}(V_1, V_2)$. 
We say that $\Phi$ is homotopic
to $\Psi$ if there exists a vector bundle morphism
$h: E_1\to C_2$ over the identity map on $M$, where $C_2$ is the core of $V_2$, such that the following relation holds
\begin{equation}
\label{eq:homotopy}
\Phi-\Psi = J_h.
\end{equation}
\end{definition}
We call $J_h$ the {\it VB-homotopy} defined by $h:E_1 \to C_2$. 

\begin{example} \label{prop:TGtwist}
Let  $(\Pi, \Lambda)$ and $(\Pi_T,\Lambda_T) $ be twist equivalent quasi-Poisson structures on a Lie groupoid $ \gpoidmm$. Then
 the  VB-groupoid morphisms $\Pi^\# $ and $\Pi^\#_T  $ considered in Proposition \ref{pro:quasiP} are homotopy equivalent,
with explicit VB-homotopy  being given by  $T^\# : A^\vee \to A$.
\end{example}

\begin{proposition}\label{VB_homotopy_start}
Homotopy equivalence of VB-groupoid morphisms is an equivalence relation and is compatible with composition of VB-groupoid morphisms.
\end{proposition}
\begin{proof}
Homotopy is an equivalence relation since VB-homotopies from $V_1$ to $V_2$ form a subspace of $\Hom_{\Gamma}(V_1, V_2)$. 
Compatibility with composition easily follows from the fact that the
 composition of a VB-homotopy with a VB-morphism is again a VB-homotopy.
\end{proof}

Recall that for $\Phi \in \Hom_{\Gamma}(V_1, V_2)$, its  dual vector bundle morphism $\Phi^ \vee \in \Hom_{\Gamma}( V_2^\vee, V_1^\vee)$. 


\begin{proposition}\label{VB_homotopy_dual}
Let $\Phi$ and $\Psi$ be homotopic VB-groupoid morphisms from $V_1$ to $V_2$ 
with VB-homotopy $J_h$ as in Definition \ref{defn:VB_homotopy}.
Then the dual VB-groupoid morphisms $\Phi^\vee$ and $\Psi^\vee$ are homotopic with VB-homotopy
$J_{h^\vee}$, where $h^\vee:C_2^\vee\rightarrow E_1^\vee$ is the dual of $h: E_1 \to C_2$.
\end{proposition}
\begin{proof}
The proposition is proved by taking the dual of Equation (\ref{eq:homotopy})
 combining with the fact that $ J_h^\vee = J_{h^\vee}$.
The latter can be easily verified by  using   Equation (\ref{dual_VB_maps_translate}).
\end{proof}

Now we are ready to introduce the notion of homotopy equivalence
 of VB-groupoids.

\begin{definition}
	\label{def:homotopyEquivVbgroup}
 Let $V_1$ and $V_2$ be VB-groupoids as in (\ref{eq:VB:samebase}).
An homotopy equivalence between 
 $V_1$  and $V_2$ is a pair of VB-groupoid morphisms
 $\Phi\in\Hom_\Gamma(V_1,V_2)$ and $\Psi\in\Hom_\Gamma(V_2,V_1)$
such that both $\Phi\circ\Psi$ and $\Psi\circ\Phi$ are homotopic to the identity VB-groupoid morphism.
\end{definition}

In the sequel, we use the following notation to denote a homotopy equivalence:
\begin{equation}
\SelectTips{eu}{12}
\label{eq:homotopyEquivNotations}
\xymatrix{V_1  \ar@(ul,dl)[]_{J_{h_1}} \ar@<-.5ex>[r]_\Phi & \ar@(ur,dr)[]^{J_{h_2}}  \ar@<-.5ex>[l]_\Psi V_2} 
\end{equation}
where $h_1: E_1\to C_1$ and $h_2: E_2\to C_2$ are bundle maps.  
 
\begin{remark}
\label{comparison_ortis_delhoyo}
A similar notion appeared in  \cite[Section 6]{OrtizDelHoyo}.
 It can be checked that
 VB-groupoid morphisms are homotopic as in Definition \ref{def:homotopyEquivVbgroup} if and only if  
they are isomorphic according to \cite{OrtizDelHoyo}.
\end{remark}

\subsection{Generalized morphisms and Morita equivalence of VB-groupoids}\label{Generalized_VB_morphisms}

In this section we consider generalized VB-morphisms,  extending the well known notion for Lie groupoids, and relate them with Morita equivalences of VB-groupoids.

Recall that a {\it Lie groupoid generalized morphism} $M_1 \stackrel{\varphi_1}{\leftarrow} X\stackrel{\varphi_2}{\rightarrow} M_2$ from $\Gamma_1 \toto M_1$ to $\Gamma_2\toto M_2$ consists
of a smooth manifold $X$, a left $\Gamma_1$-action and a right $\Gamma_2$-action on $X$ with anchor maps $\varphi_1$ and $\varphi_2$ respectively, such that the two actions commute and that $X$ is a right $\Gamma_2$-torsor, {\it i.e.} the right $\Gamma_2$-action on $\varphi_1:X\rightarrow M_1$ is principal.
 We will refer to anchor and multiplication maps as the structure maps of $X$.  A generalized morphism 
 where $X$ is also a left $\Gamma_1$-torsor, {\it i.e.} the left $\Gamma_1$-action on $\varphi_2:X\rightarrow M_2$ is principal is referred to as a \emph{Lie groupoid bitorsor} (see \cite{Hil-Ska}).

\begin{definition}\label{def_gen_VB_morph}
Let $V_1\toto E_1$ and $V_2\toto E_2$ be VB-groupoids over $\Gamma_1\toto M_1$ and $\Gamma_2\toto M_2$ respectively. 
A {\it  generalized  VB-morphism} (resp.  {\it VB-bitorsor} ) from $V_1$ to $V_2$ is a generalized  morphism (resp. a bitorsor) of Lie groupoids 
$\xymatrix{E_1 &Z\ar[l]_{\phi_1}\ar[r]^{\phi_2} & E_2}$
from  (resp. between) $V_1\toto E_1$ to $V_2\toto E_2$ such that
$Z$ is  a vector bundle over $X$, and $Z\to X$  is compatible with  the given
vector bundles $E_1\to M_1$ and $E_2\to M_2$ in the sense that 
 there are  vector bundle morphisms:
 \begin{equation}\label{diagram_generalized_VB_morphism}
\begin{tikzcd}[column sep=huge]
E_1  \arrow[d]&
  Z\arrow[l,swap,"\phi_1"]\arrow[r,"\phi_2"] \arrow[d]& E_2\arrow[d] \\
M_1  & X\arrow[r,swap,"\varphi_2"]\arrow[l,"\varphi_1"]& M_2 .
\end{tikzcd}
	 \end{equation}
\end{definition}

It is straightforward to check that generalized VB-morphisms induce on $M_1 \stackrel{\varphi_1}{\leftarrow} X\stackrel{\varphi_2}{\rightarrow} M_2$ the structure of generalized morphism from $\Gamma_1$ to $\Gamma_2$.

\begin{remark}
	Consider a generalized morphism as in Definition \ref{def_gen_VB_morph}.
	For all $v,v'$ in the same fiber of $ V_1 \to \Gamma_1$ and all $z,z'$ in the same fiber of $Z \to X$ such that $\phi_1(z) = s_{V_1}(v) $ and $\phi_1(z') = s_{V_1}(v')  $, the following identity holds
	\begin{equation}
	\label{eq:distr}
	(v+v')\cdot(z+z')=v \cdot z +  v'\cdot z'\;,
	\end{equation}
	and analogously for the right $V_2$-action.
\end{remark}
There is a natural equivalence relation on generalized VB-morphisms:

\begin{definition}
	Generalized VB-morphisms $E_1\leftarrow Z\rightarrow E_2$ and $E_1\leftarrow Z'\rightarrow E_2$ from $V_1$ to $V_2$ are said to be equivalent if there exists a $V_1-V_2$-biequivariant vector bundle isomorphism from 
	$Z$ to $Z'$.   
\end{definition}

 For disambiguation, VB-groupoid morphisms shall be referred to as \emph{strict} VB-groupoid morphisms, at least in this section. 
 As for Lie groupoids, a  VB-groupoid morphism $\Phi:V_1\rightarrow V_2$ induces a generalized VB-groupoid morphism  defined by
 $Z_\Phi=E_1\times_{ E_2} V_2 \rightarrow  M_1\times_{M_2}\Gamma_2$ with left $V_1$ and right $V_2$ actions given for every compatible
 $e_1\in E_1$ and $v_2,v_2'\in V_2$, respectively,  by
 \begin{equation}\label{strict_as_generalized}
 v_1 \cdot (e_1,v_2) = (t_{V_1}(v_1),\Phi(v_1)v_2) \, , \qquad (e_1,v_2) \cdot v_2'=(e_1,v_2 v_2')\;.\end{equation}

The following lemma contains the crucial technical result of this section:
 
 \begin{lemma}\label{lem:HomotopyMeansEqual}
 Let $V_1$ and $V_2$ be VB-groupoids as in \eqref{eq:florence}.
 VB-groupoid morphisms $\Phi$ and $\Psi: V_1 \to V_2$ are homotopic if and only if their induced  generalized VB-morphisms $Z_\Phi$ and $ Z_\Psi$ are equivalent. 
 \end{lemma}
 \begin{proof}  
 	Let $h: E_1 \to C_2 $ be an homotopy between the  VB-groupoid morphisms $\Phi,\Psi: V_1 \to V_2$.
 	Then, an explicit $V_1$-$V_2$ biequivariant vector bundle morphism from
 	 $Z_\Phi$ to $Z_\Psi$ is given by $T(e,v) = (e,h(e) \cdot 0^{V_2}+ v)$
 	 for all $ (e,v) \in E_1 \times_{E_2,  }  V_2$
 such that $\Phi(e)=t_{V_2}(v)$.
 	 Right $V_2$-equivariance is obvious.
Left $V_1$-equivariance can be checked as follows.
For any $v_1 \in V_1$ with $s_{V_1}(v_1)=e$, on one hand, we have
 	  \begin{equation}
 	  \label{eq:equiv_a}
 	   T (v_1 \cdot (e,v) )  = T (t_{V_1}(v_1) , \Phi(v_1) \cdot v ) =  (t_{V_1}(v_1) , h(t_{V_1}(v_1)) \cdot 0^{V_2} +  \Phi(v_1) \cdot v ), 
 	  \end{equation}
 	  while, on the other hand, we have
 	  \begin{equation}
 	  \label{eq:equiv_b}
 	   v_1 \cdot T  (e,v)  =  v_1 \cdot (e,h(e) \cdot 0^{V_2}+ v) = (t_{V_1}(v_1) ,
 	   \Psi(v_1) \cdot (h(e) \cdot 0^{V_2} +  v) ).
 	  \end{equation}
 	 Applying the  equation
 $ (h(e) \cdot 0^{V_2} +  v)^{-1} =v^{-1} + 0^{V_2} \cdot h(e)^{-1} $ to the right hand sides of  (\ref{eq:equiv_a}) and (\ref{eq:equiv_b}),
and   using (\ref{eq:distr}) and the relation
 	 $$ h(t_{V_1}(v_1))  \cdot 0^{V_2} \cdot 0^{V_2} \cdot h(e)^{-1}  =  h(t_{V_1}(v_1))  \cdot 0^{V_2} + 0^{V_2} \cdot h(e)^{-1}  ,$$
 	 we deduce that the left hand sides of (\ref{eq:equiv_a}) 
and (\ref{eq:equiv_b}) coincide if and only if the following relation holds:
 	  \begin{equation}
 	  \label{eq:equiv_C}
 	   \Psi(v_1)-\Phi(v_1) = h(t_{V_1}(v_1))\cdot 0^{V_2}+ 0^{V_2}\cdot h(s_{V_1}(v_1))^{-1} = J_h(v_1).
 	  \end{equation}
 	  This proves that $ T$ is an equivalence of generalized morphism of VB-groupoids.

Conversely, let $\Phi$ and $\Psi: V_1 \to V_2$ be  VB-groupoid morphisms,
and $T:Z_\Phi\rightarrow Z_\Psi$
 an equivalence of generalized 
morphism of VB-groupoids  over the identity
map of
 $M_1 \times_{M_2} \Gamma_2$. 
 	Since $T$ is left $V_1$-equivariant, the first component of any
 element $(e,v) \in E_1 \times_{E_2,  }  V_2$ coincides with the first
 component of its image under the map  $T$.
This implies that there exists a vector bundle morphism
 $T': E_1 \times_{E_2 } V_2 \to V_2 $ over the projection map 
 $M_1 \times_{M_2} \Gamma_2\to \Gamma_2$ 
 such that:
 		$$
 		T(e_1,v_2) = (e_1,T'(e_1,v_2)).
 		$$
Right $V_2$-equivariance implies that
$T'(e_1,v_2) $ and $v_2$ must have the same image under the source map
 $ s_{V_2}: V_2 \to E_2$.
Therefore there exists a vector bundle morphism 
 $H :E_1 \times_{E_2 } V_2 \rightarrow V_2|_{M_2}$ 
(over the natural projection map $M_1 \times_{M_2} \Gamma_2 \to M_2$),
 indeed valued in $C_2$, such that	
$$ T' (e_1,v_2) = H(e_1,v_2) \cdot 0^{V_2} + v_2 .  $$
 Again by the  right $V_2$-equivariance, we see that $H(e_1,v_2)$ should not
 depend on $v_2$.  Thus there is   a vector bundle morphism 
 $h :E_1  \rightarrow V_2|_{M_2}$
such that $H(e_1,v_2)= h(e_1)$. That is,
 $$T( e_1,v_2)=(e_1, h(e_1) \cdot 0^{V_2}+v_2).$$
 	Since the left hand sides of (\ref{eq:equiv_a}) and (\ref{eq:equiv_b})
 coincide if and only if Equation (\ref{eq:equiv_C}) holds, it follows that
 $h$ must be a homotopy between $\Phi$ and $\Psi$. 
\color{black}
 \end{proof}

The results described below are completely analogous to the Lie groupoid case and can be proved in the same way (see \cite{Hil-Ska}).
Let $E_1\leftarrow Z_1\rightarrow E_2$ and $E_2\leftarrow Z_2\rightarrow E_3$ be generalized VB-morphisms from $V_1\toto E_1$ to $V_2\toto E_2$ and from $V_2\toto E_2$ to $V_3\toto E_3$ respectively. Then 
$$
Z_1\circ Z_2 = \frac{Z_1\times_{E_2}Z_2}{V_{2}}
$$
where $V_2$ acts on $Z_1\times_{E_1} Z_2$ by $(z,z')\cdot v=(z \cdot v,v^{-1} \cdot z')$, together with the standard structure maps, defines a 
VB-groupoid generalized morphism from $V_1$ to $V_3$.
Composition is compatible with the equivalence, {\it i.e.} if $Z_1$ and
$Z_2$ are   equivalent to $Z_1'$ and  $Z_2'$, respectively, 
 then $Z_1 \circ Z_2$ is equivalent to $Z_1' \circ Z_2'$.

\begin{lemma}
	\label{lem:algebra}
\begin{enumerate}
	\item[(i)] Composition of generalized morphisms is associative up to equivalence, {\it i.e.} for any composable generalized morphisms $Z_1,Z_2,Z_3$, the compositions $ (Z_1 \circ Z_2) \circ Z_3$ and $ Z_1 \circ (Z_2 \circ Z_3)$ are equivalent.
\item[(ii)]  Let $V\toto E$ be a VB-groupoid; then $V$ together with the obvious structure maps is a generalized VB-morphism from $V$ to  itself. It is a neutral element with respect to the
 composition of  generalized VB-morphisms.
\end{enumerate}
\end{lemma}

We recall that $V$ being a neutral element means that the composition of $V$ with any generalized morphism $Z$  is equivalent to $Z$.

A generalized morphism $E\leftarrow Z\rightarrow F$ from $V \toto E$ to $W \toto F$ is said to be \emph{invertible} if there exists a generalized morphism  $F\leftarrow Z'\rightarrow E$ such that $Z \circ Z'$ is equivalent to  the neutral element $W $ and $Z' \circ Z$ is equivalent to the neutral element  $V$.
 Exactly as for Lie groupoids, we have the following result:
\begin{proposition}
	\label{prop:invertibleIsBitorsor}
A generalized  VB-groupoid morphism is invertible if and only if it is a VB-bitorsor.	
\end{proposition}

Two VB-groupoids related by a VB-groupoid bitorsor (or, equivalently, invertible generalized morphisms) are said to be \emph{Morita equivalent VB-groupoids}.
Let us list a few results about Morita equivalence of VB-groupoids.

\begin{proposition}
	\label{prop:equivRelations}
	\begin{enumerate}
		\item[(i)] 
		Morita equivalence  defines an equivalence relation among VB-groupoids. 
		\item[(ii)] 	VB-groupoids  are Morita equivalent if and only if their dual VB-groupoids are Morita equivalent.
	\end{enumerate}
\end{proposition}
\begin{proof}
The first assertion is a straightforward consequence of
 Proposition \ref{prop:invertibleIsBitorsor} and Lemma \ref{lem:algebra}. 
		For the second assertion,  
		let $ \xymatrix{E_1 &Z\ar[l]_{\phi_1} \ar[r]^{\phi_2}& E_2}$ be
 a $V_1$-$V_2$-bitorsor. For every $x \in X$ (the base manifold of $Z$) and
 $m \in M_1$ (the base manifold of $E_1$) with $ \varphi_1(x) = m$
 (where $\phi_1$ is over $\varphi_1:X\rightarrow M_1$),
 the $V_1$-action on $Z$ induces an injective linear map
 $ C_1|_m \hookrightarrow Z|_x$ defined as $c_m\rightarrow c_m\cdot 0_x^Z$.
 Dualizing this linear map, we obtain a vector bundle morphism $\psi_1 : Z^\vee \mapsto C_1 ^\vee$ which is a surjective submersion.
 We analogously obtain a vector bundle surjective submersion
 $\psi_2 : Z^\vee \mapsto C_2^\vee$.
Then  $ \xymatrix{C_1^\vee &Z^\vee\ar[l]_{\psi_1} \ar[r]^{\psi_2}& C_2^\vee}$
                is  a $V_1^\vee$-$V_2^\vee$ VB-bitorsor, where  $Z^\vee\to X$
is the dual vector bundle of $Z\to X$. To prove
this,  we denote by $\Lambda_1\subset V_1\times Z\times Z$, 
the graph of  the Lie groupoid $V_1$-action on $Z$.
It is simple to check that
 $\overline{\Lambda_1^\perp}=\{(\xi, w, -z)| (\xi, w, z)\in \Lambda_1^\perp\}$,
where $\Lambda_1^\perp$ denotes the annihilator of the graph $\Lambda_1$,
is again a graph that defines a left-action of $V^\vee_1$ on $Z^\vee$.
Similarly, we  obtain an right-action of  $V^\vee_2$ on $Z^\vee$.
One easily checks that
  $\xymatrix{C_1^\vee &Z^\vee\ar[l]_{\psi_1} \ar[r]^{\psi_2}& C_2^\vee}$
is indeed  a $V_1^\vee$-$V_2^\vee$ VB-bitorsor.
\color{black}
\end{proof} 
	
Below is a basic example of Morita equivalence.
	
\begin{proposition}
		\label{tangent_cotangent_morita}
		Let $\Gamma_1  $ and $\Gamma_2$ be Morita equivalent 
Lie groupoids, the tangent VB-groupoids $T\Gamma_1$ and $T\Gamma_2$ are
 Morita equivalent and so are the cotangent VB-groupoids $T^\vee\Gamma_1$ and $T^\vee\Gamma_2$.
		
Moreover, for  a $\Gamma_1-\Gamma_2$-bitorsor  $M_1\stackrel{}{\leftarrow} X \stackrel{}{\rightarrow} M_2$,
 $TM_1\stackrel{}{\leftarrow} TX \stackrel{}{\rightarrow} TM_2$   is a $ T\Gamma_1-T\Gamma_2$ VB-bitorsor and $A_1^\vee \stackrel{}{\leftarrow} T^\vee X \stackrel{}{\rightarrow} A_2^\vee$   is a $ T^\vee \Gamma_1-T^\vee \Gamma_2$VB-bitorsor.
	\end{proposition}
	\begin{proof}
		Let $M_1\stackrel{\varphi_1}{\leftarrow} X \stackrel{\varphi_1}{\rightarrow} M_2$  be a $\Gamma_1-\Gamma_2$ bitorsor. It is well-known that $TX$ is a $T\Gamma_1-T\Gamma_2$ bitorsor, with structure maps the tangent maps of the structure maps of $M_1\stackrel{\varphi_1}{\leftarrow} X \stackrel{\varphi_1}{\rightarrow} M_2$.
		These maps are vector bundle morphisms by construction, 
making $TX$ into  a $T\Gamma_1-T\Gamma_2$ VB-bitorsor.
The conclusion thus follows from Proposition \ref{prop:equivRelations} (ii).
	\end{proof}

\begin{definition}\label{bitorsor_equivalence_of_VB_morphism}			
	Let  $V_1\toto E_1$ and $V_2\toto E_2$ be VB-groupoids over $\Gamma_1$,
	and $W_1\toto F_1$ and $W_2\toto F_2$ be VB-groupoids over $\Gamma_2$ and let
  $\Phi_1: V_1 \to W_1$ and   $\Phi_2: V_2 \to W_2 $	be VB-groupoid morphisms. We will say that $\Phi_1$ and $\Phi_2$
   are \emph{equivalent VB-morphisms with respect to the bitorsors $Z$ and $Z'$}
if there exists a pair of VB-groupoid bitorsors $\xymatrix{E_1 &Z\ar[l]_{\phi_1}\ar[r]^{\phi_2} & E_2}$
	and $\xymatrix{F_1 &Z'\ar[l]_{\phi_1'}\ar[r]^{\phi_2'} & F_2}$
	such  that  $Z_{\Phi_2}\circ Z$ and $Z'\circ Z_{\Phi_1}$ are equivalent generalized morphisms.
\end{definition}

When this happens, we will diagrammatically denote it as:
		 \begin{equation}\label{diagram_VB_morphism_equivalence}
\begin{tikzcd}[column sep=huge,row sep=huge]
V_1 \arrow[r,"Z"] \arrow[d,swap,"Z_{\Phi_1}"]&
  V_2 \arrow[d,"Z_{\Phi_2}"] \\
W_1 \arrow[r,"Z^\prime"] \arrow[ur,Rightarrow]& W_2 
\end{tikzcd}
	 \end{equation}

Below we give an equivalent  description of   Morita equivalence of
 VB-groupoids. Let $V$ be a VB-groupoid and consider a vector bundle morphism:
		
		\begin{equation}
		\label{eq:SurjSubmVBundles}
		\xymatrix{  {\cal E} \ar[r]^{\phi} \ar[d]  & E \ar[d]   \\ X \ar[r]^{\varphi} & M,  }
		\end{equation}
		where both horizontal maps are surjective submersions.
		Consider the pull-back groupoid $ V[{\cal E}]: = {\cal E}\times_E V\times_E {\cal E}\toto{\cal E} $ of $V \toto E$ via $  {\cal E} \to E $,
		and the pull-back groupoid $\Gamma[X]:
		=X\times_M\Gamma\times_M X\toto X $ of $\Gamma \toto M$ via $  X \to M $.

\begin{proposition}\label{pull_back_VBgrpd}
	\begin{enumerate}
\item[(i)]  Then 
				\begin{equation}
				\label{eq:PBVBgrou}
				\VB{V[{\cal E}]}{{\cal E}}{\Gamma[X]}{X}
				\end{equation}
			is a VB-groupoid.
	\item[(ii)] The natural projection $\Phi_\phi$
				\begin{equation}
				\label{eq:PBVBgrouMorph}  
			\VB{V[{\cal E}]}{{\cal E}}{\Gamma[X]}{X}  \begin{array}{c}  \\  \\ \\ \longrightarrow \end{array} \VB{V}{E}{\Gamma}{M} 
				\end{equation} is a  VB-groupoid morphism.
\item[(iii)] The VB-generalized morphism associated to the  VB-groupoid
 morphism \eqref{eq:PBVBgrouMorph} is a VB-bitorsor.
	\end{enumerate}
	\end{proposition}

The VB-groupoid \eqref{eq:PBVBgrou} is called the
 \emph{pull-back VB-groupoid} of $V$ via $\phi: {\cal E} \to E$. By Proposition \ref{prop:invertibleIsBitorsor},
 the invertible VB-groupoid generalized morphism described
 in  Proposition \ref{pull_back_VBgrpd} (iii) will be denoted by ${\cal E} \leftarrow   Z_{\Phi_\phi} \rightarrow E $.
		
		\begin{remark}
			\normalfont
			A  VB-groupoid morphism $\Phi$ from $W\toto {\cal E}$ to $V \toto E$  that factors as
 the composition of a VB-groupoid isomorphism $W\stackrel{\sim}{\rightarrow}
 V[{\cal E}]$ with the natural projection 
 \eqref{eq:PBVBgrouMorph} corresponds to Morita morphisms of VB-groupoids, as introduced independently  in \cite{OrtizDelHoyo} in
			a different fashion.
			The  characterization of Morita equivalence of VB-groupoids  
			in terms of Morita morphisms goes exactly as for Lie groupoids  \cite{BehrendXu}: two VB-groupoids $V_1$ and $V_2$ are Morita equivalent if and only if there exists a VB-groupoid $W$ and  Morita morphisms of VB-groupoids $W\to V_1$ and $W\to V_2$.
		\end{remark}

Below	 are three additional important classes of examples
that  will be useful in  the future.

	\begin{example}
		\normalfont
		\label{ex:tangent}
	Let $ \Gamma \toto M$ be a Lie groupoid and $X \stackrel{\varphi}{\to} M$ a surjective submersion. Then the 
pull-back of the VB-groupoid $ T \Gamma \toto TM$ with respect	
	to $ T\varphi : TX \to TM$ is canonically isomorphic to the VB-groupoid $T \Gamma[X] \toto TX $.
	By  Proposition  \ref{pull_back_VBgrpd} (iii), it defines a $ T\Gamma[X]-T\Gamma $ VB-bitorsor,
  denoted by $ TX \stackrel{}{\leftarrow} Z_{\varphi}  \stackrel{}{\rightarrow} TM$,
 where we adopt  the simplified notation $Z_\varphi$ for $Z_{\Phi_{T\varphi}}$.
	\end{example}
	
\begin{example}
		\normalfont
		\label{ex:cotangent}
		Let $ \Gamma \toto M$ be a Lie groupoid with Lie algebroid $A$ and $X \stackrel{\varphi}{\to} M$ a groupoid right  action by $\Gamma$.
 The infinitesimal action of $A$ on $X$ yields a vector bundle morphism
 ${\mathfrak a}: \varphi^* A \to TX $.  The vector bundle morphism
 $({\mathfrak a},{\rm id}): \varphi^* A \stackrel{{\mathfrak a} \times id_A}{\longrightarrow} TX \times_{TM} A  \simeq A[X]$, is injective. Its dual is 
therefore a vector bundle morphism  $ {\mathfrak p}_\varphi : A[X]^\vee \to A^\vee $ which is a surjective submersion.
	The pull-back of the VB-groupoid $ T^\vee \Gamma \toto A^\vee$ with respect to $ {\mathfrak p}_\varphi: A[X]^\vee \to A^\vee $ is canonically isomorphic to the VB-groupoid $T^\vee \Gamma[X] \toto A[X]^\vee $. 
By Proposition \ref{pull_back_VBgrpd} (iii),  it defines
 a $ T^\vee \Gamma[X]-T^\vee \Gamma $ bitorsor,   denoted by 
	$ T^\vee X \stackrel{}{\leftarrow} Z_{\varphi}^\vee  \stackrel{}{\rightarrow} T^\vee M$.
	\end{example}

	\begin{example}
		\normalfont
		\label{ex:pullBack}
		Given a VB-groupoid $V\toto E$, and  a surjective submersion $ \varphi: X \to M$, the projection $ \varphi^* E \to E $ is a vector bundle morphism as in \eqref{eq:SurjSubmVBundles}.
 The resulting pull-back VB-groupoid shall be denoted as
 $\varphi^* V \toto \varphi^* E $. It is, by construction, a VB-groupoid over $ \Gamma[X]$.
\end{example}

\subsection{Homotopy and Morita equivalence}\label{VB_homotopy_and_Morita}

In this  subsection,
 we prove two propositions  about the relation between
 homotopy equivalence (see Definition \ref{def:homotopyEquivVbgroup}) and Morita equivalence of VB-groupoids, together with a study of the behavior of maps under such equivalences. Results of this subsection will be essential
 in understanding the behavior of homotopy $\Gamma$-modules under VB-groupoid Morita equivalence.
		
	\begin{remark}\normalfont
		\label{rmk:homotopIff}
		By Lemma \ref{lem:HomotopyMeansEqual}, a pair of VB-groupoid morphisms $\Phi_1 \colon V_1 \to V_2 $ and $\Phi_2 \colon V_2 \to V_1   $ form a homotopy equivalence if and only if $ Z_{\Phi_1} \circ Z_{\Phi_2} \simeq V_2 $ and $ Z_{\Phi_2} \circ Z_{\Phi_1} \simeq V_1 $.
		In particular $V_1$ and $V_2$ are Morita equivalent VB-groupoids and $Z_{\Phi_1} $ and $ Z_{\Phi_2}$ are bitorsors relating them.
	\end{remark}

		We now describe an important example of homotopy equivalence.
	 Let $V\toto E$ be a VB-groupoid and
	 consider two vector bundle morphisms:
	 \begin{equation}
	 \label{eq:SurjSubmVBundles2}
	 \xymatrix{  {\cal E}_1 \ar[r]^{\phi_1} \ar[d]  & E \ar[d]   \\ X \ar[r]^{\varphi} & M  } \hbox{  $\begin{array}{c}  \\ \\ \\ \hbox{ and } \end{array}$ }
	 \xymatrix{  {\cal E}_2 \ar[r]^{\phi_2} \ar[d]  & E \ar[d]   \\ X \ar[r]^{\varphi} & M,  }
	 \end{equation}
	 where all the horizontal maps are surjective submersions.
	 Using partitions of unity, one can construct vector bundle morphisms:
	   \begin{equation}
	   \label{eq:SurjSubmVBundles20} \xymatrix{  {\cal E}_1 \ar[rr]^{\psi^{(12)}} \ar[dr]_{\phi_1} & & {\cal E}_2 \ar[dl]^{\phi_2}   \\& E& } \hbox{  $\begin{array}{c}  \\ \\ \\ \hbox{ and } \end{array}$ }
	  \xymatrix{  {\cal E}_2 \ar[rr]^{\psi^{(21)}} \ar[dr]_{\phi_2} & & {\cal E}_1 \ar[dl]^{\phi_1}   \\ &E &} .\end{equation}
	  \begin{lemma}\label{ex:Moritaequivalence}
	  The VB-groupoid morphisms:
	 $$
	  \begin{array}{rrcl}  \Psi^{(12)} \colon &V[{\cal E}_1 ]  & \to  &  V[{\cal E}_2 ]  \\ & (e,v,e') & \mapsto & (\psi^{(12)} (e) , v, \psi^{(12)} (e')) \end{array} \hbox{  $\begin{array}{c}   \\ \hbox{ and } \end{array}$ } \begin{array}{rrcl}  \Psi^{(21)} \colon& V[{\cal E}_2 ]  & \to   &  V[{\cal E}_1 ]  \\ & (e,v,e') & \mapsto & (\psi^{(21)} (e) , v, \psi^{(21)} (e')) \end{array}
	 $$
	  form a homotopy equivalence of VB-groupoids:
	  \begin{equation}
	  \SelectTips{eu}{12}
	  \label{eq:homotopyEquivPullBack}
	  \xymatrix{V[{\cal E}_1] \ar@(ul,dl)[]_{} \ar@<-.5ex>[r]_{\Psi^{(12)}} & \ar@(ur,dr)[]^{}  \ar@<-.5ex>[l]_{ \Psi^{(21)}} V[{\cal E}_2]} .
	  \end{equation}
	  \end{lemma}
	\begin{proof}  
	 Since Diagrams \eqref{eq:SurjSubmVBundles20} commute, so do the following diagrams of  VB-groupoid morphisms:
	    $$ \xymatrix{  V[{\cal E}_1] \ar[rr]^{\Psi^{(12)}} \ar[dr]_{\Phi_{\phi_1}} & & V[{\cal E}_2] \ar[dl]^{\Phi_{\phi_2}}   \\& V& } \hbox{  $\begin{array}{c}  \\ \\ \\ \hbox{ and } \end{array}$ }
	    \xymatrix{  V[{\cal E}_2] \ar[rr]^{\Psi^{(21)}} \ar[dr]_{\Phi_{\phi_2}} & & V[{\cal E}_1] \ar[dl]^{\Phi_{\phi_1}}   \\ &V &} $$ 
   with $ \Phi_{\phi_1}$ and $\Phi_{\phi_2}$ as in  
Proposition \ref{pull_back_VBgrpd} (ii).
	    It follows from  Proposition \ref{pull_back_VBgrpd} (iii)  that the morphisms pointing downward correspond to bitorsors. In terms of generalized morphisms, the previous commutative diagrams read as follows:
	     $$ Z_{\Psi^{(12)}} = Z_{\Phi_{\phi_2}}^{-1}  \circ Z_{\Phi_{\phi_1}}  \hbox{ and } 
	     Z_{\Psi^{(21)}} = Z_{\Phi_{\phi_1}}^{-1}  \circ Z_{\Phi_{\phi_2}} .$$  
 As a consequence, $Z_{\Psi^{(12)} } \circ Z_{\Psi^{(21)}} = V_1 [{\cal E}_1]$ and 
	     $Z_{\Psi^{(21)} } \circ Z_{\Psi^{(12)}} = V_2 [{\cal E}_2]$.
	     By Lemma \ref{lem:HomotopyMeansEqual}, $\Psi^{(12)} \circ \Psi^{(21)}$ and $\Psi^{(21)} \circ \Psi^{(12)}$
	     are therefore homotopic to the identity. 
	\end{proof}

We can now state the first proposition, which uses the notations $ \varphi_1^* V_1,\varphi_2^* V_2$ of Example \ref{ex:pullBack}.

	\begin{proposition}
		\label{morita_VB_groupoids}
		Let  $V_1 \toto E_1$ and $V_2 \toto E_2$ be VB-groupoids over Lie groupoids $\Gamma_1 \toto M_1$ and $ \Gamma_2 \toto M_2$, respectively.
		Then  $V_1$ and $V_2$ are Morita equivalent  VB-groupoid if and only if there exist
		\begin{enumerate}
			\item[(i)] a $\Gamma_1 - \Gamma_2$ bitorsor  $\xymatrix{M_1 &X\ar[l]_{\varphi_1}\ar[r]^{\varphi_2} & M_2}$;
			\item[(ii)] an homotopy equivalence between the pull-back VB-groupoids $\PBVBG{V_1}{\varphi_1}{E_1}$ and $\PBVBG{V_2}{\varphi_2}{E_2}$:
			\begin{equation}
			\SelectTips{eu}{12}
			\label{eq:homotopyEquivPullBackneeded}
			\xymatrix{\varphi_1^* V_1 \ar@(ul,dl)[]_{} \ar@<-.5ex>[r]_{\Phi^{(12)}} & \ar@(ur,dr)[]^{}  \ar@<-.5ex>[l]_{ \Phi^{(21)}} \varphi_2^* V_2} 
			\end{equation}
		\end{enumerate}
	\end{proposition} 
\begin{remark}\label{base_pull_back_bitorsor}
{\rm  Since  the homotopy equivalence in
 Definition \ref{def:homotopyEquivVbgroup}  involves morphisms over the identity, it is convenient to think of
 the pull-back groupoids $\varphi_1^*V_1$ and $\varphi_2^*V_2$ above  as VB-groupoids over the same action groupoid $(\Gamma_1\times\overline{\Gamma}_2)\ltimes X\toto X$ that is canonically isomorphic to $\Gamma_1[X]$ and $\Gamma_2[X]$.}
\end{remark}
\begin{proof}
Assume that $V_1$ and $V_2$ are Morita equivalent VB-groupoids,
with  $\xymatrix{E_1 &Y\ar[l]_{\phi_1}\ar[r]^{\phi_2} & E_2}$  being
  a $V_1-V_2$ VB-bitorsor.
		Let $ X$ be the base manifold of $Y$ and $\varphi_1 \colon X \to M_1$,  $\varphi_2 \colon X \to M_2$
		be the base maps of $\phi_1 \colon Y \to E_1$ and $\phi_1 \colon Y \to E_2$, respectively.
It follows from Lemma \ref{ex:Moritaequivalence} that the VB-groupoid
 $ V_1[Y]$ is homotopy equivalent to $ \varphi_1^* V_1$,
 and likewise  $ V_2[Y]$ is homotopy equivalent to  $ \varphi_2^* V_2$.
By the definition of  $VB$-bitorsors, $V_1[Y]$ and $ V_2[Y]$ are
 isomorphic VB-groupoids, and are therefore  homotopy equivalent. 
		This implies that  $ \varphi_1^* V_1$ and $ \varphi_2^* V_2$
 are  homotopy equivalent. 
Let us denote by $Z_i:\varphi_i^*V_i\to V_i$, for $i=1,2$,
 the generalized VB-groupoid morphisms defined by $\varphi_i$, as in Example \ref{ex:pullBack}.
\color{black}
By construction, the following diagram is a commutative diagram of invertible generalized morphisms:
		\begin{equation} 
		\label{eq:proven} 
		\xymatrix{ V_1 \ar[r]^{Y}  \ar@{=>}[rd]& V_2 \\ \varphi_1^* V_1 \ar[u]^{Z_1}
			\ar[r]_{Z_{\Phi^{(12)}}} 
			& \varphi_2^* V_2  \ar[u]_{Z_2 } }.	
	\end{equation}		
		
Conversely, if there exist  data as in
 Proposition \ref{morita_VB_groupoids} (i)-(ii), then
 Remark  \ref{rmk:homotopIff} implies that
 $ \varphi_1^* V_1$ and  $ \varphi_2^* V_2$ are Morita equivalent.
 Since $ V_1$ and  $ \varphi_1^* V_1$, 
as well as $ V_2$ and  $ \varphi_2^* V_2$,
	are Morita equivalent according to 
Proposition \ref{pull_back_VBgrpd} (iii),
 it follows from  Proposition \ref{prop:equivRelations} (i) that $V_1$ and $V_2$ are Morita equivalent VB-groupoids.
	\end{proof}

Let  $V_1\toto E_1$, $W_1\toto F_1$  be VB-groupoids over $\Gamma_1$, and $V_2\toto E_2$, $W_2\toto F_2$ be VB-groupoids over $\Gamma_2$.
		
\begin{proposition}
	\label{prop:MoritaAndHomotMorphism}
			Morphisms of  VB-groupoids  $\Phi_1:V_1\to W_1$ and $\Phi_2:V_2\to W_2$ are equivalent  with respect to a bitorsor if and only if
 there exist 
				\begin{enumerate}
					\item[(i)] a $\Gamma_1 - \Gamma_2$ bitorsor  $\xymatrix{M_1 &X\ar[l]_{\varphi_1}\ar[r]^{\varphi_2} & M_2}$; and
					\item[(ii)]   homotopy equivalences
of the pull-back VB-groupoids between
 $\PBVBG{V_1}{\varphi_1}{E_1}$ and $\PBVBG{V_2}{\varphi_2}{E_2}$,  
				and 
between  $\PBVBG{W_1}{\varphi_1}{E_1}$ and $\PBVBG{W_2}{\varphi_2}{F_2}$:
					\begin{equation}
					\SelectTips{eu}{12}
					\label{eq:homotopyEquivPullBackneeded}
					\xymatrix{\varphi_1^* V_1 \ar@(ul,dl)[]_{} \ar@<-.5ex>[r]_{\Phi^{(12)}} & \ar@(ur,dr)[]^{}  \ar@<-.5ex>[l]_{ \Phi^{(21)}} \varphi_2^* V_2} \hbox{ ,}
					\xymatrix{\varphi_1^* W_1 \ar@(ul,dl)[]_{} \ar@<-.5ex>[r]_{\Psi^{(12)}} & \ar@(ur,dr)[]^{}  \ar@<-.5ex>[l]_{ \Psi^{(21)}} \varphi_2^* W_2} ,
					\end{equation} 
					\end{enumerate}
such that $\Psi^{(12)}\circ\varphi_1^*\Phi_1$ and $\varphi_2^*\Phi_2\circ\Phi^{(12)}$ are homotopic equivalent VB-groupoid morphisms
 from $\varphi_1^*V_1$ to $\varphi_2^*W_2$. 

Here the VB-groupoid morphisms	$\varphi_1^* \Phi_1 \colon \varphi_1^* V_1 \to \varphi_1^* W_1$ 
and $\varphi_2^* \Phi_2 \colon \varphi_2^* V_2 \to \varphi_2^* W_2$
	are the pull-backs of the VB-morphisms $\Phi_1\colon V_1 \to W_1$
and $\Phi_2 \colon V_2 \to W_2$, respectively. 	
\end{proposition}

		\begin{proof}		
If there exist data as in  Proposition 
			\ref{prop:MoritaAndHomotMorphism} (i)-(ii), 
then  we have a commutative diagram of VB-groupoid generalized morphisms as follows:

\[			
 \begin{tikzcd}[row sep=scriptsize, column sep=scriptsize]
V_1\arrow[dd,swap,"Z_{\Phi_1}"]&&\varphi_1^*V_1\arrow[ll,swap, "Z_1"]\arrow[rr,shift left, "Z_{\Phi^{(12)}}"]\arrow[dd,swap,"Z_{\varphi_1^*\Phi_1}"]
&& \varphi_2^*V_2\arrow[dd,"Z_{\varphi_2^*\Phi_2}"]
\arrow[rr,"Z_2"]&&V_2\arrow[dd,"{Z_{\Phi_2}}"]\\
&&&&&&\\
W_1&&\varphi_1^*W_1\arrow[ll, "Z_1'"]\arrow[rr,shift left, swap, "Z_{\Psi^{(12)}}"]
\arrow[uurr,Rightarrow]
&& \varphi_2^*W_2
\arrow[rr, swap,"Z_2'"]&&W_2\\
 \end{tikzcd}			
\]	
where $Z_i$ and $Z_i'$ denote the generalized morphisms associated to the pullback as in Example \ref{ex:pullBack}.

All horizontal arrows in the diagram are bitorsors in view of
Proposition \ref{pull_back_VBgrpd} (iii)
 and  Lemma \ref{lem:HomotopyMeansEqual}.	Therefore $\Phi_1$ and $\Phi_2$ are equivalent VB-groupoid morphisms with respect to a
bitorsor obtained by suitable composition of horizontal arrows, according to Definition \ref{bitorsor_equivalence_of_VB_morphism}.
									

Conversely, assume that  $\xymatrix{E_1 &Y\ar[l]_{\phi_1}\ar[r]^{\phi_2} & E_2}$
			and $\xymatrix{F_1 &Z\ar[l]_{\psi_1}\ar[r]^{\psi_2} & F_2}$
			are VB-groupoid bitorsors with respect to which
 $\Phi_1$ and $\Phi_2$ are equivalent VB-groupoid morphisms. 
			It is easy to show that they can be chosen to induce
 the same $\Gamma_1 - \Gamma_2$ bitorsor  $\xymatrix{M_1 &X\ar[l]_{\varphi_1}\ar[r]^{\varphi_2} & M_2}$,
  where $X$ is the base manifold of both vector bundles $Y$ and $Z$. According to Proposition \ref{morita_VB_groupoids}, there exists
a homotopy equivalence between $ \varphi_1^* W_1$ and $\varphi_2^* W_2 $
as in			\eqref{eq:homotopyEquivPullBackneeded}.

Consider the following commutative diagram of generalized morphisms where all horizontal arrows are invertible and the middle diagram is a commutative diagram of generalized VB-morphisms by hypothesis:
	
\begin{equation}\label{equivalence_VB_pullback_diagram2}			
  \begin{tikzcd}[row sep=scriptsize, column sep=scriptsize]
    \varphi_1^*V_1\arrow[dd,swap,"Z_{\varphi_1^*\Phi_1}"]\arrow[rr,"Z_1"]&&V_1\arrow[rr, "Y"]\arrow[dd,swap,"Z_{\Phi_1}"]
&& V_2\arrow[dd,"Z_{\Phi_2}"]&&\varphi_2^*V_2\arrow[ll,swap,"Z_2"]\arrow[dd,"{Z_{\varphi_2^*\Phi_2}}"]\\
&&&&&&\\
 \varphi_1^*W_1\arrow[rr,swap,"Z_{1}"]&&W_1\arrow[rr, swap,"Z"]\arrow[uurr,Rightarrow]
&& W_2&&\varphi_2^*W_2\arrow[ll,"Z_{2}"]
   \end{tikzcd}			
\end{equation}

The commutative diagram \eqref{eq:proven}  applied to the surjective submersions $\varphi_1$ and $\varphi_2$ implies the following equivalences of generalized VB-groupoid morphisms:
\[
 Z_{\Phi^{(12)}} \simeq  Z_{2}^{-1} \circ Y \circ Z_{1}\, , \qquad
			   Z_{\Psi^{(12)}} \simeq  Z_{2}^{-1} \circ Z \circ Z_{1}\, .
			   \]
Substituting in the horizontal arrows
 of \eqref{equivalence_VB_pullback_diagram2} implies

			 \begin{equation}
			 Z_{\Psi^{(21)}}\circ Z_{\varphi_1^*\Phi_1}\simeq Z_{\varphi_2^*\Phi_2}\circ Z_{\Phi^{(12)}}.
			 \end{equation}
This concludes the proof.
\end{proof}

From a pullback diagram as \eqref{eq:proven}, one can also prove that
	$$
		Z_{\Phi^{(21)}} \simeq  Z_{1}^{-1} \circ Y \circ Z_{2},
 \hbox{  } \hbox{ and  }Z_{\Psi^{(21)}} \simeq  Z_{2}^{-1} \circ Z \circ Z_{1}. 
		$$
It is then immediate to conclude that 
 $\varphi_1^*\Phi_1\circ \Phi^{(21)}$ and $\Psi^{(21)}\circ \varphi_2^*\Phi_2$
are also homotopic equivalent VB-groupoid morphisms from $\varphi_2^*V_2$ to $\varphi_1^*W_1$.

	\subsection{Morita equivalent quasi-Poisson groupoids}
 We can now state the main result of this section.
	 
	 \begin{theorem}
	 	\label{morphism_quasi_poisson2}
	 	Let $(\Gamma_1,\Pi_1,\Lambda_1)$ and $(\Gamma_2,\Pi_2,\Lambda_2)$ be
	 	Morita equivalent quasi-Poisson groupoids and let $M_1\leftarrow X\rightarrow M_2$ be a bitorsor as in Proposition \ref{prop:equiv.def.MEquasiPoisson}. The VB-groupoid morphisms
	 	 $\Pi^\#_1:T^\vee\Gamma_1\rightarrow T\Gamma_1$ and 
	 	$\Pi^\#_2:T^\vee\Gamma_2\rightarrow T \Gamma_2$ are equivalent with respect to the VB-bitorsors $T^\vee X$ and $T X$.
	 \end{theorem}
	More precisely, let $(\Gamma_1,\Pi_1,\Lambda_1)$ and $(\Gamma_2,\Pi_2,\Lambda_2)$
be Morita equivalent quasi-Poisson groupoids with respect to a  $\Gamma_1-\Gamma_2$  bitorsor $M_1\leftarrow X\rightarrow M_2$ as in Proposition \ref{prop:equiv.def.MEquasiPoisson}. Theorem \ref{morphism_quasi_poisson2} states that the following is a commutative diagram of generalized VB-groupoid morphisms, where
 $TX$ and $T^\vee X$ are the VB-bitorsors described in Proposition~\ref{tangent_cotangent_morita}:
	 
	  \begin{equation}\label{Equivalence_Poisson_VB_morphism}
\begin{tikzcd}[column sep=huge,row sep=huge]
T^\vee\Gamma_1 \arrow[r,"T^\vee X"] \arrow[d,swap,"Z_{\Pi_1^\sharp}"]&
 T^\vee \Gamma_2 \arrow[d,"Z_{\Pi^\sharp_2}"] \\
T\Gamma_1 \arrow[r,"TX"] \arrow[ur,Rightarrow]& T\Gamma_2 
\end{tikzcd}
	 \end{equation}
\begin{proof}[Proof of Theorem \ref{morphism_quasi_poisson2}]
Let $M_1\stackrel{\varphi_1}{\leftarrow} X \stackrel{\varphi_2}{\rightarrow} M_2$
 be a $\Gamma_1-\Gamma_2$ bitorsor as in
 Proposition \ref{prop:equiv.def.MEquasiPoisson}.
Then there is a natural isomorphism of pull back groupoids:
\begin{equation}
\label{eq:COVID19}
\Gamma_1[X]\stackrel{\sim}{\longrightarrow}  \Gamma_2[X].
\end{equation}
By Definition \ref{def:morita},
  there exist  twist equivalent quasi-Poisson structures $ (\Pi_1^X, \Lambda_1^X)$ and $ (\Pi_2^X, \Lambda_2^X)$ on $ \Gamma_1[X] \simeq \Gamma_2[X]$ such that the bivector fields $ \Pi_1^X$ and $ \Pi_2^X$ are
 projectable and project to
 $ \Pi_1$ and $ \Pi_2$, respectively,
 under the Morita morphisms $ \Gamma_1[X] \to \Gamma_1$ and $ \Gamma_2[X] \to \Gamma_2$, respectively.
This implies that the following diagrams are commutative as
  VB-groupoid morphisms:
		$$ 
		\xymatrix{ T^\vee \Gamma_1\ar[d]_{{ (\Pi_1)^\#}}   &\ar[l]  T^\vee (\Gamma_1[X]) \ar[d]^{{(\Pi^X_1)^\#}} \\  T \Gamma_1  & \ar[l]T (\Gamma_1[X])}   
		\xymatrix{  \\ \hbox{ and }  }
 		\xymatrix{ T^\vee (\Gamma_2[X])\ar[d]_{{( \Pi_2^X)^\#}}  \ar[r] &  T^\vee \Gamma_2 \ar[d]^{{(\Pi_2)^\#}} \\  T (\Gamma_2[X]) \ar[r] &  T \Gamma_2}    $$
 	where the horizontal maps are projections
 as in Proposition \ref{pull_back_VBgrpd} (ii).
Here $T^\vee (\Gamma_1[X])$ and $T^\vee (\Gamma_2[X])$ are 
identified with the  pull-back VB-groupoids as in Example  \ref{ex:cotangent},
while $T (\Gamma_1[X])$ and  $T (\Gamma_2[X])$
are   
identified with the  pull-back VB-groupoids as in 
Example  \ref{ex:tangent}.
It thus follows that the following diagrams of generalized 
 		VB-groupoid morphisms are commutative:
		$$ 
		\xymatrix{ T^\vee \Gamma_1\ar[d]_{Z_{ (\Pi_1)^\#}}   &\ar@{->}[l]_{Z_{\varphi_1}^\vee }  T^\vee (\Gamma_1[X]) \ar[d]^{Z_{(\Pi^X_1)^\#}} \\  T \Gamma_1  & \ar@{->}[l]^{Z_{\varphi_1}}T (\Gamma_1[X])}   
		\xymatrix{  \\ \hbox{ and }  }
		\xymatrix{ T^\vee (\Gamma_2[X])\ar[d]_{Z_{( \Pi_2^X)^\#}}  \ar@{->}[r]^{Z_{\varphi_2}^\vee} &  T^\vee \Gamma_2 \ar[d]^{Z_{(\Pi_2)^\#}} \\  T (\Gamma_2[X]) \ar@{->}[r]_{Z_{\varphi_2}} &  T \Gamma_2}    $$
		where  $ TX \stackrel{}{\leftarrow} Z_{\varphi_1}  \stackrel{}{\rightarrow} TM_1$ and 
 $ TX \stackrel{}{\leftarrow} Z_{\varphi_2}  \stackrel{}{\rightarrow} TM_2$ 
are as in Example \ref{ex:tangent}, and   $ A_1[X]^\vee \stackrel{}{\leftarrow} Z_{\varphi_1}^\vee  \stackrel{}{\rightarrow} A_1^\vee$
 and $ A_1[X]^\vee \stackrel{}{\leftarrow} Z_{\varphi_2}^\vee  \stackrel{}{\rightarrow} A_2^\vee$
are  as in Example \ref{ex:cotangent}.
All horizontal maps are invertible VB-groupoid generalized morphisms
according to Proposition   \ref{pull_back_VBgrpd} (iii).
		
		Since the quasi-Poisson structures  $ (\Pi_1^X, \Lambda_1^X)$ and $ (\Pi_2^X, \Lambda_2^X)$ are twist equivalent, $(\Pi_1^X)^\#$ and $(\Pi_2^X)^\#$ are homotopy equivalent 
according to  Example \ref{prop:TGtwist} so that
 $Z_{(\Pi_1^X)^\#}$ and $Z_{(\Pi_2^X)^\#}$ are equivalent
  by Lemma \ref{lem:HomotopyMeansEqual}. 
Therefore, we have  the following commutative diagram of generalized morphisms,
 where the horizontal arrows are bitorsors and 
 the horizontal isomorphisms in the middle square are those
induced by the isomorphism \eqref{eq:COVID19}:

\begin{equation}\label{tangent_cotangent_generalized_diagram}		 	
			\xymatrix{
		 		 T^\vee \Gamma_1\ar[d]_{Z_{ (\Pi_1)^\#}}   &\ar@{->}[l]_{Z_{\varphi_1}^\vee } 
				 T^\vee (\Gamma_1[X]) \ar[d]_{Z_{(\Pi^X_1)^\#}}\ar@{<=>}[r] &
		 	T^\vee (\Gamma_2[X])\ar[d]^{Z_{( \Pi_2^X)^\#}}  \ar@{->}[r]^{Z_{\varphi_2}^\vee} &  T^\vee \Gamma_2 \ar[d]^{Z_{(\Pi_2)^\#}}
		 		\\
		 		  T \Gamma_1  & \ar@{->}[l]^{Z_{\varphi_1}}T (\Gamma_1[X]) \ar@{<=>}[r] \ar@{=>}[ur]& T (\Gamma_2[X]) \ar@{->}[r]_{Z_{\varphi_2}} &  T \Gamma_2}   
\end{equation}				  

		 Theorem \ref{morphism_quasi_poisson2} then follows from the fact that the composition
	 	$ (Z_{\varphi_2}^\vee) \circ (Z_{\varphi_1}^\vee)^{-1}  $ is a $T^\vee \Gamma_1 - T^\vee \Gamma_2 $ bitorsor equivalent to $A_1^\vee \leftarrow T^\vee X\rightarrow A_2^\vee$, while
		 the composition
		 $ (Z_{\varphi_2}) \circ (Z_{\varphi_1})^{-1}  $ is a $T \Gamma_1 - T \Gamma_2 $ bitorsor equivalent to $TM_1 \leftarrow T X\rightarrow TM_2$.
	\end{proof}


\section{ 2-term complexes over a differentiable stack}

The aim of this section is to introduce the notion of 2-term complexes over 
a differentiable stack  and to show that it is essentially equivalent to  
 Morita equivalence classes of VB-groupoids.

For this purpose, we first recall the definition of homotopy $\Gamma$-modules over a given Lie groupoid $\Gamma$.
 We  present a dictionary between VB-groupoids over $ \Gamma$ and
 $2$-term homotopy $\Gamma$-modules, following \cite{Gracia-Saz-Mehta}. 
We then interpret several results on $VB$-groupoids established in the previous section
 in terms of homotopy $\Gamma$-modules.
In this way, we are led naturally to the category
of 2-term complexes over a given differentiable stack $\XX$,
and  obtain an efficient way of studying this category
in terms of VB-groupoids.

We shall use this material  in Section \ref{sec:COVID} to associate,
 to any (+1)-shifted Poisson structure on a differentiable stack,
 a morphism from its cotangent complex shifted by $+1$, to its tangent complex.

\subsection{Homotopy $\Gamma$-modules}

We recall in this subsection some standard materials
 from \cite{AriasAbadCrainic, Gracia-Saz-Mehta}.
For a Lie groupoid $\gpoidmm$, let  $(C^\bullet (\Gamma), \delta)$ denote the
Lie groupoid cohomology  cochain complex:
\[ 
C^0(\Gamma)\stackrel{\delta}{\longrightarrow} C^1(\Gamma) \stackrel{\delta}{\rightarrow}  C^2(\Gamma) \cdots   
\]
 where, for any $p\geq 0$,
$C^p(\Gamma): = C^\infty(\Gamma^{(p)})$,
 and $\Gamma^{(p)}$ denotes the manifold consisting of $p$-composable arrows
in $\gpoidmm$.
 We recall that $\delta f (\gamma) = f(s(\gamma)) - f(t(\gamma)) $  for all $f\in C^0(\Gamma) = C^\infty(M)$ and $\gamma \in \Gamma$,
and that for all $f\in C^p(\Gamma) = C^\infty(\Gamma^{(p)})$ and $(\gamma_0,\ldots,\gamma_p)\in \Gamma^{(p+1)}$:
\begin{eqnarray*}
(\delta f)(\gamma_0,\ldots,\gamma_p) &=& f(\gamma_1,\ldots\gamma_p) + \sum_{j=1}^p (-1)^j f(\gamma_0,\ldots,\gamma_{j-1}\gamma_j\ldots,\gamma_p) \cr 
& & + (-1)^{p+1} f(\gamma_0,\ldots,\gamma_{p-1})\;.
\end{eqnarray*}
There is also a natural multiplication, called the \emph{cup product},
 on $C^\bullet (\Gamma)$ given by
\[
(f \cup g)(\gamma_1,\ldots,\gamma_{p+q}) = f(\gamma_1,\ldots,\gamma_p)g(\gamma_{p+1},\ldots,\gamma_{p+q}),  
\]
for all $f\in C^p(\Gamma)$ and $g\in C^q(\Gamma)$. In this way,
$\big(C^\bullet (\Gamma),  \delta \big)$ becomes a differential 
algebra  (dga in short).

Let $\ec : =\bigoplus_{r \in {\mathbb Z}} E_r$ be a ${\mathbb Z}$-graded vector bundle over $M$,
 and let $C^q(\Gamma,E_r)=\Gamma((t^{(q)})^*E_r)$, where $t^{(q)}:\Gamma^{(q)}\rightarrow M$ 
is defined by $t^{(q)}(\gamma_1,\ldots,\gamma_q)=t(\gamma_1)$ for  $q>0$,
 and $t^{(0)}={\rm id}_M$. The ${\mathbb Z}$-graded vector space $C^\bullet (\Gamma,\ec)=\oplus_{p \in {\mathbb Z}} C^p(\Gamma,\ec)$, where 
 $C^p(\Gamma,\ec)=\oplus_{q+r=p} C^q(\Gamma,E_r)$
admits a right $C^\bullet(\Gamma)$-module structure defined,
 for any $\omega\in C^p(\Gamma,E_r)$ and $f\in
C^q(\Gamma)= C^\infty(\Gamma^{(q)})$, by
\[
(\omega\cdot f)(\gamma_1,\ldots,\gamma_{p+q}) = \omega(\gamma_1,\ldots,\gamma_p)f(\gamma_{p+1},\ldots,\gamma_{p+q})  \;.
\]
There is a natural isomorphism 
\begin{equation}
\label{eq:Tensors}
C^p(\Gamma,E_r) \simeq \Gamma(E_r) \otimes_{C^\infty(M)} C^p (\Gamma), 
\end{equation}
where $C^p(\Gamma) $ is seen as a $C^\infty(M)$-module with the help of the algebra morphism $(t^{(p)})^*: C^\infty(M) \hookrightarrow C^p (\Gamma)$.   
In particular, a $C^\bullet(\Gamma)$-linear map $\Phi : C^\bullet(\Gamma,\ec) \mapsto C^{\bullet+k}(\Gamma,\ec)$ of degree $k$
is entirely determined by its restriction to sections of $\ec$.

As in \cite{AriasAbadCrainic, Gracia-Saz-Mehta}, we define a 
homotopy $\Gamma$-module (also referred to as  a {\em representation up to homotopy}) as a pair $(\ec,D)$ with $\ec$ a ${\mathbb Z}$-graded
vector bundle over $M$ and $D$ a degree $+1$ operator $D: C^\bullet (\Gamma, \ec)
\to C^{\bullet+1} (\Gamma, \ec)$ satisfying the equation  $D^2=0$ and 
the \emph{Leibniz identity}
\begin{equation}
\label{eq:leibnizId}
D(\omega\cdot f)=(D\omega)\cdot f+(-1)^{|\omega|}\omega \cdot (\delta f)
\end{equation}
for all $\omega\in C^{\bullet} (\Gamma, \ec)$ and $f\in C^\bullet (\Gamma)$.
That is, $(\ec,D)$ is a dg right  module of the dga $(C^\bullet (\Gamma), \delta)$. 

When the graded vector bundle $\ec$ is concentrated in two consecutive degrees only, we shall speak of a {\em 2-term homotopy $\Gamma$-module}.

\smallskip
\begin{remark}\label{homotopy_gamma_mod_data}
	\cite{AriasAbadCrainic, Gracia-Saz-Mehta}
For a 2-term homotopy $\Gamma$-module $(\ec,D)$, $\ec$ is given by a pair $(C,E)$ of vector bundles over $M$ and the operator $D$ 
is determined by a triple $(\rho,R,\Omega)$ where
 \begin{equation}
  \label{eq:3datae}
\left\{ \begin{array}{l}
\hbox{ $\rho:C\rightarrow E$ is a vector bundle morphism over the identity of $M$}	 \\
   R= (R^C , R^E) \hbox{ with  $R^E \in \Gamma(s^*E^\vee\otimes t^*E)$  and  $ 	R^C\in\Gamma(s^* C^\vee \otimes t^* C) $  } \\
  \Omega\in \Gamma( (s^{(2)})^* E^\vee \otimes (t^{(2)})^* C)  
\end{array}
\right.
\end{equation}
 It is often convenient to consider $\rho: C \to E$ as a $2$-term complex,
denoted  $\rho: C[1] \to E$ to
indicate that $C$ is of degree $(-1)$ and $E$ is 
of degree $0$. In the sequel, we always adapt this degree
conversion unless specified. We also  consider both 
  $R^C$ and $R^E $ as  families of
linear maps  $R^C_\gamma : C_{s(\gamma)}  \to C_{t(\gamma)}$ and $R^E_\gamma : E_{s(\gamma)}  \to E_{t(\gamma)}$,  respectively,
 associated to any   $\gamma \in \Gamma$, while
  $\Omega$ as a family of linear map $\Omega_{\gamma_1,\gamma_2} :E_{s(\gamma_2)}
  \to C_{t(\gamma_1)}$  associated  to any  $(\gamma_1,\gamma_2) \in \Gamma^{(2)}$.

The condition $D^2=0$ imposes several constraints, whose meaning we write on the right column:
$$\left\{ \begin{array}{lll}
 R^E_\gamma \circ \rho = \rho \circ R^C_\gamma & \forall \gamma\in\Gamma & \hbox{``The pair $R_\gamma := (R^E_\gamma,R^C_\gamma)$ is a chain map}  \\ & &  \hbox{from $C_{s(\gamma)} \stackrel{\rho}{\rightarrow} E_{s(\gamma)} $ to 
  $C_{t(\gamma)}\stackrel{\rho}{\rightarrow} E_{t(\gamma)} $}" \\
 R^E_{\gamma_1\gamma_2} - R^E_{\gamma_1}\circ R^E_{\gamma_2}  = \rho\circ\Omega_{\gamma_1,\gamma_2} & \forall (\gamma_1,\gamma_2)\in \Gamma^{(2)} & \hbox{ ``$\Omega_{\gamma_1,\gamma_2}$ is a homotopy  between}\\
  R^C_{\gamma_1\gamma_2} - R^C_{\gamma_1}\circ R^C_{\gamma_2}   = \Omega_{\gamma_1,\gamma_2}\circ\rho & & \hbox{the chain maps $R_{\gamma_1} \circ R_{\gamma_2} $ and $R_{\gamma_1 \gamma_2}$"} \\
   \Omega_{\gamma_1\gamma_2,\gamma_3}- \Omega_{\gamma_1,\gamma_2}\circ R^E_{\gamma_3} &  \forall (\gamma_1,\gamma_2,\gamma_3)& \hbox{ ``Both natural homotopies between the chain maps }\\
\hspace{1cm}   =  \Omega_{\gamma_1,\gamma_2\gamma_3}- R^C_{\gamma_1}\circ\Omega_{\gamma_2,\gamma_3} & 
    \hspace{0.5cm}\in \Gamma^{(3)} &  \hbox{  $ R_{\gamma_1} \circ R_{\gamma_2} \circ R_{\gamma_3} $ and $ R_{\gamma_1\gamma_2\gamma_3}$ are equal"} 
\end{array}
\right.$$

The \emph{dual of a 2-term homotopy $\Gamma$-module $ (\ec,D)$} is the
2-term homotopy $\Gamma$-module $(\ec^\vee,D^\vee)$ obtained by dualizing
all the data in Remark \ref{homotopy_gamma_mod_data}.
More precisely, if $\ec$ is concentrated in degrees $k$ and $k+1$ with
 $\ec_k = C$ and $\ec_{k+1} = E$, then $\ec^\vee$ is concentrated in degrees
 $-k-1$ and $-k$ with  $\ec^\vee_{-k-1} = E^\vee$ and $\ec_{-k}=C^\vee$.

 The data that correspond to $D^\vee$ are given by  $\rho^\vee: E^\vee \to C^\vee$,
the dual of $\rho:C \to E$,  and  $ (R^{E^\vee},R^{C^\vee},\Omega^\vee) $ where:
$$ R_{\gamma}^{E^\vee} = (R_{\gamma^{-1}}^E)^\vee,  R_\gamma^{C^\vee} =
 (R_{\gamma^{-1}}^C)^\vee  \hbox{ and }  \Omega^\vee_{\gamma_1,\gamma_2} = (\Omega_{\gamma_2^{-1}, \gamma_1^{-1}})^\vee 
$$ 
for all $\gamma \in \Gamma$ and $(\gamma_1,\gamma_2) \in \Gamma^{(2)}$.

Let $(\ec,D)$ be a $2$-term homotopy $\Gamma$-module. 
The \emph{$k$-shifted $2$-term homotopy $\Gamma$-module $(\ec[k],D)$}, is the $2$-term homotopy $\Gamma$-module which has the same differential $D$, but for which the degree of $\ec$ is shifted by $-k$ (that is,  elements of degree $i$ in $\ec[k]$ are 
those  of degrees $k+i$ in $\ec$). 
\end{remark}

Let  $(\ec,D)$ and $(\ec',D')$ be homotopy
$\Gamma$-modules.	    
A  morphism of homotopy $\Gamma$-modules
from $\ec$ to $\ec'$ is a  $C^{\bullet} (\Gamma)$-linear chain map
\begin{equation}
\label{eq:homotopy-mor}
\Phi:  C^\bullet (\Gamma, \ec) \to  C^\bullet (\Gamma, \ec').
\end{equation}

Let $\Gamma \toto M$ be a Lie groupoid.
For any surjective submersion $\varphi:X\rightarrow M$ and any 2-term
homotopy $\Gamma$-module $(\ec,D)$,
there is a natural  2-term
homotopy $\Gamma[X]$-module $(\varphi^* \ec , \varphi^* D)$
obtained by pulling
back the graded bundle $\ec$ along $\varphi$ and all the data in Remark \ref{homotopy_gamma_mod_data} by the Morita morphism $\phi_{\varphi}:\Gamma[X]\rightarrow\Gamma$.
This structure shall be called \emph{pull back 2-term homotopy $\Gamma[X]$-module}.
For every morphism of  2-term 
homotopy $\Gamma$-module $ \Psi:(\ec_1,D_1) \to (\ec_2,D_2) $,
there is also a natural  pull back morphism $\varphi^* \Psi : (\varphi^* \ec_1,\varphi^* D_1) \to (\varphi^* \ec_2,\varphi^* D_2) $.

Morphisms of homotopy $\Gamma$-modules $ \Phi$ and $\Psi:  C^\bullet (\Gamma, \ec_1) \to  C^\bullet (\Gamma, \ec_2)$ are said to be \emph{homotopic} 
if  there exists a $C^\bullet(\Gamma)$-linear degree $-1$ map $H:C^{\bullet}(\Gamma,\ec_1)\rightarrow C^{\bullet-1}(\Gamma,\ec_2)$ such that
$$
\Phi-\Psi = D_2\circ H + H\circ D_1\;.
$$
Composition of homotopy $\Gamma$-module morphisms respects homotopies. This allows
us  to define the following:  homotopy $\Gamma$-modules
$\ec_1$ and $\ec_2$  are {\it homotopy equivalent} if there exist morphisms
of  homotopy $\Gamma$-modules
 $\Phi:C(\Gamma,\ec_1)\rightarrow C(\Gamma,\ec_2)$ and
	$\Psi:C(\Gamma,\ec_2)\rightarrow C(\Gamma,\ec_1)$ such that the morphisms $\Phi\circ\Psi$ and $\Psi\circ\Phi$ are homotopic to the identity. 

 In view of the isomorphism \eqref{eq:Tensors}, by $C^\bullet(\Gamma)$-linearity, a morphism of homotopy $\Gamma$-modules $\Phi :  C^\bullet (\Gamma, \ec) \to  C^\bullet (\Gamma, \ec')$ is  determined by its restriction to sections of $\ec $ over the manifold $ M$.
In particular, for 2-term homotopy $\Gamma$-modules, a morphism $\Phi$ is determined
by a pair $(\phi, \mu)$,  where $\phi$, called the {\it linear term},
consists of   a pair $ (\phi^C ,\phi^E)$,  
with  $\phi^C : C \to C'$  and $\phi^E : E \to E'$ being
vector bundle morphisms over the identity of $M$, 
 and $\mu$ is a section of $ s^* E^\vee \otimes t^* C'$.
The latter can be considered as a family of
  linear maps $\mu_\gamma : E_{s(\gamma)} \to C_{t(\gamma)}'$
  associated to any $\gamma \in \Gamma$.
 A homotopy between two  morphisms of homotopy $\Gamma$-modules can, therefore, be constructed by a vector bundle morphism $ h : E \to C'$.
 
 \begin{remark}
 Since $\Phi$ is a chain map,  the pair $(\phi, \mu)$ must satisfy several constraints, 
 that we now spell out in terms of the data $(\rho,R=(R^C,R^E),\Omega)$
 and $(\rho',R'=(R^{C'},(R^{E'}),\Omega')$ associated to $2$-term homotopy $\Gamma$-modules $ C[1] \stackrel{\rho}{\rightarrow} E$ and $
 C'[1] \stackrel{\rho'}{\rightarrow} E' $, respectively below:
 \begin{equation*}
 \left\{ 
 \begin{array}{ll}
 \phi^E \circ \rho = \rho' \circ \phi^C &  \hbox{ ``$\phi$ is a chain map
 from $C[1] \stackrel{\rho}{\rightarrow} E $ to $ C'[1]\stackrel{\rho'}{\rightarrow} E'$"}  \\  
 R_\gamma^{C'}  \circ \phi^C - \phi^C \circ R_\gamma^C = \mu_\gamma \circ \rho_{s(\gamma)}  & \hbox{\lq\lq$\mu_\gamma$ is a homotopy between } \\
 R_\gamma^{E'}  \circ \phi^E - \phi^E \circ R_\gamma^E = \rho_{t(\gamma)} \circ \mu_\gamma     & \hspace{0.5cm}\hbox{ the chain maps  $\phi \circ R_\gamma$ and $R_\gamma' \circ \phi $"} \\
 \mu_{\gamma_1\gamma_2} - \phi^C \circ \Omega_{\gamma_1,\gamma_2} - \Omega_{\gamma_1,\gamma_2}' \circ \phi^E  & \hbox{``Both natural homotopies between} \\
\hspace{1cm} = \mu_{\gamma_1} \circ R_{\gamma_2}^E +  R_{\gamma_1}^{C'} \circ  \mu_{\gamma_2}  &
\hspace{0.5cm}\hbox{$\phi \circ R_{\gamma_1} \circ  R_{\gamma_2}$ and $ R_{\gamma_1} \circ  R_{\gamma_2} \circ \phi $ are equal"} 
 \end{array}
 \right.
 \end{equation*}
  \end{remark}

A morphism $(\phi, \mu)$ of homotopy $\Gamma$-modules
 $\Phi :  C^\bullet (\Gamma, \ec) \to  C^\bullet (\Gamma, \ec')$ for which $\ec =\ec'$,  $\phi={\rm id}$ and $\mu_{\epsilon(m)}=0$ for all $m \in M$ is said to be a \emph{gauge transformation} 
 \cite{Gracia-Saz-Mehta}.
Note that a gauge transformation $\Phi: C^\bullet (\Gamma, \ec) \to
  C^\bullet (\Gamma, \ec)$ is an invertible $C^\bullet(\Gamma)$-linear map.
 The inverse of $({\rm id},\mu)$ is the gauge transformation $({\rm id},-\mu)$.
  Whenever two homotopy $\Gamma$-modules are transformed one into the other by
 a gauge transformation,  we will call them \emph{gauge equivalent}.

\medskip

\subsection{From VB-groupoids to 2-term homotopy $\Gamma$-modules}
\label{dictionary}

Let us introduce the VB-groupoid cohomology, following \cite{Gracia-Saz-Mehta}.
 
Let $V\toto E$ be a VB-groupoid over $\Gamma\toto M$ with core $C$ as in \eqref{eq:florence}. 
We define a graded vector space $  \oplus_{p \geq 0} C_{VB}^p(V)$ as follows.
For $p=0$, we define $C^{0}_{VB}(V)$ to be the space $\Gamma(C)$ of sections of the core $C \to M$. 
For $p \geq 1$, we define $C_{VB}^p (V)$ to be the space of those sections
 $\sigma \in \Gamma((\pi^{(p)})^* V)$ satisfying

\begin{equation}\label{left_projectability_2}
s_V(\sigma(\gamma_1,\ldots,\gamma_p))= s_V(\sigma(1_{s(\gamma_1)},\gamma_2,\ldots,\gamma_p)),
\end{equation}
where
$\pi^{(p)}: \Gamma^{(p)} \to \Gamma$ is  the projection
 $$ \pi^{(p)} : (\gamma_1, \dots, \gamma_{p}) \mapsto \gamma_1 ,$$
for all $(\gamma_1,\ldots,\gamma_p)\in \Gamma^{(p)}$. 
The graded vector space
$C^\bullet_{VB}(V)$ has the structure of right $C^\bullet(\Gamma)$-module defined,
 for  any $p \geq 1$, by
\[
(\sigma\star f)(\gamma_1,\ldots,\gamma_{p+q}) = \sigma(\gamma_1,\ldots,\gamma_p) f(\gamma_{p+1},\ldots,\gamma_{p+q}),
\]
for all $\sigma\in C^{p}_{VB}(V)$, $f\in C^\infty(\Gamma^{(q)})$ and $(\gamma_1,\ldots,\gamma_{p+q})\in \Gamma^{(p+q)}$.
For $p=0$, the module structure is defined by
\[
( \sigma \star f) \, (\gamma_1,\ldots,\gamma_{q}) \, =  \, \sigma (t (\gamma_1))  \cdot 0_{\gamma_1}^\vee \,  f(\gamma_{1},\ldots,\gamma_{q}),
\]
for all $\sigma\in C^{0}_{VB}(V)=\Gamma(C)$, $f\in C^\infty(\Gamma^{(q)})$ and $(\gamma_1,\ldots, \gamma_{q})\in \Gamma^{(q)}$.

In order to turn  $C_{VB}^\bullet (V)$ into a complex, we  consider it as a subcomplex
 of  the Lie groupoid cohomology cochain
 complex $(C^\bullet(V^\vee),\delta_{V^\vee})$
 of the dual VB-groupoid $V^\vee\toto C^\vee$ (as defined in (\ref{eq:perugia})). 

\begin{proposition}
\label{VB-cochains}
Let $V\toto E$ be a VB-groupoid over $\Gamma\toto M$ with core $C$ as in \eqref{eq:florence}. 
Let $i:C^p_{VB}(V)\hookrightarrow C^p(V^\vee)$ be the linear map
 defined, when $p \geq 1$, by
\[
i(\sigma)(\eta_1,\ldots,\eta_p) =
 \langle \eta_1,\sigma(\gamma_1,\ldots,\gamma_p)\rangle
\]
for all compatible $\eta_1{\in}V^\vee_{\gamma_1} \, , \dots, \, \eta_p{\in}V^\vee_{\gamma_p}$,
 and,  when $p=0$,
 by $i(\sigma)(\alpha)=\langle \alpha,\sigma(m)\rangle$ 
 for all $\alpha\in C^\vee_m$. Then $(C^\bullet_{VB}(V), \delta_{V^\vee})$
is a subcomplex of $(C^\bullet(V^\vee),\delta_{V^\vee})$.

Moreover, the restriction of the coboundary differential $\delta_{V^\vee}$ applied to $\sigma\in C^p_{VB}(V)$ reads, when $p \geq 1$,
\begin{eqnarray}\label{VB_coboundary_p}
(\delta_{V^\vee}\sigma )(\gamma_0,\ldots,\gamma_p) &=& -\sigma(\gamma_0\gamma_1,\ldots,\gamma_p)\cdot \sigma(\gamma_1,\ldots,\gamma_p)^{-1} + \sum_{i=2}^p (-1)^i \sigma(\gamma_0,\ldots,\gamma_{i-1}\gamma_i,\ldots,\gamma_p) \cr
& & + (-1)^{p+1} \sigma(\gamma_0,\ldots,\gamma_{p-1}),
\end{eqnarray}
and when $p=0$ 
\begin{equation}\label{VB_coboundary_0}
\delta_{V^\vee}(\sigma)(\gamma) = -0_\gamma\cdot\sigma(s(\gamma))^{-1} -\sigma(t(\gamma))\cdot 0_\gamma\;, 
\end{equation}
for any $\sigma\in C^0_{VB}(V)=\Gamma(C)$ and $\gamma\in\Gamma$. 
\end{proposition}
\begin{proof}
The first statement is the content of Proposition 5.5 of \cite{Gracia-Saz-Mehta}. Formulas (\ref{VB_coboundary_p}-\ref{VB_coboundary_0}) follows from a direct computation. 
\end{proof}

\smallskip
Since we have  the projection map $ V^\vee \to \Gamma$,
  $C^\bullet(V^\vee)$ is clearly  a  $C^\bullet(\Gamma)$-module.
 It is routine to check that $C^\bullet_{VB}(V)$
is a $C^\bullet(\Gamma)$-submodule. 
By construction, $\big(C^\bullet(V^\vee),  \delta_{V^\vee}\big)$ 
 is a dg right  module of the dga $(C^\bullet (\Gamma), \delta)$.
%
\color{black}

\begin{lemma}\label{VB_chain_homotopy}
	Let $V_1$ and $V_2$ be VB-groupoids as in \eqref{eq:VB:samebase}, with
 cores $C_1$ and $C_2$, respectively.
	\begin{enumerate}
\item[(i)] Assume that  $\Phi:V_1\rightarrow V_2$ 
is a VB-groupoid morphism over 
${\rm id}:\Gamma\rightarrow\Gamma$. Then $\hat\Phi:C^\bullet_{VB}(V_1)
\rightarrow C^\bullet_{VB} (V_2)$
defined by $\hat{\Phi}(\sigma)=\Phi\circ \sigma$ for all
 $\sigma\in C_{VB}^\bullet(V_1)$, is a cochain map
 and a right $C^\bullet(\Gamma)$-module morphism.
\item[(ii)] Assume that $\Phi$ and $\Psi:V_1\rightarrow V_2$ 
are  homotopic VB-groupoid morphisms  with homotopy $h:E_1\rightarrow C_2$.
 Then the chain maps
$\hat{\Phi}$ and $\hat{\Psi}$ are homotopic with homotopy being the $C^\bullet(\Gamma)$-linear morphism 
$\hat{h}:C^{p+1}_{VB}(V_1)\rightarrow C^{p}_{VB}(V_2)$ defined as
\begin{equation}\label{VB_homotopy}
\hat{h}(\sigma)(\gamma_1,\ldots,\gamma_p) = - h(s_{V_1}(\sigma(1_{t(\gamma_1)},\gamma_1,\ldots,\gamma_p)))\cdot 0_{\gamma_1},
\end{equation}
$\forall (\gamma_1,\ldots,\gamma_p)\in \Gamma^{(p)}$.
\item[(iii)] Assume that the VB-groupoids $V_1$ and $V_2$ are homotopy equivalent, then  so are $  (C^\bullet_{VB} (V_1), \delta_{V^\vee_1})$ and  $  (C^\bullet_{VB} (V_2), \delta_{V^\vee_2})$.
\end{enumerate}
\end{lemma}
\begin{proof}  Assertion (i) is obvious.
To prove  (ii), by $C^\bullet(\Gamma)$-linearity, it suffices to check
 this for $\sigma$  in $C^0_{VB}(V)$ and $C^1_{VB}(V)$.
 For $\sigma\in C^0_{VB}(V)$, we have,   for all $ m \in M$,
\begin{eqnarray*}
(\delta_{V^\vee_2}\hat{h}+\hat{h}\delta_{V^\vee_1})(\sigma)(m) &=& \hat{h}(\delta_{V^\vee_1}\sigma)(m)= - h(s_{V_1}(\delta_{V^\vee_1}\sigma(1_m))) = h(s_{V_1}(\sigma(m)^{-1}))\cr
&=& h(t_{V_1}(\sigma(m))) = J_h(\sigma)(m)\;,
\end{eqnarray*}
where, in the third equality of the first line,
 we used (\ref{VB_coboundary_0}) and,  in the second line,
 we used (\ref{eq:homotopy3}) and the fact that
$0_{1_m} = 1_{0_m}$. 

Let $\sigma\in C^1_{VB}(V)$; we have,  for all $ \gamma \in \Gamma$,
\begin{eqnarray*}
(\delta_{V^\vee_2}\hat{h}+\hat{h}\delta_{V^\vee_1})(\sigma)(\gamma) &=&  \delta_{V^\vee_2}(\hat{h}(\sigma))(\gamma) + \hat{h}(\delta_{V^\vee_2}\sigma)(\gamma)\cr 
& =& - 0_\gamma\cdot\hat{h}(\sigma)(s(\gamma))^{-1} - \hat{h}(\sigma)(t(\gamma))\cdot 0_\gamma - h(s_{V_1}(\delta_{V^\vee_2}\sigma(1_{t(\gamma)},\gamma)))\cdot 0_\gamma\cr
&=& 0_\gamma\cdot h(s_{V_1}(\sigma(1_{s(\gamma)})) )^{-1} + h(s_{V_1}(\sigma(1_{t(\gamma)})))\cdot 0_\gamma \cr
& & - h(s_{V_1}(\sigma(1_{t(\gamma)}) -\sigma(\gamma)\cdot\sigma(\gamma)^{-1}))\cdot 0_\gamma\cr
&=& 0_\gamma\cdot h(s_{V_1}(\sigma(1_{s(\gamma)})) )^{-1} + h(t_{V_1}(\sigma(\gamma)))\cdot 0_\gamma\cr
&=& 0_\gamma\cdot h(s_{V_1}(\sigma(\gamma)) )^{-1} + h(t_{V_1}(\sigma(\gamma)))\cdot 0_\gamma = J_h(\sigma)(\gamma),
\end{eqnarray*}
where, in the last line, we used the defining property (\ref{left_projectability_2}) of VB-cochains. 

Assertion (iii) now follows immediately from (i) and (ii).
\end{proof}

Recall that right decompositions of VB-groupoids are defined  following
 Equation \eqref{VB_exact_sequence} in Section 
\ref{sec:homot_equiv}. 

\begin{lemma}
	\label{lem:isomChoiceRD}
	Let $V \toto E$ be a VB-groupoid as in \eqref{eq:florence} with core $C$.
Every right decomposition  
defines an isomorphism of $C^\bullet(\Gamma)$-modules
between $ C^\bullet_{VB}(V)$ and $C^{\bullet-1}(\Gamma, C[1] \oplus E) $.
\end{lemma}
\begin{proof}
	Let us fix a right decomposition of $V\toto E$. The induced isomorphism
$V \simeq  t^*C \oplus s^*E$ as  vector bundles  over $\Gamma$
	allows us to decompose $ \sigma \in C^p_{VB}(V)$ as a sum $ \sigma = \sigma_C + \sigma_E  $ where for all ${\gamma_1, \dots, \gamma_p} \in \Gamma^{(p)}$:
$$  \left\{ \begin{array}{rcl}
\sigma_C ({\gamma_1, \dots, \gamma_p}) & \in & C_{t(\gamma_1)} \\
\sigma_E ({\gamma_1, \dots, \gamma_p})  & \in & E_{s(\gamma_1) =t(\gamma_2)}
\end{array}\right.$$

By construction, $\sigma_C  $ is a section of $(t^{(p)})^* C $, i.e. it belongs to
 $C^p(\Gamma;C) \subset C^{p-1}(\Gamma, C[1] \oplus E)$. 
Condition (\ref{left_projectability_2}) implies that $\sigma_E(\gamma_1,\ldots,\gamma_p)$ 
does not depend on $\gamma_1$,
it can therefore be identified with a section $\hat{\sigma}_E$ of $(t^{(p-1)})^*E$,
 i.e. it belongs to $C^{p-1}(\Gamma;E) \subset  C^{p-1}(\Gamma, C[1] \oplus E)$.

One can check that  the map
$\sigma\rightarrow (\sigma_C,\hat{\sigma}_E)$ is an isomorphism,
and therefore  identifies
$C^p_{VB}(V)$ with $C^{p-1}(\Gamma,C[1]\oplus E)$, which 
 is also  $C^\bullet(\Gamma)$-linear by construction.
\end{proof}

For every VB-groupoid $V \toto E$ with core $C$, the $C^\bullet(\Gamma)$-module isomorphism described in Lemma \ref{lem:isomChoiceRD} allows us
 to transfer the differential $\delta_{V^\vee}$ defined in (\ref{VB_coboundary_p}-\ref{VB_coboundary_0}) to
a differential $D_{V}$ on $C^\bullet(\Gamma, C[1] \oplus E)$. Thus
 $(C[1] \oplus E, D_{V})$  becomes a $2$-term homotopy $\Gamma$-module.
The associated map $\rho : C[1] \to E$ as in \eqref{eq:3datae} is easily seen to be the core-anchor of the VB-groupoid $V$. 

The backward construction is given in \cite{Gracia-Saz-Mehta} by verifying that the data listed in Remark \ref{homotopy_gamma_mod_data} induce a VB-groupoid structure on $t^*C \oplus s^* E $, referred to as a \emph{split VB-groupoid}. This gives the following:

\begin{proposition}\label{VBgroupoid_2termrep}
	\cite{Gracia-Saz-Mehta} 
Let $\Gamma \toto M$ be a Lie groupoid.
\begin{enumerate}
\item[(i)] There is a  one-to-one correspondence between VB-groupoids
 over $\Gamma$ equipped with  right-decompositions and
 2-term homotopy $\Gamma$-modules.
\item[(ii)] Different
 right-decompositions of a VB-groupoid over $\Gamma$ induce {\it gauge equivalent}
 2-term homotopy $\Gamma$-modules. 
\end{enumerate}
\end{proposition}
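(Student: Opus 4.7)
The plan is to use the observation made just before the proposition: a right-decomposition $\pi:s^*E\to V$ (which restricts to the canonical inclusion of units on $M$) yields a vector bundle isomorphism $V\cong t^*C\oplus s^*E$ over $\mathrm{id}_\Gamma$, and transporting the VB groupoid structure of $V$ along this isomorphism produces a split VB groupoid structure on $t^*C\oplus s^*E$. Conversely, any split VB groupoid comes with a tautological right-decomposition. Thus the bijection at the level of objects reduces to the identification between split VB groupoids over $\Gamma$ and 2-term homotopy $\Gamma$-modules.

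For the forward direction, I would read off the quadruple $(\rho,R^E,R^C,\Omega)$ of Remark~\ref{homotopy_gamma_mod_data} from the structure maps of the split VB groupoid $t^*C\oplus s^*E\toto E$. The anchor $\rho:C\to E$ is given by the restriction of $t_V$ to the core $C\subset V|_M$. The quasi-actions $R^E\in\Gamma(s^*E^\vee\otimes t^*E)$ and $R^C\in\Gamma(s^*C^\vee\otimes t^*C)$ are obtained from multiplying a split element $(0,e)$ (resp.\ $(c,0)$) by the identity $1_{e}$ at the appropriate endpoint, and $\Omega\in C^2(\Gamma,\mathrm{Hom}(E,C))$ arises as the structure coefficient of the multiplication: given two composable elements $(c_1,e_1)\in V_{\gamma_1}$ and $(c_2,e_2)\in V_{\gamma_2}$ with $s_V(c_1,e_1)=t_V(c_2,e_2)$, their product in $V$ has $E$-component $R^E_{\gamma_2^{-1}}(\cdots)$ and $C$-component containing a term of the form $\Omega(\gamma_1,\gamma_2)(e_2)$. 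The four axioms of Remark~\ref{homotopy_gamma_mod_data} then correspond respectively to the compatibility of $s_V,t_V$ with $\rho$, to the associativity of the multiplication evaluated on pairs of the form $((0,e_1),(0,e_2))$ and $((c_1,0),(c_2,0))$ (using the unit conditions), and finally to the associativity of $m_V$ on triples in $\Gamma^{(3)}$, which yields the 2-cocycle condition on $\Omega$. Conversely, given such a quadruple, the formulas above define a split VB groupoid whose VB groupoid axioms are equivalent to the four conditions.

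For the second statement, let $\pi_1,\pi_2$ be two right-decompositions of the same VB groupoid $V$. Their difference $\pi_1-\pi_2:s^*E\to V$ takes values in $\ker s_V=t^*C$, so it determines a bundle map $\mu:E\to C$ over $\Gamma$, i.e.\ an element $\mu\in C^1(\Gamma,\mathrm{Hom}(E,C))$; since both decompositions agree with the canonical splitting on $M$, we have $\mu|_M=0$. The induced isomorphism between the two split models $t^*C\oplus s^*E$ is the identity on the direct summands and shifts by $\mu$, which is precisely the form of a gauge transformation recalled above the proposition. A direct computation then shows that under this gauge transformation the quadruples $(\rho,R^E_1,R^C_1,\Omega_1)$ and $(\rho,R^E_2,R^C_2,\Omega_2)$ are intertwined; conversely, any gauge transformation arises in this way, since it is determined by a 1-cochain $\mu$ vanishing on units.

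The main obstacle will be the bookkeeping required to verify that the four axioms of a 2-term homotopy $\Gamma$-module correspond \emph{exactly} to the VB groupoid axioms for the split model, and that the cocycle condition on $\Omega$ encodes associativity on $\Gamma^{(3)}$ with the correct signs. These computations are precisely those carried out in Section~3--4 of \cite{Gracia-Saz-Mehta}, so rather than redo them one may invoke that dictionary and only verify the gauge-equivalence statement explicitly, which reduces to checking how $R^E,R^C$ and $\Omega$ transform under the shift by $\mu\in C^1(\Gamma,\mathrm{Hom}(E,C))$.
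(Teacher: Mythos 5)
Your proposal is correct in substance, but it takes a different (more hands-on) route than the paper. The paper does not verify the dictionary at the level of structure maps at all: it first identifies the VB-cochain complex of a split VB groupoid with the shifted complex $C^{\bullet-1}(\Gamma, C[1]\oplus E)$ (using the projectability condition \eqref{left_projectability_2} to see that the $E$-component of a cochain drops the first argument), so that the groupoid coboundary $\delta$ of the dual VB groupoid directly becomes the degree $+1$ operator $D$ defining the homotopy $\Gamma$-module, and then simply cites \cite{Gracia-Saz-Mehta} for both assertions. You instead unpack the quadruple $(\rho,R^E,R^C,\Omega)$ of Remark~\ref{homotopy_gamma_mod_data} from the structure maps of the split model and match the four axioms with the VB groupoid axioms; this is the original Gracia-Saz--Mehta computation and is perfectly valid, though heavier. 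What the paper's route buys is that the operator $D$ and its square-zero property come for free from $\delta^2=0$; what your route buys is an explicit description of the data, which is what one actually uses later (e.g.\ in Proposition \ref{pro:beijing}). Your treatment of part (2) is essentially the intended one: two right-decompositions differ by $\mu\in C^1(\Gamma,\mathrm{Hom}(E,C))$ vanishing on units, and the induced isomorphism of split models is $(c,e)\mapsto(c+\mu(e),e)$, which preserves the leading $E$-term and hence is a gauge transformation in the sense recalled before the proposition. One small caution: your claim that the $E$-component of a product $(c_1,e_1)\cdot(c_2,e_2)$ is of the form $R^E_{\gamma_2^{-1}}(\cdots)$ is not quite right in the standard conventions --- the source of the product equals the source of the second factor, so the $s^*E$-component is just $e_2$, and the quasi-action $R^E$ enters through the target map rather than through the multiplication; this does not affect the overall argument since you defer the bookkeeping to \cite{Gracia-Saz-Mehta}, but it should be fixed if you carry out the verification.
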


Since gauge morphisms are invertible morphisms,
 Lemma \ref{VB_chain_homotopy} implies the following:

\begin{lemma}\label{VB_chain_homotopy_2}
	Let $ V_1 \toto E_1$ and $V_2 \toto E_2$ be VB-groupoids
 as in \eqref{eq:VB:samebase}, with cores $C_1$ and $C_2$, respectively.
	Choose any  right decompositions of $V_1$ and $V_2$. The following statements
hold.
\begin{enumerate}
	\item[(i)] A VB-groupoid morphism $\Phi: V_1 \to V_2$ over the identity of $\Gamma$
	induces a morphism of 2-term homotopy $\Gamma$-modules
	$$ \underline{\Phi} : ( C_1 [1] \oplus  E_1, D_{V_1}) \to ( C_2 [1] \oplus  E_2, D_{V_2})  .$$ 
	\item[(ii)] Assume that
 $\Phi$ and $\Psi:V_1\rightarrow V_2$ are  homotopic VB-groupoid 
	morphisms  with homotopy $h:E_1\rightarrow C_2$. Then the induced morphisms of 2-term homotopy $\Gamma$-modules $ \underline{\Phi}$ and $\underline{\Psi}$
	are homotopic with homotopy $h$.
		\item[(iii)] If the VB-groupoids $V_1$ and $V_2$ are homotopy 
equivalent,   so are their induced 2-term homotopy $\Gamma$-modules $( C_1 [1] \oplus  E_1, D_{V_1}) $ and $( C_2 [1] \oplus  E_2, D_{V_2})  $.
	\end{enumerate}
\end{lemma}

Let us consider now the case of the tangent and cotangent groupoid
of a Lie groupoid $\Gamma \toto M$ with unit map  $\epsilon: M \hookrightarrow \Gamma$.
A \emph{compatible Ehresmann connection on $s : \Gamma \to M$}  is a Ehresmann connection on the source map $s : \Gamma \to M$ which coincides with
$\epsilon_* (T_m M) $ at the point $\epsilon (m)$ for all $m \in M$.
The following lemma is obvious.

\begin{lemma}
\label{lem:paris}
The following are equivalent:
\begin{enumerate}
\item[(i)] right-decompositions for the tangent VB-groupoid $T\Gamma$ ;
\item[(ii)] right-decompositions for the cotangent VB-groupoid $T^\vee\Gamma$;  
\item[(iii)] compatible Ehresmann connections on $s: \Gamma \to M$.
\end{enumerate}
\end{lemma}

Let $\gpoidmm$ be a Lie groupoid, and let us choose a
compatible Ehresmann connection on $s:\Gamma\to M$.
According  to  Lemma \ref{lem:paris}, we thus obtain right-decompositions for
 the tangent VB-groupoid $T\Gamma$ 
and  cotangent VB-groupoid $T^\vee\Gamma$.
By  Proposition \ref{VBgroupoid_2termrep} (i), these
 VB-groupoids correspond to  $2$-term homotopy $\Gamma$-modules
 denoted $(A[1]\oplus  TM, D_T)$ and 
 $(T^\vee [1]M \oplus  A^\vee, D_{T^\vee})$, 
referred to as the \emph{adjoint $2$-term homotopy $\Gamma$-module} 
and \emph{coadjoint $2$-term homotopy $\Gamma$-module},
 respectively. A different choice of 
compatible Ehresmann connection gives rise to  gauge equivalent 
$2$-term homotopy $\Gamma$-modules.

\begin{proposition}
	\label{prop:coadjointIsAdjointShifted}
	Let $\Gamma$ be a Lie groupoid.
The coadjoint 	$2$-term homotopy $\Gamma$-module is the dual of the adjoint 	$2$-term homotopy $\Gamma$-modules
shifted by $+1$.
\end{proposition}

This proposition follows from a more general fact.
	Let $V \toto E$ be a Lie groupoid with core $C$.
	Then the $2$-term homotopy $\Gamma$-module $( C[1] \oplus E , D_{V^\vee} ) $ associated to the dual VB-groupoid $V^\vee$ is the dual of the $2$-term homotopy $\Gamma$-module  $( C[1] \oplus E , D_{V} ) $ associated to $V$, shifted by $+1$. In particular $D_{V^\vee}=D^\vee$.

\medskip 

\subsection{2-term complexes over a differentiable stack}

In this subsection, we interpret the results obtained in 
Section \ref{VB_homotopy_and_Morita}
 on Morita equivalence of VB-groupoids in terms of homotopy $\Gamma$-modules.

\begin{definition}
 \label{morita_homotopy_gamma_module}
 A 2-term homotopy $\Gamma_1$-module $(\ec_1,D_1)$ and a  2-term homotopy
 $\Gamma_2$-module $(\ec_2,D_2)$  are said to be Morita equivalent if 
there exist
\begin{enumerate}
 \item[(i)] a $\Gamma_1-\Gamma_2$ bitorsor  $\xymatrix{M_1 & X\ar[l]^{\varphi_1}\ar[r]_{\varphi_2} & M_2}$; and
  \item[(ii)] an homotopy equivalence between the pull-backs  $ (\varphi_1^* \ec_1, \varphi_1^* D_1) $ and $ (\varphi_2^* \ec_2, \varphi_2^* D_2) $ along $\varphi_1$ and $\varphi_2$ respectively.
\end{enumerate}
\end{definition}

In Definition \ref{morita_homotopy_gamma_module} (ii),
 we canonically identified   the base groupoids
  $\Gamma_1[X]$ with  $\Gamma_2[X]$ as in
 Remark \ref{base_pull_back_bitorsor}.

\begin{definition}
\label{def:2termoverstack}
Let $\XX$ be a differentiable stack. A 2-term complex over $\XX$ is a
  Morita equivalence class of  2-term homotopy $\Gamma$-modules $\ec$,
 where $\Gamma\toto M$ is any representative of $\XX$.  
\end{definition}

We denote the Morita equivalence class of the homotopy $\Gamma$-module $(\ec,D)$
 as $[\ec]$,
 and say that $(\ec,D)$ represents $[\ec]$ on $\Gamma\toto M$.

\begin{proposition}
	\label{prop:translation}
Let $V_i$, $i=1,2$, be Morita equivalent VB-groupoids
 over $\Gamma_i$ with $C_i$ and $E_i$ being the cores and the units,
 respectively.
 For any choice of right decompositions, 
the induced 2-term homotopy $\Gamma_1$-module
$(C_1[1]\oplus E_1,D_{V_1})$ and 2-term homotopy $\Gamma_2$-module
$(C_2[1]\oplus E_2,D_{V_2})$ are Morita equivalent. 
\end{proposition}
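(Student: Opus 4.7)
The plan is to reduce this proposition to two previously established facts: the characterization of VB-Morita equivalence via bitorsors and homotopy equivalence of pullback VB groupoids (Proposition \ref{morita_VB_groupoids}), and the assertion (proved just above) that homotopy equivalent VB groupoids over the same base groupoid yield homotopy equivalent homotopy $\Gamma$-modules once right decompositions are chosen. The bridge between the two is the observation that the dictionary of Proposition \ref{VBgroupoid_2termrep} is compatible with pullback along a Morita morphism on the base.

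More precisely, the first step is to invoke Proposition \ref{morita_VB_groupoids} to extract a $\Gamma_1$-$\Gamma_2$ bitorsor $M_1 \stackrel{\varphi_1}{\leftarrow} X \stackrel{\varphi_2}{\rightarrow} M_2$ together with a homotopy equivalence of VB groupoids $V_1[\varphi_1^\ast E_1] \simeq V_2[\varphi_2^\ast E_2]$ over the canonically isomorphic Lie groupoids $\Gamma_1[X] \cong \Gamma_2[X]$. The second step is to check the compatibility claim: if $\ec_i = C_i[1] \oplus E_i$ denotes the homotopy $\Gamma_i$-module associated with $V_i$ through a chosen right decomposition, then the pulled-back right decomposition on $V_i[\varphi_i^\ast E_i]$ (which exists canonically, since pullback of vector bundles and of splittings is functorial) induces precisely the pullback homotopy $\Gamma_i[X]$-module $\varphi_i^\ast \ec_i = (\varphi_i^\ast C_i)[1] \oplus \varphi_i^\ast E_i$, with structure data $(\varphi_i^\ast \rho_i, \varphi_i^\ast R^{E_i}, \varphi_i^\ast R^{C_i}, \varphi_i^\ast \Omega_i)$ as in Remark \ref{homotopy_gamma_mod_data}. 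This is a direct verification from the explicit formulas for the structure maps of a split VB groupoid.

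The third step is then to apply the preceding lemma (``homotopy equivalent VB groupoids yield homotopy equivalent homotopy $\Gamma$-modules for any choice of right decompositions'') to the homotopy equivalence $V_1[\varphi_1^\ast E_1] \simeq V_2[\varphi_2^\ast E_2]$ over $\Gamma_1[X] \cong \Gamma_2[X]$. Combined with step two, this gives a homotopy equivalence of homotopy $\Gamma_1[X]$-modules between $\varphi_1^\ast \ec_1$ and $\varphi_2^\ast \ec_2$, which is exactly the data required by Definition \ref{morita_homotopy_gamma_module} to conclude that $\ec_1$ and $\ec_2$ are Morita equivalent. Since the result for different right decompositions of the same VB groupoid differs only by a gauge transformation (Proposition \ref{VBgroupoid_2termrep}(2)), which is itself a homotopy equivalence, the conclusion is independent of the choices.

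The main subtlety I expect is the compatibility check in step two: one must verify that the identification $C^{p}_{VB}(V_i[\varphi_i^\ast E_i]) \cong C^{p-1}(\Gamma_i[X], \varphi_i^\ast \ec_i)$ arising from the pulled-back right decomposition intertwines the Gracia-Saz-Mehta differential with the pulled-back differential on $\varphi_i^\ast \ec_i$. This is essentially bookkeeping through equations (\ref{VB_coboundary_0}) and (\ref{VB_coboundary_p}), but it is the only place in the argument where one must actually write formulas rather than invoke earlier results, and it is the step that must be carried out carefully to legitimately reduce the proposition to the previous lemma.
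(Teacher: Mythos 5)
Your proof is correct and takes essentially the same route as the paper, which simply declares the proposition a straightforward consequence of Lemma \ref{VB_chain_homotopy} and Proposition \ref{morita_VB_groupoids}. The compatibility check you isolate in step two (that pulled-back right decompositions yield the pullback homotopy modules) is left implicit in the paper's one-line proof, so spelling it out is a reasonable addition rather than a deviation.
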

\begin{proof}
	For any VB-groupoid $V$ as in \eqref{eq:florence} and any
 surjective submersion $ \varphi : X\to M$,
	the 2-term homotopy $\Gamma$-module associated to the pull-back
 VB-groupoid $ \varphi^* V$ is the pull-back along $\varphi:X \to M$ of the 2-term homotopy $\Gamma$-module associated to $V$.
The result is then a straightforward consequence of Lemma \ref{VB_chain_homotopy_2} (iii) and Proposition \ref{morita_VB_groupoids}.
\end{proof}

\begin{corollary}
\label{cor:PSU}
Let $\Gamma_1\toto M_1$ and $\Gamma_2\toto M_2$ be Morita equivalent
 Lie groupoids. Choose right decompositions on $T\Gamma_1$ and $T\Gamma_2$. Then
\begin{enumerate}
 \item[(i)] the adjoint 2-term homotopy $\Gamma_1$-module 
$(A_1[1]\oplus TM_1,D_{T\Gamma_1})$ and the adjoint homotopy $\Gamma_2$-module $(A_2[1]\oplus TM_2,D_{T \Gamma_2})$  are Morita 
 equivalent;
 \item[(ii)] the induced  coadjoint 2-term homotopy $\Gamma_1$-module 
 $((T^\vee M_1)[1]\oplus A^\vee_1,D_{T^\vee \Gamma_1})$  and the
 coadjoint 2-term homotopy $\Gamma_2$-module $((T^\vee M_2)[1]\oplus A^\vee_2,D_{T^\vee \Gamma_2})$
 are Morita equivalent.
\end{enumerate}
\end{corollary}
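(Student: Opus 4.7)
The plan is to derive this corollary by directly chaining two earlier results, with essentially no new computation needed. The key observation is that both parts $(i)$ and $(ii)$ assert Morita equivalence of homotopy $\Gamma$-modules arising from right decompositions of specific VB groupoids (the tangent and the cotangent VB groupoids), so it suffices to verify that those VB groupoids are themselves VB-Morita equivalent and then transport this via the VB-groupoid / homotopy $\Gamma$-module dictionary.

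For part $(i)$, I would first invoke Proposition \ref{tangent_cotangent_morita}, which guarantees that whenever $\Gamma_1$ and $\Gamma_2$ are Morita equivalent as Lie groupoids, the tangent VB groupoids $T\Gamma_1 \toto TM_1$ and $T\Gamma_2 \toto TM_2$ are VB-Morita equivalent. Since the core of $T\Gamma_i$ is $A_i$ and its unit bundle is $TM_i$ (as in Example \ref{eq:TG}), the homotopy $\Gamma_i$-module produced from a choice of right decomposition on $T\Gamma_i$ is of the form $A_i[1]\to TM_i$, as recorded in the final corollary before Section 4.3. Applying now the proposition immediately preceding Corollary \ref{cor:PSU}, i.e.\ that VB-Morita equivalent VB groupoids give rise to Morita equivalent homotopy $\Gamma$-modules under any choice of right decompositions, we conclude that $A_1[1]\to TM_1$ and $A_2[1]\to TM_2$ are Morita equivalent as homotopy $\Gamma$-modules.

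Part $(ii)$ is entirely parallel. Proposition \ref{tangent_cotangent_morita} also asserts that the cotangent VB groupoids $T^\vee\Gamma_1$ and $T^\vee\Gamma_2$ are VB-Morita equivalent (in fact this follows from part (i) together with Corollary \ref{VB_morita_dual}). The core of $T^\vee\Gamma_i$ is $T^\vee M_i$ and its unit bundle is $A_i^\vee$ (Example \ref{eq:TGdual}), so the homotopy $\Gamma_i$-module induced by a right decomposition is $(T^\vee M_i)[1] \to A_i^\vee$. Applying the preceding proposition once more yields that $(T^\vee M_1)[1]\to A_1^\vee$ and $(T^\vee M_2)[1]\to A_2^\vee$ are Morita equivalent homotopy $\Gamma$-modules.

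There is no real obstacle here: the content of the corollary is simply the translation, through the Gracia-Saz--Mehta dictionary between VB groupoids and $2$-term homotopy $\Gamma$-modules, of Proposition \ref{tangent_cotangent_morita}. The only minor subtlety worth flagging is that the right decomposition on $T\Gamma_i$ (respectively $T^\vee\Gamma_i$) amounts to the choice of a compatible Ehresmann connection on $s:\Gamma_i\to M_i$ as in Lemma \ref{lem:paris}; different choices only change the resulting homotopy $\Gamma_i$-modules by gauge equivalence, which is in particular a homotopy equivalence, so Morita equivalence of the induced modules is independent of these choices.
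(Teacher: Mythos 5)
Your proof is correct and follows exactly the route the paper intends: the corollary is stated immediately after the proposition asserting that VB-Morita equivalent VB groupoids induce Morita equivalent homotopy $\Gamma$-modules for any choice of right decompositions, and the paper's (implicit) argument is precisely to combine that proposition with Proposition \ref{tangent_cotangent_morita} applied to the tangent and cotangent VB groupoids. Your closing remark on the independence of the choice of right decomposition (via gauge equivalence) is a correct and harmless addition.
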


The following definition directly  interprets the content of 
Proposition \ref{prop:MoritaAndHomotMorphism} in terms 
of homotopy $\Gamma$-modules.

\begin{definition}\label{generalized_VB_homotopy_bis}
Let $(\ec_1,D_1)$ and $(\ec_1',D_1')$ be homotopy $\Gamma_1$-modules and
$\Phi:\ec_1\rightarrow \ec_1'$  a morphism of homotopy $\Gamma_1$-modules.
Similarly,  let $(\ec_2,D_2)$ and $(\ec_2',D_2')$ be homotopy
 $\Gamma_2$-modules and $\Psi:\ec_2\rightarrow \ec_2'$ 
 a morphism of homotopy $\Gamma_2$-modules. 
We say that $\Phi$ and $\Psi$ are equivalent with respect to a bitorsor,
 if there exist
\begin{enumerate}
\item[(i)] a $\Gamma_1-\Gamma_2$ bitorsor $\xymatrix{M_1 & X\ar[l]^{\varphi_1}\ar[r]_{\varphi_2} & M_2}$; and
\item[(ii)]  an homotopy equivalence between the pull back morphisms
\[
\varphi^*_1 \Phi: \varphi^*_1 \ec_1  \rightarrow \varphi^*_1  \ec_1' \ ,\;\;  \varphi_2^*  {\Psi}:   \varphi_2^* \ec_2 \rightarrow \varphi_2^* \ec_2'
\]
over canonically isomorphic groupoids, as in Remark \ref{base_pull_back_bitorsor}.
\end{enumerate}
\end{definition}

Let us remark that the homotopy $\Gamma_1$-modules $(\ec_1,D_1)$ and $(\ec_1',D_1')$ are
then, respectively, Morita equivalent to $(\ec_2,D_2)$ and $(\ec_2',D_2')$.
 The following result is easily obtained combining 
 Lemma \ref{VB_chain_homotopy_2} (ii) with Proposition \ref{prop:MoritaAndHomotMorphism}.
\begin{proposition}
\label{pro:Lyon} 
 VB-groupoid morphisms which are equivalent as generalized VB-morphisms
as in Definition \ref{bitorsor_equivalence_of_VB_morphism} give rise to  equivalent
morphisms between their Morita equivalent 2-term
 homotopy  groupoid modules in the sense of
 Definition \ref{generalized_VB_homotopy_bis}.
\end{proposition}


\begin{definition}
Let $\XX$ be a differentiable stack. A morphism between
 2-term complexes over $\XX$
 is an  equivalence class  of morphisms with respect to a bitorsor (in the sense of
 Definition \ref{generalized_VB_homotopy_bis})  between Morita equivalent 2-term homotopy  $\Gamma$-modules.
\end{definition}

\begin{corollary}
\label{cor:Lyon}
An equivalence class of VB-groupoid morphisms
as generalized VB-morphisms with respect to a bitorsor (see Definition
\ref{bitorsor_equivalence_of_VB_morphism}) induces a morphism
of the corresponding 2-term  complexes over the stack.
\end{corollary}

\section{The rank of a $(+1)$-shifted  Poisson stack}

\label{sec:COVID}
\subsection{The tangent complex and cotangent complex}

Now we are ready to introduce {\em the tangent complex} and
 {\em the  cotangent complex} of a differentiable stack $\XX$.

\begin{definition}
Let $\XX$ be a differentiable stack.
\begin{enumerate}
\item[(i)] By  {\em the tangent complex of $\XX$}, denoted
by ${T_\XX}$, we mean the
$2$-term complex over $\XX$  defined by
 the Morita equivalence class of
the adjoint 2-term homotopy $\Gamma$-module $(A[1]\oplus TM,D_{T} )$;
\item[(ii)] by  {\em the cotangent complex of $\XX$}, denoted
by ${L_\XX}$, we mean 
 the $2$-term complex over $\XX$  defined by
 the  Morita equivalence class of the dual  $( TM^\vee \oplus A^\vee[-1],D_{T^\vee} )$
of the adjoint 2-term homotopy $\Gamma$-module. 
\end{enumerate}
Here $\Gamma\toto M$ is  any Lie groupoid representing $\XX$.
\end{definition}

Corollary \ref{cor:PSU} and Proposition \ref{prop:coadjointIsAdjointShifted} imply that the definition above is indeed justified.
The representative $(A[1]\oplus TM,D_{T} )$ of $T_\XX$ is denoted
 $T_\XX|_M$, 
while the representative  $( TM^\vee \oplus A^\vee[-1],D_{T^\vee} )$ of $L_\XX$ by $L_\XX|_M$. 

The following result is an immediate  consequence of 
Theorem \ref{morphism_quasi_poisson2}.

\medskip
\begin{theorem}
\label{pro:paris}
A $(+1)$-shifted Poisson structure on a differentiable stack $\XX$
defines a morphism of $2$-term complexes over
$\XX$ from the shifted cotangent complex to 
the tangent complex:
\begin{equation}
\label{eq:shifted}
 \Pi^{\#}:  {L_\XX}[1]\to {T_\XX}.
\end{equation}
\end{theorem}

\begin{remark}
\label{rk:3}
The morphism \eqref{eq:shifted} is analogous to
one  in \cite[Definition 3.2.1]{CPTVV}.
\end{remark}

Choosing a compatible  Ehresmann connection on $s: \Gamma \to M$ as in Lemma 
\ref{lem:paris}, one can describe  explicitly the morphism of
homotopy  $\Gamma$-modules
$\Pi^{\#}:  {L_\XX}[1]_M\to {T_\XX}|_M$.

Let $(\Gamma,\Pi,\Lambda)$ be a quasi-Poisson groupoid over $M$.
The VB-groupoid morphism $\Pi^{\#}: T^\vee \Gamma \to T\Gamma $, recalled in Proposition \ref{pro:quasiP},
induces, on the unit manifold, a vector bundle morphism
(see \cite{MackenzieX:1994}):
\begin{equation}
\label{eq:Congqiang}
\rho_*: A^\vee\to TM.
\end{equation}

\begin{proposition}
\label{pro:beijing}
Under the same hypothesis as in  Theorem \ref{pro:paris}, for any
presentation $\Gamma \toto M$ of $\XX$, the linear term of
the morphism of homotopy  $\Gamma$-modules $\Pi^{\#}: {L_\XX} [1]|_M\to 
{T_\XX}|_M$ is the morphism of complexes:

\begin{equation}
\label{eq:perugiabis}
\begin{tikzcd}0\arrow{d}&0\arrow{d}\\
{(T^\vee M) [1]} \arrow[xshift=-2pt]{d}{\rho^\vee}  \arrow{r}{-\rho_*^\vee} &
{A[1]} \arrow[xshift=-2pt]{d}{\rho}  \\
{A^\vee}\arrow{d} \arrow{r}{\rho_*} & {TM}\arrow{d}\\
0&0
\end{tikzcd}
\end{equation}
\end{proposition}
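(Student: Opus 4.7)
The plan is to combine Proposition \ref{pro:quasiP} with the Gracia-Saz-Mehta dictionary of Section \ref{dictionary}. By Proposition \ref{pro:quasiP}, $\Pi$ yields a VB-groupoid morphism $\Pi^\#:T^\vee\Gamma\to T\Gamma$ over the identity of $\Gamma$. A compatible Ehresmann connection on $s:\Gamma\to M$ induces right-decompositions of both $T\Gamma$ and $T^\vee\Gamma$ simultaneously (Lemma \ref{lem:paris}), and via Proposition \ref{VBgroupoid_2termrep} it turns $\Pi^\#$ into a morphism of two-term homotopy $\Gamma$-modules. This is precisely the representative, at the level of the presentation $\Gamma\toto M$, of the morphism $\Pi^\#:L_\XX[1]\to T_\XX$ produced by Theorem \ref{pro:paris}. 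Its linear term can then be read off directly from the bundle maps $\Pi^\#$ induces on units and on cores.

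For the bottom arrow of \eqref{eq:perugiabis}: the units of $T^\vee\Gamma$ are $A^\vee$ (embedded in $T^\vee\Gamma|_M$ as the conormal bundle to $M$) and those of $T\Gamma$ are $TM$, so the induced map on units is $\Pi^\#|_{A^\vee}:A^\vee\to TM$, which is $\rho_*$ by the very definition recalled in the statement. For the top arrow, the core of $T^\vee\Gamma$ is $T^\vee M$ and that of $T\Gamma$ is $A$, so $\Pi^\#$ induces a map $\psi:T^\vee M\to A$ to be identified with $-\rho_*^\vee$. To do this, I would use the skew-symmetry of $\Pi$. Writing $\Pi_{\epsilon(m)}$ in the splitting $T_{\epsilon(m)}\Gamma=T_mM\oplus A_m$ as $\pi_{MM}+\pi_{MA}+\pi_{AA}$, the preservation of units and cores by $\Pi^\#$ as a VB-groupoid morphism forces the pure $\wedge^2 T_mM$ and $\wedge^2 A_m$ components to vanish, so $\Pi_{\epsilon(m)}=\pi_{MA}\in T_mM\wedge A_m$. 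Then for any $\xi\in T^\vee_mM$ and $\alpha\in A^\vee_m$ one computes
$$\langle\psi(\xi),\alpha\rangle=\Pi_{\epsilon(m)}(\xi,\alpha)=-\Pi_{\epsilon(m)}(\alpha,\xi)=-\langle\rho_*(\alpha),\xi\rangle=-\langle\alpha,\rho_*^\vee(\xi)\rangle,$$
where the two outer equalities use that $A^\vee$ annihilates $T_mM$ inside $T_{\epsilon(m)}\Gamma$ while $T^\vee M$ annihilates $A_m$. This forces $\psi=-\rho_*^\vee$. The vertical differentials in \eqref{eq:perugiabis} are $\rho$ and $\rho^\vee$ by the very definition of the representatives $T_\XX|_M$ and $L_\XX[1]|_M$, and commutativity of the square is automatic because the dictionary translates a VB-groupoid morphism into a chain map of homotopy $\Gamma$-modules.

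The main obstacle in making the argument above precise will be the bookkeeping of sign and splitting conventions. The core embeddings $L_V,R_V$ of \eqref{core_embedding}, the dual source/target maps of \eqref{dual_VB_maps} for $T^\vee\Gamma$, and the identifications $T_{\epsilon(m)}\Gamma\cong T_mM\oplus A_m$ and $T^\vee_{\epsilon(m)}\Gamma\cong T^\vee_mM\oplus A^\vee_m$ fixed by the compatible Ehresmann connection interact nontrivially; reproducing the precise sign $-\rho_*^\vee$ rather than $+\rho_*^\vee$ requires matching these carefully with the conventions fixed in Section \ref{dictionary}. Once the conventions are pinned down, the displayed computation above determines the linear term uniquely.
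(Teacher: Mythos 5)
Your argument is correct and is exactly the computation the paper has in mind: the paper in fact states Proposition~\ref{pro:beijing} without any written proof, merely remarking beforehand that the morphism of homotopy $\Gamma$-modules can be computed explicitly once a compatible Ehresmann connection is chosen, and deferring the definition of $\rho_*$ to \cite{MackenzieX:1994}. Your identification of the linear term with the maps induced by $\Pi^\#$ on units ($A^\vee\to TM$, which is $\rho_*$ by definition) and on cores ($T^\vee M\to A$), together with the sign $-\rho_*^\vee$ extracted from the skew-symmetry of $\Pi$ at unit points after observing that the $\wedge^2 T_mM$ and $\wedge^2 A_m$ components of $\Pi_{\epsilon(m)}$ vanish, supplies precisely the verification the paper omits.
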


\subsection{Rank of a $(+1)$-shifted  Poisson stack}

The main purpose of this section is to introduce
the notion of rank of a $(+1)$-shifted  Poisson  structure on a
  differentiable  stack.

Recall that the rank of an ordinary Poisson manifold is defined at 
each point of the underlying manifold.
For a $(+1)$-shifted  Poisson  structure  on  a  differentiable  stack,
we  define its rank at each 
point of the \emph{coarse moduli space $|\XX|$} of the differentiable stack $\XX$.
The latter can be identified  with the
 orbit space $M/\BaseGroupoid$ as a topological space, 
where $\BaseGroupoid \toto M$ is any  Lie groupoid  representing
 the differentiable  stack $\XX$. It is known that $M/\Gamma$ is invariant
under Morita equivalence of $\BaseGroupoid$.

Our strategy is, first of all,
 to define  the rank of a quasi-Poisson  groupoid
$(\Gamma \toto M, \Pi, \Lambda)$ at any given point $m\in M$.
Then, we show that this rank is constant along Lie groupoid orbits.
Furthermore, it is also  invariant under 
twists of the quasi-Poisson structures,
 and indeed is invariant under Morita equivalence. In this way, we
are led to a well defined map $|\XX|\to \zz$, called the \emph{rank of the 
 $(+1)$-shifted  Poisson stack}.

%
%

\begin{definition}
 \label{def:rankPoint}
The \emph{rank} of a quasi-Poisson groupoid $\quasip$ at any $m\in M$
is defined to be
$${\rm dim}( \rho (A_m) +  \rho_* (A^\vee_m)) - {\rm rank}(A),$$
where $\rho: A\to TM$ 
is the anchor of Lie algebroid $A $,
 and $\rho_* : A^\vee \to TM$ is the bundle map
as in Equation \eqref{eq:Congqiang}.
\end{definition}

\begin{remark}
\label{rmk:practical}
Recall that the dimension  \cite{BehrendXu} of  a differentiable  stack
$\XX$ is defined as  $\dim \XX=\dim (M)-{\rm rank}(A)$,
where $\Gamma\toto M$ is a Lie groupoid representing $\XX$, and
$A$ its Lie algebroid.
Hence  the rank of the quasi-Poisson groupoid
 $\quasip$ at $m\in M$ 
can also be expressed as
$$\dim \XX-\dim(  \ker \rho^\vee|_m \cap \ker  \rho_*^\vee|_m)$$
\end{remark}

For a Poisson groupoid \cite{MackenzieX:1994}, 
the rank is  maximal at a point $m \in M$, i.e.,  equal to $\dim \XX$, if and only if
 the orbit of the Lie algebroid $A$ and the orbit of the dual Lie algebroid $A^\vee$ 
intersect transversally at $m \in M$.

\begin{proposition}
\label{lem:zurich}
Let $\quasip$ be  a quasi-Poisson groupoid.  The  rank of the quasi-Poisson structure $(\Pi,\Lambda)$ 
is constant on any orbit of the groupoid.
\end{proposition}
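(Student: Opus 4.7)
The plan is to reformulate $\mathrm{rank}(m)$ as a difference of dimensions of fibers of genuine $\Gamma$-equivariant vector bundles over the $\Gamma$-orbit $\mathcal{O}$ through $m$, so that constancy along $\mathcal{O}$ becomes immediate. I will exploit the chain map $\Pi^\#:L_{\XX}[1]\to T_{\XX}$ of Theorem \ref{pro:paris}, whose linear part on the presentation $\gpoidmm$ is, by Proposition \ref{pro:beijing}, the commutative square
$$\begin{tikzcd}
T^\vee M \arrow[r,"-\rho_*^\vee"] \arrow[d,"\rho^\vee"'] & A \arrow[d,"\rho"] \\
A^\vee \arrow[r,"\rho_*"'] & TM.
\end{tikzcd}$$
The chain-map identity $\rho_*\circ\rho^\vee=-\rho\circ\rho_*^\vee$ shows that $\rho_*(\rho^\vee(T_m^\vee M))\subseteq\rho(A_m)=T_m\mathcal{O}$, so $\rho_*$ descends to a well-defined bundle morphism
$$\bar\rho_*:\mathrm{coker}(\rho^\vee)\cong\mathfrak{g}^\vee\longrightarrow TM/\rho(A)=N\mathcal{O},$$
where $\mathfrak{g}_m:=\ker\rho|_m$ is the isotropy Lie algebra at $m$. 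A short linear-algebra computation, combining $\dim\rho(A_m)=\mathrm{rk}(A)-\dim\mathfrak{g}_m$ with the identification $\rho_*(A_m^\vee)/(\rho_*(A_m^\vee)\cap\rho(A_m))\cong\mathrm{Im}(\bar\rho_*|_m)$ inside $N_m\mathcal{O}$, then yields the key reformulation
$$\mathrm{rank}(m)\;=\;\dim\mathrm{Im}(\bar\rho_*|_m)\;-\;\dim\mathfrak{g}_m.$$

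Next I will argue that both terms on the right-hand side are constant when $m$ varies in $\mathcal{O}$. Over $\mathcal{O}$ the anchor $\rho|_\mathcal{O}$ has locally constant rank $\dim\mathcal{O}$, so $\mathfrak{g}|_\mathcal{O}$ and $N\mathcal{O}$ are honest $\Gamma|_\mathcal{O}$-equivariant vector bundles of constant rank; in particular $\dim\mathfrak{g}_m$ is constant on $\mathcal{O}$. Since $\mathcal{O}$ is a homogeneous $\Gamma$-space, any $\Gamma$-equivariant morphism between such equivariant bundles has constant fiberwise rank: for $\gamma:m\to m'$ in $\Gamma|_\mathcal{O}$ the equivariance square conjugates $\bar\rho_*|_m$ into $\bar\rho_*|_{m'}$ through linear isomorphisms, so their ranks coincide. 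Combining the two observations yields constancy of $\mathrm{rank}(m)$ along $\mathcal{O}$.

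The delicate step, which would require the most care, is the strict (rather than up-to-homotopy) $\Gamma$-equivariance of $\bar\rho_*$. Both $A[1]\to TM$ and $T^\vee M\to A^\vee[-1]$ are only representations up to homotopy in the sense of Remark \ref{homotopy_gamma_mod_data}, so $A,TM,A^\vee,T^\vee M$ carry only quasi-actions $R^C,R^E$ whose failure of associativity is controlled by a quadratic correction $\Omega$. However, as is visible from the defining identities of Remark \ref{homotopy_gamma_mod_data}, these corrections factor through the anchor, and therefore vanish after passing to kernels and cokernels; the same mechanism applied to the quadratic term of the morphism $\Pi^\#$ ensures that the induced map $\bar\rho_*$ on cokernels is strictly linear and strictly $\Gamma|_\mathcal{O}$-equivariant. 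I would verify this explicitly using the Gracia-Saz--Mehta dictionary of Section \ref{dictionary} together with the computation of $R^E,R^C$ and $\Omega$ for the VB-groupoids $T\Gamma$ and $T^\vee\Gamma$, which is where essentially all the work lies.
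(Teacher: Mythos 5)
Your argument is correct, and it reaches the conclusion by a route that is dual to, though genuinely different in presentation from, the paper's. The paper uses Remark \ref{rmk:practical} to reduce the claim to the constancy of $\dim\bigl(\ker \rho^\vee|_m \cap \ker \rho_*^\vee|_m\bigr)$ along an orbit, and then transports this intersection along a single arrow $\gamma:m\to n$ by a concrete diagram chase: it introduces ``butterfly'' diagrams built from the two short exact sequences relating $T^\vee_\gamma\Gamma$ to $A^\vee$ and $T^\vee M$ (and likewise for $T_\gamma\Gamma$), notes that each butterfly induces a canonical isomorphism between the kernels of its two vertical maps, and that $\Pi^\#_\gamma : T^\vee_\gamma\Gamma\to T_\gamma\Gamma$ is a morphism of butterflies by multiplicativity of $\Pi$, whence $\ker\rho^\vee|_m\cap\ker\rho_*^\vee|_m\simeq \ker\rho^\vee|_n\cap\ker\rho_*^\vee|_n$. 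Your reformulation $\mathrm{rank}(m)=\dim\mathrm{Im}(\bar\rho_*|_m)-\dim\mathfrak{g}_m$ is the dual picture: $\ker\rho^\vee|_m\cap\ker\rho_*^\vee|_m$ is precisely the kernel of the dual of $\bar\rho_*|_m:\mathrm{coker}(\rho^\vee|_m)\to T_mM/\rho(A_m)$, so (given that $\dim\ker\rho^\vee|_m$ is constant on the orbit) constancy of $\dim\mathrm{Im}(\bar\rho_*)$ is equivalent to what the paper proves, and your linear-algebra identity checks out. What your route buys is conceptual transparency: the statement becomes ``a strictly equivariant map between genuine representations of a transitive groupoid has constant fibrewise rank.'' What it defers --- and what you explicitly leave as a promissory note --- is exactly the content of the paper's butterfly chase: the verification that the quasi-actions $R^E,R^C$, the curvature $\Omega$ of Remark \ref{homotopy_gamma_mod_data}, and the quadratic term of the morphism $\Pi^\#$ all factor through $\rho$ or $\rho^\vee$ and hence descend to honest, strictly equivariant data on $\mathfrak{g}$, $\mathfrak{g}^\vee$ and $N\mathcal{O}$. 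That verification does go through, but it is the substantive step of the proof; the paper's butterflies are a choice-free, single-arrow implementation of it, so carrying out your deferred check would essentially reproduce the published argument in dual form.
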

\begin{proof}
According to Remark \ref{rmk:practical},
 it suffices to show that  $\text{dim}  (\ker \rho^\vee|_m \cap \ker  \rho_*^\vee|_m)$ is constant along the Lie groupoid orbits.

We start with a few linear algebra facts.
Let us call \emph{butterfly} a commutative diagram ${\mathcal C}$ of the
 form below,
where $NW,NE,SW,SE,C$ are vector spaces 
and  both diagonal lines are short exact sequences:
$$  \xymatrix{ 0 \ar@{<-}[rd] &  & & & 0 \ar@{<-}[ld] \\ & NW  \ar@{<-}[rd]^{p_W} & & NE  & \\& & C\ar@{<-}[rd]_{{\mathfrak i}_W} \ar[ru]^{p_E} & & \\ & SW  \ar_{\rho_W}[uu] \ar[ru]_{{\mathfrak i}_E} & & SE \ar_{\rho_E}[uu] \\
	 0 \ar@{->}[ru] &  & & & 0 \ar@{->}[lu]
	} $$
From now on, the four exterior arrows (pointing to $0$ or from $0$) square shall not be drawn when representing a butterfly. We remark that similar butterfly diagrams were previously considered also by Aldrovandi and Noohi in \cite{Noo0}.

By diagram chasing,
  each butterfly induces a vector space isomorphism:
$$ {\mathcal K}_{\mathcal C} : {\rm Ker}(\rho_W) \stackrel{\sim}{\longrightarrow} {\rm Ker}(\rho_E)\, .  $$
More explicitly: $a_W \in {\rm Ker}(\rho_W) $ and $a_E \in {\rm Ker}(\rho_E) $
correspond one to the other through the isomorphism ${\mathcal K}_{\mathcal C}  $ if and only if
$ {\mathfrak i}_E( a_W) = {\mathfrak i}_W( a_E)  $.

By a {\em butterfly morphism}  from a butterfly ${\mathcal C}$ to another
 butterfly ${\mathcal C}'$, we mean a family of five linear maps, as represented by dotted lines in diagram (\ref{eq:butterflyMorph}) below, making it commutative:
\begin{equation}
 \label{eq:butterflyMorph}
 \xymatrix{ NW  \ar^{\pi_{NW}}@/^2pc/@{-->}[rrrr] \ar@{<-}[rd]^{p_E} & & NE \ar^{\pi_{NE}}@/^2pc/@{-->}[rrrr] & & NW'  \ar@{<-}[rd]^{p_E'} & & NE'\\ 
& C\ar@{<-}[rd]_{{\mathfrak i}_E} \ar[ru]^{p_W} \ar^{\pi}@{-->}@/^1pc/[rrrr] & & &  & C'\ar@{<-}[rd]_{{\mathfrak i}_E'} \ar[ru]^{p_W'} & \\
SW  \ar_{\rho_W}[uu] \ar_{\pi_{SW}}@/_2pc/@{-->}[rrrr] \ar[ru]_{{\mathfrak i}_W} & & SE   \ar_{\rho_E}[uu] \ar_{\pi_{SE}}@/_2pc/@{-->}[rrrr] &  & SW'
 \ar_{\rho_W'}[uu] \ar[ru]_{{\mathfrak i}_W'} & & SE'  \ar_{\rho_E'}[uu]} 
\end{equation}
By diagram chasing, it is routine to check that $\pi_{SW} $ (resp. $\pi_{SE}$) maps  $ {\rm Ker}(\rho_W)$ to $ {\rm Ker}(\rho_W')$
(resp.  $ {\rm Ker}(\rho_E)$ to $ {\rm Ker}(\rho_E')$).
Moreover, the commutativity of the diagram (\ref{eq:butterflyMorph}) implies the commutativity of the following diagram (where vertical maps are
 vector space isomorphisms):

$$
\xymatrix{  {\rm Ker}(\rho_W) \ar^{\pi_{SW}}[r] \ar@{<->}^{{\mathcal K}_{\mathcal C} }[d] & \ar@{<->}^{{\mathcal K}_{\mathcal C}'}[d] {\rm Ker}(\rho_W')\\  {\rm Ker}(\rho_E) \ar^{\pi_{SE}}[r] &  {\rm Ker}(\rho_E')} 
$$

Therefore, it follows that
 the butterfly morphism induces a vector space isomorphism: 
\begin{equation}
 \label{eq:linearAlgebraFact}
{\rm Ker}(\rho_W) \cap {\rm Ker}(\pi_{SW}) \simeq {\rm Ker}(\rho_E) \cap {\rm Ker}(\pi_{SE}).
\end{equation}

For any $\gamma \in \BaseGroupoid$ with source $m$ and target $n$, the commutative diagrams below are easily verified to be butterflies:

\begin{tabular}{ccc}    
$  \xymatrix{ A_m^\vee   \ar@{<-}[rd]^{L^\vee_\gamma } & & A_n^\vee   \\ & T_\gamma^\vee \BaseGroupoid \ar@{<-}[rd]_{t_{T\Gamma}^\vee} \ar[ru]^{R^\vee_\gamma} & \\
T_m^\vee M  \ar[ru]_{s_{T\Gamma}^\vee} \ar^{\rho^\vee|_m}[uu]& & T_n^\vee M \ar_{\rho^\vee|_n}[uu]} $
&
\centering{ \begin{tabular}{c} \\ \\   and \\ \end{tabular}} 
&
$  \xymatrix{ T_m M  \ar@{<-}[rd]^{s_{T\Gamma}} & & T_n M  \\ & T_\gamma \BaseGroupoid \ar@{<-}[rd]_{R_\gamma} \ar[ru]^{t_{T\Gamma}} & \\ A_m  \ar^{\rho|_m}[uu] \ar[ru]_{L_\gamma}& & A_n \ar_{\rho|_n}[uu]  } $
\end{tabular}

The multiplicative bivector field $\Pi$ induces a butterfly morphism with central map $ \Pi^\#: T_\gamma^\vee \BaseGroupoid \to T_\gamma \BaseGroupoid$, from the first butterfly to the second one. Here the four remaining arrows are (in notations of  (\ref{eq:butterflyMorph})):
 \begin{eqnarray*} 
   \pi_{NW} = \rho_* |_m\, &  \pi_{SW}= \rho_*^\vee |_m\,  \\
     \pi_{NE} = \rho_* |_n\,  & \pi_{SE}= \rho_*^\vee |_n\,  . 
 \end{eqnarray*}

It thus follows from the isomorphism (\ref{eq:linearAlgebraFact})
that the rank of the quasi-Poisson structure $(\Pi,\Lambda)$
is indeed  constant on any orbit of the groupoid. This
concludes the proof.
\end{proof}

Let $\quasip$ be  a quasi-Poisson groupoid, $T \in \Sigma^1 A$ a twist,
and  $(\Pi_T,\Lambda_T)$ the corresponding
 twisted Poisson structure as in Definition \ref{def:quasiPoisson}.
 Denote by $\rho_* : A^\vee \to TM$ and $\rho_*^T : A^\vee \to TM$ the vector bundle morphisms associated to the quasi-Poisson structures $(\Pi,\Lambda)$ and $(\Pi_T,\Lambda_T)$, respectively.
 The  following relations  can be easily verified:

\begin{equation}
\label{eq:rho*}
\rho_*^T=\rho_* +\rho \smalcirc T^\# \hbox{ and } (\rho_*^T) ^\vee=(\rho_*)^\vee - T^\# \smalcirc \rho^\vee.
\end{equation}
	
Now \eqref{eq:rho*} implies the following:

\begin{lemma}
\label{lem:zurich2}
Let $\quasip$ be  a quasi-Poisson groupoid.  Then for any $T\in \Sigma^1 A$, 
\[
\text{dim}  \big(\ker \rho^\vee|_m   \cap \ker (\rho^T_*)^\vee|_m\big)
=\text{dim}
\big(  \ker \rho^\vee|_m \cap \ker   \rho_*^\vee|_m \big), \  \ \ \forall m \in M.
\]
\end{lemma}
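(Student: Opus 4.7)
The plan is to prove the stronger set-theoretic equality
\[
\ker \rho^\vee|_m \cap \ker (\rho_*^T)^\vee|_m \;=\; \ker \rho^\vee|_m \cap \ker \rho_*^\vee|_m
\]
inside $T_m^\vee M$, from which the asserted equality of dimensions is immediate.

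My starting point is Equation (\ref{eq:rho*}), read as $\rho_*^T = \rho_* + \rho\circ T^\#$ (the only interpretation consistent with the types $\rho_*^T,\rho_*\colon A^\vee\to TM$ and $T^\#\colon A^\vee\to A$, where $T^\#(\xi)=\iota_\xi T$ for $T\in\Gamma(\wedge^2 A)$). Dualising this identity, together with the canonical identification $A^{\vee\vee}\cong A$ and the relation $(T^\#)^\vee = -T^\#$ (reflecting the antisymmetry of $T$ as a section of $\wedge^2 A$), I obtain the fibrewise identity
\[
(\rho_*^T)^\vee \;=\; \rho_*^\vee \;-\; T^\#\!\circ \rho^\vee \;\colon\; T_m^\vee M \longrightarrow A_m.
\]

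The decisive observation is that the correction term $T^\#\circ \rho^\vee$ vanishes identically on $\ker \rho^\vee|_m$: indeed, for $\xi\in\ker\rho^\vee|_m$ one has $\rho^\vee(\xi)=0$, hence $T^\#(\rho^\vee(\xi))=0$. Consequently, on $\ker\rho^\vee|_m$ the maps $(\rho_*^T)^\vee$ and $\rho_*^\vee$ coincide, so $\xi\in\ker\rho^\vee|_m$ lies in $\ker(\rho_*^T)^\vee|_m$ if and only if it lies in $\ker\rho_*^\vee|_m$. This gives the displayed set-theoretic equality, and in particular equality of dimensions.

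No serious obstacle arises: once the twist formula (\ref{eq:rho*}) is in hand, the lemma reduces to the elementary linear-algebra remark that a map is unchanged modulo corrections which factor through a kernel. In the write-up I would therefore simply record the two short computations sketched above (the dualisation and the observation that the correction vanishes on $\ker\rho^\vee$), without any additional machinery.
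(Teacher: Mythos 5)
Your proof is correct and follows essentially the same route as the paper: the identity $(\rho_*^T)^\vee=\rho_*^\vee-T^\#\circ\rho^\vee$ you obtain by dualizing is exactly the content of the paper's statement that $T^\#$ is a chain homotopy between $(-\rho_*^\vee,\rho_*)$ and $(-(\rho_*^T)^\vee,\rho_*^T)$, and your observation that the correction term vanishes on $\ker\rho^\vee|_m$ is precisely the ``elementary linear algebra'' the paper invokes. You even establish the slightly stronger set-theoretic equality of the two intersections, which is what the paper's argument implicitly yields as well.
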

	
As an immediate consequence  of Definition \ref{def:rankPoint},
 Remark \ref{rmk:practical} and Lemma \ref{lem:zurich2}, we have

\begin{corollary}
\label{cor:zurich2}
The ranks at a given  orbit of any two  quasi-Poisson
 structures on a Lie groupoid $\Gamma \toto M$,
which are  equivalent up to a twist,  are equal.
\end{corollary}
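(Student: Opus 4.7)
The plan is to deduce the statement as a direct consequence of Lemma \ref{lem:zurich2} together with the reformulation of the rank recorded in Remark \ref{rmk:practical}. Given two twist-equivalent quasi-Poisson structures $(\Pi_1,\Lambda_1)$ and $(\Pi_2,\Lambda_2)$ on $\Gamma\toto M$, by Definition \ref{def:quasiPoisson} there is a twist $T\in\Gamma(\wedge^2 A)$ with $(\Pi_2,\Lambda_2)=(\Pi_{1,T},\Lambda_{1,T})$. Writing $\rho_*^{(i)}:A^\vee\to TM$ for the map induced from $\Pi_i^\sharp:T^\vee\Gamma\to T\Gamma$ by restriction to $A^\vee\subset T^\vee\Gamma|_M$, one has $\rho_*^{(2)}=(\rho_*^{(1)})^T$ in the notation preceding Lemma \ref{lem:zurich2}.

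The argument then proceeds in two steps. First, by Remark \ref{rmk:practical}, the rank of $(\Pi_i,\Lambda_i)$ at $m\in M$ can be written as
\[
\dim\XX-\dim\bigl(\ker\rho^\vee|_m\cap\ker(\rho_*^{(i)})^\vee|_m\bigr),
\]
where both $\dim\XX$ and the anchor $\rho:A\to TM$ depend only on the underlying Lie groupoid and are thus independent of $i$. Second, applying Lemma \ref{lem:zurich2} pointwise with the twist $T$ gives
\[
\dim\bigl(\ker\rho^\vee|_m\cap\ker(\rho_*^{(2)})^\vee|_m\bigr)=\dim\bigl(\ker\rho^\vee|_m\cap\ker(\rho_*^{(1)})^\vee|_m\bigr)
\]
for every $m\in M$. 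Combining the two identities yields equality of the two rank functions pointwise on $M$, hence in particular on every orbit.

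There is no real obstacle here: all the technical content has already been packaged into Lemma \ref{lem:zurich2}, whose proof rests on the elementary observation that $T^{\sharp}:A^\vee\to A$ provides a chain homotopy between $(-\rho_*^{\vee},\rho_*)$ and $(-(\rho_*^T)^{\vee},\rho_*^T)$ and thus induces an isomorphism of the relevant kernel intersections. The only mild consistency point to verify is that the map $\rho_*$ used in Definition \ref{def:rankPoint} is precisely the one appearing in Proposition \ref{pro:beijing} and in Lemma \ref{lem:zurich2}, which holds by construction. Note that Proposition \ref{lem:zurich} is not strictly needed for the equality itself, but it is what guarantees that the rank makes sense as a function on the orbit space, so that speaking of ``the rank at a given orbit'' is well-posed.
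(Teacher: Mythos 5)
Your proof is correct and follows exactly the paper's route: the paper states the corollary as an immediate consequence of Definition \ref{def:rankPoint}, Remark \ref{rmk:practical} and Lemma \ref{lem:zurich2}, which is precisely the two-step argument you spell out (rewrite the rank via the kernel intersection, then apply the chain-homotopy lemma pointwise with the twist $T$). Your added remark that Proposition \ref{lem:zurich} is what makes ``the rank at a given orbit'' well-posed is a correct and useful clarification, consistent with the paper's intent.
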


Finally, we have the following

\begin{lemma}
\label{lem:rankMorita}
Let  $(\Gamma' , \Pi', \Lambda')$ and $(\Gamma , \Pi, \Lambda)$
 be quasi-Poisson groupoids.
Assume that  
\begin{equation}
\begin{tikzcd}
{\Gamma'} \arrow[xshift=-3pt]{d} \arrow[xshift=3pt]{d} \arrow{r}{\phi} &
{\Gamma} \arrow[xshift=-3pt]{d} \arrow[xshift=3pt]{d} \\
{M'} \arrow{r}{\varphi} & {M}
\end{tikzcd}
\end{equation}
is a Morita morphism of quasi-Poisson groupoids
from  $(\Gamma' , \Pi', \Lambda')$ to $(\Gamma , \Pi, \Lambda)$.
Then, for any  $ m \in M'$, the rank of the quasi-Poisson structure
$( \Pi', \Lambda')$  at $m$ is equal to  the rank of
 the quasi-Poisson structure $(\Pi, \Lambda)$ at
$\varphi (m)\in M$.
\end{lemma}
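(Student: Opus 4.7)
The plan is a two-step reduction followed by a direct linear-algebra argument. First, by Corollary~\ref{cor:zurich2} the rank at a point is invariant under twists, so by Definition~\ref{def:MoritaMorphQuasiPoisson} combined with the reduction already carried out in the proof of Lemma~\ref{lem:MoritaMorphQuasiPoisson}, we may assume $(\Pi_1,\Lambda_1)$ itself is projectable along $\Phi$ with projection $(\Pi_2,\Lambda_2)$, and that $\Gamma_1 = \Gamma_2[X]$ with $\Phi$ the canonical projection, $\phi = \varphi \colon X \to M_2$ being a surjective submersion; set $m_2 = \varphi(m_1)$.

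In this reduced setup, the Lie algebroid of the pullback groupoid satisfies $A_1|_{m_1} = T_{m_1}X \times_{T_{m_2}M_2} A_2|_{m_2}$, with anchor $\rho_1(v,a) = v$ and algebroid map $\phi_A(v,a) = a$. A short diagram chase exploiting the surjectivity of $\varphi_*$ yields
\[
\ker\rho_1^\vee|_{m_1} \;=\; \varphi^*\bigl(\ker\rho_2^\vee|_{m_2}\bigr),
\]
and, since $\varphi^*$ is injective on cotangent vectors, this restricts to a canonical isomorphism $\varphi^* \colon \ker\rho_2^\vee|_{m_2} \xrightarrow{\sim} \ker\rho_1^\vee|_{m_1}$. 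On the other hand, expressing the projectability of $\Pi_1$ as the condition $T\Phi \circ \Pi_1^\# = \Pi_2^\# \circ \Phi^*$ and restricting to the units $1_{m_i} \in \Gamma_i$, together with the fact that the VB-groupoid morphism $\Pi_i^\#$ sends the core $T^\vee M_i$ of $T^\vee\Gamma_i$ into the core $A_i$ of $T\Gamma_i$ by $-\rho_{i,*}^\vee$, yields the identity
\[
\phi_A \circ \rho_{1,*}^\vee \circ \varphi^* = \rho_{2,*}^\vee.
\]

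I then claim that $\varphi^*$ restricts to a bijection $\ker\rho_2^\vee|_{m_2} \cap \ker\rho_{2,*}^\vee|_{m_2} \xrightarrow{\sim} \ker\rho_1^\vee|_{m_1} \cap \ker\rho_{1,*}^\vee|_{m_1}$. The forward direction follows immediately from the displayed identity. For the converse, suppose $\eta \in \ker\rho_2^\vee|_{m_2} \cap \ker\rho_{2,*}^\vee|_{m_2}$, and write $\rho_{1,*}^\vee(\varphi^* \eta) = (v,b) \in T_{m_1}X \oplus A_2|_{m_2}$. Applying $\phi_A$ gives $b = \rho_{2,*}^\vee \eta = 0$, while applying $\rho_1$ together with the chain-map identity $\rho_1 \circ \rho_{1,*}^\vee = \rho_{1,*} \circ \rho_1^\vee$ (which is encoded in the fact that $\Pi_1^\#$ is a VB-groupoid morphism, equivalently in Proposition~\ref{pro:beijing}) gives $v = \rho_{1,*}(\rho_1^\vee(\varphi^* \eta)) = \rho_{1,*}(0) = 0$, because $\varphi^* \eta \in \ker\rho_1^\vee|_{m_1}$. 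Hence $\rho_{1,*}^\vee(\varphi^* \eta) = 0$, establishing the converse.

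Combining the two identifications gives $\dim(\ker\rho_1^\vee|_{m_1} \cap \ker\rho_{1,*}^\vee|_{m_1}) = \dim(\ker\rho_2^\vee|_{m_2} \cap \ker\rho_{2,*}^\vee|_{m_2})$. Since $\dim\XX = \dim M_i - \mathrm{rk}(A_i)$ is a Morita invariant, Remark~\ref{rmk:practical} then yields the equality of ranks at $m_1$ and $m_2 = \phi(m_1)$. The main obstacle I anticipate is the rigorous derivation of the projectability identity $\phi_A \circ \rho_{1,*}^\vee \circ \varphi^* = \rho_{2,*}^\vee$: one needs to carefully identify which unit/core splitting of the VB-groupoids $T^\vee\Gamma_i$ and $T\Gamma_i$ at $1_{m_i}$ is in play and track how $T\Phi$ and $\Phi^*$ respect it; once this is made explicit, the remaining diagram chases are routine.
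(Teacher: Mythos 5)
Your proof is correct. Note that the paper itself states Lemma~\ref{lem:rankMorita} without printing a proof, so there is nothing to compare against line by line; your argument supplies the missing details using precisely the ingredients the surrounding text makes available: Corollary~\ref{cor:zurich2} to reduce to the projectable case with $\Gamma_1\cong\Gamma_2[X]$, the explicit description $A_1|_{m_1}=T_{m_1}X\times_{T_{m_2}M_2}A_2|_{m_2}$ from Equation~\eqref{eq:horlift01}, and the fact that $\Pi^\#$ is a VB-groupoid morphism (Proposition~\ref{pro:quasiP}, Proposition~\ref{pro:beijing}) so that its unit and core components intertwine with $\varphi^*$ and $\phi_A$ under projection. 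This is entirely in the spirit of the butterfly argument used for Proposition~\ref{lem:zurich}. Two cosmetic remarks: your labels ``forward direction'' and ``converse'' for the two inclusions of the intersection of kernels are swapped relative to the direction of $\varphi^*$ (the containment $\varphi^*(\ker\rho_2^\vee\cap\ker\rho_{2,*}^\vee)\subseteq\ker\rho_{1,*}^\vee$ is the one requiring the $(v,b)$ computation, while surjectivity onto the intersection is the immediate consequence of $\phi_A\circ\rho_{1,*}^\vee\circ\varphi^*=\rho_{2,*}^\vee$), and the chain-map identity carries a sign, $\rho_1\circ\rho_{1,*}^\vee=-\rho_{1,*}\circ\rho_1^\vee$ per Diagram~\eqref{eq:perugiabis}, which is harmless since you only evaluate it on $\ker\rho_1^\vee$. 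Neither affects the validity of the argument.
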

\begin{proof}
	Let $(A',\rho')$ and $(A,\rho)$ be the Lie algebroids of $\Gamma$ and $\Gamma'$ respectively,
	and let $ \phi_A:A ' \to A$ the Lie algebroid morphisms induced by $\phi$.
Since $\phi$ is a Lie groupoid morphism, for all $m' \in M'$, both pairs $( \phi_A,\varphi_*)$ and $(\varphi_*^\vee , \phi_A^\vee)$ are chain maps:
	$$ 
	\xymatrix{ 
	0\ar[d]&0\ar[d]\\
	A_{m'}'[1] \ar[r]^{\phi_A} \ar[d]_{\rho'} & A_{\varphi(m')}[1] \ar[d]^{\rho}\\ T_{m'}M'\ar[r]^{\varphi_*}\ar[d] & T_{\varphi(m')}M \ar[d]\\
	0&0
	} 
	\begin{array}{c} \\ {}\\{}\\  \hbox{ and } \end{array} 
	\xymatrix{ 
	0\ar[d]&0\ar[d]\\
	T_{\varphi(m')} M^\vee[1] \ar[r]^{\varphi_*^\vee} \ar[d]_{\rho^\vee} & (T_{\varphi(m')} M'^\vee) [1] \ar[d]^{(\rho')^\vee} \\ 
	A^\vee_{\varphi(m')} \ar[r]^{(\phi_A)^\vee} \ar[d] & (A')_{m'}^\vee\ar[d]\\ 
	0&0} 
	$$
Since $ \phi$ is a Morita morphism, it is routine to check that these
 chain maps are indeed  quasi-isomorphisms.
	
In view of Corollary \ref{cor:zurich2},
 without  loss of generality, we can assume
that $ \Pi'$ is the horizontal lift of $ \Pi$
 with respect to an Ehresmann connection on $ \varphi: M'  \to M$.
 This implies the commutativity of the following diagram, for all $m' \in M'$:
$$ \xymatrix{ (T_{m'}\Gamma')^\vee \ar[rr]^{(\Pi')_{m'}^\#} & & T_{m'} \Gamma' \ar[d]^{\phi_* }\\\ar[u]^{(\phi_*)^\vee } 
	\ar[rr]^{\Pi_{\varphi(m')}^\#}  (T_{\varphi(m')} \Gamma)^\vee & & T_{\varphi(m')} \Gamma .} $$

In turn, this implies the commutativity of the following diagram,
  $\forall m' \in M'$:
\begin{equation}
  \xymatrix{ 
	(T_{m'}M')^\vee[1] \ar[rrr]^{(\rho_*')^\vee }   \ar[rd]^{(\rho')^\vee} &  & &  (A')_{m'}[1] \ar[ddd]^{\phi_A} \ar[ld]_{\rho'} \\ 
	&\ar[r]^{-\rho_*' }   (A')_{m'}^\vee & T_{m'}M'  \ar[d]^{\varphi_* }  & \\
	& \ar[r]^{-\rho_* } \ar[u]^{\phi_A^\vee} A^\vee_{\varphi(m')}& T_{\varphi(m')}M'& \\
\ar[uuu]^{\varphi_*^\vee } \ar[ru]^{\rho^\vee}  (T_{\varphi(m')}M)^\vee[1]	\ar[rrr]^{(\rho_*)^\vee } & & & \ar[lu]_{\rho} A_{\varphi(m')}[1]
}
\end{equation}
where $\rho_*: A^\vee\to  T M$ and $\rho_*':  (A')^\vee \to T M'$ are
the bundle maps associated to $\Pi$ and $\Pi'$, respectively,
as in   \eqref{eq:Congqiang}.
Since both vertical chain maps
  $(\varphi_*^\vee , \phi_A^\vee)$ and $( \phi_A,\varphi_*)$ are 
quasi-isomorphisms, both horizontal chain maps
 $ ((\rho_*)^\vee, - \rho_* )$ and $ ((\rho_*')^\vee, - \rho_*' )$ induce
 the same map at the level of cohomology.
The latter  implies  that the rank of $ (\Pi',\Lambda')$ at $m'$ is equal to the rank of $( \Pi,\Lambda)$ at $ \varphi (m') \in M$.
\end{proof}

Now, we are ready to introduce the rank of a $(+1)$-shifted Poisson
structure on a differentiable stack $\XX$.

\begin{definition}
For a $(+1)$-shifted Poisson structure  $\pixx$
on a differentiable stack $\XX$,
let $\quasip$ be  any quasi-Poisson groupoid representing it.  Define
the rank of $\pixx$ as a map $|\XX|\to \zz$:
$$\text{rank} \, \pixx = \dim \, \XX- \text{dim} \big(
 \ker \rho^\vee|_m \cap \ker  \rho_*^\vee|_m \big), $$
where $m$ is any point in the groupoid orbit representing the element
in the coarse moduli space $|\XX|$ of the~stack~$\XX$.
\end{definition}

According to Lemma \ref{lem:rankMorita}, $\text{rank} \, \, \pixx$
 is indeed well
defined.  Let us now describe non-degenerate Poisson structures on a differentiable stack.

\begin{definition}
\label{def:nondeg}
A $(+1)$-shifted Poisson structure  $\pixx$ on a differentiable stack $\XX$
is  non-degenerate if and only if   the linear
term \eqref{eq:perugiabis} of the morphism of homotopy $\Gamma$-modules 
$\Pi^{\#}:  {L_\XX}[1] |_M\to {T_\XX}|_M$ is a quasi-isomorphism
of  2-term complexes of vector bundles. That is, for any $m\in M$, the
morphism defined by the horizontal arrows as in
 \eqref{eq:perugiabis}:
\begin{equation}
\label{eq:perugia2}
\begin{tikzcd}
0\arrow{d}&0\arrow{d}\\
{(T_m^\vee M) [1]} \arrow[xshift=-2pt]{d}{\rho^\vee}  \arrow{r}{-\rho_*^\vee} &
{A_m[1]} \arrow[xshift=-2pt]{d}{\rho}  \\
{A^\vee_m} \arrow{r}{\rho_*}\arrow{d} & {T_mM}\arrow{d}\\
0&0
\end{tikzcd}
\end{equation}
is a  quasi-isomorphism of the 2-term complexes.
\end{definition}

Not all differentiable stacks admit $(+1)$-shifted
 non-degenerate Poisson structures.

\begin{lemma}
\label{lem:dim0}
If $\XX$ is a $(+1)$-shifted non-degenerate Poisson stack,
 then $\dim \, \XX=0$.
\end{lemma}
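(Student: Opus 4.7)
The plan is to reduce the non-degeneracy condition at a point to an equality of alternating sums of dimensions, which forces $\dim M = \mathrm{rk}(A)$ for any presentation $\Gamma\toto M$ of $\XX$. Since $\dim\XX = \dim M - \mathrm{rk}(A)$, this will give the conclusion.

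First I would choose a Lie groupoid $\Gamma\toto M$ presenting $\XX$ together with a quasi-Poisson structure $(\Pi,\Lambda)$ on $\Gamma$ representing $\pixx$. By Definition~\ref{def:nondeg}, non-degeneracy means that at every point $m\in M$ the chain map
\[
\begin{tikzcd}
T_m^\vee M \arrow{d}{\rho^\vee|_m}  \arrow{r}{-\rho_*^\vee|_m} & A_m \arrow{d}{\rho|_m}  \\
A_m^\vee \arrow{r}{\rho_*|_m} & T_mM
\end{tikzcd}
\]
(with $T^\vee M$ and $A$ placed in degree $-1$, $A^\vee$ and $TM$ in degree $0$) is a quasi-isomorphism of $2$-term complexes. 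In particular, it induces isomorphisms on cohomology in every degree.

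Next I would compute the two cohomology groups explicitly. For the shifted cotangent complex $L_\XX[1]|_M$ at $m$, we have $H^{-1} = \ker(\rho^\vee|_m)$ and $H^0 = A_m^\vee / \mathrm{im}(\rho^\vee|_m)$. For the tangent complex $T_\XX|_M$ at $m$, we have $H^{-1} = \ker(\rho|_m)$ and $H^0 = T_mM / \mathrm{im}(\rho|_m)$. Denoting $r = \mathrm{rk}(\rho|_m) = \mathrm{rk}(\rho^\vee|_m)$, the rank–nullity theorem gives
\[
\dim H^{-1}(L_\XX[1]|_M) = \dim M - r, \qquad \dim H^{-1}(T_\XX|_M) = \mathrm{rk}(A) - r.
\]
Equality of these dimensions, forced by the quasi-isomorphism condition, immediately yields $\dim M = \mathrm{rk}(A)$. (The equality of $H^0$'s gives the same relation by a dual computation, which serves as a sanity check.)

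Finally, since $\dim \XX = \dim M - \mathrm{rk}(A)$ by definition, we conclude $\dim \XX = 0$. I do not expect any real obstacle here: the argument is pure linear algebra at a point, once one has identified the cohomology of the two representing $2$-term complexes. The only mild subtlety is that the non-degeneracy condition of Definition~\ref{def:nondeg} is stated as a quasi-isomorphism of complexes of vector bundles, but this reduces pointwise to the above computation.
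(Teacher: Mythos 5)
Your proof is correct and is essentially the same pointwise linear-algebra argument as the paper's: the paper simply packages it via the Euler characteristic (quasi-isomorphic $2$-term complexes satisfy $\chi(L_\XX[1]|_M)=-\dim\XX=\chi(T_\XX|_M)=\dim\XX$), whereas you extract the same relation from the equality $\dim H^{-1}=\dim\ker(\rho^\vee|_m)=\dim\ker(\rho|_m)$ together with $\operatorname{rank}(\rho|_m)=\operatorname{rank}(\rho^\vee|_m)$. Both routes are valid; the Euler-characteristic phrasing just avoids needing the rank equality between $\rho$ and its transpose.
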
  
\begin{proof}
Since the 2-term complexes of vector bundles associated to
 ${L_\XX}[1] |_M $ and $ {T_\XX}|_M$ are $(T^\vee M)[1] \stackrel{\rho^\vee}{\to} A^\vee $
 and $A[1] \stackrel{\rho}{\to} TM$, respectively, 
 their Euler characteristics are $-\text{dim} \,\XX$ and $\text{dim} \, \XX$,
 respectively.
 Since quasi-isomorphic 2-term complexes have the same Euler characteristic,
 we have $\text{dim} \, \XX = - \text{dim} \, \XX$. Therefore,
  it follows that  $\text{dim} \, \XX=0$.
\end{proof}

The following proposition gives an alternative description
of non-degenerate Poisson stacks. 
\begin{proposition}\label{non_degeneracy_criterion}
A $(+1)$-shifted Poisson structure  $\pixx$ on a differentiable stack $\XX$
is non-degenerate if and only if $\text{rank} \, \pixx=\text{dim} \, \XX = 0$
 uniformly on the coarse moduli space of the stack.
\end{proposition}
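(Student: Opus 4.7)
The plan is to reduce non-degeneracy of $\pixx$ to a single pointwise linear-algebra condition on the map \eqref{eq:perugia2}, and then identify it with the rank conditions via Remark~\ref{rmk:practical} and Proposition~\ref{lem:zurich}.

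First, Lemma~\ref{lem:dim0} already provides the ``$\dim\XX=0$'' part of the statement: for the two $2$-term complexes in \eqref{eq:perugia2} to be quasi-isomorphic at a single point, their Euler characteristics $-\dim\XX$ and $\dim\XX$ must agree, forcing $\dim\XX=0$. From now on I may therefore assume $\dim M=\mathrm{rk}(A)=n$, so that all four fibers in \eqref{eq:perugia2} have dimension $n$ and consequently the pairs $H^{-1}$ (resp.\ $H^0$) of the source and target complexes have equal dimension at every $m\in M$.

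Next I would record the pointwise orthogonality identity
\begin{equation*}
\bigl(\rho|_m(A_m) + \rho_*|_m(A^\vee_m)\bigr)^{\perp}
\;=\;
\ker\rho^\vee|_m \,\cap\, \ker\rho_*^\vee|_m
\;\subset\; T^\vee_m M,
\end{equation*}
which is immediate since $\rho^\vee$ and $\rho_*^\vee$ are the transposes of $\rho$ and $\rho_*$. Writing $r_{12}(m):=\dim(\rho|_m(A_m)+\rho_*|_m(A^\vee_m))$, the map on $H^{-1}$ at $m$ is the restriction of $-\rho_*^\vee|_m$ to $\ker\rho^\vee|_m$, and the map on $H^0$ is induced by $\rho_*|_m$. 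By the equality of dimensions from the previous paragraph, the first is bijective iff injective iff $\ker\rho^\vee|_m \cap \ker\rho_*^\vee|_m = 0$, while the second is bijective iff surjective iff $\rho|_m(A_m) + \rho_*|_m(A^\vee_m) = T_m M$; and by the orthogonality identity these two conditions coincide and amount to $r_{12}(m) = \dim M$.

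Finally, Remark~\ref{rmk:practical} gives $\mathrm{rank}\,\pixx(m) = \dim\XX - \dim(\ker\rho^\vee|_m\cap\ker\rho_*^\vee|_m)$, which under $\dim\XX=0$ vanishes precisely when this intersection is trivial. Combining everything, non-degeneracy at $m$ is equivalent to ``$\dim\XX=0$ and $\mathrm{rank}\,\pixx(m)=0$'', i.e.\ to $\mathrm{rank}\,\pixx(m)=\dim\XX=0$; and since rank is constant on groupoid orbits by Proposition~\ref{lem:zurich}, this pointwise condition descends to the uniform condition on the coarse moduli space $|\XX|$. The only genuinely delicate step is recognizing that the $H^{-1}$- and $H^0$-isomorphism conditions collapse to the single scalar condition $r_{12}(m)=\dim M$, which rests entirely on the orthogonality identity above.
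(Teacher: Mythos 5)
Your proof is correct and follows essentially the same route as the paper: Lemma~\ref{lem:dim0} for the dimension count, Remark~\ref{rmk:practical} to translate the rank condition, and a pointwise linear-algebra reduction on Diagram~\eqref{eq:perugia2}. The only difference is that you spell out the paper's ``simple linear algebra argument'' explicitly, via the annihilator identity $\bigl(\rho|_m(A_m)+\rho_*|_m(A^\vee_m)\bigr)^{\perp}=\ker\rho^\vee|_m\cap\ker\rho_*^\vee|_m$ and the equality of dimensions of the cohomologies, which is a welcome clarification but not a different method.
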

\begin{proof}
Assume that $\pixx$  is non-degenerate. By Lemma \ref{lem:dim0},
we know that  $\text{dim} \,\XX = 0$.  From assumption,
it follows  that $\text{dim} \big(
 \ker \rho^\vee|_m \cap \ker  \rho_*^\vee|_m \big)=0$.
Therefore, $\text{rank}\, \pixx=0$ according to Remark \ref{rmk:practical}. 

Conversely, assume that $\text{rank}  \pixx=\text{dim}\, \XX = 0$.
It thus follows that  all vector spaces in Diagram \eqref{eq:perugia2} 
have the same dimension. A simple linear algebra argument
implies that the
morphism defined by the horizontal arrows
in \eqref{eq:perugia2}
must be  a  quasi-isomorphism of the 2-term complexes.
\end{proof}

\section{Examples}

In this section, we present several examples of quasi-Poisson groupoids,
which have appeared in literatures.

\subsection{Quasi-Poisson groups}
\begin{example}
Let $(G, \Pi, \Lambda)$ be a quasi-Poisson group of dimension $n$ in the sense of Kosmann-Schwarzbach \cite{Yvette}.
As a Lie groupoid over a point,
 it defines a  $(+1)$-shifted Poisson structure on $[\cdot/G]$ of rank $-n$, since $\rho=\rho_*=0$.  
 
Indeed $(+1)$-shifted Poisson structures on  $[\cdot/G]$ correspond exactly to
equivalence classes of quasi-Poisson group structures on $G$, where the equivalence relation is given
by ``Drinfeld twists" \cite{Yvette}.  In particular,  they cannot be non-degenerate.

When $G$ is a  Lie group whose Lie algebra $\frakg$ is
equipped with a symmetric $\frakg$-invariant element
 $t\in S^2(\frakg)^G$, then $(G, \Pi, \Lambda)$, where
$\Pi=0$ and $ \Lambda=-\frac{1}{4}[t_{12}, t_{23}]\in (\wedge^3 \frakg)^G$,
defines a quasi-Poisson group. This induces a $(+1)$-shifted Poisson structure on $[\cdot/G]$. 
In particular,
 it is known that any {\em quasi-triangular}  Poisson Lie group is twist-equivalent to a quasi-Poisson group
 on $G$ with $0$ bivector field, and therefore  its corresponding $(+1)$-shifted Poisson stack on $[\cdot/G]$
is  isomorphic to the $(+1)$-shifted Poisson stack on $[\cdot/G]$ described
above.  This viewpoint can certainly be traced back to 
Drinfeld \cite{Drinfeld2}.

 For the specific case when $G$ is a connected  and simply connected semi-simple Lie group,
it is possible to show \cite{Drinfeld2, Yvette} that any quasi-Poisson group
structure on $G$  is twist-equivalent to  the one  as above,
where the twist $t\in S^2(\frakg)^G \cong S^2(\frakg^\vee)^G$ is a multiple of the Killing form.
This establishes a one to one correspondence between 
$(+1)$-shifted Poisson structures  on $[\cdot/G]$ and  elements in $(\wedge^3 \frakg)^G$.
\end{example}

\subsection{Manin pairs}

Another type of quasi-Poisson groupoid arises as integration of Manin pairs. 
Let $(\mathfrak d, \mathfrak g)$ be a {\em Manin pair} \cite{Drinfeld},
 that is, ${\mathfrak d}$ is an even dimensional quadratic Lie algebra 
(i.e. a Lie algebra equipped
 with an ad-invariant, non-degenerate symmetric bilinear form) of  
 signature $(n,n)$ and $\mathfrak g$ is a maximal isotropic  Lie subalgebra of $\mathfrak d$.
Choose   an isotropic complement $\mathfrak h$ of $\mathfrak g$
 in $\mathfrak d$. The data
 $(\mathfrak d, \mathfrak g, \mathfrak h)$ is called
a {\em Manin quasi-triple}. A Manin quasi-triple induces a quasi-Lie bialgebra
 $(\mathfrak g, {\mathfrak g}^\vee)$ \cite{Drinfeld2}. Two different
 choices of isotropic complement differ by a  skew-symmetric linear map
 $T: {\mathfrak g}^\vee  \simeq \mathfrak h \to {\mathfrak g}$.
 
 Let $D$ be the connected and simply connected Lie group
 with Lie algebra ${\mathfrak d}$, and $G \subset D$
a closed Lie subgroup with Lie algebra
 $\mathfrak g$.
Then $(D, G)$ is called the corresponding {\em group pair}.
Denote by $S$, the homogeneous space $S=D/G$.
 The action of the Lie group $D$ on itself by left multiplication
induces an action of $D$ on $S=D/G$,  and this, in turn,
 restricts to a  $G$-action on $S$,   called the {\em dressing action}.

It was shown in \cite{IPLGX} how the Manin quasi-triple allows
us  to define a quasi-Lie bialgebroid structure  $(A,\delta,\Omega)$ on the 
transformation Lie algebroid $A=\mathfrak g\ltimes S\to S$,
 where the anchor map $\rho:A\to TS$ and the opposite anchor map
 $\rho_*:A^\vee \to TS$ are defined by the restrictions of
 the infinitesimal dressing action to $\mathfrak g$ and 
$\mathfrak h\simeq\mathfrak g^\vee $, respectively. 

The corresponding transformation groupoid $G\ltimes S\toto S$ is
therefore naturally endowed with a quasi-Poisson structure
 $(\Pi_S, \Lambda)$; any two such quasi-Poisson structures, related to
different choices of  the complement $\mathfrak h$,  are
 equivalent by a twist determined by $T$.

Indeed we  have the following:

\begin{theorem}
\label{thm:DG}
Let $(\mathfrak d, \mathfrak g)$ be a Manin pair, and $(D, G)$ its 
corresponding group pair. Then  the quotient stack $[S/G]$,
where $S=D/G$ and $G$ acts on $S$ by the dressing action,  is
 naturally  a non-degenerate $(+1)$-shifted Poisson stack.
\end{theorem}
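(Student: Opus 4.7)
The plan is to represent the stack $[S/G]$ by the action Lie groupoid $G\ltimes S\rightrightarrows S$, so that proving the claim amounts to equipping $G\ltimes S$ with a quasi-Poisson groupoid structure as in Definition \ref{def:quasiPoisson} and verifying non-degeneracy as in Definition \ref{def:nondeg}. The Lie algebroid of $G\ltimes S$ is the action algebroid $A=S\times\mathfrak{g}\to S$ with anchor $\rho$ the infinitesimal dressing action. Since $\mathfrak{d}$ has signature $(n,n)$ and $\mathfrak{g}\subset\mathfrak{d}$ is maximal isotropic, $\dim\mathfrak{g}=n$ and $\dim S=\dim D-\dim G=n$, hence $\dim[S/G]=0$, consistent with the necessary condition of Lemma \ref{lem:dim0}.

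To build the quasi-Poisson structure, I would pick a vector space complement $\mathfrak{h}\subset\mathfrak{d}$ to $\mathfrak{g}$, which via the bilinear form identifies $\mathfrak{h}\cong\mathfrak{g}^\vee$. Let $(e_a)$ be a basis of $\mathfrak{g}$ and $(e^a)$ the dual basis of $\mathfrak{h}$. Define the multiplicative bivector $\Pi$ on $G\ltimes S\cong G\times S$ by the standard formula built from the right-invariant vector fields on $G$ associated to the $\mathfrak{h}$-parts of $\mathrm{Ad}$, the left-invariant vector fields associated to $\mathfrak{g}$, and the dressing action fields on $S$; and set $\Lambda\in\Gamma(\wedge^3 A)=C^\infty(S)\otimes\wedge^3\mathfrak{g}$ to be the Cartan trivector built from the structure constants of $[\mathfrak{h},\mathfrak{h}]_\mathfrak{d}$ projected onto $\mathfrak{g}$. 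This is the specialization to the group pair $(D,G)$ of the quasi-Poisson groupoid construction associated to a Manin pair developed in \cite{IPLGX}; different choices of $\mathfrak{h}$ give twist-equivalent structures, so the $(+1)$-shifted Poisson class on $[S/G]$ is canonical.

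Verification of the axioms $\tfrac{1}{2}[\Pi,\Pi]=\overleftarrow{\Lambda}-\overrightarrow{\Lambda}$ and $\delta_\Pi(\Lambda)=0$ reduces, in the chosen basis, to the Jacobi identity in $\mathfrak{d}$ together with ad-invariance of the bilinear form, parallel to the checks in \cite{IPLGX, AKM}. This is the main technical step but is essentially routine, and the conceptual content of the theorem lies elsewhere.

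The real content is non-degeneracy. By Proposition \ref{pro:beijing}, I must show that at every $s=gG\in S$ the chain map
\[
\begin{tikzcd}
T_s^\vee S \arrow{d}{\rho_s^\vee} \arrow{r}{-\rho_{*,s}^\vee} & \mathfrak{g} \arrow{d}{\rho_s} \\
\mathfrak{g}^\vee \arrow{r}{\rho_{*,s}} & T_s S
\end{tikzcd}
\]
is a quasi-isomorphism of two-term complexes. Via the canonical identification $T_sS\cong\mathfrak{d}/\mathrm{Ad}_g\mathfrak{g}$, $\rho_s$ becomes the composition $\mathfrak{g}\hookrightarrow\mathfrak{d}\twoheadrightarrow\mathfrak{d}/\mathrm{Ad}_g\mathfrak{g}$, and a direct inspection of $\Pi$ shows that $\rho_{*,s}$ is the composition
\[
\mathfrak{g}^\vee\xrightarrow{\sim}\mathrm{Ad}_g\mathfrak{h}\hookrightarrow\mathfrak{d}\twoheadrightarrow\mathfrak{d}/\mathrm{Ad}_g\mathfrak{g},
\]
where the first isomorphism is the one induced by the bilinear form. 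Since $\mathrm{Ad}_g\mathfrak{g}$ and $\mathrm{Ad}_g\mathfrak{h}$ are complementary in $\mathfrak{d}$, this composition is a linear isomorphism, and dually so is $-\rho_{*,s}^\vee:T_s^\vee S\to\mathfrak{g}$. Both horizontal arrows of the diagram are then isomorphisms, so the chain map $\Pi^\#$ is an isomorphism of two-term complexes at every $s$, a fortiori a quasi-isomorphism. By Definition \ref{def:nondeg}, $([S/G],[\Pi])$ is a non-degenerate $(+1)$-shifted Poisson stack. The main obstacle I anticipate is a careful check that $\rho_{*,s}$ really has the asserted form when computed from the explicit bivector $\Pi$; this is a pointwise calculation at the unit section, extended to arbitrary $s=gG$ by $G$-equivariance, and is the only step requiring the specific formula for $\Pi$ rather than general Manin-pair principles.
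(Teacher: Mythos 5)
Your overall architecture coincides with the paper's: choose an isotropic complement $\mathfrak h$ of $\mathfrak g$ in $\mathfrak d$ so that $(\mathfrak d,\mathfrak g,\mathfrak h)$ is a Manin quasi-triple, import the explicit quasi-Poisson groupoid structure $(\Pi,\Lambda)$ on $G\ltimes S\toto S$ from Theorem 4.22 of \cite{IPLGX} (this is Proposition \ref{pro:quasi-triple}), note that different choices of $\mathfrak h$ yield twist-equivalent structures by Proposition 4.15 of \cite{IPLGX} (this is Proposition \ref{pro:h}), and then check non-degeneracy pointwise. Two small cautions: the complement must be \emph{isotropic}, not merely linear, for the quasi-triple machinery to apply; and the quasi-Poisson axioms are not re-verified but cited, which is also what the paper does.

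The genuine gap is in the non-degeneracy step, which you correctly identify as the real content. Your claim that $\rho_{*,s}$ factors as $\mathfrak g^\vee\xrightarrow{\sim}\mathrm{Ad}_g\mathfrak h\hookrightarrow\mathfrak d\twoheadrightarrow\mathfrak d/\mathrm{Ad}_g\mathfrak g$ is not correct. Reading off the mixed term $\langle \theta'_s,(L_g^\ast\theta_g)_S\rangle$ of \eqref{bivector-Manin} along the unit section, $\rho_{*,s}$ is the infinitesimal dressing action of the \emph{fixed} subspace $\mathfrak h\cong\mathfrak g^\vee$, i.e.\ the composition $\mathfrak h\hookrightarrow\mathfrak d\twoheadrightarrow\mathfrak d/\mathrm{Ad}_g\mathfrak g$, whose kernel is $\mathfrak h\cap \mathrm{Ad}_g\mathfrak g$. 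This is nonzero in general: already for $\mathfrak d=\mathfrak g\oplus\mathfrak g$ with $\mathfrak h=\Delta_-(\mathfrak g)$ (the case of Proposition \ref{quasi-AMM}), at a point represented by $(s,e)\in D$ with $\mathrm{Ad}_s\xi=-\xi$ for some $\xi\neq 0$ one has $\mathfrak h\cap\mathrm{Ad}_{(s,e)}\Delta(\mathfrak g)\neq 0$. (Your appeal to ``$G$-equivariance'' to reduce to the base point also cannot work: the dressing orbits are proper subsets of $S$, e.g.\ the orbit of $eG$ is a single point.) Hence the horizontal arrows of \eqref{eq:perugiabis} are \emph{not} isomorphisms in general, and the conclusion ``both arrows are isomorphisms, hence an isomorphism of complexes'' fails.

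The theorem survives because non-degeneracy only demands a quasi-isomorphism. The cleanest repair uses Remark \ref{rmk:practical} together with the proposition characterizing non-degeneracy as $\mathrm{rank} = \dim\XX=0$: you have already checked $\dim\XX=0$, and
\[
\ker\rho^\vee|_s\cap\ker\rho_*^\vee|_s=\bigl(\mathrm{im}\,\rho_s+\mathrm{im}\,\rho_{*,s}\bigr)^{\circ}=0,
\]
since $\mathrm{im}\,\rho_s+\mathrm{im}\,\rho_{*,s}$ is the image of $\mathfrak g+\mathfrak h=\mathfrak d$ in $\mathfrak d/\mathrm{Ad}_g\mathfrak g$, i.e.\ all of $T_sS$. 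Equivalently, one can check directly that the chain map restricts to an isomorphism $\ker\rho^\vee\cong\mathfrak g\cap\mathrm{Ad}_g\mathfrak g\to\ker\rho=\mathfrak g\cap\mathrm{Ad}_g\mathfrak g$ and descends to an isomorphism of cokernels, using the self-duality furnished by the invariant pairing on $\mathfrak d$; but the isomorphism-on-the-nose claim should be dropped.
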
	 
\begin{proof}
According to Proposition \ref{non_degeneracy_criterion} we need to check that the rank of the quasi-Poisson groupoid is uniformly zero, i.e.
\[
\mathrm{dim}[(\mathrm{Im}\rho)_s+(\mathrm{Im}\rho_*)_s]=\mathrm{dim} T_sS
\]
for every $s\in S$. As previously remarked, by definition,
\[
(\rho,\rho_*):A_s\oplus A_s^\vee\simeq\mathfrak d \to T_sS
\]
is the infinitesimal dressing action map. Since $S$ is $D$-homogeneous this map is surjective at every point.

\end{proof}

\subsection{AMM groupoid}
\label{sec:AMM}
A particular subcase of the one considered in the previous example deserves
 some special  attention.
Let $\mathfrak g$ be a  quadratic compact Lie algebra endowed with an
ad-invariant non-degenerate bilinear form $K$. On the direct sum $\mathfrak
d=\mathfrak g\oplus \mathfrak g$,  one  constructs a scalar
product of signature $(n,n)$
 (with $n$  being the dimension of ${\mathfrak g}$) by
\[
((u_1,u_2)|(v_1,v_2))=K(u_1,v_1)-K(u_2,v_2),
\]
$\forall (u_1,u_2),(v_1,v_2)\in \mathfrak d$.
 Then $(\mathfrak d , \Delta (\mathfrak g), \Delta _- (\mathfrak g))$
 is a Manin quasi-triple,
 where $\Delta (v)=(v,v)$ and $\Delta _-(v)=(v,-v)$,
$\forall v\in\mathfrak g$  \cite{AKKS}.
If $G$ is the compact, connected and simply connected Lie group with
 Lie algebra $\mathfrak g$,
 then $D=G\times G$ is the  connected and simply connected Lie group
with Lie algebra $\mathfrak d$,
and $G$ is identified with the diagonal inside
$D=G \times G$. The map
$[(g',g)] \mapsto g' g^{-1}$ allows us to
identify the homogeneous space $D/G$ with $G$ itself;
under this identification,
 the dressing action of $G$ becomes the conjugation action.
Hence the transformation groupoid of Subsection 7.3
becomes the transformation groupoid $G\ltimes G \toto  G$. 

On this transformation groupoid, the multiplicative bivector
 field $\Pi$ on $G\times G$:
\begin{equation}
 \label{eq:Pi_gs}
\Pi|_{(g,s)}  = \half \sum _{i=1}^n \ceV{e^2_i}\wedge \Vec{e^2_i}-
\ceV{e^2_i}\wedge \ceV{e^1_i}-\Vec{(Ad_{g ^{-1}}e_i)^2}\wedge
\Vec{e^1_i}\, ,
\end{equation}
together with the constant section $\Lambda \in\Gamma (G; \wedge^3 (\mathfrak g\ltimes G))$
 corresponding to the 3-vector in $(\wedge^3 {\mathfrak g})^G$ induced by
the Cartan 3-form $\frac{1}{4}K(\cdot ,[\cdot ,\cdot ]_\mathfrak g)\in \wedge^3 {\mathfrak g}^*$,
defines a  quasi-Poisson groupoid structure \cite[Corollary 4.24]{IPLGX}).
Here $\{ e_i\}$ is an orthonormal basis of ${\mathfrak g}$ and the superscript
refers to the respective $G$-component.

By a direct verification, one can show that $\Pi$ is indeed
non-degenerate in the sense  of Definition \ref{def:nondeg}.
In summary, we have the following

\begin{theorem}
\label{AMM}
Let  $G$ be a compact, connected and simply connected Lie group
whose Lie algebra $\mathfrak g$ is  quadratic.
Then the quotient stack $[G/G]$, where $G$ acts on $G$ by
conjugation, is  naturally a non-degenerate $(+1)$-shifted Poisson stack.
\end{theorem}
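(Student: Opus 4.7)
The plan is to obtain Theorem \ref{AMM} as a direct application of Theorem \ref{thm:DG} to a Manin pair constructed from $(\mathfrak{g},K)$. I would set $\mathfrak{d} := \mathfrak{g}\oplus\mathfrak{g}$ with the bilinear form of signature $(n,n)$ given by $((u_1,u_2)|(v_1,v_2)) = K(u_1,v_1) - K(u_2,v_2)$, so that $(\mathfrak{d},\Delta(\mathfrak{g}))$ is a Manin pair with corresponding group pair $(G\times G,\Delta(G)\cong G)$. As recalled just before Proposition \ref{quasi-AMM}, the map $[(g_1,g_2)]\mapsto g_1 g_2^{-1}$ identifies $S = D/G$ with $G$ and the dressing action with conjugation, so the presenting groupoid $G\ltimes S\toto S$ of Theorem \ref{thm:DG} becomes the standard transformation groupoid $G\ltimes G\toto G$ representing the stack $[G/G]$.

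Next I would select the isotropic complement $\Delta_-(\mathfrak{g})\subset\mathfrak{d}$, producing a Manin quasi-triple, and by Proposition \ref{pro:quasi-triple} --- made fully explicit by Proposition \ref{quasi-AMM} --- this equips $G\ltimes G\toto G$ with the non-degenerate quasi-Poisson groupoid structure $(\Pi,\Lambda)$ written in Equation \eqref{eq:Pi_gs}, with $\Lambda\in(\wedge^3\mathfrak{g})^G$ the 3-vector dual to the Cartan 3-form $\tfrac14 K(\cdot,[\cdot,\cdot])$. By Definition \ref{poiss.stack}, the Morita equivalence class of this quasi-Poisson groupoid is a $(+1)$-shifted Poisson structure on $[G/G]$. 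For the ``naturality'' assertion I would invoke Proposition \ref{pro:h}: any other admissible isotropic complement yields a twist-equivalent quasi-Poisson structure on the same groupoid, hence, by Proposition \ref{prop:MC=quasiPoisson}, the same class in $\Poiss(G\ltimes G)$ and a fortiori the same $(+1)$-shifted Poisson stack structure on $[G/G]$.

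Non-degeneracy at the stack level is then immediate. Indeed, non-degeneracy of the quasi-Poisson groupoid asserted in Proposition \ref{quasi-AMM} is, by Definition \ref{def:nondeg}, exactly the condition that the linear term \eqref{eq:perugiabis} be a pointwise quasi-isomorphism of 2-term complexes of vector bundles; and since $\dim[G/G]=0$, non-degeneracy of the stack is equivalent to the vanishing of the rank function on $|[G/G]|$, which by Lemma \ref{lem:rankMorita} is Morita-invariant and therefore a well-defined property of the stack, not of any particular presentation.

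I expect the only nontrivial technical point --- already hidden inside Proposition \ref{quasi-AMM} rather than in Theorem \ref{AMM} itself --- to be the pointwise verification that, under the identifications $\mathfrak{g}\cong\mathfrak{g}^\vee$ via $K$ and $T^\vee G\cong TG$ via (right or left) translation, the map $\rho_*:\mathfrak{g}^\vee\to T_gG$ extracted from \eqref{eq:Pi_gs} fits with the anchor $\rho(\xi) = \xi^R_g - \xi^L_g$ of the transformation Lie algebroid into a chain map inducing isomorphisms on kernel (the centralizer $\mathfrak{z}_{\mathfrak{g}}(g)$) and on cokernel (the normal to the conjugation orbit). This is the quasi-Poisson analogue of the classical statement that the invariant form $K$ turns the conjugation orbit and its transverse directions into $K$-dual objects.
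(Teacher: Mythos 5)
Your proposal matches the paper's own route exactly: the paper obtains Theorem \ref{AMM} as a summary of the preceding discussion, namely the Manin quasi-triple $(\mathfrak g\oplus\mathfrak g,\Delta(\mathfrak g),\Delta_-(\mathfrak g))$, the identification of $D/G$ with $G$ carrying the dressing action to conjugation, the explicit non-degenerate quasi-Poisson groupoid of Proposition \ref{quasi-AMM}, and the twist-independence from Proposition \ref{pro:h}. The proposal is correct and adds nothing structurally different from the paper's argument.
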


\begin{remark}
\label{rk:4}
Similar to  \cite[Theorem 3.2.5]{CPTVV} and \cite[Theorem 3.33]{Pr1}
 in the algebraic geometry setting,
 we   expect that one can
invert non-degenerate $(+1)$-shifted Poisson structures
on a differentiable stack to obtain  $(+1)$-shifted  symplectic stacks.
It is known   that quasi-symplectic structures on a
Lie groupoid transfer to any  Morita equivalent Lie groupoids,
and indeed a well defined notion of Morita equivalence of
quasi-symplectic groupoids was introduced in \cite{X}.
A $(+1)$-shifted symplectic differentiable stack
is  a  Morita equivalent class of quasi-symplectic groupoids.

In \cite{BCLX2}, we will explore the question how to invert a
  non-degenerate $(+1)$-shifted
Poisson stack to obtain  a $(+1)$-shifted symplectic stack 
 by ``homotopy inverting" non-degenerate quasi-Poisson groupoids
to obtain quasi-symplectic groupoids \cite{X}.
In particular, we prove that  by homotopy inverting the above  non-degenerate
quasi-Poisson groupoid,
 we obtain the AMM quasi-symplectic groupoid $G\ltimes G\toto G$ \cite{X}.
 Therefore,  we  obtain AMM $(+1)$-shifted symplectic stack
 $[G/G]$ by inverting the  non-degenerate $(+1)$-shifted Poisson stack 
$[G/G]$  in Theorem \ref{AMM}.
\color{black}
\end{remark}

\appendix

\section{$\zz$-graded Lie $2$-algebras}

We discuss here the extension to the 
$\zz$-graded case of the standard notions of Lie $2$-algebras
 and their morphisms. This extension is straightforward.
However since we could not find it in the literature,
  we will give a self contained presentation.

\label{sec:strictLie2}

\subsection{Definitions}\label{AppendixA1}

\begin{definition}
\label{def:strictLie2}
 A \emph{{\strict}} (or graded Lie algebra crossed-module) $ \leftgla \stackrel{\diff}{\mapsto} \rightgla$ is a pair $(\leftgla , \rightgla)$
 of $\zz$-graded Lie algebras, equipped with
 \begin{enumerate}
  \item[(i)] a degree $0$ graded Lie algebra morphism $\diff:\leftgla  \to \rightgla  $,
  \item[(ii)] a graded Lie algebra action  of $\rightgla$ on the graded vector space $\leftgla$:
   $$ \begin{array}{rcl}  \rightgla \times \leftgla &\to     & \leftgla \\
                          (\pi,a)                   &\mapsto & \pi \cdot a,  
      \end{array} $$
 \end{enumerate}
 such that:
 \begin{enumerate}
  \item[(a)]  for all 
$a , \pi\in \leftgla$, the relation $  \diff (\pi \cdot a) =  [\pi,\diff a ]$ holds,
 and 
  \item[(b)]  for all $a_1$ and $a_2 \in \leftgla$, the relation $ [a_1,a_2] = (\diff a_1) \cdot a_2  $ holds.
 \end{enumerate}
\end{definition}

In the non-graded case when $\leftgla$ and $\rightgla$ are ordinary
 Lie algebras,  i.e. graded Lie algebras concentrated in degree $0$,
 we recover the usual notion of a crossed module, which  is  also 
called a {\it strict Lie 2--algebra } \cite{BaezCranz}.
 Since it  is the only case we are interested in,
 we omit the term ``strict" in 
Definition \ref{def:strictLie2}.

\medskip

\begin{remark}
\label{rmk:item_c}
The conditions (a) and (b) in Definition \ref{def:strictLie2}
 imply that $\rightgla$ acts
 on $\leftgla$ by derivations of graded Lie algebras.
 Moreover, the fact that $d$ respects the Lie algebra bracket 
is a consequence of (a) and (b),
 and can be omitted from Definition \ref{def:strictLie2}.
\end{remark}

To any $\zz$-graded Lie $2$-algebra $\leftgla \stackrel{\diff}{\mapsto} \rightgla$, there is an associated differential graded Lie algebra, denoted
 $\Nu( \leftgla \stackrel{\diff}{\mapsto} \rightgla)$,
which is defined  as follows:
\begin{enumerate}
 \item[(i)] as a graded vector space, $\Nu= \leftgla[1] \oplus \rightgla$,
 i.e. for any $k\in {\mathbb Z}$,
the degree $k$-component $\assogla_k$  is the direct sum 
 $ \leftgla_{k+1} \oplus \rightgla_{k}$;
 \item[(ii)] the differential
 is $\diff(a\oplus\pi) =0\oplus\diff a $ for all $a \in \leftgla_{k+1} \subset \assogla_k $
 and  $\pi\in \rightgla_k \subset \assogla_{k}$
 \item[(iii)] the graded Lie bracket is given, 
  for all $ a_1 \oplus \pi_1 \in \assogla_k$ and  $a_2 \oplus \pi_2 \in \assogla_l $ by

  \begin{eqnarray*}[ a_1 \oplus \pi_1 ,  a_2 \oplus \pi_2 ]  &:=&  ( (-1)^k  \pi_1 \cdot a_2 - (-1)^{l}  a_1\cdot \pi_2)\oplus [\pi_1,\pi_2]\, .\\
   & =&
  (  (-1)^k \pi_1 \cdot a_2 - (-1)^{(k+1)l} \pi_2\cdot a_1 )\oplus [\pi_1,\pi_2].
  \end{eqnarray*}
\end{enumerate}
In the sequel, we will denote by $ a \cdot \pi$ the element
 $ (-1)^{k(l+1)} \pi \cdot a$ for all $\pi \in \rightgla_k$ and $a \in \leftgla_l$.

For an ordinary crossed module  $\leftgla \stackrel{\diff}{\mapsto} \rightgla$ (i.e. the non-graded case), the only non vanishing components are 
$\Nu_0=\rightgla$ and $\Nu_{-1}=\leftgla$. 
In this case, a $L_\infty$-morphism from the dgla $\Nu(\leftgla \stackrel{\diff}{\mapsto} \rightgla)$
to the dgla $\Nu(\leftgla' \stackrel{\diff'}{\mapsto} \rightgla')$ is
 determined by a pair of linear maps
  $\leftgla \to \leftgla' $ and $  \rightgla \to  \rightgla'$,
 respectively,	 together with a bilinear skew-symmetric map
   $ \wedge^2 \rightgla \to \leftgla'$. For degree reasons,
 no other Taylor coefficients may exist.
This is no longer true for $\zz$-graded crossed modules.
Below we introduce the  notion of morphisms of $\zz$-graded crossed modules
(or {\strict}s) by imposing these conditions.

\begin{definition}
\label{def:strictLie2_morph}
A  morphism of {\strict}s from
 $ \leftgla \stackrel{\diff}{\mapsto} \rightgla$ to
 $ \leftgla' \stackrel{\diff'}{\mapsto} \rightgla'$ is an
 $L_\infty$-morphism $\Phi$ between their associated
dglas $ \assogla(\leftgla \stackrel{\diff}{\mapsto} \rightgla) $ and $ \assogla(\leftgla' \stackrel{\diff'}{\mapsto} \rightgla') $   whose Taylor coefficients $(\Phi_n)_{n \geq 1}$ satisfy the following properties:
\begin{enumerate}
 \item[(i)] the \emph{linear Taylor coefficient} $\Phi_1$ maps 
	 $\leftgla$ to $\leftgla'$ and maps  $\rightgla$ to $\rightgla'$;
 \item[(ii)] the only non-trivial component
of the  \emph{quadratic Taylor coefficient} is
 $\Phi_2: \wedge^2 \rightgla\to \leftgla'$;
 \item[(iii)] all higher Taylor coefficients $(\Phi_n)_{n \geq 3}$ vanish.
\end{enumerate}
\end{definition}

When the quadratic Taylor coefficient $\Phi_2$ is zero, 
we call it  a \emph{strict} morphism of {\strict}s. 
Strict morphisms are then simply pairs of maps $\leftgla \to \leftgla'$ 
and  $\rightgla \to \rightgla'$
 that preserve the structures defining {\strict}s.

Let us spell out Definition \ref{def:strictLie2_morph}.
 A morphism  $\Phi$ consists of  a pair of degree $0$ linear maps
 $\Phi_1: \leftgla \to \leftgla'$ and
  $\Phi_1:\rightgla \to \rightgla'$,
 called {\em the linear terms},
together with a graded skew-symmetric bilinear map
 $\Phi_2 : \wedge^2 \rightgla   \to  \leftgla'$
of degree $+1$, called {\em the quadratic term}, such that:
\begin{enumerate}
 \item[(a)] $\Phi_1$ is a chain map:
       $\xymatrix{ \leftgla \ar[r]^{\diff} \ar[d]^{\Phi_1}  &  \rightgla \ar[d]^{\Phi_1} \\ \leftgla' \ar[r]^{\diff'} & \rightgla' },$
 \item[(b)] for all $\pi_1,\pi_2 \in \rightgla $, the relation
 $ (\diff' \circ  \Phi_2)\, (\pi_1,\pi_2) =  \Phi_1([\pi_1,\pi_2])-[\Phi_1(\pi_1),\Phi_1(\pi_2)]  $ holds, 
 \item[(c)]  for all $\pi \in \rightgla, a \in \leftgla $, the relation
 $  \Phi_2 (\pi \cdot \diff a) =
  \Phi_1(\pi\cdot a)-\Phi_1(\pi) \cdot \Phi_1(a)$ holds,
 \item[(d)] the relation $(-1)^{|\pi_1| |\pi_3|} \left( \Phi_2 (\pi_1,[\pi_2,\pi_3]) - \Phi_1(\pi_1) \cdot \Phi_2 (\pi_2,\pi_3) \right)+ \circlearrowright{\hbox{\tiny{$\pi_1\pi_2\pi_3$}}} = 0  $ 
 holds for all $\pi_1,\pi_2, \pi_3 \in \rightgla $.
\end{enumerate} 
In the non-graded case, these are  exactly the  relations
satisfied by the Taylor coefficients of an $L_\infty$-morphism between dglas
 concentrated in degrees $0$ and $-1$ (see \cite{NOOHI}).

\begin{remark}
Morphisms of $L_\infty$-algebras can be composed; it is routine to check that morphisms of graded Lie 2-algebras 
are stable under the  composition of $L_\infty$-morphisms.
 Indeed, if $\Phi$ and $\Psi$ are morphisms of {\strict}s, 
so is  $\Phi\circ\Psi$,  whose only non-vanishing terms are 
the linear and quadratic ones that read as follows:
 \begin{equation}
  \label{eq:compositionStrictLie2}
  (\Phi \circ \Psi)_1 = \Phi_1 \circ \Psi_1  \hbox{ and } (\Phi \circ \Psi)_2 = \Phi_1 \circ \Psi_2 + \Phi_2 \circ (\wedge^2 \Psi_1) \, .
 \end{equation}
\end{remark}

\medskip
The following definition generalizes to the graded case the notion of
 homotopy between morphisms of Lie 2-algebras (see, for instance, 
 \cite[Definition 2.9]{NOOHI}).
Again, for the non-graded case, such homotopies (called natural transformations) are the only possible ones. 
In the graded case, we impose their form to mimic the non-graded case.

\begin{definition}
\label{def:strictLie2_morph_homotopy}
Let $\Phi$ and $\Psi$ be morphisms of {\strict}s from 
 $ \leftgla \stackrel{\diff}{\mapsto} \rightgla$ to
 $ \leftgla' \stackrel{\diff'}{\mapsto} \rightgla'$. 
An \emph{homotopy} between $\Phi$ and $\Psi$ is a linear map
 $h : \rightgla \to \leftgla'$ of 
degree\footnote{Note that $h$ becomes of degree $-1$ if $ \rightgla$ and $\leftgla' $ 
are seen as subspaces of their associated dglas.} $0$ such that:
\begin{enumerate}
 \item[(i)] $h$ is a homotopy between the chain maps
 $\Phi_1$ and $\Psi_1$,
        i.e., 
\[ 
\Psi_1\Big|_{\rightgla} =\Phi_1\Big|_{\rightgla}+\diff' \smalcirc\, h\, ,\qquad
\Psi_1\Big|_{\leftgla} =\Phi_1\Big|_{\leftgla}+ h \smalcirc\, \diff 
\]
 \item[(ii)] 
 \begin{equation}
  \label{eq:defhomotopy}
\Psi_2=\Phi_2 +\Theta_h^\Phi;
 \end{equation}
 where $\Theta_h^\Phi: \leftgla\times \leftgla'\to \leftgla'$ is the map
 defined, for all $\pi_1 \in \rightgla , \pi_2 \in \rightgla_l$ by 
  \begin{equation}
  \label{eq:defTheta}
\Theta_h^\Phi (\pi_1,\pi_2 ) :=  h([\pi_1,\pi_2]) - [h(\pi_1),h(\pi_2)]
 -\Phi_1(\pi_1) \cdot h(\pi_2) + (-1)^l h(\pi_1) \cdot \Phi_1(\pi_2). 
  \end{equation}
\end{enumerate}
\end{definition}

It is straightforward  to check  that this definition is
indeed  compatible with the usual notion of homotopy of $L_\infty$-morphisms.

\begin{proposition}
\label{lem:homotopyMakesSense1}
\begin{enumerate}
\item[(i)]Homotopy is an equivalence relation on  morphisms 
between  $\mathbb Z$-graded Lie $2$-algebras; 
\item[(ii)] Composition of morphisms of {\strict}s is compatible with 
homotopies. 
\end{enumerate}
\end{proposition}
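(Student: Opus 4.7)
My plan is to verify each of the two statements directly from the explicit formulas in Definitions \ref{def:strictLie2_morph} and \ref{def:strictLie2_morph_homotopy}. The key tools throughout will be the crossed-module identities $[a_1, a_2]=(\diff' a_1)\cdot a_2$ from item b) of Definition \ref{def:strictLie2}, the sign convention $a\cdot\pi = (-1)^{|\pi|(|a|+1)}\pi\cdot a$, and the composition formula \eqref{eq:compositionStrictLie2}. Reflexivity in (i) is trivial with $h=0$: condition $(\alpha)$ is immediate, and $\Theta_0^\Phi=0$ so $(\beta)$ also holds.

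For symmetry in (i), I will show that if $h$ witnesses $\Phi\sim\Psi$, then $-h$ witnesses $\Psi\sim\Phi$. Condition $(\alpha)$ is immediate by linearity. The substantive computation is to check $\Theta_{-h}^\Psi = -\Theta_h^\Phi$; expanding the left-hand side and substituting $\Psi_1|_{\rightgla} = \Phi_1|_{\rightgla}+\diff'\smalcirc h$, the extra terms $\diff'(h(\pi_1))\cdot h(\pi_2)$ and $h(\pi_1)\cdot \diff'(h(\pi_2))$ produced reduce, via the crossed-module identity and the sign convention, to $\pm[h(\pi_1), h(\pi_2)]$, which combine with the bracket term in $\Theta_h^\Phi$ to yield the desired equality. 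For transitivity, I will show that if $h_1$ realizes $\Phi\sim\Psi$ and $h_2$ realizes $\Psi\sim\Xi$, then $h_1+h_2$ realizes $\Phi\sim\Xi$; again only $(\beta)$ requires work. The key identity is
\[
\Theta_{h_1+h_2}^\Phi = \Theta_{h_1}^\Phi + \Theta_{h_2}^\Psi,
\]
whose verification reduces to the cancellation of $-[h_1(\pi_1),h_2(\pi_2)]-[h_2(\pi_1),h_1(\pi_2)]$ (from the cross-terms of the bracket in $\Theta_{h_1+h_2}^\Phi$) against the terms produced by replacing $\Phi_1$ by $\Psi_1 = \Phi_1+\diff'\smalcirc h_1$ inside $\Theta_{h_2}^\Psi$.

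For (ii), I will use transitivity from (i) to reduce the statement to the separate compatibility with pre- and post-composition. Assume $h_\Phi$ witnesses $\Phi\sim\Phi'$ (both from $\mathfrak{A}\to\mathfrak{A}'$) and $h_\Psi$ witnesses $\Psi\sim\Psi'$ (both from $\mathfrak{A}'\to\mathfrak{A}''$). I will show first that $h_\Psi\circ\Phi_1$ is a homotopy $\Psi\circ\Phi\sim\Psi'\circ\Phi$, using that $\Phi_1$ is a chain map for $(\alpha)$ and the defining equation of a morphism of \strict s, namely $\Phi_1[\pi_1,\pi_2] - [\Phi_1(\pi_1),\Phi_1(\pi_2)] = \diff'\Phi_2(\pi_1,\pi_2)$, to identify $(\Psi'\circ\Phi)_2-(\Psi\circ\Phi)_2$ with $\Theta^{\Psi\circ\Phi}_{h_\Psi\circ\Phi_1}$ via the composition formula \eqref{eq:compositionStrictLie2}. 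Symmetrically, I will show that $\Psi_1\circ h_\Phi$ is a homotopy $\Psi\circ\Phi\sim\Psi\circ\Phi'$; here $(\beta)$ requires handling the quadratic correction $\Psi_2\circ(\wedge^2\Phi'_1-\wedge^2\Phi_1)$, which after using $\Phi'_1-\Phi_1 = \diff'\circ h_\Phi + h_\Phi\circ\diff$ on their respective components and the Jacobi-type relation (d) of a morphism of \strict s, combines with $\Psi_1\circ\Theta^\Phi_{h_\Phi}$ to produce $\Theta^{\Psi\circ\Phi}_{\Psi_1\circ h_\Phi}$. Transitivity then gives $\Psi\circ\Phi\sim\Psi\circ\Phi'\sim\Psi'\circ\Phi'$.

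The main obstacle will be the sign bookkeeping in the quadratic-term identities, specifically in the transitivity step for (i) and in the post-composition step for (ii): these involve several rewritings of $a\cdot\pi$ into $\pi\cdot a$ and uses of graded skew-symmetry of the brackets, and each such rewriting generates a sign that has to match precisely with the sign appearing in the definition \eqref{eq:defTheta} of $\Theta_h^\Phi$. Once the sign conventions are consistently handled — in particular the convention that $h$, while of degree $0$ as a map $\rightgla\to\leftgla'$, effectively acts as a map of degree $-1$ between the associated dglas — the computations reduce to repeated applications of the two crossed-module identities and the chain-map property of $\Phi_1, \Psi_1$, without any genuinely new ingredient beyond what is already used in the non-graded case treated in the literature on Lie $2$-algebras.
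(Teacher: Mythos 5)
Your proposal is correct and follows essentially the same route as the paper: the heart of part (i) is exactly the paper's identity $\Theta_{h_1+h_2}^{\Phi}=\Theta_{h_1}^{\Phi}+\Theta_{h_2}^{\Psi}$ (verified by substituting $\Psi_1=\Phi_1+\diff'\smalcirc h_1$ and converting the resulting $\diff'$-terms into brackets via the crossed-module identity), and your part (ii) --- pre- and post-composing the homotopy with the linear Taylor coefficients and then invoking transitivity --- is equivalent to the paper's one-line observation that $\Xi_1\circ h\circ\Psi_1$ is a homotopy between $\Xi\circ\Phi\circ\Psi$ and $\Xi\circ\Phi'\circ\Psi$. (Your symmetry identity $\Theta_{-h}^{\Psi}=-\Theta_h^{\Phi}$ is the correct form of what is needed.)
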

\begin{proof}
 (i).
Assume that  $\Phi$ is homotopic to $\Psi$  with respect to a
 homotopy $h$.  From the relation $\Theta_h^\Phi=\Theta_{-h}^\Psi$,
it follows that
$\Psi$ is homotopic to $\Phi$ with respect to
the homotopy $-h$.

Now assume that $\Phi$ is homotopic to $\Psi$ with respect to a
homotopy $h$, and $\Psi$ is homotopic to $\Xi$ with respect to a
homotopy $g$. We need to prove that $\Phi$ is homotopic to $\Xi$ 
with respect to the homotopy $h+g$. For this purpose, it suffices
to prove the following relation:
\begin{equation}
\label{eq:lem:homotopyMakesSenseA} 
\Theta_{h+g}^\Phi =  \Theta_{g}^\Psi + \Theta_{h}^\Phi
\end{equation}
with $h,g:\rightgla \to \leftgla'$.
For  any $\pi_1 \in \rightgla_k$ and $\pi_2 \in \rightgla_l$,
\begin{eqnarray*}
   \Theta_{g}^\Psi \, (\pi_1,\pi_2) &=& g([\pi_1,\pi_2]) -[g(\pi_1),g(\pi_2)] +
   (-1)^l g(\pi_1) \cdot \Psi_1(\pi_2)  - \Psi_1(\pi_1) \cdot g(\pi_2) \\
  &=& g([\pi_1,\pi_2]) - [g(\pi_1),g(\pi_2)] + (-1)^l g(\pi_1) \cdot \Phi_1(\pi_2) - \Phi_1(\pi_1) \cdot g(\pi_2) \\
 & & + (-1)^l  g(\pi_1) \cdot (\diff' \circ h) (\pi_2) - (\diff' \circ h)(\pi_1) \cdot g(\pi_2) \\
  &=& g([\pi_1,\pi_2]) - [g(\pi_1),g(\pi_2)]  + (-1)^l
  g(\pi_1) \cdot \Phi_1(\pi_2) - \Phi_1(\pi_1) \cdot g(\pi_2)  \\
  & & -  [ g(\pi_1) , h (\pi_2) ] - [h (\pi_1 )   , g (\pi_2) ], 
  \end{eqnarray*}
where we used  the relation:
$\Psi_1 (\pi) = \Phi_1 (\pi) + (\diff' \circ h) (\pi)$,
 $\forall 	\pi \in {\rightgla}$,
in the second equality,
 and  Definition \ref{def:strictLie2} (ii) in  the last  equality.
Equation (\ref{eq:lem:homotopyMakesSenseA}) thus 
follows immediately. This completes the proof of (i).

 Let us prove (ii). Let  $\Phi$ and $\Phi'$ be 
 homotopic morphisms of {\strict}s and let $h$ be a homotopy between them.
  For all morphisms $\Psi$ and $\Xi$ such that the compositions
 $ \Psi \circ \Phi \circ \Xi $ and $  \Psi \circ \Phi' \circ \Xi $ make sense, the degree $0$ linear map
 $\Xi_1 \circ h \circ \Psi_1 $ is a homotopy between them.  This implies that
 for any  homotopic morphisms  $\Phi$ and  $\Psi$ from 
 $\leftgla \stackrel{\diff}{\mapsto} \rightgla$
 to $\leftgla' \stackrel{\diff'}{\mapsto} \rightgla'$
 and any  homotopic morphisms $\Phi'$ and $\Psi'$ from
  $\leftgla' \stackrel{\diff'}{\mapsto} \rightgla' $ to $\leftgla'' \stackrel{\diff''}{\mapsto} \rightgla''$,
 the composition $\Psi \circ \Phi $ is homotopic to $\Psi' \circ \Phi$.
 The latter is  homotopic to $\Psi' \circ \Phi' $. This proves the claim.
\end{proof}

Proposition \ref{lem:homotopyMakesSense1} allows
us  to make sense of the following:

\begin{definition}
A \emph{homotopy equivalence} between {\strict}s 
$(\leftgla \stackrel{\diff}{\mapsto} \rightgla)$
and $(\leftgla' \stackrel{\diff'}{\mapsto} \rightgla')$ is a pair of 
morphisms of {\strict}s: 
\begin{equation}
 \xymatrix{ \rightgla & \rightgla'   \\
    \ar@{}[r]^(.25){}="a"^(.75){}="b" \ar@<8pt>@{->}^{\Phi} "a";"b"  &    \ar@{}[l]^(.25){}="a"^(.75){}="b" \ar@<8pt>@{->}^{\Psi} "a";"b"   \\
 \leftgla  \ar[uu]^{\diff}  & \leftgla'  \ar[uu]_{\diff'} 
}  
\end{equation}
 such that $\Phi \circ \Psi$ and $\Psi \circ \Phi$ are homotopic to the identity map.
The morphism $\Phi$ (resp. $\Psi$) is 
 said to be an \emph{homotopy inverse} of $\Psi $ (resp. $\Phi$).
\end{definition}

\subsection{Homotopy inverses of morphisms of {\strict}s}

Recall that if $\Phi$ is a morphism of {\strict}s from 
$\leftgla \stackrel{\diff}{\mapsto} \rightgla$ to
 $\leftgla'\stackrel{\diff'}{\mapsto} \rightgla'$, its 
 linear part $\Phi_1$ is a chain map. 
 An homotopy inverse of $\Phi_1$
is a graded chain map $\Psi_1$ from $\leftgla' \stackrel{\diff'}{\mapsto}
 \rightgla'$ to
 $\leftgla \stackrel{\diff}{\mapsto} \rightgla$ together with an homotopy
$h$ between $\Psi_1\circ\Phi_1$ and the identity map,
 and an homotopy $h'$ between $\Phi_1\circ\Psi_1$ and the identity map:

\begin{equation}\label{inverse_chain_map}
 \xymatrix{  \ar@[blue]@/_{16pt}/[dd]_{h} \rightgla & \rightgla' \ar@[blue]@/^{16pt}/[dd]^{h'}  \\
    \ar@{}[r]^(.25){}="a"^(.75){}="b" \ar@[red]@<4pt>@{->}^{\Phi_1} "a";"b"  &    \ar@{}[l]^(.25){}="a"^(.75){}="b" \ar@[red]@<4pt>@{->}^{\Psi_1} "a";"b"   \\
 \leftgla  \ar[uu]^{\diff}  & \leftgla'  \ar[uu]_{\diff'} .}  
\end{equation}

The following theorem states that a morphism of $\zz$-graded Lie 2-algebras
 has an homotopy inverse as long as its linear part  is homotopy
invertible as a  chain map.
 Moreover,
the homotopy  inverse  is uniquely determined by the 
homotopy inverse  of the linear part.
 
\begin{theorem}
\label{th:invertingStrictLie2Morphisms}
Let $\Phi$ be a morphism of {\strict}s from $ \leftgla \stackrel{\diff}{\mapsto} \rightgla $ 
to $ \leftgla' \stackrel{\diff'}{\mapsto}  \rightgla' $ and
let $\Phi_1$ be its linear Taylor coefficient.
Assume that an homotopy inverse  $\Psi_1$ of $\Phi_1$ is given
with homotopies $h$ and $h'$ as in (\ref{inverse_chain_map}). Then 
there exists a unique morphism 
 of {\strict}s $\Psi$ from $\leftgla'\stackrel{\diff'}{\mapsto} \rightgla'$ 
to $ \leftgla \stackrel{\diff}{\mapsto}  \rightgla $ 
such that
\begin{enumerate}
 \item[(i)] $\Psi_1$ is the linear Taylor coefficient of $\Psi$; and
 \item[(ii)] $h$ (resp. $h'$) is a homotopy of morphisms of 
$\zz$-graded Lie $2$-algebras between the composition
 $\Phi \circ \Psi$ (resp. $\Psi \circ \Phi$) and the identity map.
\end{enumerate}
\end{theorem}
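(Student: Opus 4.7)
The strategy is to use the desired homotopy condition imposed by $h'$ to derive an explicit formula for the quadratic Taylor coefficient $\Psi_2$, and then to verify that this formula yields a morphism of $\strict$s with $h$ and $h'$ as the prescribed homotopies.

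The defining relation comes from condition $(\beta)$ of Definition \ref{def:strictLie2_morph_homotopy}: for $h'$ to be an homotopy between $\Phi\circ\Psi$ and the identity, one must have
\[
\Phi_1 \circ \Psi_2 \;=\; -\,\Phi_2 \circ (\wedge^2 \Psi_1) \;-\; \Theta_{h'}^{\Phi\circ\Psi}\, .
\]
Composing on the left with $\Psi_1$ and exploiting the chain-homotopy relation $\Psi_1 \circ \Phi_1 = \mathrm{id}_{\leftgla} + h\circ d$ valid on $\leftgla$ (the only combination of $h$ and $d$ that type-checks there), I obtain
\[
\Psi_2 \;=\; -\,\Psi_1 \circ \Phi_2 \circ (\wedge^2 \Psi_1) \;-\; \Psi_1 \circ \Theta_{h'}^{\Phi\circ\Psi} \;-\; h \circ d\Psi_2\, .
\]
Since morphism axiom (b) of Definition \ref{def:strictLie2_morph} forces $d\Psi_2(\pi'_1,\pi'_2) = \Psi_1([\pi'_1,\pi'_2]) - [\Psi_1(\pi'_1),\Psi_1(\pi'_2)]$, an expression in $\Psi_1$ alone, substituting this back turns the relation above into a closed explicit formula for $\Psi_2$ in terms of the given data $(\Psi_1,\Phi_1,\Phi_2,h,h')$. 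I take this as the definition of $\Psi_2$.

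Next I would verify the axioms (a)--(d) of Definition \ref{def:strictLie2_morph} for $(\Psi_1,\Psi_2)$. Axiom (a) is immediate from $\Psi_1$ being a chain map by hypothesis. Axiom (b) holds by construction, since the formula was designed so that $d\Psi_2$ matches its right-hand side. Axioms (c) and (d) follow from direct manipulations using the chain-homotopy equations for $h$ and $h'$, the cross-module identities of Remark \ref{rmk:item_c}, and the fact that the analogous axioms already hold for $\Phi$. I would then verify that $h$ and $h'$ are homotopies in the sense of Definition \ref{def:strictLie2_morph_homotopy}: condition $(\beta)$ for $h'$ holds by construction, while condition $(\beta)$ for $h$ requires substituting the explicit formula for $\Psi_2$ into $(\Psi\circ\Phi)_2 + \Theta_h^{\Psi\circ\Phi}$ and simplifying using the dual relation $\Phi_1\circ\Psi_1 = \mathrm{id}_{\leftgla'} + h'\circ d'$ on $\leftgla'$ together with the cross-module axioms. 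For uniqueness, given two candidates $\Psi_2$ and $\Psi'_2$, their difference $\Delta$ satisfies $d\Delta = 0$ (axiom (b) depends only on $\Psi_1$) and $\Phi_1\circ\Delta = 0$ (by $(\beta)$ for $h'$); composing with $\Psi_1$ yields $0 = \Psi_1\circ\Phi_1\circ\Delta = \Delta + h\circ d\Delta = \Delta$.

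The main obstacle will be the bookkeeping in axiom (d) and in condition $(\beta)$ for $h$: both are long sign-sensitive computations in the graded setting, with many cross terms involving iterated applications of $h$, $h'$ and of the action of $\rightgla$ on $\leftgla$. A conceptually cleaner presentation would package the entire statement at the level of the associated $\dgla$s $\Nu(\leftgla\stackrel{d}{\to}\rightgla)$ and $\Nu(\leftgla'\stackrel{d'}{\to}\rightgla')$, where it becomes a special case of the standard fact that an $L_\infty$ quasi-isomorphism between $\dgla$s, equipped with prescribed chain-level homotopies between its two compositions and the identity, extends uniquely to an $L_\infty$ inverse compatible with these data. The two-term nature of a $\strict$ ensures that only Taylor coefficients of degree at most two appear, bypassing any need for higher homotopy-transfer machinery.
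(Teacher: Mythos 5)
Your proposal is correct and follows essentially the same route as the paper: you force the constraints coming from axiom (b) and from condition $(\beta)$ for $h'$, solve for $\Psi_2$ using $\Psi_1\circ\Phi_1=\mathrm{id}+h\circ\diff$ (which, up to the homotopy-direction convention in $\Theta$, reproduces exactly the paper's explicit formula \eqref{eq:explicitPsi2}), and obtain uniqueness from $\Ker\diff\cap\Ker\Phi_1=0$. The remaining verification of axioms (c), (d) and of condition $(\beta)$ for $h$ is likewise left as a direct computation in the paper, so the two arguments match in both strategy and level of detail.
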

\begin{proof}
Let $\Psi_2 : \wedge^2 \rightgla' \to \leftgla$ be the  quadratic term
of $\Psi$. Then  $\forall \pi_1',\pi_2',\pi' \in \rightgla'$, $a'\in\leftgla'$, $\pi_1,\pi_2\in\rightgla$, 
\begin{equation}
 \label{eq:constraints}
 \left\{ \begin{array}{rcl}
           \diff \, \Psi_2 (\pi_1',\pi_2') & = & \Psi_1 ([\pi_1',\pi_2'])  - [ \Psi_1(\pi_1') , \Psi_1 (\pi_2') ]    \\
           \Psi_2(\pi',\diff'a') &=& \Psi_1(\pi'\cdot a')-\Psi_1(\pi')\cdot \Psi_1(a')\\
            (\Phi_1 \circ \Psi_2)  (\pi_1',\pi_2')   & = & \Theta_{h'}^{id} ( \pi_1',\pi_2' ) - \Phi_2 \left( \Psi_1(\pi_1') , \Psi_1(\pi_2') \right)  \\
             (\Psi_1 \circ \Phi_2)  (\pi_1,\pi_2)   & = & \Theta_{h}^{id} ( \pi_1,\pi_2 ) - \Psi_2 \left( \Phi_1(\pi_1) , \Phi_1(\pi_2) \right) 
           \end{array} \right.
\end{equation}
where $\Theta_h^{id}$ and $\Theta_{h'}^{id}$ are defined as in Equation (\ref{eq:defTheta}). The first two relations above say that $\Psi_2$ is the quadratic Taylor coefficient of a  
{\strict} morphism whose linear Taylor coefficient is $\Psi_1$. The third (resp. fourth) relations say that $h'$ and $h$ are the homotopies between  
$\Phi\circ\Psi$ (resp. $\Psi\circ\Phi$) and the identity map.

In order to prove the uniqueness, note that
 if both  $\Psi_2$ and $\tilde\Psi_2$ satisfy (\ref{eq:constraints}),
 then ${\rm Im}(\Psi_2-\tilde\Psi_2)\subset\Ker\diff\cap\Ker\Phi_1=0$, 
since $\Psi_1$ and $\Phi_1$ are homotopy inverse to each other.

Existence is proved by describing
 an explicit formula of $ \Psi_2$ that satisfies  (\ref{eq:constraints}).
Let 
\begin{equation}
 \label{eq:explicitPsi2}
\Psi_2 := \Psi_1 \circ \Theta_{h'}^{id} + h \circ \kappa_{\Psi_1} -
 \Psi_1 \circ \Phi_2 \circ (\wedge^2 \Psi_1)  ,  
 \end{equation}
where 
$$ \kappa_{\Psi_1}  (\pi_1',\pi_2') = -\Psi_1 ([\pi_1',\pi_2'])  + [ \Psi_1(\pi_1') , \Psi_1 (\pi_2') ]  .$$

It follows from a tedious but direct computation  that $\Psi_2$ indeed satisfies 
 \eqref{eq:constraints}.
 \end{proof}

\subsection{Maurer-Cartan moduli set of a {\strict}}\label{set:MC}

Let $\leftgla \stackrel{\diff}{\mapsto}\rightgla $ be a {\strict} and $\Nu :=\Nu(\leftgla \stackrel{\diff}{\mapsto}\rightgla)$ its associated dgla.
The \emph{Maurer-Cartan elements of $\leftgla \stackrel{\diff}{\mapsto}\rightgla $} are the Maurer-Cartan elements of its associated dgla. 
The set of Maurer-Cartan elements is denoted by $ MC(\leftgla \stackrel{\diff}{\mapsto}\rightgla) $.
 
\begin{lemma}
\label{lem:maurerCartanStrict}
Maurer-Cartan elements of a {\strict} $\leftgla \stackrel{\diff}{\mapsto}\rightgla $
are elements  $ \Lambda\oplus\Pi \in  \Nu_1=\leftgla_2\oplus\rightgla_1$
 satisfying
 $$ \diff \Lambda + \frac{1}{2} [\Pi,\Pi] =0,\ \   \hbox{ and }  \Pi \cdot \Lambda =0  .$$
\end{lemma}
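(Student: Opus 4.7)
The plan is to prove Lemma A.6 by direct unwinding of the definition of a Maurer--Cartan element in the associated dgla $\Nu := \Nu(\leftgla \stackrel{\diff}{\mapsto} \rightgla)$, using the explicit formulas for the differential and bracket on $\Nu$ given in the paragraph just before Definition \ref{def:strictLie2_morph}. Since $\Nu_1 = \leftgla_2 \oplus \rightgla_1$, a general element of degree $1$ has the form $x = \Lambda \oplus \Pi$ with $\Lambda \in \leftgla_2$ and $\Pi \in \rightgla_1$, so the task reduces to computing $\diff x + \tfrac{1}{2}[x,x]$ in terms of the data $(\Lambda,\Pi)$ and reading off the two components.

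First, the definition of the differential on $\Nu$ immediately gives $\diff x = 0 \oplus \diff\Lambda$. Next, applying the bracket formula to $x$ paired with itself (both factors of degree $k = l = 1$), I obtain
\[
[x,x] = \bigl((-1)^{1}\,\Pi\cdot\Lambda - (-1)^{1}\,\Lambda\cdot\Pi\bigr)\oplus [\Pi,\Pi]
      = \bigl(-\Pi\cdot\Lambda + \Lambda\cdot\Pi\bigr)\oplus [\Pi,\Pi].
\]
The convention $a\cdot\pi = (-1)^{k(l+1)}\,\pi\cdot a$, introduced right after Definition \ref{def:strictLie2}, applied with $k=1$ (for $\Pi$) and $l=2$ (for $\Lambda$), yields $\Lambda\cdot\Pi = (-1)^{3}\,\Pi\cdot\Lambda = -\Pi\cdot\Lambda$. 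Hence $[x,x] = -2\,\Pi\cdot\Lambda \,\oplus\, [\Pi,\Pi]$, and therefore
\[
\diff x + \tfrac{1}{2}[x,x] \;=\; (-\Pi\cdot\Lambda)\,\oplus\,\bigl(\diff\Lambda + \tfrac{1}{2}[\Pi,\Pi]\bigr) \;\in\; \leftgla_3 \oplus \rightgla_2 \;=\; \Nu_2 .
\]

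Setting this expression equal to zero and reading off the two summands separately gives $\Pi\cdot\Lambda = 0$ and $\diff\Lambda + \tfrac{1}{2}[\Pi,\Pi]=0$, which are precisely the two stated conditions. Conversely, if these two identities hold, then the above computation shows at once that $x = \Lambda\oplus\Pi$ is a Maurer--Cartan element of $\Nu$. No serious obstacle is expected: the only subtle point is getting the sign conventions right when converting $\Lambda\cdot\Pi$ into $\Pi\cdot\Lambda$, and ensuring that $\Pi\cdot\Lambda\in\leftgla_3$ and $\diff\Lambda+\tfrac{1}{2}[\Pi,\Pi]\in\rightgla_2$ live in the right summands of $\Nu_2$ so that the two equations can indeed be separated.
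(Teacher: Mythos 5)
Your proof is correct and is exactly the direct verification the paper has in mind: the lemma is stated without proof there, being an immediate unwinding of the differential and bracket on the associated dgla $\Nu$, and your sign bookkeeping (in particular $\Lambda\cdot\Pi=(-1)^{1\cdot(2+1)}\Pi\cdot\Lambda=-\Pi\cdot\Lambda$, giving $[x,x]=-2\,\Pi\cdot\Lambda\oplus[\Pi,\Pi]$) is accurate. Nothing is missing.
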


For any
  Maurer-Cartan element $ \Lambda\oplus\Pi \in  \leftgla_2\oplus\rightgla_1$
 and any 
${\twist} \in \leftgla_{1} $, define $\Lambda_\twist\oplus\Pi_\twist \in
  \leftgla_2\oplus\rightgla_1$ by:
 \begin{equation}
 \label{eq:twistGeneral}
  \Pi_{\twist} := \Pi + {\rm d} {\twist} \hbox{ and } \Lambda_{\twist} :=\Lambda - \Pi \cdot {\twist} - \frac{1}{2} [{\twist},{\twist}]. 
 \end{equation} 
 
Then
$ \Lambda_{\twist} \oplus \Pi_{\twist}$
is called   the {\it twist} of $\Lambda\oplus\Pi$
by $T$, denoted $(\Lambda\oplus\Pi)_\twist$.
Twist transformations are related to  gauge transformations of dglas. 
Recall that two Maurer-Cartan elements $m$ and $m'$ in a dgla are said to be
 \emph{gauge equivalent},
if   there exists an element $b$ of degree $0$, called a \emph{gauge element},
such that $m' = {\rm exp} (b) \cdot m$ where
\begin{equation}
 \label{eq:gauge}
  {\rm exp}(b) \cdot m  := m - \sum_{i \geq 0} \frac{ {\rm ad}_b^{i}  }{(i+1)!} \left( {\rm d} b + [m,b] \right).
\end{equation}
See, for instance, \cite[Equation (3.7)]{BGNT}. 
In general, the right hand side of Equation \eqref{eq:gauge} may not
be convergent.
However when the gauge element  $b$ is a nilpotent element
of the graded Lie algebra,
 the right hand side  of Equation  \eqref{eq:gauge}
is well defined.
 In our situation, it is clear that 
$\leftgla_1\subset\Nu_0=\leftgla_1\oplus\rightgla_0$ is
an abelian Lie subalgebra; in particular the gauge transformations
 \eqref{eq:gauge} makes sense for all $T\in\leftgla_1$.
 
 \begin{proposition}
 \label{prop:gauge=twist}
 Let $\leftgla \stackrel{\diff}{\mapsto} \rightgla $ be a {\strict}
 and $\Nu := \Nu(\leftgla[1] \stackrel{\diff}{\mapsto} \rightgla)$ its associated dgla. For any Maurer-Cartan element $ \Lambda\oplus\Pi$ and 
 $T\in\leftgla_1$,
 $$
 {\rm exp}(-T) \cdot(\Lambda\oplus\Pi) = (\Lambda\oplus\Pi)_T\;.
 $$
 \end{proposition}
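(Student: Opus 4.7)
The plan is a direct unpacking of the definitions. Set $b := -T \oplus 0 \in \Nu_0 = \leftgla_1 \oplus \rightgla_0$ and $m := \Lambda \oplus \Pi \in \Nu_1 = \leftgla_2 \oplus \rightgla_1$, so that Equation \eqref{eq:gauge} reads
\[
{\rm exp}(-T) \cdot m = m - \sum_{i \geq 0} \frac{ {\rm ad}_b^{\,i}  }{(i+1)!} \left( {\rm d} b + [m,b] \right).
\]
Since $b \in \leftgla_1$ is abelian inside $\Nu_0$, the only convergence issue is whether the sum eventually terminates, which the plan below verifies. The main task is therefore to compute $db+[m,b]$ and the first ${\rm ad}_b$--iterate.

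First I would compute $db = 0 \oplus (-\diff T)$ from the definition of the differential on $\Nu$. Next I apply the bracket formula on $\Nu$, namely $[a_1\oplus\pi_1,a_2\oplus\pi_2] = ((-1)^{|a_1\oplus\pi_1|}\pi_1\cdot a_2 - (-1)^{|a_2\oplus\pi_2|} a_1\cdot \pi_2)\oplus [\pi_1,\pi_2]$, to the pair $(m,b)$ with respective degrees $1$ and $0$: this gives $[m,b] = (\Pi \cdot T)\oplus 0$. Consequently
\[
db + [m,b] \;=\; (\Pi \cdot T) \oplus (-\diff T) \;\in\; \Nu_1.
\]

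Next I would compute ${\rm ad}_b(db+[m,b]) = [b, (\Pi\cdot T)\oplus(-\diff T)]$, again by the bracket formula (with degrees $0$ and $1$); the $\rightgla$-component is $[0,-\diff T]=0$, while the $\leftgla$-component collapses, after cancellation of signs, to $T \cdot \diff T$. Using item (b) of Definition \ref{def:strictLie2} together with the sign convention $a\cdot\pi = (-1)^{k(l+1)}\pi\cdot a$, one checks $T\cdot \diff T = \diff T \cdot T = [T,T]$. Hence
\[
{\rm ad}_b(db+[m,b]) \;=\; [T,T] \oplus 0.
\]
A second application of ${\rm ad}_b$ then vanishes identically: the $\rightgla$-component is automatically zero, and the $\leftgla$-component reduces to $(-T)\cdot 0$ (again by the bracket formula, since the $\rightgla$-part of $[T,T]\oplus 0$ is zero). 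Therefore ${\rm ad}_b^{\,i}(db+[m,b]) = 0$ for all $i\geq 2$, and the exponential series truncates at order two.

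Finally, substituting into the gauge formula yields
\[
{\rm exp}(-T)\cdot m \;=\; m \;-\; (db+[m,b]) \;-\; \tfrac{1}{2}\,{\rm ad}_b(db+[m,b])
\;=\; \bigl(\Lambda - \Pi\cdot T - \tfrac{1}{2}[T,T]\bigr) \oplus (\Pi + \diff T),
\]
which is precisely $\Lambda_T \oplus \Pi_T = (\Lambda\oplus\Pi)_T$ as defined in \eqref{eq:twistGeneral}. The only mild obstacle in the computation is tracking the Koszul signs in the bracket of $\Nu$ and correctly translating $T\cdot\diff T$ into $[T,T]$ via item (b) of Definition \ref{def:strictLie2}; once these are settled, the statement follows.
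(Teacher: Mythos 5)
Your proof is correct and follows essentially the same route as the paper: a direct computation of ${\rm d}b+[m,b]$ and its ${\rm ad}_b$-iterates, observing that the series truncates after the quadratic term, and matching the result against the twist formula \eqref{eq:twistGeneral}. The sign bookkeeping (in particular $T\cdot\diff T=(\diff T)\cdot T=[T,T]$ via item (b) of Definition \ref{def:strictLie2}) is handled correctly.
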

\begin{proof} 
A direct computation gives:
\begin{eqnarray*}
 \diff \left( {\twist} \oplus 0 \right)   + \left[{\twist} \oplus 0  , \Lambda \oplus \Pi \right] &=& -\Pi \cdot {\twist}\oplus  \diff {\twist},\\
  {\rm ad}_{\twist} \left( \diff \left( {\twist}\oplus 0 \right)   +
 \left[ {\twist}\oplus 0 , \Lambda\oplus\Pi \right] \right) &=&
 [{\twist},{\twist}]\oplus  0, \\
\color{black}
  {\rm ad}_{\twist}^i \left(    \diff \left( 0 \oplus {\twist} \right)   + \left[ {\twist}\oplus 0  , \Lambda\oplus\Pi \right] \right) &=& 0 \hbox{ for $i \geq 2$}\, .
\end{eqnarray*}
The result then follows by using these relations to compare the right hand side of (\ref{eq:gauge})  with (\ref{eq:twistGeneral}).
\end{proof}

\begin{corollary}
 For any  Maurer-Cartan element $\Lambda \oplus \Pi $ of
 a {\strict} $ \leftgla \stackrel{\diff}{\mapsto} \rightgla$ and
any  ${\twist} \in \leftgla_1$,
the element  $  \left( \Lambda \oplus \Pi \right)_{\twist}$ is 
also  a Maurer-Cartan element.
 Moreover, twist transformations define an equivalence relation on 
$MC(\leftgla \stackrel{\diff}{\mapsto}\rightgla )$.
\end{corollary}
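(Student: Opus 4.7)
The plan is to leverage Proposition \ref{prop:gauge=twist}, which identifies twist transformations with gauge transformations of the associated dgla $\Nu$ by elements of $\leftgla_1 \subset \Nu_0$. Once this identification is in hand, both claims reduce to standard, purely dgla-theoretic facts about the gauge action on Maurer--Cartan sets, together with one small observation about the bracket of $\Nu$.

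For the first claim, I would simply invoke the general result that the gauge action preserves Maurer--Cartan elements of any dgla. Concretely, by Proposition \ref{prop:gauge=twist}, $(\Lambda\oplus\Pi)_\twist = {\rm exp}(-\twist)\cdot(\Lambda\oplus\Pi)$, and a direct check (or the standard calculation with \eqref{eq:gauge}) shows that if $m$ satisfies $\diff m + \tfrac{1}{2}[m,m]=0$ then so does ${\rm exp}(b)\cdot m$ for any $b$ of degree $0$. Applied with $b=-\twist \in \Nu_0$ this yields that $(\Lambda\oplus\Pi)_\twist$ is Maurer--Cartan.

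For the equivalence relation, the key observation is that the subspace $\leftgla_1\subset \Nu_0 = \leftgla_1 \oplus \rightgla_0$ is an \emph{abelian} Lie subalgebra of $\Nu$: indeed, from the explicit formula for the bracket of $\Nu$ recalled at the top of Appendix \ref{AppendixA1}, one computes $[\twist_1\oplus 0,\twist_2\oplus 0] = 0$ for all $\twist_1,\twist_2\in\leftgla_1$. Hence the Baker--Campbell--Hausdorff formula degenerates on $\leftgla_1$ to ordinary addition, and in particular
\[
{\rm exp}(-\twist_2)\cdot\bigl({\rm exp}(-\twist_1)\cdot(\Lambda\oplus\Pi)\bigr)
= {\rm exp}\bigl(-(\twist_1+\twist_2)\bigr)\cdot(\Lambda\oplus\Pi).
\]
Reflexivity follows by taking $\twist=0$, symmetry from the fact that ${\rm exp}(\twist)$ inverts ${\rm exp}(-\twist)$ and $-\twist\in\leftgla_1$, and transitivity from the displayed identity together with Proposition \ref{prop:gauge=twist} applied in both directions.

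The only genuine point to verify is the abelianness of $\leftgla_1$ inside $\Nu$, since this is what makes twist composition close up on $\leftgla_1$ rather than leaking into $\rightgla_0$; everything else is bookkeeping. I do not foresee any real obstacle beyond this observation.
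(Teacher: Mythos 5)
Your proof is correct and follows essentially the same route the paper intends: the corollary is stated as an immediate consequence of Proposition \ref{prop:gauge=twist}, with the abelianness of $\leftgla_1\subset\Nu_0$ (which you correctly identify as the key point, and which guarantees both that the gauge series terminate and that twists compose additively) already noted in the paper just before that proposition. One could alternatively verify transitivity directly from the explicit formulas \eqref{eq:twistGeneral} using axiom b) of Definition \ref{def:strictLie2}, but your gauge-theoretic argument is exactly the intended one.
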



\smallskip
\begin{definition}\label{MC_moduli_set}
 The {\it Maurer-Cartan moduli set} $\underline{MC(\leftgla \stackrel{\diff}{\mapsto}\rightgla)}$ is the quotient of $MC(\leftgla \stackrel{\diff}{\mapsto}\rightgla)$ by twist equivalence.
\end{definition}

It is a general fact that if $\{F_n\}_{n\geq 0}$ is a morphism of 
$L_\infty$ algebras from $\g_1$ to $\g_2$
 and $m\in \g_1$ is a Maurer-Cartan element, then
$$
\sum_{n=0}^\infty\frac{1}{n!} F_n(m,,\ldots,m),
$$
if it is convergent,
is a Maurer-Cartan element of $\g_2$. By applying this formula to a
morphism $\Phi$ of {\strict}s from  $ \leftgla \stackrel{\diff}{\mapsto} \rightgla $
to  $  \leftgla' \stackrel{\diff'}{\mapsto} \rightgla' $, 
we obtain a  map 
$MC(\Phi): MC(\leftgla \stackrel{\diff}{\mapsto} \rightgla) \to MC(\leftgla' \stackrel{\diff'}{\mapsto} \rightgla')$ that reads
\begin{equation}
 \label{eq:inducedMap}
 MC(\Phi)(\Lambda\oplus\Pi) =   \left( \, \Phi_1 (\Lambda) + \frac{1}{2} \Phi_2(\Pi,\Pi) \, \right)\oplus \Phi_1(\Pi) \;.
\end{equation}

The following result is also a straightforward consequence of a general result
 valid for any  $L_\infty$-morphisms.
For completeness,   we outline a  proof below.

\begin{lemma}\label{MC_functor_morphisms}
Let $\Phi$ be a morphism of graded Lie 2-algebras from  $ \leftgla \stackrel{\diff}{\mapsto} \rightgla $
to  $ \leftgla' \stackrel{\diff'}{\mapsto} \rightgla'$.
Then  $MC({\Phi})$ maps twist equivalent Maurer-Cartan elements 
to  twist equivalent Maurer-Cartan elements.
\end{lemma}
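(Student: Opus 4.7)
The plan is to produce the twist for the image pair explicitly and verify the identities by hand, using only the defining relations of a morphism of {\strict}s together with Lemma \ref{lem:rightAndleft}(iii.2). Let $ \Lambda_1\oplus\Pi_1$ and $ \Lambda_2\oplus\Pi_2 = (\Lambda_1\oplus\Pi_1)_{\twist}$ be two Maurer--Cartan elements of $\leftgla \stackrel{\diff}{\mapsto}\rightgla$ related by a twist $ \twist\in\leftgla_1$, so that
\[
\Pi_2 = \Pi_1+\diff \twist,\qquad \Lambda_2 = \Lambda_1-\Pi_1\cdot \twist-\tfrac{1}{2}[\twist,\twist].
\]
The natural candidate for the twist in the target is $\twist' := \Phi_1(\twist)\in\leftgla'_1$. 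The claim is then that $MC(\Phi)(\Lambda_2\oplus\Pi_2) = \bigl(MC(\Phi)(\Lambda_1\oplus\Pi_1)\bigr)_{\twist'}$.

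The $\rightgla'$-component is easy: since $\Phi_1$ is a chain map we get $\Phi_1(\Pi_2)=\Phi_1(\Pi_1)+\diff'\Phi_1(\twist)$, which is precisely $\Phi_1(\Pi_1)$ twisted by $\twist'$. The content of the lemma sits in the $\leftgla'_2$-component. Expanding,
\[
MC(\Phi)(\Lambda_2\oplus\Pi_2) - MC(\Phi)(\Lambda_1\oplus\Pi_1)
\]
contributes, on the $\leftgla'$ side, the four terms $-\Phi_1(\Pi_1\cdot \twist)$, $-\tfrac{1}{2}\Phi_1([\twist,\twist])$, $\Phi_2(\Pi_1,\diff \twist)$ and $\tfrac{1}{2}\Phi_2(\diff \twist,\diff \twist)$. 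The target twist by $\twist'=\Phi_1(\twist)$ contributes $-\Phi_1(\Pi_1)\cdot\Phi_1(\twist)-\tfrac{1}{2}[\Phi_1(\twist),\Phi_1(\twist)]$, and we must match the two.

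The first pair is handled by condition (c) of Definition \ref{def:strictLie2_morph} applied to $\pi=\Pi_1$, $a=\twist$, which gives exactly $\Phi_2(\Pi_1,\diff \twist)=\Phi_1(\Pi_1\cdot \twist)-\Phi_1(\Pi_1)\cdot\Phi_1(\twist)$. The second pair is handled by applying condition (c) again with $\pi=\diff \twist$ and $a=\twist$: using Lemma \ref{lem:rightAndleft}(iii.2), $\diff \twist\cdot \twist=[\twist,\twist]$, and using that $\Phi_1$ is a chain map, $\Phi_1(\diff \twist)\cdot\Phi_1(\twist)=\diff'\Phi_1(\twist)\cdot\Phi_1(\twist)=[\Phi_1(\twist),\Phi_1(\twist)]$. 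This yields $\Phi_2(\diff \twist,\diff \twist)=\Phi_1([\twist,\twist])-[\Phi_1(\twist),\Phi_1(\twist)]$, and substituting it shows that the remaining two terms cancel the bracket contribution exactly.

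No higher Taylor coefficient is needed, and conditions (b) and (d) of Definition \ref{def:strictLie2_morph} do not enter the computation; this reflects the fact that, under the identification of Proposition \ref{prop:gauge=twist}, twist-equivalence is gauge-equivalence by elements of the abelian Lie subalgebra $\leftgla_1\subset\Nu^0$, and the only non-trivial obstruction one could expect comes from the quadratic term $\Phi_2$ acting on $\diff \twist$. The main (and only) delicate point is keeping the signs and the right/left action conventions consistent, which is the reason to go through condition (c) twice rather than appealing abstractly to the general $L_\infty$-transport formula (\ref{eq:gauge}).
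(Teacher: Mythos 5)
Your proof is correct and follows essentially the same route as the paper's: the $\rightgla'$-component via the chain-map property of $\Phi_1$, and the $\leftgla'$-component by applying condition (c) of Definition \ref{def:strictLie2_morph} twice, once with $\pi=\Pi_1$ and once with $\pi=\diff\twist$, combined with the crossed-module identity $\diff\twist\cdot\twist=[\twist,\twist]$. The only nitpick is that this last identity should be cited as item b) of Definition \ref{def:strictLie2} rather than Lemma \ref{lem:rightAndleft}(iii.2), which is merely its concrete instance for polyvector fields.
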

\begin{proof}
We  prove that if ${\twist}\in\leftgla_1$ and $\Lambda\oplus\Pi\in MC(\leftgla \stackrel{\diff}{\mapsto} \rightgla)$, then
\begin{equation} 
\label{MC-twist}
MC(\Phi)\left( (\Lambda\oplus\Pi)_{\twist}  \right) =  \left( MC(\Phi)\left(\Lambda\oplus \Pi  \right) \right)_{\Phi_1 ({\twist})} .
\end{equation}
In view of the definition of twist equivalence (\ref{eq:twistGeneral}),
Equation \eqref{MC-twist}  decomposes into the following two relations:
\begin{equation}
 \label{eq:twoEquations} \left\{ \begin{array}{rcl} 
     \Phi_1 (\Pi + \diff {\twist} ) & = & \Phi_1 (\Pi) +
( \diff' \circ \Phi_1) \, ({\twist}) \\ 
     \begin{array}{c}\Phi_1 \left( \Lambda - \Pi \cdot {\twist} - \frac{1}{2}[{\twist},{\twist}]  \right) \\  + \frac{1}{2} \Phi_2 \left( \Pi + \diff {\twist}, \Pi + \diff {\twist} \right) \end{array} & = &  
     \begin{array}{c}
     \Phi_1 ( \Lambda )  + \frac{1}{2}  \Phi_2 ( \Pi, \Pi) \\ -  \Phi_1( \Pi) \cdot \Phi_1 ( {\twist} ) - \frac{1}{2} [ \Phi_1 ( {\twist} ),\Phi_1 ( {\twist} )] \end{array}
    \end{array}  \right.
\end{equation}
The first equation follows from the fact that $\Phi_1$ is a chain map.
We prove the second equation by a direct computation. First,
 by the definition of {\strict} morphism, for all $P \in \rightgla$,
we have
 \begin{equation}
  \label{eq:Phi2}
\Phi_2(P,\diff{\twist})  =  \Phi_1 (P \cdot{\twist}) - \Phi_1 ( P) \cdot \Phi_1 ({\twist})\, .  
 \end{equation}
In particular,
 for $P=\diff {\twist}$, Equation (\ref{eq:Phi2})  implies that
 \begin{eqnarray*}
  \Phi_2( \diff {\twist},\diff{\twist})  &=& 
 \Phi_1 (\diff {\twist} \cdot{\twist}) - \Phi_1 ( \diff {\twist} ) \cdot \Phi_1 ({\twist}) \\
   &=&  \Phi_1 (\diff {\twist} \cdot{\twist}) -
 \left( (\diff' \circ \Phi_1) (  {\twist})  \right) \cdot \Phi_1 ({\twist}) \\
  &=&  \Phi_1 ([{\twist},{\twist}]) - [\Phi_1 ({\twist}), \Phi_1({\twist})].
 \end{eqnarray*}
The second equation in  (\ref{eq:twoEquations}) follows from the previous relation and Equation (\ref{eq:Phi2})  (being applied to $P=\Pi$).
\end{proof}

As a consequence, $MC(\Phi): MC(\leftgla \stackrel{\diff}{\mapsto} \rightgla) \to MC(\leftgla' \stackrel{\diff'}{\mapsto} \rightgla')$
induces a map between Maurer-Cartan moduli sets: 
$$ \underline{MC(\Phi)}: \underline{MC(\leftgla \stackrel{\diff}{\mapsto} \rightgla)} \to \underline{MC(\leftgla' \stackrel{\diff'}{\mapsto} \rightgla')}. $$

In the following lemma, we prove  that such a map depends
 only on the homotopy type of $\Phi$.

\begin{lemma}\label{MC_homotopy}
Let $\Phi$ and $\Psi$ be morphisms of {\strict}s from  $ \leftgla \stackrel{\diff}{\mapsto} \rightgla $
to  $  \leftgla' \stackrel{\diff'}{\mapsto} \rightgla' $,
 and let $h:\rightgla\rightarrow\leftgla'$ be an homotopy between $\Phi$ and $\Psi$; then 
$$MC(\Psi) (\Lambda\oplus\Pi)=MC(\Phi)(\Lambda\oplus\Pi)_{h(\Pi)}$$ 
for all $\Lambda\oplus\Pi\in MC(\leftgla \stackrel{\diff}{\mapsto} \rightgla)$. As a consequence,
 $\underline{MC(\Phi)}=\underline{MC(\Psi)}$.
\end{lemma}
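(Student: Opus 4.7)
The plan is to verify the identity $MC(\Psi)(\Lambda\oplus\Pi)=(MC(\Phi)(\Lambda\oplus\Pi))_{h(\Pi)}$ by a direct graded computation from the definitions, exploiting the Maurer--Cartan equation to convert a term involving $\diff\Lambda$ into a bracket of $\Pi$.

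First, I would unfold both sides using formula \eqref{eq:inducedMap}. On the left, the $\rightgla'$-component of $MC(\Psi)(\Lambda\oplus\Pi)$ is $\Psi_1(\Pi)$, and by condition $(\alpha)$ of Definition \ref{def:strictLie2_morph_homotopy} this equals $\Phi_1(\Pi)+\diff' h(\Pi)$, which is precisely the $\rightgla'$-component of $(MC(\Phi)(\Lambda\oplus\Pi))_{h(\Pi)}$ as defined in \eqref{eq:twistGeneral}. So the $\rightgla'$-component matches immediately. This is the easy half.

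The serious work is on the $\leftgla'$-component. On one side we have $\Psi_1(\Lambda)+\tfrac12\Psi_2(\Pi,\Pi)$. Using $(\alpha)$ we rewrite $\Psi_1(\Lambda)=\Phi_1(\Lambda)+h(\diff\Lambda)$, and then the Maurer--Cartan equation $\diff\Lambda=-\tfrac12[\Pi,\Pi]$ (Lemma \ref{lem:maurerCartanStrict}) turns this into $\Phi_1(\Lambda)-\tfrac12 h([\Pi,\Pi])$. Using $(\beta)$ we have $\Psi_2(\Pi,\Pi)=\Phi_2(\Pi,\Pi)+\Theta_h^\Phi(\Pi,\Pi)$. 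Now the bookkeeping: since $\Pi\in\rightgla_1$, formula \eqref{eq:defTheta} with $l=1$ yields
\[
\Theta_h^\Phi(\Pi,\Pi)= h([\Pi,\Pi])-[h(\Pi),h(\Pi)]-\Phi_1(\Pi)\cdot h(\Pi)-h(\Pi)\cdot\Phi_1(\Pi),
\]
and the convention $a\cdot\pi=(-1)^{|a|(|\pi|+1)}\pi\cdot a$ gives $h(\Pi)\cdot\Phi_1(\Pi)=\Phi_1(\Pi)\cdot h(\Pi)$, so the last two terms combine to $-2\,\Phi_1(\Pi)\cdot h(\Pi)$. Multiplying by $\tfrac12$ and summing with $\Psi_1(\Lambda)$, the terms $\pm\tfrac12 h([\Pi,\Pi])$ cancel and what remains is exactly
\[
\Phi_1(\Lambda)+\tfrac12\Phi_2(\Pi,\Pi)-\Phi_1(\Pi)\cdot h(\Pi)-\tfrac12[h(\Pi),h(\Pi)],
\]
which by \eqref{eq:twistGeneral} is the $\leftgla'$-component of $(MC(\Phi)(\Lambda\oplus\Pi))_{h(\Pi)}$. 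This establishes the stated equality.

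For the final claim, $\underline{MC(\Phi)}=\underline{MC(\Psi)}$ follows since $MC(\Phi)$ and $MC(\Psi)$ differ by a twist and twist equivalence is the relation defining the Maurer--Cartan moduli set; combining this with Lemma \ref{MC_functor_morphisms}, which ensures that the induced maps on the moduli sets are well defined, yields the result. The only genuinely delicate point is the sign analysis in $\Theta_h^\Phi(\Pi,\Pi)$ and the degree-$1$ commutativity of the action, but once the conventions $a\cdot\pi=(-1)^{|a|(|\pi|+1)}\pi\cdot a$ are invoked the cancellation is forced.
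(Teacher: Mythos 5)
Your proof is correct and follows essentially the same route as the paper: expand both sides via \eqref{eq:inducedMap}, use the homotopy relations $(\alpha)$, $(\beta)$ together with the Maurer--Cartan identity $\diff\Lambda=-\tfrac12[\Pi,\Pi]$ to cancel the $h([\Pi,\Pi])$ terms, and match the result against the twist formula \eqref{eq:twistGeneral} with twist $h(\Pi)$. (One cosmetic remark: the paper's convention is $a\cdot\pi=(-1)^{|\pi|(|a|+1)}\pi\cdot a$, with the roles of the two degrees transposed relative to what you wrote, but since both elements here have degree $1$ the sign is $+1$ either way and your cancellation is unaffected.)
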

\begin{proof}
According to Equation (\ref{eq:defTheta}),  we have 
\begin{eqnarray}
\Theta^\Phi_h(\Pi,\Pi) &=& h ( [\Pi,\Pi]) - [h(\Pi),h(\Pi)] - 2 \Phi_1(\Pi) \cdot h(\Pi) \cr
   & =& -2 h ( \diff \Lambda) - [h(\Pi),h(\Pi)] - 2 \Phi_1(\Pi) \cdot h(\Pi) ,\label{eq:ThetaPiPi}
   \end{eqnarray}
where we used the Maurer-Cartan condition $[\Pi,\Pi]= - 2\diff \Lambda $.

  By Equations  (\ref{eq:inducedMap}) and \eqref{eq:defhomotopy}, 
and using the fact that $h$ is a homotopy between the chain maps
 $\Phi_1$ and $ \Psi_1$,
we  obtain
 \begin{eqnarray*}  
 MC(\Psi)(\Lambda\oplus\Pi) &=&  \left( \Psi_1 (\Lambda) + \frac{1}{2} \Psi_2 (\Pi,\Pi) \right)\oplus \Psi_1 (\Pi) \\
   & =& \left( \Phi_1(\Lambda) +  (h \circ \diff ) ( \Lambda)  +  \frac{1}{2} \Phi_2 (\Pi,\Pi) +  \frac{1}{2}  \Theta^\Phi_h (\Pi,\Pi) \right) \\
 & & \oplus   \left(  \Phi_1(\Pi) + (\diff' \circ h) ( \Pi) \right)  \\
 &=& \left( \Phi_1(\Lambda) + \frac{1}{2} \Phi_2 (\Pi,\Pi) - \frac{1}{2}[h(\Pi),h(\Pi)] -  \Phi_1(\Pi) \cdot h(\Pi)  \right)\\
 && \oplus
 \left(  \Phi_1(\Pi) + (\diff' \circ h) ( \Pi) \right)  \\
 &=& MC(\Phi)(\Lambda\oplus\Pi)_{h(\Pi)}.
 \end{eqnarray*}
where in the third equality, we used Equation (\ref{eq:ThetaPiPi}). 
 The result  thus follows. 
\end{proof}

Lemma \ref{MC_homotopy} implies immediately the following

\begin{corollary}
\label{cor:MC}
An homotopy equivalence between two strict Lie $2$-algebras induces
 a one-to-one correspondence between their Maurer-Cartan moduli sets.
\end{corollary}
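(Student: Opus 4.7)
The plan is to derive the corollary directly from the two preceding lemmas by checking that $\underline{MC}$ behaves functorially on morphisms of {\strict}s. Concretely, let $\Phi$ and $\Psi$ be a homotopy equivalence as in the definition, with $\Phi \circ \Psi$ and $\Psi \circ \Phi$ homotopic to the respective identity morphisms.

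First I would verify that the assignment $\Phi \mapsto \underline{MC(\Phi)}$ is functorial: the composition formula for $L_\infty$-morphisms, which reduces in the {\strict} setting to Equation (\ref{eq:compositionStrictLie2}), together with the explicit expression (\ref{eq:inducedMap}) for $MC(\Phi)$, gives $MC(\Phi \circ \Psi) = MC(\Phi) \circ MC(\Psi)$ after a short direct computation: for a Maurer--Cartan element $\Lambda \oplus \Pi$, the right-hand side evaluates, using $(\Phi\circ\Psi)_1 = \Phi_1 \circ \Psi_1$ and $(\Phi\circ\Psi)_2 = \Phi_1 \circ \Psi_2 + \Phi_2 \circ (\wedge^2 \Psi_1)$, to the expression obtained by applying $MC(\Phi)$ to $MC(\Psi)(\Lambda \oplus \Pi)$. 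Since Lemma \ref{MC_functor_morphisms} ensures that $MC(\Phi)$ descends to $\underline{MC(\Phi)}$ on moduli sets, we obtain $\underline{MC(\Phi \circ \Psi)} = \underline{MC(\Phi)} \circ \underline{MC(\Psi)}$. Also, $\underline{MC(\mathrm{id})} = \mathrm{id}$ is immediate from (\ref{eq:inducedMap}).

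Next, by Lemma \ref{MC_homotopy}, homotopic {\strict} morphisms induce the same map on Maurer--Cartan moduli sets. Applying this to the homotopies $\Psi \circ \Phi \sim \mathrm{id}$ and $\Phi \circ \Psi \sim \mathrm{id}$, we obtain
\[
\underline{MC(\Psi)} \circ \underline{MC(\Phi)} = \underline{MC(\Psi \circ \Phi)} = \underline{MC(\mathrm{id})} = \mathrm{id},
\]
and symmetrically $\underline{MC(\Phi)} \circ \underline{MC(\Psi)} = \mathrm{id}$. Thus $\underline{MC(\Phi)}$ and $\underline{MC(\Psi)}$ are mutually inverse bijections between the Maurer--Cartan moduli sets, which is the desired conclusion.

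The only non-automatic step is the verification of functoriality of $MC$ on the nose; the rest is a formal chase. I do not anticipate any obstacle here: the composition formula (\ref{eq:compositionStrictLie2}) is explicit, and substituting it into (\ref{eq:inducedMap}) matches $MC(\Phi) \circ MC(\Psi)$ term by term. Hence the corollary follows.
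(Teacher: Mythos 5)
Your proof is correct and follows exactly the route the paper takes: the paper states the corollary as an immediate consequence of Lemma \ref{MC_functor_morphisms} and Lemma \ref{MC_homotopy}, and then records the functoriality relations $\underline{MC(\Phi\circ\Psi)}=\underline{MC(\Phi)}\circ\underline{MC(\Psi)}$ and $\underline{MC(\mathrm{id})}=\mathrm{id}$ together with homotopy invariance, which is precisely your argument. Your explicit verification of functoriality via Equations (\ref{eq:compositionStrictLie2}) and (\ref{eq:inducedMap}) just fills in a step the paper leaves as ``of course satisfied.''
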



We are now ready to consider the following assignments:
\begin{enumerate}
 \item to any {\strict} $\leftgla \stackrel{\diff}{\mapsto} \rightgla $, we associate the moduli set $\underline{MC(\leftgla \stackrel{\diff}{\mapsto} \rightgla)}$;
 \item to any morphism $\Phi$ from $\leftgla \stackrel{\diff}{\mapsto} \rightgla $ to $\leftgla' \stackrel{\diff'}{\mapsto} \rightgla' $, we associate a map 
 $ \underline{MC(\Phi)} : \underline{MC(\leftgla \stackrel{\diff}{\mapsto} \rightgla)} \to \underline{MC(\leftgla \stackrel{\diff}{\mapsto} \rightgla)}$.
 \end{enumerate}

According to  Lemma \ref{MC_functor_morphisms},  $ \underline{MC(\Phi)}$
is indeed well defined.
 It is simple to check that the relations $\underline{MC(\Phi \circ \Psi)}
 =\underline{MC(\Phi )} \circ \underline{MC( \Psi)}$ and
  $\underline{MC(id)} = id$ hold.
Moreover, if morphisms $\Phi$ and $\Psi$ are homotopic,
 then $\underline{MC(\Phi)}=\underline{MC(\Psi)}$ by Lemma \ref{MC_homotopy}.
Hence $\underline{MC}$ is a functor from the category $\Lieh$
(where objects are {\strict}s and arrows are homotopy classes of {\strict} morphisms) to the category of sets.
By Corollary \ref{cor:MC}, this functor is in fact valued in the subcategory 
of sets where objects are sets and all arrows are bijections. It will be called
the {\it Maurer-Cartan functor} $\underline{MC}$.

\section{$\zz$-graded  Lie groupoids and  cohomology}
\label{sec:AnnexSuper}

This section is devoted to establishing those results,
 which we need in order to prove Proposition~\ref{cor:superGroupoidsConclusion}.

\subsection{$\zz$-graded  Lie  groupoids and  truncated  
2-term groupoid cohomology complexes}

$\zz$-graded  Lie  groupoids are Lie groupoids in the category of
$\zz$-graded manifolds  \cite{Mehta, Mehta1}. 
 Many standard    constructions have straightforward extensions
 to the context of $\zz$-graded  Lie groupoids
including  groupoid cohomology, morphisms, and Morita morphisms.
In particular,  for a $\zz$-graded Lie groupoid 
${\mathcal G} \toto {\mathcal M}$, 
its  cohomology complex is the $\zz$-graded complex:
 \begin{equation}
 \label{eq:diffGrouComplex}
 \xymatrix{  {\mathcal C}^\infty({\mathcal M})\ar[r]^{\delta}  & {\mathcal C}^\infty({\mathcal G}) \ar[r]^{\delta}& {\mathcal C}^\infty({\mathcal G}^{(2)}) \ar[r]^{\delta}& \dots }
 \end{equation}
where
 for $k \geq 2$, ${\mathcal G}^{(k)}$ denotes the space of
 composable $k$-arrows.
Cocycles in  $C^\infty ({\mathcal G})$ 
 are called \emph{multiplicative} functions.
The space of multiplicative functions is
 denoted by 	 ${\mathcal Z}({\mathcal G})$.
We are interested in the shifted truncation of the complex
\eqref{eq:diffGrouComplex} at degree $1$.

\begin{definition}
\label{def:2termComplexInterestedIn}
The \emph{2-term truncated (groupoid cohomology) complex} of a  $\zz$-graded Lie groupoid
${\mathcal G} \toto {\mathcal M}$ is the graded 2-term  complex:
\begin{equation}
\label{eq:diffGrouComplex1}
 \xymatrix{  {\mathcal C}^\infty({\mathcal M})[1]\ar[r]^{\delta}  & {\mathcal Z}({\mathcal G})[1]}\, .
 \end{equation}
\end{definition}

A  morphism of $\zz$-graded groupoids
 $\Phi: {\mathcal G}' \mapsto  {\mathcal G} $ induces a 
cochain map $\Phi^*$ between their
$\zz$-{\superG} cohomology cochain complexes:
 \begin{equation}
 \label{eq:diffGrouComplexMaps}
 \xymatrix{  {\mathcal C}^\infty({\mathcal M}')\ar[r]^{\delta'}  & {\mathcal C}^\infty({\mathcal G}') \ar[r]^{\delta'}& {\mathcal C}^\infty({\mathcal G}'^{(2)}) \ar[r]^{\delta'}& \dots \\
 \ar[u]^{\phi^*} {\mathcal C}^\infty({\mathcal M})\ar[r]^{\delta}  & \ar[u]^{\Phi^*}{\mathcal C}^\infty({\mathcal G}) \ar[r]^{\delta}& {\mathcal C}^\infty({\mathcal G}^{(2)}) \ar[r]^{\delta}
 \ar[u]^{  } & \dots
 }
 \end{equation}
Therefore $\Phi^*$
induces a morphism of  their corresponding
truncated 2-term  $\zz$-graded complexes:

\begin{equation}
 \label{eq:diffGrouComplexChain}
\xymatrix{   {\mathcal Z}({\mathcal G})[1] &  {\mathcal Z}({\mathcal G}')[1]
 \\
\ar@{}[r]^(.25){}="a"^(.75){}="b" \ar@<4pt>@[red]@{->}^{\Phi^*} "a";"b"  & \\
\ar[uu]^{\delta}  {\mathcal C}^\infty({\mathcal M}) [1] &  {\mathcal C}^\infty({\mathcal M}') [1]\ar[uu]^{\delta'} }
 \end{equation}

When $\Phi$ is a Morita morphism, 
the map $\Phi^*$ in  \eqref{eq:diffGrouComplexChain} becomes a
quasi-isomorphism.  The main purpose of this section is to describe an explicit construction of its  homotopy inverse.

 From now on, assume that
$ {\mathcal G}' \toto {\mathcal M}'$ is the pullback groupoid
 ${\mathcal G}[{\mathcal X}] \toto {\mathcal X}$, 
and $\Phi: {\mathcal G}[{\mathcal X}] \to {\mathcal G}$
 is the natural projection,
where $\phi:{\mathcal X} \to {\mathcal M}$ is
 a surjective submersion of $\zz$-graded manifolds.
Assume that $\phi: {\mathcal X} \to {\mathcal M}$  admits a section
$\sigma: {\mathcal M}\to {\mathcal X}$. Introduce
maps $\hat{\sigma}: {\mathcal G}\to  {\mathcal G}[{\mathcal X}]$
and $\tau:  {\mathcal X}\to  {\mathcal G}[{\mathcal X}]$, respectively
by 
\begin{equation}
\label{eq:sigma}
\hat{\sigma}=(\sigma \smalcirc  t, \id, \sigma \smalcirc  s)
\end{equation}
and
\begin{equation}
\label{eq:tau}
\tau=(\id , \epsilon\smalcirc\phi, \sigma \smalcirc \phi),
\end{equation}
 where
we identify ${\mathcal G}[{\mathcal X}]$
with 
${\mathcal X} \times_{{\mathcal M}, t} {\mathcal G}\times_{{\mathcal M}, s} {\mathcal X}$ and $\epsilon:{\mathcal M}\rightarrow{\mathcal G}$ is the embedding
of units of ${\mathcal G}$. 
It is simple to check that  the pair of maps
$(\hat{\sigma}, \sigma)$ is a morphism of $\zz$-graded groupoids
from ${\mathcal G} \toto {\mathcal M}$ to 
  ${\mathcal G}[{\mathcal X}] \toto  {\mathcal X}$.
 Therefore, it induces a  morphism $\hat{\sigma}^*$ of the truncated  2-term 
complexes from
$ {\mathcal C}^\infty({\mathcal X})[1]\to {\mathcal Z}({\mathcal G} [{\mathcal X}])[1]$
to ${\mathcal C}^\infty({\mathcal M})[1]\to {\mathcal Z}({\mathcal G})[1]$.
Since $\Phi\smalcirc\, \hat{\sigma}=\id$, it follows that
$\hat{\sigma}^* \smalcirc\, \Phi^* =\id$.
Moreover, it is straightforward to check that 
$\Phi^*\smalcirc\, \hat{\sigma}^*$ is homotopic to the identity,
with  $\tau^*: {\mathcal Z}({{\mathcal G}}({\mathcal X}))[1]\to
 {\mathcal C}^\infty({\mathcal X})[1]$ being a homotopy map.

In general,  global sections $\sigma: {\mathcal M}\to {\mathcal X}$
may not exist. However, since   $\phi: {\mathcal X} \to {\mathcal M}$ 
is a surjective submersion, local sections always exist.
A standard argument using a partition of unity enable us
to construct a homotopy inverse of $\Phi^*$. 
More precisely, 
denote by $X$ and $M$
the base manifold of ${\mathcal X} $ and
${\mathcal M}$, respectively, and by $\varphi : X \to M$ 
the surjective submersion at the  level of base manifolds.
Choose a  nice open cover $(U_i)_{i \in S}  $  of
$M$.  Let $(\chi_i)_{i \in S}$ be a partition of unity
subject to the cover $(U_i)_{i \in S}$.
Denote by ${\mathcal U}_i$ the restriction of the graded manifold
${\mathcal M}$ to $U_i$,
and by $ \phi^{-1}({\mathcal U_i})$ the restriction of ${\mathcal X}$ to the 
open subset $\varphi^{-1}(U_i)$ of  $X$.
For each $i\in S$,
there exists a local section $\sigma_i : {\mathcal U}_i  \hookrightarrow  \phi^{-1}({\mathcal U}_i)$ of $\phi: {\mathcal X} \to {\mathcal M}$.
Let $\tau_i:  \phi^{-1}({\mathcal U}_i) \to  {\mathcal G}[{\mathcal X}]_{\phi^{\scalebox{0.7}-1}({\mathcal U}_i)}^{\phi^{\scalebox{0.7}-1}({\mathcal U}_i)}$
be the map  defined as  in Equation \eqref{eq:tau} with respect to the
 section $\sigma_i: {\mathcal U}_i  \hookrightarrow  \phi^{-1}({\mathcal U}_i)$.
Similar to Equation \eqref{eq:sigma}, 
for all $i_1, i_2\in S$,  denote by $\hat{\sigma}_{i_1, i_2}:
{\mathcal G}|_{{\mathcal U_{i_1}}}^{{\mathcal U_{i_2}}}\to
 {\mathcal G}[{\mathcal X}]_{\phi^{\scalebox{0.7}-1}({\mathcal U}_{i_1})}^{\phi^{\scalebox{0.7}-1}({\mathcal U}_{i_2})}$, the map  
\begin{equation}
\label{eq:sigma1}
  \hat{\sigma}_{i_1, i_2}=(\sigma_{i_1} \smalcirc\,  t, \id, \sigma_{i_2}	 \smalcirc\,  s),
\end{equation}
where ${\mathcal G}|_{{\mathcal U_{i_1}}}^{{\mathcal U_{i_2}}}
=s^{-1} ({{\mathcal U_{i_1}}})\cap t^{-1}({{\mathcal U_{i_2}}})$
with $s$ and $t$ being the source and target maps
of ${\mathcal G} \toto {\mathcal M}$; similarly for 
${\mathcal G}[{\mathcal X}]_{\phi^{-1}({\mathcal U}_{i_1})}^{\phi^{-1}({\mathcal U}_{i_2})}$.

Consider the maps
\begin{equation}
 \label{eq:I1}
I_1 :  {\mathcal C}^\infty( {{\mathcal G}}[{\mathcal X}])[1]
\to  {\mathcal C}^\infty({{\mathcal G}} )[1],  \ \ \ I_1= \sum_{i_1, i_2 \in S} 
(s^* \chi_{i_1})(t^* \chi_{i_2})\, \hat{\sigma}_{i_1, i_2}^*
\end{equation}

\begin{equation}
 \label{eq:I0}
I_0:  {\mathcal C}^\infty({\mathcal X})[1]\to {\mathcal C}^\infty({\mathcal M})[1],
\ \ \ I_0=\sum_{i \in S} \chi_i\, \sigma_i^*
\end{equation}

and

\begin{equation}
 \label{eq:HX}
 H: {\mathcal Z}({{\mathcal G}}({\mathcal X}))[1]  \to  {\mathcal C}^\infty({\mathcal X})[1] ,  \ \ \ 
H=\sum_{i \in S}  (\phi^*\chi_i)  \tau_i^* 
\end{equation}

The following proposition can be verified directly.

\begin{proposition}
\label{prop:inverseCandidateI}
Let ${\mathcal G} \toto {\mathcal M}$ be a $\zz$-graded
 groupoid, $ \phi :{\mathcal X} \to {\mathcal M}$ a surjective submersion,
and $ \Phi: {\mathcal G}[{\mathcal X}] \to {\mathcal G}$ the natural projection.
Then,
\begin{enumerate}
 \item[(i)] the pair $I:=(I_0,I_1)$ defines a morphism of truncated  2-term
 complexes from $ {\mathcal C}^\infty({\mathcal X})[1]\to
{\mathcal Z}({\mathcal G} [{\mathcal X}])[1]$
to ${\mathcal C}^\infty({\mathcal M})[1] \to {\mathcal Z}({\mathcal G})[1]$;
 \item[(ii)]   $I$ is a left inverse of $\Phi^*$;
 \item[(iii)] the composition $ \Phi^* \circ I $ is homotopic to the identity 
with $H$ being  a homotopy map.
\end{enumerate}
\end{proposition}
In summary, we have the following diagram:
 \begin{equation}
 \label{eq:diffGrouComplexMapsI}
\xymatrix{   {\mathcal Z}({\mathcal G})[1] &  {\mathcal Z}({\mathcal G}[{\mathcal X}])[1]
 \ar@/^/@[blue][dd]^{ H}
 \\
\ar@{}[r]^(.25){}="a"^(.75){}="b" \ar@<4pt>@[red]@{^{(}->}^{\Phi^*} "a";"b"  &
 \ar@{}[l]^(.25){}="a"^(.75){}="b" \ar@<4pt>@[red]@{_{}->}^{I} "a";"b"\\
\ar[uu]^{\delta}  {\mathcal C}^\infty({\mathcal M})[1]  &  {\mathcal C}^\infty({\mathcal X})[1] \ar[uu]^{\delta'}\, .}
 \end{equation}


\subsection{Proof of Proposition \ref{cor:superGroupoidsConclusion}}
\label{App:B2}

Every VB-groupoid $V\toto E$ defines a $\zz$-graded Lie groupoid $V_{[1]}\toto E_{[1]}$. The space of multiplicative functions 
${\mathcal Z}(V_{[1]})\subset\Gamma(\Lambda V^\vee)$ inherits
 the $\N$-grading with 
${\mathcal Z}^k(V_{[1]})\subset \Gamma(\Lambda^k V^\vee)$,  i.e. ${\mathcal Z}(V_{[1]})=\oplus_k {\mathcal Z}^k(V_{[1]})$. The following straightforward lemma gives an useful characterization.

\begin{lemma}\label{graded_grpd_cohomology}
Let $V\toto E$ be a VB-groupoid over $\Gamma\toto M$; any $P\in\Gamma(\Lambda^k V^\vee)$ is a multiplicative function, i.e. 
$P\in{\mathcal Z}^k(V_{[1]})$,
if and only if the function
$$F_P(\mu_1,\ldots,\mu_k)=\langle P,\mu_1\wedge\ldots\wedge\mu_k\rangle\; ,\;\;  \quad
(\mu_1,\ldots,\mu_k)\in V\times_\Gamma {\ldots} \times_\Gamma V$$ 
is a one cocycle of the Lie groupoid $V\times_\Gamma \ldots\times_\Gamma V\toto E\times_M \ldots \times_M E$ (considered as a subgroupoid of the 
direct product groupoid $V\times\ldots \times V\toto E\times\ldots\times E$).
\end{lemma}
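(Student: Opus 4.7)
The plan is to deduce the lemma by polarizing the graded multiplicativity condition on $P$ into a scalar identity on the Whitney sum groupoid, and then recognizing that identity as the ordinary $1$-cocycle condition for $F_P$.

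First I would make explicit the bijection between degree-$k$ functions on the graded manifold $V_{[1]}$ and sections of $\Lambda^k V^\vee$, hence the bijection between such functions $P$ and smooth alternating $k$-multilinear maps $F_P$ on the Whitney sum $V\times_\Gamma\cdots\times_\Gamma V$. This is nothing more than the usual polarization of a polynomial in odd variables, and it is compatible with restriction/pullback to subbundles; in particular $F_P$ and $P$ determine each other uniquely.

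Next I would rewrite the defining condition of multiplicativity, $\delta P=0$, as an equality of degree-$k$ functions on the graded composable space $V_{[1]}^{(2)}$. Writing $m$, $\pi_1$, $\pi_2:V^{(2)}\to V$ for the multiplication and the two projections of the VB groupoid $V\toto E$, and denoting their graded lifts by the same symbols, this condition reads
\[
m^\ast P \;=\; \pi_1^\ast P + \pi_2^\ast P
\]
as an identity in $\Gamma(\Lambda^k(V^{(2)})^\vee)$ over $\Gamma^{(2)}$.

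I would then test this identity against the decomposable $k$-vector $\bigwedge_{i=1}^k (\mu_i,\nu_i)\in\Lambda^k V^{(2)}_{(\gamma_1,\gamma_2)}$, where each $(\mu_i,\nu_i)\in V^{(2)}$ lies over the same $(\gamma_1,\gamma_2)\in\Gamma^{(2)}$. Since such decomposable elements span $\Lambda^k V^{(2)}$ pointwise, testing them is sufficient. A direct computation yields
\[
\Big\langle m^\ast P,\bigwedge_i(\mu_i,\nu_i)\Big\rangle \;=\; F_P(\mu_1\cdot\nu_1,\ldots,\mu_k\cdot\nu_k)
\]
on the left, and
\[
\Big\langle \pi_1^\ast P+\pi_2^\ast P,\bigwedge_i(\mu_i,\nu_i)\Big\rangle \;=\; F_P(\mu_1,\ldots,\mu_k)+F_P(\nu_1,\ldots,\nu_k)
\]
on the right. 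Matching the two expressions for all such tuples is precisely the statement that $F_P$ is a $1$-cocycle of the Whitney sum groupoid $V\times_\Gamma\cdots\times_\Gamma V\toto E\times_M\cdots\times_M E$, whose composition is defined componentwise and whose composable pairs are exactly those of the form $\big((\mu_1,\ldots,\mu_k),(\nu_1,\ldots,\nu_k)\big)$ with $s_V(\mu_i)=t_V(\nu_i)$ for every $i$.

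The only point that requires a moment's care is the spanning statement for decomposable vectors of the form $\bigwedge_i(\mu_i,\nu_i)$, and the compatibility of the polarization with the subgroupoid structure inside $V\times\cdots\times V\toto E\times\cdots\times E$; both are standard linear-algebraic facts. Once they are in place, the equivalence between graded multiplicativity of $P$ and the $1$-cocycle condition on $F_P$ follows from a single line of computation, completing the proof.
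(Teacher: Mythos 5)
Your argument is correct and is precisely the verification the paper leaves implicit (the lemma is stated there as "straightforward" with no written proof): degree-$k$ functions on $V_{[1]}$ are sections of $\Lambda^k V^\vee$, the cocycle condition $\delta P=0$ unwinds to $m^\ast P=\pi_1^\ast P+\pi_2^\ast P$ over $\Gamma^{(2)}$, and evaluating on decomposables $\bigwedge_i(\mu_i,\nu_i)$ --- which span $\Lambda^k V^{(2)}$ fiberwise and correspond exactly to composable pairs of the Whitney-sum groupoid --- turns this into the additivity of $F_P$ under componentwise multiplication. No gaps; this is the intended proof.
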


For any Lie groupoid $\BaseGroupoid \toto M$ with
Lie algebroid $A$, the cotangent groupoid 
$T^\vee \BaseGroupoid \toto \BaseAlgebroid^\vee  $ is a VB-groupoid 
as  in Example \ref{eq:TGdual}.
Therefore, it gives rise to a $\zz$-graded Lie groupoid 
 $T^\vee\super \BaseGroupoid \toto \BaseAlgebroid^\vee\super$ \cite{Mehta, Gracia-Saz-Mehta}.
The following lemma follows from Lemma \ref{graded_grpd_cohomology} and the characterization of multiplicative polyvector fields given in 
Proposition 2.7 of \cite{IPLGX}.

\begin{lemma}
\label{lem:ConnectionMoritaMaps0} 
Let $\BaseGroupoid \toto M$  be a Lie groupoid. The truncated 
2-term complex of 
the $\zz$-graded groupoid $T^\vee\super \BaseGroupoid \toto \BaseAlgebroid^\vee\super$
coincides with the ${\mathbb Z}$-graded 2-term  complex
 $ \hSec{\bullet}{\BaseAlgebroid} \stackrel{\diff}{\to} \polymult{\bullet}{\BaseGroupoid}$  in Lemma  \ref{lem:rightAndleft}.
\end{lemma}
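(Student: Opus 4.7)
The plan is to identify each piece of the truncated $2$-term complex of Definition \ref{def:2termComplexInterestedIn} applied to the $\zz$-graded Lie groupoid $T^\vee\super \BaseGroupoid \toto \BaseAlgebroid^\vee\super$ with the corresponding piece of the complex $\hSec{\bullet}{\BaseAlgebroid}\stackrel{\diff}{\to}\polymult{\bullet}{\BaseGroupoid}$, and then to check that the differentials match.

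First, I would identify the left-hand side. Functions on the $\zz$-graded manifold $\BaseAlgebroid^\vee\super$ are in canonical bijection with sections of $\wedge^{\bullet}\BaseAlgebroid$, with a function of internal degree $k+1$ corresponding to a section of $\wedge^{k+1}\BaseAlgebroid$. Accounting for the shift $[1]$ in Definition \ref{def:2termComplexInterestedIn}, the degree-$k$ piece of ${\mathcal C}^\infty(\BaseAlgebroid^\vee\super)[1]$ is exactly $\Gamma(\wedge^{k+1}\BaseAlgebroid)=\hSec{k}{\BaseAlgebroid}$.

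Second, I would identify the right-hand side. By Lemma \ref{graded_grpd_cohomology} applied to the cotangent VB-groupoid $V=T^\vee\BaseGroupoid$, a section $P\in\Gamma(\wedge^{k+1}T\BaseGroupoid)$ lies in ${\mathcal Z}^{k+1}(T^\vee\super\BaseGroupoid)$ if and only if the induced function $F_P$ on the $(k+1)$-fold fibered product of $T^\vee\BaseGroupoid$ over $\BaseGroupoid$ is a $1$-cocycle for the natural groupoid structure. This cocycle condition is precisely the coisotropy condition recalled at the beginning of Section 2 and spelled out in Proposition 2.7 of \cite{IPLGX}: it asserts that $\mathrm{graph}(\BaseGroupoid)$ is coisotropic with respect to $P\oplus P\oplus(-1)^{k+1}P$. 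Hence, after the shift $[1]$, the degree-$k$ piece of ${\mathcal Z}(T^\vee\super\BaseGroupoid)[1]$ is exactly $\polymult{k}{\BaseGroupoid}$.

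Third, I would check that the differentials coincide. The groupoid-cohomology differential sends $a\in\Gamma(\wedge^{k+1}\BaseAlgebroid)\cong {\mathcal C}^\infty(\BaseAlgebroid^\vee\super)$ to $s^*a-t^*a$, where $s,t$ now denote the source and target of the $\zz$-graded groupoid $T^\vee\super\BaseGroupoid\toto\BaseAlgebroid^\vee\super$. Dualizing the core embeddings $R_{T^\vee\BaseGroupoid}$ and $L_{T^\vee\BaseGroupoid}$ via \eqref{dual_VB_maps_translate}, the pullbacks $s^*a$ and $t^*a$, viewed as functions on $T^\vee\super\BaseGroupoid$, correspond under the above identification to the left- and right-invariant multivector fields $\overleftarrow{a}$ and $\overrightarrow{a}$, respectively. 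Consequently $\delta a$ corresponds to $\overrightarrow{a}-\overleftarrow{a}=\gauge{a}=\diff(a)$, which is precisely the map of Lemma \ref{lem:rightAndleft} $(ii)$.

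The main subtle step is the third one: one has to carry out carefully the dictionary between the source/target maps of the cotangent groupoid and the right/left translations on $\BaseGroupoid$, so that the sign and orientation conventions of \eqref{core_embedding}--\eqref{dual_VB_maps} combine to yield exactly $\overrightarrow{a}-\overleftarrow{a}$. The other steps are essentially formal consequences of Lemma \ref{graded_grpd_cohomology} together with the standard identification ${\mathcal C}^\infty(E\super)\cong\Gamma(\wedge E^\vee)$.
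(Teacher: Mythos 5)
Your proof takes exactly the route the paper intends: the paper's own justification of this lemma is a one-line citation of Lemma \ref{graded_grpd_cohomology} together with the characterization of multiplicative polyvector fields in Proposition 2.7 of \cite{IPLGX}, and your three steps simply spell out that citation (identification of functions on $\BaseAlgebroid^\vee\super$ with $\Gamma(\wedge^\bullet\BaseAlgebroid)$, identification of multiplicative functions on $T^\vee\super\BaseGroupoid$ with multiplicative polyvector fields, and matching of the differentials). One bookkeeping caveat in your third step: pairing $s^*a$ with $\overleftarrow{a}$ and $t^*a$ with $\overrightarrow{a}$ together with $\delta a=s^*a-t^*a$ would literally yield $\overleftarrow{a}-\overrightarrow{a}$ rather than $\gauge{a}$, so either the assignment must be swapped or the signs built into \eqref{core_embedding} and \eqref{dual_VB_maps} must be absorbed into the identification --- but this is precisely the convention-checking you already flag as the delicate part, and it does not affect the validity of the argument.
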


Now assume that $\varphi : X \to M$ is  a surjective submersion
and let $\hordis$ be an Ehresmann connection for $\varphi$.
It is simple to see that $(\Phi_\hordis, \phi_\hordis)$
 in  \eqref{eq:aa}
indeed defines a VB-groupoid morphism:

\begin{equation}
\label{eq:bba}
\begin{tikzcd}[row sep=scriptsize, column sep=scriptsize]
& T^\vee(\Gamma[X]) \arrow{dl}[swap]{\Phi_\nabla} \arrow[rr,shift left] \arrow[rr, shift right] \arrow[dd] & & A[X]^\vee \arrow{dl}{\phi_\nabla} \arrow[dd] \\
T^\vee\Gamma \arrow[rr, shift left, crossing over] \arrow[rr, shift right, crossing over] \arrow[dd] & & A^\vee\\
& \Gamma[X] \arrow[dl] \arrow[rr, shift left] \arrow[rr, shift right]& & 
X \arrow{dl}{\varphi} \\
\Gamma \arrow[rr, shift left] \arrow[rr, shift right] & & M \arrow[from= 2-3, crossing over] \\
\end{tikzcd}
\end{equation}
This in turn induces a morphism of  $\zz$-graded groupoids
from  $ T^\vee \super(\BaseGroupoid[X]) \toto (A[X])^\vee\super$
to $T^\vee \super\BaseGroupoid\toto A^\vee\super $.

The following result is just a rephrasing of the discussion 
preceding Proposition \ref{cor:superGroupoidsConclusion}
into the language of graded groupoids. For the proof,  it
 suffices to check that 
 $T^\vee (\BaseGroupoid[X])$ is isomorphic to the fibered product 
${A^\vee[X]} \times_{A^\vee} T^\vee \BaseGroupoid \times_{A^\vee} 
 {A^\vee [X]}$.

\begin{proposition}
\label{prop:ConnectionMoritaMaps}
Let $\BaseGroupoid \toto M$  be a Lie groupoid,
 $\varphi : X \to M$ a surjective submersion
and $\hordis$ an Ehresmann connection for $\varphi:X \to M$.
The pair $( \Phi_{\hordis}, \phi_\hordis)$ defined
in  \eqref{eq:aa}  is a Morita morphism of $\zz$-{\superG} 
from $ T^\vee\super (\BaseGroupoid[X]) \toto (A[X])^\vee\super $
 to $T^\vee\super \BaseGroupoid  \toto A^\vee\super $.
\end{proposition}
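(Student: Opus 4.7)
The plan is to prove the proposition by first establishing that $(\Phi_\hordis, \phi_\hordis)$ is a VB-Morita morphism of VB-groupoids in the sense of Definition \ref{VB_morita_morphism}, and then transporting this property along the degree-shift functor $V\mapsto V\super$ to obtain a Morita morphism of $\zz$-graded Lie groupoids. The underlying Lie-groupoid morphism is the natural projection $\phi:\BaseGroupoid[X]\to \BaseGroupoid$, which is already a Morita morphism of Lie groupoids by hypothesis, so the argument concentrates on the VB-theoretic conditions.

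First I would verify condition (i) of Definition \ref{VB_morita_morphism}: the induced map $\phi_\hordis:\BaseAlgebroid[X]^\vee \to \BaseAlgebroid^\vee$ must be a surjective vector bundle submersion. This is immediate from Equation \eqref{eq:horlift01}: the Ehresmann connection $\hordis$ produces a fiberwise injective vector bundle morphism $\BaseAlgebroid_{\varphi(x)} \hookrightarrow \BaseAlgebroid_x[X]$, and its transpose $\phi_\hordis$ is therefore a fiberwise surjection that covers the surjective submersion $\varphi:X\to M$.

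Next I would verify condition (ii), the cartesian property, i.e.~exhibit a VB isomorphism
\[
T^\vee(\BaseGroupoid[X]) \;\cong\; \BaseAlgebroid[X]^\vee \times_{\BaseAlgebroid^\vee} T^\vee\BaseGroupoid \times_{\BaseAlgebroid^\vee} \BaseAlgebroid[X]^\vee
\]
over $\BaseGroupoid[X]$, where the fiber product uses $\phi_\hordis$. The natural candidate comes by dualizing Equation \eqref{eq:horlift00}: a covector $\xi \in T^\vee_{(x,\gamma,y)}(\BaseGroupoid[X])$ is sent to the triple obtained by pulling it back along the three injections determined by $\hordis$. Using the short exact sequence $0\to \ker d\varphi \to \BaseAlgebroid[X] \to \varphi^*\BaseAlgebroid \to 0$ and the analogous exact sequence dual to the fiber-product presentation $T_{(x,\gamma,y)}(\BaseGroupoid[X]) \simeq T_x X \times_{T_{\varphi(x)}M} T_\gamma\BaseGroupoid \times_{T_{\varphi(y)}M} T_y X$, a fiberwise diagram chase shows this map is bijective in every fiber. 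Equivalently, one may identify this morphism with the composition already appearing in Remark \ref{homotopy_equivalence_tangent_cotangent}, namely $\nabla^\vee : T^\vee(\BaseGroupoid[X]) \to (T^\vee\BaseGroupoid)[\varphi^*\BaseAlgebroid^\vee]$ followed by the canonical VB morphism $(T^\vee\BaseGroupoid)[\varphi^*\BaseAlgebroid^\vee] \to T^\vee\BaseGroupoid$, which provides another route to the cartesian square.

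Finally, I would transfer the conclusion to the $\zz$-graded setting. The degree-shift functor $V\mapsto V\super$ turns a VB-groupoid into a $\zz$-graded Lie groupoid, and both items of Definition \ref{VB_morita_morphism} translate into the corresponding Morita-morphism conditions for graded groupoids: surjective submersions of vector bundles become surjective submersions of graded manifolds, and cartesian squares remain cartesian. Combined with the fact that the body $\phi:\BaseGroupoid[X]\to\BaseGroupoid$ is a Morita morphism of Lie groupoids, this yields that the shifted pair $(\Phi_\hordis, \phi_\hordis)\super$ is a Morita morphism of $\zz$-graded Lie groupoids from $T^\vee\super(\BaseGroupoid[X])\toto (\BaseAlgebroid[X])^\vee\super$ to $T^\vee\super\BaseGroupoid \toto \BaseAlgebroid^\vee\super$. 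The principal technical obstacle is the fiberwise verification of the cartesian condition, which requires a careful dualization of a fiber product of tangent spaces and a reading off of the inverse map from the Ehresmann connection data in Equations \eqref{eq:horlift00}--\eqref{eq:horlift01}.
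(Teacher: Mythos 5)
Your argument is correct, and it is more self-contained than what the paper actually does. The paper offers no direct verification: it declares the proposition to be ``just a rephrasing'' of Remark \ref{homotopy_equivalence_tangent_cotangent}, which in turn is obtained from the general VB-Morita machinery (the bitorsor $TX$ in Proposition \ref{tangent_cotangent_morita}, duality via Corollary \ref{VB_morita_dual}, and the characterization in Proposition \ref{morita_VB_groupoids}), together with the assertion that $\Phi_\hordis$ coincides with $\nabla^\vee$ followed by the canonical projection $(T^\vee\BaseGroupoid)[\varphi^*\BaseAlgebroid^\vee]\to T^\vee\BaseGroupoid$. You instead check the two conditions of Definition \ref{VB_morita_morphism} directly: surjectivity of $\phi_\hordis$ as the transpose of the fiberwise injection \eqref{eq:horlift01}, and the cartesian condition by dualizing the fiber-product presentation of $T_{(x,\gamma,y)}(\BaseGroupoid[X])$. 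The latter is the real content, and your diagram chase does close: a covector killed by $\Phi_\hordis$ and by both cotangent source and target annihilates the $\hordis$-image of $T_\gamma\BaseGroupoid$ together with the left and right core translates of $\BaseAlgebroid[X]$, which span the whole tangent space, and the rank count then forces the comparison map to $(T^\vee\BaseGroupoid)[\BaseAlgebroid[X]^\vee]$ to be an isomorphism. The trade-off is the usual one: your route is elementary and exhibits the isomorphism explicitly in terms of the connection data, while the paper's route reuses results it needs anyway and avoids the fiberwise computation. One small caution about your side remark: factoring $\Phi_\hordis$ as $\nabla^\vee$ followed by the canonical Morita map does not by itself yield the cartesian property, since $\nabla^\vee$ is only a homotopy equivalence and not an isomorphism of VB groupoids; so that alternative should be read as consistent with, rather than a substitute for, the direct verification. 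The final transfer to the $\zz$-graded setting via the shift $V\mapsto V\super$ is formal, exactly as you say.
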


The following lemma can be  verified in a straightforward manner.

\begin{lemma}
\label{lem:ConnectionMoritaMaps1}
Let $\BaseGroupoid \toto M$  be a Lie groupoid, $\varphi : X \to M$ a surjective submersion
and $\hordis$ an Ehresmann connection for $\varphi$.
Under the identification as in  Lemma \ref{lem:ConnectionMoritaMaps0},
 the morphism of  2-term truncated complexes associated to the 
$\zz$-graded groupoid morphism $\Phi_\hordis$ as in
Proposition \ref{prop:ConnectionMoritaMaps} 
  coincides with the horizontal lift:
$$ 
\xymatrix{  \polymult{\bullet}{\BaseGroupoid} & \polymult{\bullet}{\BaseGroupoid[X]}   \\ 
\ar@{}[r]^(.25){}="a"^(.75){}="b" \ar@<4pt>@[red]@{^{(}->}^{\lambda_\hordis} "a";"b"  &  \\
\ar[uu]^{\diff} \hSec{\bullet}{\BaseAlgebroid}   & \hSec{\bullet}{\BaseAlgebroid[X]} \ar[uu]^{\diff'}. } 
$$
\end{lemma}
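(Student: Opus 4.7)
The plan is to unravel the two identifications and verify that $\Phi_\hordis^*$ on the truncated 2-term cohomology complex coincides fiberwise with $\lambda_\hordis$. Under the standard shift convention, $\mathcal{C}^\infty(A^\vee\super) \cong \Gamma(\wedge^\bullet A)$ canonically, and by Lemma \ref{graded_grpd_cohomology} combined with Proposition 2.7 of \cite{IPLGX}, a section $P \in \Gamma(\wedge^{k+1} T\Gamma)$ is multiplicative if and only if the associated polynomial function on $T^\vee\Gamma \times_\Gamma \cdots \times_\Gamma T^\vee\Gamma$, namely $(\mu_0,\dots,\mu_k) \mapsto \langle P, \mu_0 \wedge \cdots \wedge \mu_k \rangle$, is a groupoid $1$-cocycle. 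This yields the identification $\mathcal{Z}^{k+1}(T^\vee\super\Gamma) \cong \polymult{k}{\Gamma}$ stated in Lemma \ref{lem:ConnectionMoritaMaps0}, and similarly for $\Gamma[X]$.

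With these identifications in place, the heart of the argument is pure duality. Recall from \eqref{eq:aa} that $\Phi_\hordis$ and $\phi_\hordis$ were constructed precisely as the fiberwise duals of the horizontal injections \eqref{eq:horlift00} and \eqref{eq:horlift01}; that is,
\[
\Phi_\hordis\big|_{(x,\gamma,y)} \;=\; \big( (\nabla \smalcirc t_{T\Gamma}) \oplus \id \oplus (\nabla \smalcirc s_{T\Gamma}) \big)^{\!\vee},
\qquad
\phi_\hordis\big|_{x} \;=\; (\id \oplus \nabla \smalcirc \rho)^{\!\vee}.
\]
Consequently, under the natural pairing, the pullback $\phi_\hordis^*: \Gamma(\wedge^\bullet A) \to \Gamma(\wedge^\bullet A[X])$ is the map induced fiberwise by the wedge power of the horizontal injection $A_{\varphi(x)} \hookrightarrow A[X]_x$; this is, by inspection, the horizontal lift of sections of $\wedge^\bullet A$ defined in \eqref{eq:hordis000}. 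The same argument applied to the fiber product of copies of $T^\vee\Gamma$ shows that the pullback $\Phi_\hordis^*$ on multiplicative functions corresponds to the map on $\Gamma(\wedge^\bullet T\Gamma)$ induced by wedging the horizontal injections $T_\gamma\Gamma \hookrightarrow T_{(x,\gamma,y)}(\Gamma[X])$, which is exactly $\lambda_\hordis$.

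The main subtlety to handle is not conceptual but bookkeeping: one must check that the identification respects multiplicativity, i.e.\ that $\Phi_\hordis^*$ restricted to $\mathcal{Z}(T^\vee\super\Gamma)$ lands inside $\mathcal{Z}(T^\vee\super\Gamma[X])$ and corresponds under the Lemma \ref{lem:ConnectionMoritaMaps0} identification to a map landing in $\polymult{\bullet}{\Gamma[X]}$. This preservation is automatic because $\Phi_\hordis$ is a graded groupoid morphism (Proposition \ref{prop:ConnectionMoritaMaps}), so pullback sends groupoid cocycles to groupoid cocycles. The remaining verification — that the chain-complex structure (the differential $\diff$ given by left-minus-right invariant extension) is preserved under pullback — follows from the fact that $\Phi_\hordis$ intertwines the source and target maps of the two cotangent groupoids, together with naturality of the pairing; careful tracking of degree and sign conventions in the shift $[1]$ completes the argument.
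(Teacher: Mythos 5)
Your proposal is correct and follows essentially the same route as the paper's proof: the paper likewise observes that $\Phi_\hordis$ and $\phi_\hordis$ are by construction the duals of the horizontal injections \eqref{eq:horlift00}--\eqref{eq:horlift01}, so that their pullback on functions is exactly $\lambda_\hordis$ of \eqref{eq:hordis000}, and then invokes the fact that $(\Phi_\hordis,\phi_\hordis)$ is a graded groupoid morphism to conclude that this pullback preserves multiplicativity and is a chain map of the truncated $2$-term complexes. You merely spell out in more detail the bookkeeping (via Lemma \ref{graded_grpd_cohomology} and the identification of Lemma \ref{lem:ConnectionMoritaMaps0}) that the paper dismisses as straightforward.
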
 
\begin{proof}
It is straightforward  to see
that the dual of the maps $\Phi_\hordis$ and $ \phi_\hordis$ are the horizontal lifts $\lambda_\hordis$ defined in  (\ref{eq:hordis000}).
Since $(\Phi_\hordis, \phi_\hordis)$ is a $\zz$-graded groupoid morphism,
 its dual $\lambda_\hordis$ is a morphism of $\zz$-graded 2-term
 complexes.  This completes the proof.
\end{proof}

We are now ready to prove Proposition \ref{cor:superGroupoidsConclusion}.

\begin{proof}[Proof of Proposition \ref{cor:superGroupoidsConclusion}]
Applying Proposition \ref{prop:inverseCandidateI} to the Morita morphism
 described in Proposition \ref{prop:ConnectionMoritaMaps},
 we obtain a morphism $I=(I_0,I_1)$ of 2-term complexes 
from  $\hSec{\bullet}{\BaseAlgebroid[X]} \stackrel{\diff'}{\mapsto} \polymult{\bullet}{\BaseGroupoid[X]}$
to $ \hSec{\bullet}{\BaseAlgebroid} \stackrel{\diff}{\mapsto} \polymult{\bullet}{\BaseGroupoid}$ that is a left inverse to $\lambda_\hordis$, 
and a homotopy map $h_X :\polymult{\bullet}({\BaseGroupoid[X]}) \to\hSec{\bullet}{\BaseAlgebroid[X]}$. These maps depend on the choice of local sections of 
$\phi_\hordis: (A[X])^\vee_{[1]}\rightarrow A^\vee_{[1]}$.

Note that the image of  $\lambda_\hordis$ lies in 
$ \hSec{\bullet}{\BaseAlgebroid[X]}_{proj} \stackrel{\diff'}{\mapsto} 
\polymult{\bullet}({\BaseGroupoid[X]})_{proj}$.
In order to prove the first statement, we need  to show that it is possible to 
choose local sections of $\phi_\hordis: (A[X])^\vee_{[1]}\rightarrow A^\vee_{[1]}$
 so that: $(i)$ the restriction of $I$ to 
projectable elements is given by the
natural projection $\proj$;
$(ii)$ the restriction of the homotopy map $h_X$ to
$\polymult{\bullet}({\BaseGroupoid[X]})_{proj}$
yields a homotopy map $h_{\horlift}: \polymult{\bullet}({\BaseGroupoid[X]})_{proj}
\to \hSec{\bullet}{\BaseAlgebroid[X]}_{proj}$. 

Indeed, choose a  nice open cover $(U_i)_{i \in S}  $  of  $M$
so that  $\varphi: X\to M$ admits a family of local sections
$\sigma'_i: U_i\to \varphi^{-1} (U_i)$.
Denote by ${\mathcal U}_i$ the restriction of the graded manifold
$  \BaseAlgebroid^\vee \super$ to $U_i$,
and $ \varphi^{-1}({\mathcal U_i})$ the restriction of
 $ (\BaseAlgebroid [X])^\vee\super$
 to the open subset $\varphi^{-1}(U_i)\subset X$.
Therefore, for each $i\in S$,
there is an induced   local section
$\sigma_i : {\mathcal U}_i  \hookrightarrow  \varphi^{-1}({\mathcal U}_i)$ of
the submersion
$\phi_\nabla: (\BaseAlgebroid [X])^\vee\super \to \BaseAlgebroid^\vee\super$. It is now straightforward to check
that the maps $I$ and $h_X$ defined in the proof of
 Proposition \ref{prop:inverseCandidateI}
with these local sections do satisfy $(i)$ and $(ii)$.

For the second part of the proposition, let 
 $\RRR:= \lambda_\hordis \circ I $. By construction, $\RRR$ is a chain map
 from $\hSec{\bullet}{\BaseAlgebroid[X]} \stackrel{\diff'}{\mapsto}
 \polymult{\bullet}({\BaseGroupoid[X]})$
to $ \hSec{\bullet}{\BaseAlgebroid[X]}_{proj} \stackrel{\diff'}{\mapsto} 
\polymult{\bullet}({\BaseGroupoid[X]})_{proj}$.
According to Proposition \ref{prop:inverseCandidateI}, $\RRR$ is homotopic to
 the identity  map
 as a  chain map from $ \hSec{\bullet}{\BaseAlgebroid[X]} 
\stackrel{\diff'}{\mapsto} \polymult{\bullet}{\BaseGroupoid[X]}$ to itself,
where  the homotopy map is $h_X$, i.e.,
$$  {\mathfrak i} \circ \RRR =  {\rm id} + \diff \circ h_X+ h_X \circ \diff .$$
Also,
 $$ \RRR\smalcirc {\mathfrak i} = \lambda_\hordis \circ \proj = {\rm id} + \diff \circ h_{\horlift}+ h_{\horlift} \circ \diff  .$$
This concludes the proof.
\end{proof}

\newcommand{\namearticle}{}
\newcommand{\namejournal}{\emph}

\end{document}